\def\Z{\mathbb Z}
\def\Q{\mathbb Q}
\theoremstyle{plain}
\newtheorem{theorem}{Theorem}[section]
\newtheorem{conjecture}{Conjecture}[section]
\newtheorem{lemma}{Lemma}[section]
\newtheorem{proposition}{Proposition}[section]
\newtheorem{corollary}{Corollary}[section]
\theoremstyle{definition}
\newtheorem{definition}{Definition}[section]
\newtheorem{remark}{Remark}[section]
\DeclareMathOperator{\ab}{ab}
\DeclareMathOperator{\Jac}{Jac}
\DeclareMathOperator{\Res}{Res}
\DeclareMathOperator{\Hom}{Hom}
\DeclareMathOperator{\cris}{cris}
\DeclareMathOperator{\loc}{loc}
\DeclareMathOperator{\Gal}{Gal}
\DeclareMathOperator{\rk}{rk}
\DeclareMathOperator{\Sym}{Sym}
\newcommand{\ma}{\mathrm{ma}}
\newcommand{\Nm}{\mathrm{Nm}}
\newcommand{\GSp}{\mathrm{GSp}}
\newcommand{\AJ}{\mathrm{AJ}}
\newcommand{\an}{\mathrm{an}}
\newcommand{\lie}{\mathrm{Lie}}
\newcommand{\Sel}{\mathrm{Sel}}
\newcommand{\NS}{\mathrm{NS}}
\newcommand{\End}{\mathrm{End}}
\newcommand{\Zar}{\mathrm{Zar}}
\newcommand{\ad}{\mathrm{ad}}
\newcommand{\gr}{\mathrm{gr}}
\newcommand{\Sp}{\mathrm{Sp}}
\newcommand{\spg}{\mathrm{sp}}
\DeclareMathOperator{\Ind}{Ind}
\newcommand{\et}{\mathrm{\acute{e}t}}
\DeclareMathOperator{\spec}{Spec}
\DeclareMathOperator{\Alb}{Alb}
\DeclareMathOperator{\codim}{codim}
\DeclareMathOperator{\Ker}{Ker}
\DeclareMathOperator{\Fil}{Fil}
\DeclareMathOperator{\dR}{dR}
\newcommand{\Mat}{\mathrm{Mat}}
\newcommand{\Be}{\mathrm{Be}}
\newcommand{\h}{\mathrm{h}}
\begin{document}
\title{Unlikely intersections and the Chabauty--Kim method over number fields}
\author{Netan Dogra}
\setcounter{tocdepth}{1}
\newcommand{\todo}[1]{[#1]}
\newcommand{\old}[1]{ old: [#1]}
\newcommand{\newversion}[1]{ new: [#1]}
\maketitle
\pagestyle{headings}
\markright{UNLIKELY INTERSECTIONS AND THE CHABAUTY--KIM METHOD}
\begin{abstract}
The Chabauty--Kim method is a tool for finding the integral or rational points on varieties over number fields via certain transcendental $p$-adic analytic functions arising from certain Selmer schemes associated to the unipotent fundamental group of the variety. In this paper we establish several foundational results on the Chabauty--Kim method for curves over number fields. The two main ingredients in the proof of these results are an unlikely intersection result for zeroes of iterated integrals, and a careful analysis of the intersection of the Selmer scheme of the original curve with the unipotent Albanese variety of certain $\Q _p $-subvarieties of the restriction of scalars of the curve. The main theorem also gives a partial answer to a question of Siksek on Chabauty's method over number fields, and an explicit counterexample is given to the strong form of Siksek's question.
\end{abstract}

\tableofcontents
\section{Introduction}
\label{intro}
Let $X$ be a hyperbolic curve over a number field $K$. Then, by a theorem of Siegel in the case $X$ is affine \cite{siegel} and by Faltings \cite{faltings1983} in general, $X$ has only finitely many integral points. Both these proofs are \textit{ineffective}, in the sense that they do not give a way to determine the set of integral points. The Chabauty--Kim method seeks to give a method for determining this set of points, by constructing a set of $p$-adic points containing the integral points, which one can prove is finite and compute in practice. Before explaining the Chabauty--Kim method more precisely, we clarify what we mean by integral points. Let $\overline{X}$ be a smooth projective curve over $K$, with $X\subset \overline{X}$ and complement $D:=\overline{X}-X$. We assume that $X$ is hyperbolic, i.e. that $2g(\overline{X})+\# D(\overline{K})>2$. Let $\mathcal{\overline{X}}$ be a regular model of $\overline{X}$ over $\mathcal{O}_{K,S}$, for $S$ a finite set of primes, $\mathcal{D}\subset \overline{\mathcal{X}}$ a normal crossings divisor with generic fibre equal to $D$, and $\mathcal{X}:=\overline{\mathcal{X}}-\mathcal{D}$. Then the theorems of Faltings and Siegel imply that $\mathcal{X}(\mathcal{O}_{K,S})$ is finite. 

Let $p$ be a prime which splits completely in $K$ and which is a prime of good reduction for $\mathcal{X}$ (henceforth when we say that a rational prime is a prime of good reduction for $\mathcal{X}$, we will mean that for all $v|p$, $v$ is not in $S$, $\overline{X}$ has good reduction at $v$ and $\mathcal{D}$ is \'etale over $\mathcal{O}_{K_v} $). Then Kim's method produces nested subsets $\mathcal{X}(\mathcal{O}_K \otimes \mathbb{Z}_p )_{S,n}$ and $\mathcal{X}(\mathcal{O}_{K_v } )_{S,n}$ of $\mathcal{X}(\mathcal{O}_K \otimes \mathbb{Z}_p )=\prod _{v|p}\mathcal{X}(\mathcal{O}_{K_v })$ and $\mathcal{X}(\mathcal{O}_{K_v })$ respectively, each containing the $S$-integral points of $\mathcal{X}$:
\begin{align*}
\mathcal{X}(\mathcal{O}_K \otimes \mathbb{Z}_p )\supset \mathcal{X}(\mathcal{O}_K \otimes \mathbb{Z}_p )_{S,1} \supset \mathcal{X}(\mathcal{O}_K \otimes \mathbb{Z}_p )_{S,2} \supset \ldots \supset \mathcal{X}(\mathcal{O}_{K,S}) , \\
\mathcal{X}(\mathcal{O}_{K_v } )\supset \mathcal{X}(\mathcal{O}_{K_v } )_{S,1} \supset \mathcal{X}(\mathcal{O}_{K_v} )_{S,2} \supset \ldots \supset \mathcal{X}(\mathcal{O}_{K,S}).
\end{align*}
When $S$ is empty, these sets will be written simply as $\mathcal{X}(\mathcal{O}_K \otimes \mathbb{Z}_p )_n$ and $\mathcal{X}(\mathcal{O}_{K_v })_n $, and in the case $X=\overline{X}$ is projective, they will be written simply as $X(K\otimes \Q _p )_n$ and $X(K_v )_n $.
A detailed description of $\mathcal{X}(\mathcal{O}_K \otimes \mathbb{Z}_p )_{S,n} $ and $\mathcal{X}(\mathcal{O}_{K_v })_{S,n}$ is given in section \ref{subsec:selmer}. A valuable feature of the Chabauty--Kim method is that the sets $\mathcal{X}(\mathcal{O}_K \otimes \mathbb{Z}_p )_{S,n}$ and $\mathcal{X}(\mathcal{O}_{K_v})_{S,n}$ are often computable in practice, and can be used to determine $\mathcal{X}(\mathcal{O}_{K,S})$ (see e.g. \cite{QC1}, \cite{BDCKW}, \cite{DC},\cite{DCW}). 

In this paper we establish foundational results about the scope of the Chabauty--Kim method over number fields, by establishing when we should expect the sets $\mathcal{X}(\mathcal{O}_K \otimes \mathbb{Z}_p )_{S,n}$ to be finite. The algorithmic side of the Chabauty--Kim method has also been developed in certain cases in depths one (see for example \cite{siksek2013explicit}) and two \cite{balakrishnan2020explicit}, and hence these results also suggest that the Chabauty--Kim method may be a practical method for the determination of rational points on hyperbolic curves of small genus over number fields.

\subsection{Main results}
The main result of this paper is that, essentially, the status of the Chabauty--Kim method over arbitrary number fields is the same as that over $\Q $. More precisely, we show that the Chabauty--Kim sets $\mathcal{X}(\mathcal{O}_K \otimes \mathbb{Z}_p )_{S,n}$ are finite under the same sets of hypotheses as those needed over $\Q $. As we explain below, the proofs of these results are however quite different from their analogues over $\Q $.
\begin{theorem}\label{finiteness}
Let $K$ be a number field, and $p$ a (rational) prime which splits completely in $K$, and $S$ a finite set of primes disjoint from $K$. Then $\mathcal{X}(\mathcal{O}_K \otimes \mathbb{Z}_p )_{S,n}$ is finite for all $n\gg 0 $ in each of the following cases.
\begin{enumerate}
\item $X=\mathbb{P}^1 -D$, where $D\subset \mathbb{P}^1 _K$ is a closed subscheme with $\# D(\overline{K})>2$.
\item $X/K$ is a smooth projective curve of genus $g>1$, and the conjecture of Jannsen, or the conjecture of Bloch--Kato, hold for all the product varieties $X^n $.
\end{enumerate}
In each of the following cases, there exists a finite extension $L|K$, such that for all rational primes $p$ which split completely in $L$, there is a prime $v$ of $K$ above $p$ such that $\mathcal{X}(\mathcal{O}_{K_v })_{S,n}$ is finite for all $n\gg 0$.
\begin{enumerate}
\setcounter{enumi}{2}
\item $X=E-O$,where $E/K$ is an elliptic curve with complex multiplication.
\item $X/K$ is a smooth projective curve of genus $g>1$ whose Jacobian is geometrically simple and has complex multiplication.
\end{enumerate}
\end{theorem}

\begin{remark}
The last two cases of Theorem \ref{finiteness} involve two conditions on the primes involved, which were erroneously not included in a previous version of this paper. The first, that of restricting to a positive proportion of primes, already occurs in the original work of Kim and Coates--Kim, since they require $p$ to split completely in the CM field by which the Jacobian has complex multiplication. It also occurs in our work because we pass to a finite extension of $K$, and require $p$ to split completely in that. The condition of only working with one prime above $p$ seems difficult to remove without a deeper understanding of localisation maps in Galois cohomology.
\end{remark}

\begin{remark}

When $K=\Q$, Theorem \ref{finiteness} is already known: cases (1), (2) and (3) are due to Kim \cite{kim:siegel}, \cite{kim:chabauty}, \cite{kim2010p}. Case (4) is due to Coates and Kim \cite{coates2010selmer}. As in Ellenberg and Hast \cite{ellenberg2017rational} this implies finiteness of $X(K_v )_n$ for any curve $X$ which geometrically dominates a hyperbolic curve with geometrically simple CM Jacobian (since we prove finiteness over all number fields, we can just pass to a finite extension over which the cover is defined, and note that for a cover $X\to Y$, finiteness of $Y(K_v )_n$ implies finiteness of $X(K_v )_n$). In particular, it implies finiteness of $X(K)$ for all hyperelliptic curves $X$ and number fields $K$.

When $K$ is totally real, case (1) was proved independently by Hadian \cite{hadian2011motivic} and Kim \cite{kim2012tangential}.

\end{remark}

\begin{remark}
As explained below, the main input into the proof of Theorem \ref{finiteness} is an unlikely intersection result (Proposition \ref{unlikely_intersection}) for Kim's unipotent Albanese morphism. The idea of relating unlikely intersections to the Chabauty--Kim method also appears in the thesis of Daniel Hast \cite[\S 5]{hast-thesis}, where he shows that, when $K$ has a real place, case (2) (assuming Bloch--Kato) and case (4) of the above theorem are implied by a sufficiently strong unlikely intersection result \cite[Conjecture 5.1]{hast-thesis}, which is a generalisation of a question of Siksek on Chabauty's method over number fields. In section \ref{subsec:siksek}, we give a counterexample to this strong unlikely intersection result, which also provides a negative answer to Siksek's question.

Hast has independently obtained related results pertaining to the relationship between the Chabauty--Kim method and generalisations of the Ax--Schanuel theorem \cite{hast}. Rather than proving finiteness of $\mathcal{X}(\mathcal{O}_K \otimes \mathbb{Z}_p )_{S,n}$, Hast gives a Chabauty--Kim proof of finiteness of the set $\mathcal{X}(\mathcal{O}_{K,S})$ of integral points, assuming Klingler's Ax--Schanuel conjecture \cite[Conjecture 7.5]{klingler2017hodge} for variations of mixed Hodge structure, as well as the assumptions in (1) to (4). The point of divergence in proving finiteness of $\mathcal{X}(\mathcal{O}_{K,S})$ rather than $\mathcal{X}(\mathcal{O}_K \otimes \mathbb{Z}_p )_{S,n}$ or $\mathcal{X}(\mathcal{O}_{K,v})_{S,n}$ is that, in both proofs, one wants to prove finiteness of the Zariski closure of a set of points (either $\Res _{K|\Q }(X)(\Q ) $ or $\Res _{K|\Q }(X)(\Q _p )_n $) by considering its image under the unipotent Albanese morphism and intersecting with a Selmer scheme. Whilst the former descends to $\Q$, the latter will in general just be defined over $\Q _p$ -- this phenomenon already occurs in the case of Chabauty's method over $\Q$, where the set $X(\Q _p )_1$ would be expected to typically contain non-algebraic points even when it is finite. We elaborate on the issue of fields of definition in the subsequent subsection.
\end{remark}

\begin{remark}
It would be interesting to adapt the proof of Theorem \ref{finiteness} to give a method for determining the rational points on more general higher dimensional varieties which have non-abelian unipotent fundamental groups. For example, can one apply the Chabauty--Kim method to determine the rational points on some of the higher dimensional Shimura varieties considered by Dimitrov and Ramakrishnan in \cite{DR15}?
\end{remark}
\subsection{Unlikely intersections and fields of definition}
The main new result used in the proof of Theorem \ref{finiteness} is a way of understanding the zeroes of certain transcendental functions (iterated integrals) on higher dimensional varieties. Kim's method works by constructing, under certain Galois cohomological assumptions, certain nontrivial locally analytic transcendental (Coleman) functions on $\prod _{v|p}\mathcal{X}(\mathcal{O}_v )$ whose zero locus contains $\mathcal{X}(\mathcal{O}_{K,S} )$. Since Coleman functions have only finitely many zeroes on $\mathcal{X}(\mathbb{Z}_p )$, this proves finiteness when $K=\Q $. 

Over number fields, $\prod _{v|p} \mathcal{X}(\mathcal{O}_v )$ is no longer one dimensional and hence the problem is to rule out that these functions conspire to have many zeroes in common. In this paper we resolve this problem by proving a foundational result (Proposition \ref{unlikely_intersection}) on unlikely intersections for iterated integrals. In the abelian case, the main unlikely intersection result (Proposition \ref{unlikely_intersection}) is a straightforward consequence of the Ax--Schanuel theorem for abelian varieties. 

To describe how such unlikely interesections can occur, consider the case of the product of $\mathbb{P}^1 -\{ 0 ,1,\infty\}$ with itself, with co-ordinate functions $z_1 ,z_2 $. Then the Coleman functions $\log (z_1 )-\log (z_2 )$ and $\log (1-z_1 )-\log (1-z_2 )$ are independent, in a suitable sense, but their common zero set is not codimension 2, as it contains the diagonal. The zero locus is not Zariski dense, however. 
A rather complicated way of seeing this is to observe that, on any positive dimensional component of the zero locus, $\frac{dz_1 }{z_1 }-\frac{dz_2 }{z_2 }$ and $\frac{dz_1 }{1-z_1 }-\frac{dz_2 }{1-z_2 }$ are colinear. Hence such a component must be contained in the subspace
\[
z_1 (z_2 -1)=z_2 (z_1 -1).
\]
In this paper we show that unlikely intersections of this form are, in some sense, the only thing that can go wrong. More precisely, in Proposition \ref{unlikely_intersection} we prove the following Ax--Schanuel-type statement for iterated integrals: if the codimension of the zero set of a set of iterated integral functions on a smooth geometrically connected quasi-projective variety is `smaller than expected', then this zero set is not Zariski dense in the subvariety. 
The method of proof is an elaborate version of the example above, inspired by Ax's original proof of the Ax--Schanuel theorem for tori and abelian varieties. A similar strategy is used by Bl\'azquez-Sanz, Casale, Freitag and Nagloo \cite{BSCFN}, where a stronger and vastly more general Ax--Schanuel type theorem is proved used a suitable nonabelian generalisation of Ax's proof. Indeed, their work, specialised to the specific context of unipotent connections on products of curves, gives an elegant conceptual explanation for some of the more elaborate (and apparently unmotivated) calculations in section \ref{sec:the_beef}. Although their argument is phrased in the language of connections on principal bundles for varieties over the complex numbers, standard arguments show their results imply unlikely intersection results for Coleman integrals. This will be pursued further in a subsequent work.

The second difficulty is establishing the existence of non-trivial Coleman functions vanishing on $\mathcal{X}(\mathcal{O}_K \otimes \mathbb{Z}_p )_{S,n}$. In the case $K=\Q $, by a theorem of Kim this is proved by showing that a localisation map from a Selmer scheme to local Galois cohomlogy variety is not dominant (this is a nonabelian, Galois cohomological analogue of the fact that Chabauty's method this is typically proved via (often conjectural) dimension bounds on certain Bloch--Kato Selmer groups associated to the \'etale fundamental groups of $X_{\overline{K}}$). Using Proposition \ref{unlikely_intersection}, we are then able to deduce Zariski non-density results for $\mathcal{X}(\mathcal{O}_K \otimes \mathbb{Z}_p )_{S,n}$ from results on dimension bounds on certain Bloch--Kato Selmer groups. To prove finiteness results, we argue by contradiction, assuming that $\mathcal{X}(\mathcal{O}_K \otimes \mathbb{Z}_p )_{S,n}$ is infinite, and taking $Z$ to be its Zariski closure. To apply the unlikely intersection result via the Chabauty--Kim method, we have to bound the intersection of Selmer groups associated to the fundamental group of the Weil restriction $\Res _{K|\Q }X_{\overline{\Q }}$ with local Galois cohomology groups associated to fundamental group of the $Z_{\overline{\Q }_p }$. 
 
Since the unipotent fundamental group of $Z_{\overline{\Q }_p }$ need not surject onto that of $\Res _{K|\Q }X_{\overline{\Q }}$, we are compelled to study a slightly novel local--global problem in Galois cohomology. Instead of needing to bound the image of the Bloch--Kato Selmer group of a global Galois representation $V$ in the local Galois cohomology of $V$, we need to bound the \textit{intersection} of the image of the Bloch--Kato Selmer group of $V$ with that of the local Galois cohomology of a subspace $W\subset V$ which is stable under $\Gal (\overline{\Q }_p | \Q _p )$, but which need not be stable under $\Gal (\overline{\Q }|\Q )$. This is done in several stages. First, although we cannot assume that $Z$ descends to a number field, we can descend the image of its Albanese variety in that of the Weil restriction of $X$ (see Lemma \ref{nearly_arithmetic}), and we know that the Albanese of $Z$ must surject onto the Jacobian of one of the factors of $\Res _{K|\Q }(X)$. For case (2), we then use this to show that there are pieces of the fundamental group of $\Res_{K|\Q }(X)$ whose global Galois cohomology cannot contain the fundamental group. Because our assumptions in this case imply strong bounds on the relevant $H^2$ groups, this suffices by standard duality arguments to prove finiteness.

For cases (3) and (4), this is not enough, because our unconditional bounds on $H^2$ in this case are weaker. This is why we instead prove finiteness of $X(\mathcal{O}_{K_v })_{S,n}$ for some $v$ above $p$. To explain how this simplifies the proof, consider the case where, for all distinct embeddings $\sigma _1 ,\sigma _2 :K\to \overline{K}$, $\Hom (\Alb (X_{\overline{K},\sigma _1 }),\Alb (X_{\overline{K},\sigma _2 }))=0$. Then, up to isogeny, the image of the unipotent fundamental group of $X$ in the fundamental group of $\Res (X)$ is the product of the images in the individual factors of $\Res (X)$. This allows us to reduce to the case where the image is very large. On the other hand, if $K|\Q $ is Galois and $X$ is defined over $\Q $, then there is an action of $\Gal (K|\Q )$ on the Galois cohomology over $K$ of the fundamental group of $X$. This gives an additional structure to show that the Galois cohomology of the fundamental group of $Z/\Q _p $ is not contained in the global Galois cohomology of the fundamental group of $\Res (X)$. In cases (3) and (4) we interpolate between these two techniques, using our strong assumptions on the Jacobian of $X$.

\subsection{Applications to explicit Chabauty--Kim}

Theorem \ref{finiteness} guarantees that the algorithms of Dan-Cohen--Wewers, Dan-Cohen and Corwin--Dan-Cohen \cite{DCW}, \cite{DC}, \cite{CDC} for computing $\mathcal{O}_{K,S}$-points on $\mathbb{P}^1 -\{ 0,1,\infty \}$ provably produce finite sets, extending the theoretical scope of the algorithms beyond totally real fields.
To use the Chabauty or Chabauty--Kim method to determine $X(K)$ for $X$ of genus bigger than one, at present one typically needs finiteness of $X(K\otimes \Q _p )_n$ when $n=1$ or $2$. When $K=\Q $, the foundational work of Chabauty implies that $X(\Q _p )_1$ is finite if $r<g$, where $r$ is the Mordell--Weil rank of the Jacobian of $X$, and $g$ is the genus. The `quadratic Chabauty lemma' \cite[Lemma 3.2]{QC1} states that $X(\Q _p )_2$ is finite when $r<g+\rho (\Jac (X))-1$, where $\rho (\Jac (X))$ denotes the rank of the Neron-Severi group of $\Jac (X)$ (over $\Q $). We prove a partial generalisation of this result to number fields. We also give sufficient conditions for finiteness of $X(K\otimes \Q _p )_1$, providing a partial answer to a question of Siksek (see below). The latter result, which can be phrased purely in terms of the classical Chabauty--Coleman method, is proved separately in section \ref{sec:AS-C}, although it is also a special case of the more general results obtained later in the paper.
\begin{proposition}\label{QC_numberfield}
Let $K|\Q $ be a finite extension of degree $d$, and let $r_1 (K)$ and $r_2 (K)$ denote the number of real and complex places. Let $P_{\mathbb{R}}$ denote the set of real places of $K$, and for each $v\in P_{\mathbb{R}}$, let $c_v \in G_{K}$ denote complex conjugation with respect to an embedding $\overline{K}\hookrightarrow \mathbb{C}$ extending $v$. Let $X/K$ be a smooth projective geometrically integral curve of genus $g>1$, and let $p$ be a prime which splits completely in $K$ and such that, for all $v|p$, $X$ has good reduction at $v$. Let $r$ denote the rank of $\Jac (X)(K)$. Suppose that for any two distinct embeddings $\sigma _1 ,\sigma _2 :K\hookrightarrow \overline{\Q }$, we have $\Hom (\Jac (X)_{\overline{\Q },\sigma _1 },\Jac (X)_{\overline{\Q },\sigma _2 })=0$. Then we have the following finiteness results.
\begin{enumerate}
\item If $r\leq d(g-1)$, then for all primes $p$ of good reduction and splitting completely in $K$, there is a prime $v$ of $K$ lying above $p$ such that $X(K_v )_1$ is finite.
\item If 
\[
\rk H^1 _f (G_{K,T},T_p (\Jac (X)))\otimes \Q _p )\leq d(g-1) +(r_2 (K)+1)(\rho (\Jac (X))-1)+\sum _{v\in P_{\mathbb{R}}}\rho (J_{\mathbb{C},v})-\rho (J_{\mathbb{R},v})
\] then for all primes $p$ as in part (1), there is a prime $v$ above $p$ such that $X(K_v )_2$ is finite.
\end{enumerate}
\end{proposition}

\begin{remark}
Assuming the finiteness of the $p$-primary part of the Tate--Shafarevich group for $(\Jac (X)$, we have $\rk H^1 _f (G_{K,T},T_p (\Jac (X))\otimes \Q _p )=\rk \Jac (X)(K)$.
By modifying the definition of the Selmer scheme as in \cite[Definition 2.2]{QC1} to include a condition on mapping to $J(K)\otimes \Q _p $, (condition (c) of \cite[Definition 2.2]{QC1}), one can prove finiteness of a modified version of $X(K_v )_2 $, for $v$ as above, whenever
\[
r\leq  d(g-1) +(r_2 (K)+1)(\rho (\Jac (X))-1)+\sum _{v\in P_{\mathbb{R}}}\rho (J_{\mathbb{C},v})-\rho (J_{\mathbb{R},v})
\]
(still assuming $\Hom (\Jac (X)_{\overline{\Q },\sigma _1 },\Jac (X)_{\overline{\Q },\sigma _2 })=0$ for all $\sigma _1 \neq \sigma _2 $).
This modified version of the Selmer scheme is also easier to compute with (see \cite{QC1}) but as this distinction is not needed elsewhere in the paper, we use the simpler definition.

In the case $K=\Q $, case (2) of Proposition \ref{QC_numberfield} is in fact more general than the quadratic Chabauty lemma \cite[Lemma 3.2]{QC1} mentioned above. Instead it reduces to \cite[Proposition 2]{QC2}, which states that $X(\Q _p )_2 $ is finite whenever
\[
\rk (J)<g-1+\rho (J)+\rk (\NS (J_{\overline{\Q }})^{c=-1}),
\]
where $J$ is Jacobian of $X$, and $c\in \Gal (\overline{\Q }|\Q )$ is a complex conjugation.
\end{remark}

Although the condition on $\Hom (\Jac (X)_{\overline{\Q },\sigma _1 },\Jac (X)_{\overline{\Q },\sigma _2 })$ is generic when $X$ is not defined over a subfield of $K$, and is practical to check for curves of small genus and number fields of low degree, it is natural to wonder whether a weaker condition is sufficient. There are known examples where $X(K\otimes \Q _p )_1$ is infinite and $r=d(g-1)$ coming from the fact that $X$ descends to a subfield of $K|\Q $.
In \cite{siksek2013explicit}, Siksek asks whether a sufficient condition for finiteness of $X(K\otimes \Q _p )_1$ is that $r\leq d(g-1)$ and $X$ is not defined over any intermediate extension of $K|\Q $. In section \ref{sec:AS-C}, we show that this question has a negative answer, but it is not clear that the condition that we obtain is optimal.

\begin{remark}
In the case when $K$ is an imaginary quadratic field, Proposition \ref{QC_numberfield} (modified as in the above remark) implys that $X(K_v )_2 $ is finite whenever $\rk J(K)\leq 2g$ and $\rho (J)>1$, which has applications to the scope of the algorithms developed by Balakrishnan, Besser, Bianchi and M\"uller \cite{balakrishnan2020explicit}.
\end{remark}

\subsection{Notation and plan of the paper}\label{subsec:notation}
In section \ref{sec:AS-C}, we explain the relation between the application of the Ax--Schanuel theorem to questions on Chabauty's method over number fields. Although the main result is essentially a special case of results proved later in the paper, we give an independent exposition that involves none of the machinery from Kim's method. We hope this may be of independent interest, and provide an illustration of how unlikely intersection results imply finiteness of Chabauty--Kim sets. In section \ref{sec:CK}, we provide a brief re-cap on the Chabauty--Kim method over $\Q $ and over number fields. We also explain how the Chabauty--Kim method for $X/K$ is related to the Chabauty--Kim method for $\Res _{K|\Q }(X)/\Q $, where $\Res _{K|\Q }(X)$ denotes the Weil restriction of $X$ (following an analogous construction of Stix, in the context of the section conjecture, in \cite{stix2010trading}). In section \ref{universal_con}, we recall the explicit description of the $p$-adic iterated integrals (or more precisely, the unipotent connections) which arise in Kim's method given in \cite{kim:chabauty}. This enable us to reduce the unlikely intersection statement required to a statement about zeroes of iterated integrals. In section \ref{sec:the_beef}, we then prove the unlikely intersection result, following the strategy outlined above. In \ref{sec:complete}, we describe how to reduce the proof of finiteness of $\mathcal{X}(\mathcal{O}_K \otimes \mathbb{Z}_p )_{S,n}$ to specific inequalities for the dimension of (abelian) Galois cohomology groups. In section \ref{the_proof} we complete the proof of Theorem \ref{finiteness} by verifying these inequalities.

The following notation will be used throughout the paper. If $X$ is a rigid analytic space over $\Q _p $ with formal model $\mathcal{X}$, then, for any subscheme $Y$ of the special fibre $\mathcal{X}_{\mathbb{F}_p },$ we denote by $]Y[\subset X$ the \textit{tube} of $Y$ in the sense of Berthelot. This is a rigid analytic space whose $\Q _p $-points are exactly the $\Q _p$-points of $X$ which reduce to $Y(\mathbb{F}_p )$ modulo $p$. When $Y$ is an $\mathbb{F}_p $-point of $X$, we refer to $]Y[$ as a residue disk. If $X$ and $Y$ are rigid analytic spaces over $\Q _p$, and $X$ has a fixed formal model $\mathcal{X}$, a \textit{locally analytic} morphism $F:X(\Q _p )\to Y(\Q _p )$ will mean a morphism of sets which, for all residue disks $]b[$, $b\in \mathcal{X}(\mathbb{F}_p )$, has the property that $F|_{]b[ (\Q _p )}$ comes from a morphism of rigid analytic spaces $]b[\to Y$.We will often view such a morphism as a morphism of rigid analytic spaces
\[
F':\sqcup _{b\in \mathcal{X}(\mathbb{F}_p )}]b[ \to Y.
\]
In particular, when we refer to the \textit{graph} of $F$, we will mean the image of the graph of $F'$ under the map
\[
\sqcup _{b\in \mathcal{X}(\mathbb{F}_p )}]b[ \times Y \to X\times Y.
\]

When we talk about \textit{irreducible components} of a rigid analytic space, the rigid analytic space will always be a closed affinoid subspace of a polydisk, or a union of such spaces.

We denote the Galois group of a field $F$ by $G_F$. When $F$ is a number field, and $S$ is a set of primes of a subfield $L\subset F$, we denote by $G_{F,S}$ the maximal quotient of $G_F$ unramified outside above all primes above $S$.

Recall that, given a scheme $Z$ over $K$, we say that a $\Q $-scheme is the \textit{Weil restriction} of $X$, denoted $\Res _{K|\Q }(Z)$, if it represents the functor on $\Q $-algebras
\[
S\mapsto Z(S\times _{\Q }K).
\]
We will sometimes write $\Res _{K|\Q }(Z)$ simply as $\Res (Z)$.
The only statements about existence of Weil restrictions that we will need are that smooth projective curves and abelian varieties over fields, or over Dedekind domains, admit Weil restrictions \cite[7.6.4]{BLR}.

Given an algebraic (or pro-algebraic) group $U$ we denote by $C_i U$ the central series filtration
\[
C_1 U=U \supset C_2 U =[U,U]\supset \ldots
\]
and, unless otherwise indicated, we denote $C_i /C_{i+1}U$ by $\gr_i U $. We similarly define $C_i L$ and $\gr _i L$ for a Lie algebra $L$.

If $K$ is a number field, and $T\supset S\sqcup \{v|p \}$ are finite sets of primes of $K$,
and $U$ is a unipotent group over $\Q _p $ with a continuous action of $G_{K,T}$, we denote by $H^1 _{f,S}(G_{K,T},U)$ the set of isomorphism classes of $G_{K,T}$-equivariant $U$-torsors which are unramified at all primes not in $S\sqcup \{v|p\}$ and are crystalline at all primes above $p$ in the sense of \cite{kim:siegel}. In the case where $U$ is a vector space, this recovers the usual Bloch--Kato Selmer group \cite{blochkato}. For a continuous $G$-representation $W$, we define $\h ^i (G,W):=\dim _{\Q _p }H^i (G,W)$, and similarly define $\h ^i _{f,S}(G,W)$ etc.

If $L|K$ is a finite extension of fields, we will write $\Ind ^K _L $ and $\Res ^K _L$ to denote the functors $\Ind ^{G_K }_{G_L}$ and $\Res ^{G_K}_{G_L}$ on Galois representations.
\subsection*{Acknowledgements}
The author is very grateful to Nils Bruin, Minhyong Kim, Martin Orr and Samir Siksek for helpful discussions related to this paper. He is also indebted to the anonymous referee for their careful reading of an earlier version of this paper, including the identification of a serious error, as well as Jan Steffen M\"uller, Pavel Coupek, David Lilienfeldt, Luciena Xiao and Zijian Yao for comments and corrections.

\section{Ax--Schanuel and Chabauty's method over number fields}\label{sec:AS-C}
\subsection{Chabauty's method and $p$-adic unlikely intersections}
The Ax--Schanuel theorem for abelian varieties \cite{ax1972some} can be translated into the following statement (the translation is identical to the geometric statement of Ax--Schanuel for tori \cite{ax1971schanuel} as stated in \cite{tsimerman:unlikely}).

\begin{theorem}[Ax--Schanuel \cite{ax1972some}]\label{ASAV}
Let $A$ be an abelian variety over $\mathbb{C}$ of dimension $n$.
Let $\Delta _{\exp } \subset A\times \lie (A)$ be the graph of the exponential, and let $p:A\times \lie (A)\to A$ be the projection. Let $V$ be a subvariety of $A\times \lie (A) $. Let $W$ be an irreducible component of the complex analytic space $V^{\an }\cap \Delta _{\exp } $. Suppose $\codim _{V^{\an }} (W)< n$. Then $p(W)$ is contained in a translate of a proper abelian subvariety of $A$.
\end{theorem}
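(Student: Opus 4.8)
The plan is to run Ax's original argument \cite{ax1972some}: the statement above is the geometric repackaging of Ax's transcendence theorem for the exponential map of an abelian variety, exactly as the torus case is repackaged in \cite{tsimerman:unlikely}. First I would pass to the universal cover. Identifying $\Delta_{\exp}$ with $\lie(A)$ via the second projection $p_2$, the component $W$ corresponds to an irreducible analytic subset $\widetilde{W}\subset\lie(A)$ with $\dim\widetilde{W}=\dim W=:\delta$ and $p(W)=\exp(\widetilde{W})$, and the hypothesis reads $\dim V<\delta+n$. (More precisely one can aim for the sharper bound $\codim_V(W)\geq\dim B$, where $B\subseteq A$ is the smallest abelian subvariety with $p(W)$ contained in a translate of $B$; Theorem \ref{ASAV} is the instance of this used to force $B\neq A$.)

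Next I would reduce to the case that $W$ is Zariski dense in $V$: replacing $V$ by the Zariski closure $\overline{W}$ of $W$ in $A\times\lie(A)$ can only decrease $\dim V$, hence preserves $\codim_V(W)<n$, while the conclusion ``$p(W)$ lies in a translate of a proper abelian subvariety'' is a Zariski-closed condition on $W$. Now argue by contradiction: suppose $p(W)$ is contained in no translate of a proper abelian subvariety of $A$, equivalently that the translated tangent spaces to $p(V)$ at its smooth points span $\lie(A)$. The claim is that this forces $\dim V\geq\delta+n$, which contradicts $\dim V<\delta+n$.

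For the core of the argument set $F=\mathbb{C}(V)$ and consider the tautological point of $V$, consisting of $y\in A(F)$ and $x=(x_1,\dots,x_n)\in\lie(A)\otimes_{\mathbb{C}}F$. Since $W$ is Zariski dense in $V$, a generic analytic chart on $W$ embeds $F$ into a differential field with $\delta$ commuting $\mathbb{C}$-derivations $D_1,\dots,D_\delta$ and no new constants (this is where Seidenberg's embedding theorem lets one attach the derivations despite $W$ being only analytic). On $W$ one has $y=\exp(x)$, so for every invariant $1$-form on $A$, corresponding to $\xi\in\lie(A)^{\ast}$, one gets the relation $\xi(D_jy)=\sum_i\xi_i\,D_jx_i$ in $F$ for every $j$; that is, the logarithmic derivative of $y$ coincides with that of $x$. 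Now I would invoke Ax's theorem -- the abelian analogue of the Rosenlicht--Kolchin theorem on logarithmic derivatives -- which, under the standing hypothesis that $p(W)$ lies in no translate of a proper abelian subvariety, yields $\mathrm{tr.deg}_{\mathbb{C}}\mathbb{C}(x,y)\geq\delta+n$, where $\delta=\mathrm{tr.deg}_{\mathbb{C}}\mathbb{C}(x)$, which also equals $\rk(D_jx_i)$. As $\mathbb{C}(x,y)=F$, this gives $\dim V=\mathrm{tr.deg}_{\mathbb{C}}F\geq\delta+n$, the required contradiction.

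The hard part is Ax's logarithmic-derivative inequality itself: once there is no isogeny relation forcing $p(W)$ into a translate of a proper abelian subvariety, the single analytic constraint $y=\exp(x)$ must already produce the full expected transcendence degree $\delta+n$. Proving this from scratch requires Ax's inductive analysis of commutative algebraic groups -- peeling off abelian subvarieties to reduce to the case of a simple quotient, and then the Rosenlicht-type computation with invariant $1$-forms -- and this is the step I would cite rather than reprove. Two subsidiary technical points also need care: rigorously equipping $F$ with the derivations $D_j$ coming from the merely analytic germ $W$ (handled via the embedding into a differential field with no new constants above), and, if one wants the sharp bound $\codim_V(W)\geq\dim B$ rather than just Theorem \ref{ASAV}, translating $\widetilde W$ into $\lie(B)$ and replacing $A$ by $B$ so as to reduce to the generating case treated above.
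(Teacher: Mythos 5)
Your proposal and the paper's remark after Theorem \ref{ASAV} are both deductions from Ax's results in \cite{ax1972some}, but they invoke different faces of that paper. The paper cites Ax's ``Theorem 1'' in its geometric form: there is a complex analytic subgroup $B\subset A\times\lie(A)$ containing $V^{\an}$ and $\Delta_{\exp}$ with $\codim_B(\Delta_{\exp})\leq\codim_{W^{\Zar}}(W)$, and then uses Chevalley's structure theorem for the algebraic subgroup generated by $W^{\Zar}$ (which, since $A\times\lie(A)$ is a product of an abelian variety and a vector group, must be of the form $B_1\times B_2$) to show that if $p(W)$ generates $A$ modulo translation then $\codim_B(\Delta_{\exp})=n$, contradicting $\codim_V(W)<n$. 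You instead go through the differential-algebraic formulation: attach $\delta$ commuting derivations to $\mathbb{C}(V)$ via Seidenberg's embedding along the germ $W$, write the tautological point $(y,x)\in V$, observe the log-derivative relation coming from $y=\exp(x)$ on $W$, and cite Ax's log-derivative transcendence bound (the abelian Rosenlicht--Kolchin statement) to get $\mathrm{tr.deg}_{\mathbb{C}}\mathbb{C}(x,y)\geq\delta+n$, contradicting $\dim V<\delta+n$. Both arguments are correct and both ultimately rest on the same theorem of Ax; the paper's route is shorter because it uses the already-geometrized Theorem 1 and only needs Chevalley, while yours re-derives the geometric statement from the differential-field inequality and so has more moving parts (Seidenberg embedding, rank of the Jacobian $(D_jx_i)$, etc.) -- but it has the advantage of making visible exactly where the ``no proper abelian subvariety'' hypothesis is used and would generalize more readily to semiabelian or mixed settings. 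Your preliminary reduction (replace $V$ by $W^{\Zar}$, which can only decrease codimension and preserves the hypothesis) is the same step the paper performs implicitly by phrasing the bound in terms of $\codim_{W^{\Zar}}(W)$. One small point to be careful about in your write-up: you should say explicitly that $W$ remains an irreducible component of $W^{\Zar,\an}\cap\Delta_{\exp}$ after the replacement, which holds because $W\subset W^{\Zar,\an}\cap\Delta_{\exp}\subset V^{\an}\cap\Delta_{\exp}$ and $W$ is already a component of the larger intersection.
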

\begin{remark}
This theorem can be deduced from Ax's original theorem as follows. Theorem 1 of \cite{ax1972some} says that there exists a complex analytic sub-group $B$ of $A\times \lie (A)$, containing $V^{\an }$ and $\Delta _{\exp }$, such that
\[
\codim _B (\Delta _{\exp })\leq \codim _{W^{\Zar}}(W).
\]
where $W^{\Zar }\subset A\times \lie (A)$ denotes the Zariski closure of $W$.
Let $B'$ be the subgroup variety of $A\times \lie (A)$ generated by $W^{\Zar }$. By Chevalley's theorem, $B'$ is of the form $B_1 \times B_2 $, for $B_1$ an abelian subvariety of $A$ and $B_2$ a sub-vector space of $\lie (A)$. If $p(W)$ is not contained in a translate of a proper abelian subvariety of $A$, then $p(W^{\Zar })$ generates $A$, and hence $B_1 =A$, and $p(V)\cup \Delta _{\exp }$ generates $A\times \lie (A)$, so that  $\codim _B (\Delta _{\exp })=n$.
\end{remark}

Now let $A/\Q _p $ be an abelian variety with good reduction (the generalisations of these statements to the case of bad reduction are also well known, but we omit them as we don't use them, and haven't defined the notion of locally analytic in this setting). The $p$-adic logarithm defines a locally analytic group homomorphism
\[
\log _A :A(\Q _p )\to \lie (A).
\]
in the sense of section \ref{subsec:notation}.
Theorem \ref{ASAV} can be translated into a statement about $\log _A$ via the following Lemma.
\begin{lemma}\label{exp_vs_log}
Let $A/\Q _p $ be an abelian variety with good reduction. Let $\Delta _{\log }\subset A\times \lie (A)$ denote the graph of the $p$-adic logarithm.
Choose an embedding $\Q _p \hookrightarrow \mathbb{C}$. Let 
\[
\widehat{\Delta }_{\exp }\subset \widehat{A}_{\mathbb{C}}\times \widehat{\lie }(A)_{\mathbb{C}}
\] denote the formal completion of the graph of the exponential at $(0,0)$.
Then
\[
\widehat{\Delta }_{\exp }=\widehat{\Delta }_{\log ,\mathbb{C}}.
\]
\end{lemma}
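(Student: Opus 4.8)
The plan is to compare two power series with coefficients in $\mathbb{C}$, both defining the same formal subscheme of $\widehat{A}_{\mathbb{C}}\times\widehat{\lie}(A)_{\mathbb{C}}$, by checking that they are solutions of the same formal differential equation with the same initial condition. First I would fix formal coordinates: pick a basis of invariant differentials $\omega_1,\ldots,\omega_n$ on $A$ over $\Q_p$, let $u_1,\ldots,u_n$ be the induced linear coordinates on $\lie(A)$ (dual basis), and let $t_1,\ldots,t_n$ be the resulting formal parameters at the origin of $\widehat A$. In these coordinates the graph of the exponential is cut out by $u_i = \int \omega_i$, i.e.\ by the $n$ formal power series $u_i = E_i(t_1,\ldots,t_n)\in\mathbb{C}[[t]]$ characterized by $dE_i = \omega_i$ and $E_i(0)=0$; this is essentially the definition of the exponential of an abelian variety in formal-group terms, and the identity $\widehat\Delta_{\exp}=\graph(E)$ is what ``formal completion of the graph of the exponential at $(0,0)$'' means.

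The key point is then that the $p$-adic logarithm is defined by exactly the same recipe. By construction (see the discussion of $\log_A$ above), $\log_A\colon A(\Q_p)\to\lie(A)$ is the unique locally analytic group homomorphism whose derivative at the origin is the identity on $\lie(A)$; concretely, on the formal group it is given in the coordinates above by the $n$ power series $u_i = L_i(t_1,\ldots,t_n)\in\Q_p[[t]]$ with $dL_i=\omega_i$ and $L_i(0)=0$ — the formal antiderivatives of the invariant differentials, normalized to vanish at the origin. (That such formal antiderivatives exist uniquely in $\Q_p[[t]]$ is the standard fact that an invariant differential on a formal group is exact, proved by integrating the power series term by term; the group-homomorphism property of $\log_A$ is equivalent to the translation-invariance of the $\omega_i$.) Hence $\widehat\Delta_{\log}$ is cut out over $\Q_p$ by $u_i = L_i(t)$, and $\widehat\Delta_{\log,\mathbb{C}}$ by the same equations viewed in $\mathbb{C}[[t]]$.

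It remains to observe that $E_i$ and $L_i$ are the same element of $\mathbb{C}[[t]]$: both lie in $\mathbb{C}[[t]]$ (indeed $L_i\in\Q_p[[t]]\subset\mathbb{C}[[t]]$ via the chosen embedding), both satisfy $dE_i=\omega_i=dL_i$, and both vanish at the origin. Since the formal de Rham complex of $\mathbb{C}[[t_1,\ldots,t_n]]$ has no cohomology in positive degrees and the only constant killed by $d$ that vanishes at $0$ is $0$, a closed $1$-form with a vanishing-at-origin primitive has a \emph{unique} such primitive; therefore $E_i=L_i$ for each $i$, and the two formal subschemes coincide.

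I do not expect a genuine obstacle here — the statement is a formal bookkeeping identity, and the only thing to be careful about is making the definitions of $\widehat\Delta_{\exp}$ and of $\log_A$ on the formal group literally match up, i.e.\ confirming that both are governed by ``formally integrate the invariant differentials.'' The one mildly subtle point is the role of the embedding $\Q_p\hookrightarrow\mathbb{C}$: the coefficients of $L_i$ are algebraic combinations of the structure constants of the formal group law of $A$ over $\Q_p$ (in fact they lie in the subring generated by those constants and denominators), so the formal completion of the complex-analytic exponential, whose Taylor coefficients are computed from the \emph{same} formal group law base-changed along $\Q_p\hookrightarrow\mathbb{C}$, produces exactly the image of $L_i$ under that embedding; this is what makes the comparison independent of any transcendence input and reduces the lemma to the uniqueness of formal primitives.
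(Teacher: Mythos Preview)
Your proposal is correct. The paper's proof is a one-liner: $\widehat{\Delta}_{\log,\mathbb{C}}$ and $\widehat{\Delta}_{\exp}$ are the graphs of the formal group morphisms $\widehat{\log}_A:\widehat{A}_{\mathbb{C}}\to\widehat{\lie}(A)_{\mathbb{C}}$ and $\widehat{\exp}_A:\widehat{\lie}(A)_{\mathbb{C}}\to\widehat{A}_{\mathbb{C}}$, and these morphisms are inverse to each other, so the graphs coincide.

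Your argument is a more explicit unpacking of the same fact. You parametrize the graph of the complex exponential by the $A$-coordinate, so that the defining power series $E_i(t)$ are really the components of the complex-analytic \emph{logarithm}; you then observe that both the complex logarithm and the $p$-adic logarithm are characterized as the unique formal primitives of the invariant differentials $\omega_i$ vanishing at the origin, and conclude by uniqueness. This is exactly the standard proof that the formal logarithm of a formal group is unique (equivalently, that $\widehat{\exp}$ and $\widehat{\log}$ are mutually inverse), so the two arguments differ only in presentation: the paper cites the inverse relationship directly, while you rederive it via the differential-equation characterization. Your version has the minor advantage of making transparent why the embedding $\Q_p\hookrightarrow\mathbb{C}$ is harmless --- the coefficients of the formal logarithm are determined algebraically by the formal group law --- which the paper leaves implicit. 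A small notational quibble: calling the series $E_i$ invites confusion, since what you are writing down is the logarithm, not the exponential; but this does not affect the correctness.
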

\begin{proof}
$\widehat{\Delta } _{\log ,\mathbb{C}}$ and $\widehat{\Delta }_{\exp }$ are the graphs of the morphisms of formal groups
\begin{align*}
\widehat{\log }_A : & \widehat{A}_{\mathbb{C}} \to \widehat{\lie }(A)_{\mathbb{C}}, \\
\widehat{\exp }_A: & \widehat{\lie}(A)_{\mathbb{C}} \to \widehat{A}_{\mathbb{C}}. \\
\end{align*}
The morphisms $\widehat{\exp }_A$ and $\widehat{\log }_A$ are inverse,
hence their graphs agree.
 \end{proof}
Now let $X$ be a smooth projective geometrically irreducible curve of genus $g$ over a number field $K$ of degree $d$ over $\Q$, and let $J$ be the Jacobian of $X$. Let $p$ be a rational prime of good reduction for  $X$. Let $\Res (X):=\Res _{K|\Q }(X)$ denote the Weil restriction of $X$. Recall that this is a smooth projective variety of dimension $d=[K:\Q ]$ over $\Q $. The morphism $\Res (X) \to \Res (J)$ induces a morphism $\Alb (\Res (X))\to \Res (J)$, which can be seen to be an isomorphism by base changing to $\overline{\Q }$.
Let $Y$ denote the formal completion of $\Res J\times \lie (\Res (J))$ at $(0,0)$.

Let $\log _J$ denote the $p$-adic logarithm map
\[
\Res (J)(\Q _p )\to \lie (\Res (J))_{\Q _p }.
\]
Since $\log _J$ is locally analytic, the graph of $\log _J$ gives a rigid analytic space $\Delta _{\log }$, and the formal completion at the point $(0,0)$ defines a formal subscheme $\widehat{\Delta } _{\log }$ of $Y_{\Q _p }$. 
Let $\AJ:\Res (X)\to \Res (J)$ denote the Abel--Jacobi map with respect to the chosen basepoint $b\in  \Res (X)(\Q )$.
Let $\overline{J(K)} \subset \lie \Res (J)_{\Q _p }$ denote the $\Q _p $-vector space generated by the image of $\Res (J)(\Q )$ in $\lie \Res (J)_{\Q _p }$ under the map $\log $.
Let $X(K\otimes \Q _p )_1 \subset \Res (X)_{\Q _p }^{\an }$ denote the rigid analytic space $(\log \circ \AJ )^{-1}(\overline{J(K)})$.

\begin{corollary}\label{padic_AS}
Let $Z$ be a positive dimensional geometrically irreducible subvariety of $\Res (X)_{\Q _p }$. Let $L$ denote the image of $\lie (\Alb (Z))$ in $\lie (\Res (J))_{\Q _p }$. Let $W$ be an irreducible component of $Z\cap X(K\otimes \Q _p )_1 $. Suppose 
\[
\codim _{Z}(W)<\codim _{L}(L\cap \overline{J(K)}).
\]
Then the projection of $W$ to $\Res (X)_{\Q _p }$ is not Zariski dense in $Z$.
In particular, if $\rk (J(K)) \leq d(g-1)$, then $X(K\otimes \Q _p )_1 $ is not Zariski dense in $\prod _{v|p}X(K_v )$.
\end{corollary}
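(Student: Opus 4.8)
The plan is to deduce this from the $p$-adic Ax--Schanuel statement obtained by combining Theorem \ref{ASAV} with Lemma \ref{exp_vs_log}, applied to the abelian variety $A=\Res(J)$ over $\Q_p$. First I would reduce to the formal/complex-analytic picture. Replacing $Z$ by the irreducible component $W$ and working in a residue disk, the composite $\log\circ\AJ$ restricted to a residue polydisk is given by a convergent power series; the preimage $X(K\otimes\Q_p)_1$ is, inside each residue disk, the analytic subvariety cut out by the condition that $\log\circ\AJ$ lands in the fixed $\Q_p$-subspace $\overline{J(K)}\subset\lie(\Res(J))_{\Q_p}$. Via Lemma \ref{exp_vs_log}, after choosing an embedding $\Q_p\hookrightarrow\mathbb{C}$ and passing to formal completions at a point of $W$, the graph of $\log_J$ becomes identified with the formal completion of the graph of the exponential $\Delta_{\exp}$. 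Thus, setting $V$ in Theorem \ref{ASAV} to be (an appropriate translate of) the Zariski closure in $\Res(J)_{\mathbb{C}}\times\lie(\Res(J))_{\mathbb{C}}$ of the image of $Z$ under $(\AJ,\,\text{linear projection to }\overline{J(K)})$, the component $W$ (or rather its image) becomes a component of $V^{\an}\cap\Delta_{\exp}$.

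Next I would translate the codimension hypothesis. The ``expected'' codimension of $Z\cap X(K\otimes\Q_p)_1$ inside $Z$ is $\codim_L(L\cap\overline{J(K)})$, where $L$ is the image of $\lie(\Alb(Z))$: this is the codimension of the condition ``$\log\circ\AJ\in\overline{J(K)}$'' along the directions in which $\AJ(Z)$ actually moves. The hypothesis $\codim_Z(W)<\codim_L(L\cap\overline{J(K)})$ is then exactly the ``smaller than expected'' intersection hypothesis needed to invoke Theorem \ref{ASAV}, i.e.\ it forces $\codim$ of the $\Delta_{\exp}$-slice to be strictly less than the dimension $n=\dim\Res(J)=dg$ in the relevant quotient. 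The conclusion of Theorem \ref{ASAV} is that $p(W)$ — the image of $W$ in $\Res(J)$, i.e.\ $\AJ(W)$ up to translation — is contained in a translate of a \emph{proper} abelian subvariety $B\subsetneq\Res(J)$. Since $\AJ:\Res(X)\hookrightarrow\Alb(\Res(X))=\Res(J)$ is a closed immersion (for $g\ge 1$), $W$ itself is contained in a translate of $B$, hence in $\AJ^{-1}(\text{translate of }B)$, which is a proper closed subvariety of $\Res(X)$; therefore $W$ is not Zariski dense in $\Res(X)$, and a fortiori not Zariski dense in $Z$ unless $Z$ is itself non-dense, proving the first claim.

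For the final ``in particular'': take $Z=\Res(X)_{\Q_p}$ itself, so $L=\lie(\Res(J))_{\Q_p}$ has dimension $dg$ and $\codim_L(L\cap\overline{J(K)})=dg-\dim\overline{J(K)}=dg-\rk(J(K))$, where the last equality uses that $\log_J$ is injective on the finitely generated group $J(K)$ modulo torsion. If $\rk(J(K))\le d(g-1)$ then this codimension is $\ge d$. But $X(K\otimes\Q_p)_1$ has dimension at most that of a typical fibre structure: if it \emph{were} Zariski dense in $\prod_{v\mid p}X(K_v)=\Res(X)(\Q_p)$, some component $W$ would have $\dim W\ge d-(d-1)=1$ \ldots more precisely, one argues that a Zariski-dense analytic subset must contain a component of dimension $\ge 1$ whose codimension $d-1$ in $\Res(X)$ is strictly less than $d\le\codim_L(L\cap\overline{J(K)})$, contradicting the first part. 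So $X(K\otimes\Q_p)_1$ is not Zariski dense.

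\medskip

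\textbf{Main obstacle.} The delicate point is the bookkeeping around the ``expected codimension'': making precise the passage from the analytic intersection-theoretic statement of Theorem \ref{ASAV} (which is phrased with an honest abelian variety $A$ and the \emph{full} exponential graph) to the situation here, where the relevant linear subspace is $\overline{J(K)}$ rather than all of $\lie(\Res(J))$, and where $Z$ need not surject onto $\Res(J)$ under $\AJ$. One must set up the variety $V$ in Theorem \ref{ASAV} correctly — essentially as a fibre product encoding both $\AJ(Z)$ and the graph of the linear projection onto a complement of $\overline{J(K)}$ — so that the codimension count in Theorem \ref{ASAV} matches $\codim_L(L\cap\overline{J(K)})$ exactly, and so that ``$p(W)$ in a translate of a proper abelian subvariety'' genuinely yields non-density of $W$ in $Z$. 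Handling the case where $\AJ(Z)$ already lies in a proper abelian subvariety (so that the naive codimension count degenerates) requires working with the quotient of $\Res(J)$ by the abelian subvariety generated by $\AJ(Z)-\AJ(Z)$, and checking that the hypothesis is stable under this reduction.
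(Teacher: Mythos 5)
Your overall strategy (reduce via Lemma \ref{exp_vs_log} to the complex Ax--Schanuel statement, Theorem \ref{ASAV}, with an auxiliary variety $V$ built from $\AJ(Z)$ and $\overline{J(K)}$) is the right one, and you correctly identify where the difficulty lies. But there are two genuine gaps, both of which you brush up against in your ``main obstacle'' paragraph without resolving correctly.

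First, applying Theorem \ref{ASAV} with $A=\Res(J)$ gives a vacuous conclusion exactly in the non-trivial case. If $B$ denotes the image of $\Alb(Z)$ in $\Res(J)$ and $B\subsetneq \Res(J)$, then $\AJ(Z)$ already lies in a coset of $B$, so ``$p(W)$ is contained in a translate of a proper abelian subvariety of $\Res(J)$'' is information you already had. The fix is not, as you propose, to pass to the quotient of $\Res(J)$ by ``the abelian subvariety generated by $\AJ(Z)-\AJ(Z)$'' (that subvariety \emph{is} $B$, and quotienting by it collapses $\AJ(Z)$ to a point). The paper instead works \emph{inside} $B$: after translating a point of $W$ on $\Delta_{\log}$ to the origin, the relevant formal germs all lie in $B\times\lie(B)$, and one applies Theorem \ref{ASAV} to the abelian variety $B$ with the sliced variety $V\cap (B\times\lie(B)) = (\AJ(Z)-x)\times(\overline{J(K)}\cap\lie(B))$. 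The codimension there is $\codim_Z(W) + \dim(L\cap\overline{J(K)})$ (with $L=\lie(B)$), and the hypothesis $\codim_Z(W)<\codim_L(L\cap\overline{J(K)})$ is \emph{exactly} the condition $\codim_{V\cap(B\times\lie B)}(\widetilde W)<\dim B$. Applied with $A=\Res(J)$ the codimension inequality is satisfied too (since $\dim(L+\overline{J(K)})\le dg$), but the conclusion is useless; the whole point is to get a proper abelian subvariety \emph{of $B$}.

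Second, your final step — ``$W$ is contained in $\AJ^{-1}(\text{translate of }B)$, which is a proper closed subvariety of $\Res(X)$; therefore $W$ is not Zariski dense in $\Res(X)$, and a fortiori not Zariski dense in $Z$'' — is a non sequitur. Non-density in $\Res(X)$ tells you nothing about density in $Z$: indeed every subset of a positive-codimension $Z$ is non-dense in $\Res(X)$. The correct conclusion, once one has a \emph{proper} abelian subvariety $B'\subsetneq B$ with $p(\widetilde W)$ in a translate of $B'$, is that the Zariski closure of $W$ lies in $\AJ^{-1}(\text{translate of }B')\cap Z$, and this cannot be all of $Z$ because $\Alb(Z)$ surjects onto $B$ but not onto $B'$. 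That closing step is where the paper's proof actually derives non-density \emph{in $Z$}, and it is exactly the step you skip.
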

\begin{proof}
Let $B$ denote the image of $\Alb (Z)$ in $\Res (J)_{\Q _p }$.
Define $V\subset \Res (J)_{\Q  _p }\times \lie (\Res (J))_{\Q  _p }$ by 
\[
V=\AJ (Z)\times \overline{J(K)}.
\]
Then the map $(\AJ ,\log _J \circ \AJ )$ induces an isomorphism
\[
X(K\otimes \Q _p )_1 \cap Z \simeq \Delta _{\log }\cap V.
\]
Let $D$ be a residue disk of $J_{\Q  _p }\times \lie (J)_{\Q  _p }$ intersecting $\Delta _{\log }\cap V$ non-trivially. For any point $(x,\log (x))$ in $\Delta _{\log }(\Q  _p )$, the group law on $J_{\Q  _p }\times \lie (J)_{\Q  _p }$ induces an isomorphism
\begin{equation}\label{group_law}
J_{\Q  _p }\times \lie (J)_{\Q  _p }\stackrel{\simeq }{\longrightarrow }J_{\Q  _p }\times \lie (J)_{\Q _p }.
\end{equation}
sending $(x,\log (x))$ to $(0,0)$.
Let $P$ be a point of $W$.
Using \eqref{group_law}, we may assume $P=(0,0)$. Let $\widehat{W}$ denote the formal completion of $W$ at $P$. Choose an embedding $\overline{\Q } _p \hookrightarrow \mathbb{C}$. Then, by Lemma \ref{exp_vs_log}, $\widehat{W}_{\mathbb{C}}$ is an irreducible component of the formal completion of $V_{\mathbb{C}}\cap \Delta _{\exp }$ at the $(0,0)$. Then $\widehat{W}_{\mathbb{C}}$ is the formal completion at $(0,0)$ of an irreducible component $\widetilde{W}$ of $\Delta _{\exp }\cap V_{\mathbb{C}}$ satisfying
\[
\codim _{V_{\mathbb{C}}}(\widetilde{W})< \dim (B).
\]
Hence, by Theorem \ref{ASAV}, $p(\widetilde{W})$ is contained in a translate of a proper abelian subvariety of $B_{\mathbb{C}}$, hence the same holds for $\widehat{W}$ and hence for $W$. Let $B'\subset B$ be the translate of a proper abelian subvariety contianing $W$. This implies that the Zariski closure of $W$ is contained in the pre-image of $B'$. In particular, the Zariski closure does not equal $Z$, since it does not generate $\Alb (Z)$.
 \end{proof}
\subsection{Applications to Siksek's question}\label{subsec:siksek}
In \cite{siksek2013explicit}, Siksek examined the question of when $X(K\otimes \Q _p )_1$ can be proved to be finite for a curve $X$ of genus $g$. In particular, he asked whether a sufficient condition for finiteness of $X(K\otimes \Q _p )_1$ is that, for all intermediate extensions $K|L|\Q $ over which $X$ admits a model $X' /L$, the Chabauty--Coleman condition 
\begin{equation}\label{samir:eqn}
\rk (\Jac (X' )(L))\leq (g-1)[L:\Q ]
\end{equation} 
is satisfied.

This question has a negative answer, as one can construct counterexamples as follows. Let $X_0$ be a curve of genus $g_0$ defined over $\Q $, such that the $p$-adic closure of $\Jac (X_0 )(\Q )$ in $\Jac(X_0 )(\Q _p )$ has finite index. Let $K|\Q $ be a finite extension such that the rank of $\Jac( X_0 )(K)$ is $\leq [K:\Q ](g_0 -1)$. Let $X\to X_{0,K}$ be a cover such that $X$ is not defined over a proper subfield of $K$, and the Prym variety $P=\Ker (\Jac (X)\to\Jac (X_0 )_K )^0$ has the property that the $p$-adic closure of $P(K)$ in $\prod _{v|p}P(K_v )$ has finite index. Then $X$ satifies \eqref{samir:eqn}, but $X(K\otimes \Q _p )_1$ will contain the pre-image of $X_0 (\Q _p )\subset \prod _{v|p}X_0 (\Q _v)$ in $\prod _{v|p}X(\Q _v )$, and in particular will be infinite.

For example, we can take $X_0 $ to be the curve
\[
y^2 =(x^4-\frac{11}{27})(x^2-\frac{27}{11}),
\]
take $K=\Q (\sqrt{33})$, and take $X$ to be the degree two cover of $X_0 $ given by
\begin{align*}
& y^2 =x^8 + \frac{2916\cdot b + 484}{297}x^6 + \frac{-128304\cdot b + 168112}{8019}x^4 \\
& + \frac{214057728\cdot b - 35529472}{23181643}t^2 + \frac{-10784721024\cdot b + 8742087808}{64304361},
\end{align*}
where $b:=\sqrt{11/27}$. The Jacobian of $X$ is isogenous to $\Jac (X_0 )_K$ times the rank two elliptic curve
\begin{align*}
y^2 + \frac{2916\cdot b + 484}{297}xy + \frac{276156864\cdot b + 116895680}{2381643}y \\
=x^3 + \frac{384912\cdot b - 168112}{8019}x^2 +\frac{3594907008\cdot b - 4270950016}{21434787}x.\end{align*}
This also gives a counterexample to Conjecture 5.1 of \cite{hast-thesis}, which is a generalisation of Siksek's question to the setting of Kim's method. A generalisation of this construction has recently been considered in \cite{triantafillou2020restriction}, where it is referred to as a base change Prym obstruction.

The Ax--Schanuel Theorem implies the following weaker form of Siksek's question has a positive answer. Informally, we can phrase the result as follows: Siksek's question asks whether, for finiteness of $X(K\otimes \Q _p )_1$, it is sufficient for all subvarieties of $\Res _{K|\Q }(X)$ arising from the diagonal embedding of $\Res _{L|\Q }(X)$ to satisfy the usual Chabauty--Coleman condition, where $L\subset K$ is a subfield. Whilst this is not true in general, the corollary below says that  it is sufficient for \textit{all} irreducible subvarieties of $\Res _{K|\Q }(X)_{\overline{\Q }_p }$ to satisfy a Chabauty--Coleman-type condition.
\begin{corollary}\label{siksek:cor}
Let $X$ be a smooth projective curve of genus $g$ over $K$. Let $J\to B$ be a quotient of $J$ defined over $K$. Suppose that the cokernel of 
\[
\Res (B)(K)\otimes \overline{\Q } _p\to \lie (\Res (B))_{\overline{\Q }_p }
\]
has rank $\geq [K:\Q ]$.
If $X(K\otimes\Q _p )_1$ is infinite then there exists a positive dimensional subvariety $Z$ of $\Res _{K|\Q }(X)_{\overline{\Q }_p }$, such that the image $A$ of $\Alb (Z)$ in $\Res _{K|\Q }(B)_{\overline{\Q }_p }$ satisfies
\[
\codim _{\lie (A)}\lie (A)\cap \overline{B(K)}<\dim (Z),
\]
where $\overline{B(K)}$ denotes the $\overline{\Q }_p $-subspace of $\lie (\Res _{K|\Q }(B))_{\overline{\Q }_p }$ generated by $B(K)$.
\end{corollary}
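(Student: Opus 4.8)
The plan is to reduce the statement to the $p$-adic Ax--Schanuel argument already carried out in the proof of Corollary \ref{padic_AS}, after enlarging $X(K\otimes\Q_p)_1$ to a Chabauty-type locus attached to $B$ rather than to $J$. Let $\pi\colon\Res(J)\to\Res(B)$ be the morphism induced by $J\to B$, put $\AJ_B:=\pi\circ\AJ\colon\Res(X)\to\Res(B)$, and set
\[
X(K\otimes\Q_p)_1^B:=(\log_B\circ\AJ_B)^{-1}\bigl(\overline{B(K)}\bigr)\subset\Res(X)_{\overline\Q_p}^{\an}.
\]
Since the $p$-adic logarithm and the Abel--Jacobi map are functorial and $\pi$ carries $J(K)$ into $B(K)$, one has $\lie(\pi)(\overline{J(K)})\subseteq\overline{B(K)}$, and hence $X(K\otimes\Q_p)_1\subseteq X(K\otimes\Q_p)_1^B$; so if $X(K\otimes\Q_p)_1$ is infinite, so is $X(K\otimes\Q_p)_1^B$. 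The key observation is that, writing $\overline{B(K)}':=\lie(\pi)^{-1}(\overline{B(K)})\subset\lie(\Res(J))_{\overline\Q_p}$, one has $X(K\otimes\Q_p)_1^B=(\log_J\circ\AJ)^{-1}(\overline{B(K)}')$: thus $X(K\otimes\Q_p)_1^B$ is once again a locus of exactly the shape treated in Corollary \ref{padic_AS}, only with the linear subspace $\overline{J(K)}$ replaced by the larger linear subspace $\overline{B(K)}'$. As the proof of Corollary \ref{padic_AS} uses nothing about $\overline{J(K)}$ beyond its being a $\overline\Q_p$-linear subspace, it applies verbatim with $\overline{B(K)}'$ in its place; this is what lets us keep working with the Abel--Jacobi map $\AJ$ into $\Res(J)$, which is a closed immersion, rather than with $\AJ_B$, which in general is not.

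Next I would extract the subvariety. Because $X$ has good reduction at all $v\mid p$ and $p$ splits completely in $K$, $X(K\otimes\Q_p)_1^B$ is, within each of the finitely many residue polydisks of $\Res(X)$, the common zero locus of finitely many convergent power series (the coordinates of $\log_B\circ\AJ_B$ modulo $\overline{B(K)}$); exhausting each residue polydisk by closed affinoid sub-polydisks, whose function rings are Noetherian, one sees that $X(K\otimes\Q_p)_1^B$ has finitely many irreducible components. Since it is infinite, one such component $W$ is positive-dimensional. Let $Z\subset\Res(X)_{\overline\Q_p}$ be the Zariski closure of $W$: this is a geometrically irreducible positive-dimensional subvariety, $W$ is Zariski dense in $Z$, and $W$ is an irreducible component of $Z\cap X(K\otimes\Q_p)_1^B$.

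Finally I would apply the above variant of Corollary \ref{padic_AS} (over $\overline\Q_p$, with $\overline{B(K)}'$ in place of $\overline{J(K)}$) to this $Z$ and $W$. Since $W$ is Zariski dense in $Z$, the conclusion ``$W$ is not Zariski dense in $Z$'' fails, so the hypothesis $\codim_Z(W)<\codim_L(L\cap\overline{B(K)}')$ must fail too, where $L$ denotes the image of $\lie(\Alb(Z))$ in $\lie(\Res(J))_{\overline\Q_p}$. As $\codim_Z(W)=0$, this forces $\codim_L(L\cap\overline{B(K)}')=0$, i.e. $L\subseteq\overline{B(K)}'$, i.e. $\lie(\pi)(L)\subseteq\overline{B(K)}$. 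Now $L=\lie(B_Z)$ for $B_Z$ the image of $\Alb(Z)$ in $\Res(J)$, and $A=\pi(B_Z)$, so $\lie(\pi)(L)=\lie(A)$; hence $\lie(A)\subseteq\overline{B(K)}$, and therefore $\codim_{\lie(A)}(\lie(A)\cap\overline{B(K)})=0<\dim(Z)$, as required. The hypothesis that the cokernel of $\Res(B)(K)\otimes\overline\Q_p\to\lie(\Res(B))_{\overline\Q_p}$ has rank $\ge[K:\Q]$ --- i.e. that $\overline{B(K)}$ is a proper subspace of $\lie(\Res(B))_{\overline\Q_p}$ --- is needed only to guarantee that the resulting $Z$ is a \emph{proper} subvariety of $\Res(X)$: were $Z=\Res(X)$, then $A$ would equal $\Res(B)$, and the conclusion $\lie(A)\subseteq\overline{B(K)}$ would contradict that hypothesis.

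The main obstacle I expect lies in this middle step, at the interface between the rigid-analytic and the Zariski topologies: one must establish that a positive-dimensional irreducible component $W$ of the analytic locus $X(K\otimes\Q_p)_1^B$ exists and is Zariski dense in a genuine algebraic subvariety $Z$ over which the Ax--Schanuel bookkeeping is meaningful, and one must check carefully that replacing $\overline{J(K)}$ by $\overline{B(K)}'$ really does leave the proof of Corollary \ref{padic_AS} intact --- in particular the reduction, via the group law \eqref{group_law} and Lemma \ref{exp_vs_log}, to a statement about the formal completion at $(0,0)$ of $\Delta_{\exp}\cap V_{\mathbb C}$ with $V=\AJ(Z)\times\overline{B(K)}'$, and the concluding appeal to Theorem \ref{ASAV} with the ambient abelian variety $B_{Z,\mathbb C}$.
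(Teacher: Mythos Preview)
Your overall strategy is sound and close to the paper's, but there is one genuine slip. You assert that $\codim_Z(W)=0$, and from this you deduce the strong conclusion $\lie(A)\subseteq\overline{B(K)}$. This is not justified: $W$ is a rigid analytic irreducible component and $Z$ is its Zariski closure, and there is no reason for their dimensions to agree. Indeed, the graph of $\log$ on a disk is one-dimensional but Zariski dense in the two-dimensional ambient space; more to the point, the very Ax--Schanuel argument you invoke is about precisely this discrepancy. So from the contrapositive of the (variant of) Corollary~\ref{padic_AS} you only get
\[
\codim_L\bigl(L\cap\overline{B(K)}'\bigr)\ \le\ \codim_Z(W)\ =\ \dim Z-\dim W\ \le\ \dim Z-1\ <\ \dim Z,
\]
using that you chose $W$ positive-dimensional. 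Together with your (correct) identity $\codim_{\lie(A)}(\lie(A)\cap\overline{B(K)})=\codim_L(L\cap\overline{B(K)}')$, this gives exactly the inequality in the corollary, not the stronger $\codim=0$. With this one correction your argument goes through; your discussion of the hypothesis on the cokernel is likewise repaired, since $Z=\Res(X)$ would now force $\codim_{\lie(\Res(B))}\overline{B(K)}<[K:\Q]$, contradicting that hypothesis.

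By comparison, the paper takes the opposite order: it first lets $Z$ be an irreducible component of the Zariski closure of $X(K\otimes\Q_p)_1$, observes that some analytic component $W$ of $Z\cap X(K\otimes\Q_p)_1$ is Zariski dense in $Z$, and then applies Corollary~\ref{padic_AS} directly for $J$ (the passage to $B$ being via the inclusion $\overline{J(K)}\subseteq\lie(\pi)^{-1}(\overline{B(K)})$, which only decreases the codimension). Your detour through the enlarged locus $X(K\otimes\Q_p)_1^B$ and the pulled-back subspace $\overline{B(K)}'$ is not needed, but it has the virtue of making this passage from $J$ to $B$ explicit, and of ensuring that one still works with the closed immersion $\AJ$ into $\Res(J)$ rather than the possibly non-injective map into $\Res(B)$.
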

\begin{proof} Let $Z\subset \Res (X)_{\overline{\Q }_p }$ be a positive dimensional irreducible component of the Zariski closure of $X(K\otimes \Q _p )_1$. Let $A$ denote the image of $\Alb (Z)$ in $\Res _{K|\Q }(B)_{\overline{\Q }_p }$. Since $X(K\otimes \Q _p )_1 \cap Z$ is Zariski dense in $Z$ by construction, Corollary \ref{padic_AS} implies that the codimension of $\lie (A)\cap J(K)\otimes \overline{\Q}_p $ in $\lie (A)$ is less than the dimension of $Z$.
 \end{proof}

\begin{proof}[Proof of Proposition \ref{QC_numberfield}, case (1)]
Recall that we suppose that 
\begin{equation}\label{eqn:hom_condition}
\Hom (\Jac (X)_{\overline{\Q },\sigma _1 },\Jac (X)_{\overline{\Q },\sigma _2 })=0
\end{equation}
whenever $\sigma _1 \neq \sigma _2 $.
Let $Z\subset \Res (X)_{\Q _p }$ be an irreducible component of the Zariski closure of $\Res (X)(\Q _p )_1 $. Note that, since $Z$ is an irreducible component of the Zariski closure of a set of $\Q _p $-points of $\Res (X)_{\Q _p }$, it is actually geometrically irreducible. Suppose, for contradiction, that the projection of $Z$ to every factor of $\Res(X)_{\Q _p }\simeq\prod _{v|p}X_{K_v }$ is dominant. Then, by assumption, $\Alb (Z)$ surjects onto every factor of $\Res (J)_{\Q _p }$. Then, by \eqref{eqn:hom_condition}, it must be the case that $\Alb (Z)$ surjects onto $\Res (J)_{\Q _p }$. This contradicts Corollary \ref{siksek:cor}.
\end{proof}

\begin{remark}
Note that the condition in Proposition \ref{QC_numberfield} (concerning the homomorphisms between $J_{\sigma ,\mathbb{C}}$ for different embeddings $\sigma $) certainly implies that $X$ does not descend to a subfield, however it is strictly stronger.
The explicit counterexample given above has the property that it can be explained by a quotient curve which does descend to $\Q $. It is natural to wonder if there exist `stronger' counterexamples not explained by a quotient curve which descends to a subfield. For example, does there exist a genus two curve $X$ defined over a quadratic field $K|\Q $, with simple Jacobian $J$, which gives a negative answer to Siksek's question?
\end{remark}

\section{The Chabauty--Kim method}\label{sec:CK}
\subsection{Selmer varieties}\label{subsec:selmer}
To describe the main obstacle to proving finiteness over general number fields, we first explain Kim's method over $\Q $, following \cite{kim:siegel} \cite{kim:chabauty}. Let $\mathcal{X}, S$ and $p$ be as in the introduction. Let $T_0 $ denote the union of the set of primes in $S$, the set of primes ramifying in $K|\Q $ and the set of primes of bad reduction for $X$, and let $T:=T_0 \cup \{v|p\}$. Suppose we have a $K$-rational point $b\in \mathcal{X}(\mathcal{O}_{K,S})$. For any field $L|K$, and any $L$-point $y\in X(L)$, we have a $\Gal (\overline{L}|L)$-equivariant $\pi _1 ^{\et}(X_{\overline{L}},b)$-torsor (where $\pi _1 ^{\et}(X_{\overline{L}},b)$ denotes the \'etale fundamental group of $X_{\overline{L}}$) given by the \'etale torsor of paths $\pi _1 ^{\et}(X_{\overline{L}};b,y)$. 
Hence we have a commutative diagram
\begin{equation}
\begin{tikzpicture}
\matrix (m) [matrix of math nodes, row sep=3em,
column sep=3em, text height=1.5ex, text depth=0.25ex]
{\mathcal{X}(\mathcal{O}_{K,S}) & & H^1 (G_K ,\pi _1 ^{\et }(X_{\overline{K}},b)) \\ {\displaystyle \prod _{v\in S}X(K_v )\times \prod_{v\in T\backslash S}\mathcal{X}(\mathcal{O}_{K_v})} & & \displaystyle{ \prod_{v\in T}H^1 (G_{K_v },\pi _1 ^{\et }(X_{\overline{K}_v },b)) } \\ };
\path[->]
(m-1-1) edge[auto] node[auto] {} (m-1-3)
edge[auto] node[auto] {  } (m-2-1)
(m-1-3) edge[auto] node[auto] { $ \loc $} (m-2-3)
(m-2-1) edge[auto] node[below] {} (m-2-3);
\end{tikzpicture}
\end{equation}
Hence a natural obstruction to $(x_v )\in  \prod _{v\in S}X(K_v )\times \prod _{v\in T\backslash S}\mathcal{X}(\mathcal{O}_{K_v})$ coming from $x\in \mathcal{X}(\mathcal{O}_{K,S})$ is that $[\pi _1 ^{\et}(X_{\overline{K_v }};b,x_v )]$ lies in the subspace $\loc H^1 (G_{K},\pi _1 ^{\et }(X_{\overline{K}},b))$. 

In practice, the set $H^1 (G_K , \pi _1 ^{\et }(X_{\overline{\Q }},b))$ is rather mysterious, and Kim's method starts by replacing it with a more tractable object. Namely, for any variety $Z$ over a field $K$ of characteristic zero, and $b\in Z(\overline{L})$, we define $\pi _1 ^{\et ,\Q _p }(X_{\overline{L}},b)$ to be the $\Q _p$-unipotent completion of $\pi _1 ^{\et}(Z_{\overline{L}},b)$ \cite[\S 10]{Del89}, and define
\[
U_n (Z)=U_n (Z)(b):=\pi _1 ^{\et  ,\Q _p }(Z_{\overline{L}},b)/C_{n+1}\pi _1 ^{\et  ,\Q _p }(Z_{\overline{L}},b).
\]

Returning to the setting of $\mathcal{X}/\mathcal{O}_{K,S}$ and $b\in \mathcal{X}(\mathcal{O}_{K,S})$ as above, we have a commutative diagram
\[
\begin{tikzpicture}
\matrix (m) [matrix of math nodes, row sep=3em,
column sep=3em, text height=1.5ex, text depth=0.25ex]
{\mathcal{X}(\mathcal{O}_{K,S}) & & H^1 (G_{K,T},U_n (X)(b)) \\ {\displaystyle \prod _{v\in S}X(K_v )\times \prod_{v\in T\backslash S}\mathcal{X}(\mathcal{O}_{K_v})} & & \displaystyle{ \prod_{v\in T}H^1 (G_{K_v },U_n (X)(b)), } \\ };
\path[->]
(m-1-1) edge[auto] node[auto] { $j_n$ } (m-1-3)
edge[auto] node[auto] {  } (m-2-1)
(m-1-3) edge[auto] node[auto] { $ \loc _n $} (m-2-3)
(m-2-1) edge[auto] node[below] {$ \prod\limits_{v\in T}j_{n,v}$} (m-2-3);
\end{tikzpicture}
\]
where the map $j_{n}$ may be defined as follows: by definition of the unipotent completion, we have a Galois equivariant map $\pi _1 ^{\et }(X_{\overline{\Q }},b)\to U_n (X)(b)$, and hence by functoriality a map
\[
H^1 (G _K ,\pi _1 ^{\et }(X_{\overline{K}},b))\to H^1 (G_K ,U_n (b)).
\]
By Grothendieck's specialization theorem \cite[\S X]{SGA1}, the image of $[\pi _1 ^{\et }(X_{\overline{K}};b,x)]$ in $H^1 (G_K ,U_n (b))$ will be unramified at all $v$ outside of $T$, and hence defines an element of the subspace $H^1 (G_{K,T},U_n (b))$.

Hence a natural obstruction to $(x_v )\in  \prod _{v\in S}X(K_v )\times \prod _{v\in T\backslash S}\mathcal{X}(\mathcal{O}_{K_v})$ coming from $x\in \mathcal{X}(\mathcal{O}_{K,S})$ is that $(j_{n,v}(x_v ))$ lies in the subspace $\loc _{n}H^1 (G_{K,T},U_n (b))$. 
For a rational prime $p$ we denote by $\mathcal{X}(\mathcal{O}_K \otimes \mathbb{Z}_p )_{S,n} $ the image of \\ $(\prod _{v \in T}j_{n,v})^{-1}\loc _n H^1 (G_{K,T},U_n )$ in 
$\prod _{v|p}\mathcal{X}(\mathcal{O}_v )$ under the projection 
\[
\prod _{v\in S}X(K_v )\times \prod _{v\in T\backslash S}\mathcal{X}(\mathcal{O}_{K_v})\to \prod _{v|p}\mathcal{X}(\mathcal{O}_v ).
\]
That is, $\mathcal{X}(\mathcal{O}_K \otimes \mathbb{Z}_p )_{S,n}$ is simply the set of all tuples $(x_v )$ in $\prod _{v|p}\mathcal{X}(\mathcal{O}_{K_v })$ which extend to a tuple $(x_v )$ in $\prod _{v\in S}X(K_v )\times \prod _{v\in T\backslash S}\mathcal{X}(\mathcal{O}_{K_v})$ for which $(j_{n,v}(x_v ))$ lies in the image of $H^1 (G_{K,T},U_n (X)(b))$. For $v$ a prime above $p$, we define $\mathcal{X}(\mathcal{O}_{K_v})_{S,n}$ to be the projection of $\mathcal{X}(\mathcal{O}_K \otimes \Z _p )_{S,n}$ to $X(K_v )$. Equivalently, it is the set of $\mathcal{O}_{K_v}$-points whose $H^1 (G_{K_v },U_n )$ class extends to a $\prod _{v'\in T\backslash S}H^1 (G_{K_{v'}},U_n )$ class in the image of $H^1(G_{K,T},U_n )$.

By a theorem of Kim and Tamagawa \cite[Corollary 0.2]{kim2008component}, for $v$ prime to $p$ and not in $S$, the map $j_{n,v}$ has finite image. Let $\alpha \in \prod _{v\in T_0 -S}j_{n,v}(\mathcal{X}(\mathcal{O}(K_v )))$. Let $\Sel (U_n )_{\alpha } \subset H^1 _{f,T}(G_{K,T},U_n (b))$ denote the fibre of $\alpha $ with respect to the localisation map (the Selmer scheme of $U_n $ with local conditions $\alpha $). Let $\mathcal{X}(\mathcal{O}_{K,S})_{\alpha } \subset \mathcal{X}(\mathcal{O}_{K,S})$ denote the subset of points mapping to $\Sel (U_n )_{\alpha }(\Q _p )$ under $j_n $, and let $\mathcal{X}(\mathcal{O}_K \otimes \mathbb{Z}_p )_{\alpha }$ denote the subset of $p$-adic points mapping to $\loc _p (\Sel (U_n )_{\alpha })$. There is a commutative diagram
\begin{equation}\label{eqn:the_square_2}
\begin{tikzpicture}
\matrix (m) [matrix of math nodes, row sep=3em,
column sep=3em, text height=1.5ex, text depth=0.25ex]
{\mathcal{X}(\mathcal{O}_{K,S} )_{\alpha } & \Sel (U_n )_{\alpha } \\ 
\prod _{v|p}\mathcal{X}(\mathcal{O}_v ) & \prod _{v|p} H^1 _f (G_{K_v },U_n (X) (b)).  \\ };
\path[->]
(m-1-1) edge[auto] node[auto] { $j_n $ } (m-1-2)
edge[auto] node[auto] {  } (m-2-1)
(m-1-2) edge[auto] node[auto] { $ \loc _{p}$} (m-2-2)
(m-2-1) edge[auto] node[auto] {$\prod _{v|p}j_{n,v}$} (m-2-2);
\end{tikzpicture}
\end{equation}
 We define 
\[
\Sel (U_n ):=\sqcup _{\alpha }\Sel (U_n )_{\alpha },
\]
where the disjoint union is over all $\alpha$ in the finite set  
\[
\prod _{v\in T_0 -S}j_{n,v}(\mathcal{X}(\mathcal{O}_{K_v })) \subset \prod _{v\in T_0 -S }H^1 (G_{K_v },U_n (X)(b)).
\]
Note that
\[
\mathcal{X}(\mathbb{Z}_p \otimes \mathcal{O}_K )_n =(\prod _{v|p}j_v )^{-1}(\Sel (U_n )).
\]

More generally for any $G_K$-stable quotient $U$ of $U_n$, we can define maps $j_v :X(K_v )\to H^1 (G_K ,U)$, Selmer schemes $\Sel (U)$, and global maps $j:\mathcal{X}(\mathcal{O}_{K,S})\to \Sel (U)$, and $\mathcal{X}(\mathbb{Z}_p \otimes \mathcal{O}_K )_{S,U}$. Whenever $U$ is a quotient of $U'$, we have an inclusion
\[
\mathcal{X}(\mathbb{Z}_p \otimes \mathcal{O}_K )_{S,U} \supset \mathcal{X}(\mathbb{Z}_p \otimes \mathcal{O}_K )_{S,U'}.
\]

We recall the interpretation of $\Sel (U)_{\alpha }$ in terms of twisting.
\begin{lemma}\label{twisty_lemma}
Let $\widetilde{\alpha }\in H^1 (G_{K ,T},U)$ be a cohomology class whose image in $\prod _{v\in S}H^1 (G_{K_v },U)$ is equal to $\alpha =(\alpha _v )$. Let $U^{\widetilde{\alpha }}$ denote the twist of $U$ by the torsor $\widetilde{\alpha }$. Then we have an inclusion
\[
\Sel (U )_{\alpha }\hookrightarrow H^1 _{f,S} (G_{K,T},U^{\widetilde{\alpha }})
\]
\end{lemma}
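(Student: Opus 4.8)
The plan is to identify $\Sel(U)_\alpha$ with a set of twisted torsors via the standard torsor-twisting bijection, and then check that the local conditions defining $\Sel(U)_\alpha$ translate into the crystalline-and-unramified conditions defining $H^1_{f,S}(G_{K,T}, U^{\widetilde\alpha})$. Recall the basic fact (Serre, \emph{Galois Cohomology}, I.5.3) that for a $G$-group $U$ and a fixed cocycle $\widetilde\alpha$, there is a bijection of pointed sets $H^1(G, U) \supset \{\text{classes mapping to } [\widetilde\alpha]\text{-neighbourhood}\} \leftrightarrow \ker\big(H^1(G, U^{\widetilde\alpha}) \to \{\ast\}\big)$; more precisely, twisting gives a bijection between the fibre of $H^1(G,U) \to H^1(G, \mathrm{Aut}(U))$-type data and $H^1(G, U^{\widetilde\alpha})$ sending $[\widetilde\alpha]$ itself to the trivial class. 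I would first state this bijection cleanly at the level of $G_{K,T}$: twisting by $\widetilde\alpha$ gives a bijection $\tau_{\widetilde\alpha}\colon H^1(G_{K,T}, U) \isoarrow H^1(G_{K,T}, U^{\widetilde\alpha})$ carrying $[\widetilde\alpha] \mapsto 0$.

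Next I would track what $\tau_{\widetilde\alpha}$ does to the subset $\Sel(U)_\alpha$. By definition, a class $c \in H^1_{f,T}(G_{K,T}, U)$ lies in $\Sel(U)_\alpha$ iff $\loc_v(c) = \alpha_v$ for $v \in S$ (equality of cohomology classes in $H^1(G_{K_v}, U)$), together with the implicit conditions that $c$ is crystalline at $v \mid p$ and unramified outside $T$. Applying $\tau_{\widetilde\alpha}$ and using that twisting commutes with restriction to decomposition groups, the condition $\loc_v(c) = \alpha_v = \loc_v(\widetilde\alpha)$ becomes $\loc_v(\tau_{\widetilde\alpha}(c)) = \loc_v(\tau_{\widetilde\alpha}(\widetilde\alpha)) = 0$ in $H^1(G_{K_v}, U^{\widetilde\alpha})$, i.e. the twisted class is \emph{trivial} (hence in particular unramified) at the primes of $S$. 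So $\tau_{\widetilde\alpha}(\Sel(U)_\alpha)$ consists of classes in $H^1(G_{K,T}, U^{\widetilde\alpha})$ that are unramified outside $S \sqcup \{v \mid p\}$, crystalline above $p$, and trivial at $v \in S$ — which in particular lands inside $H^1_{f,S}(G_{K,T}, U^{\widetilde\alpha})$ as defined in the notation section. The map in the statement is then $\tau_{\widetilde\alpha}$ restricted to $\Sel(U)_\alpha$, and injectivity is immediate since $\tau_{\widetilde\alpha}$ is a bijection on the ambient $H^1$.

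The two points requiring care — and the main obstacle — are the compatibility of the twisting bijection with the \emph{local} conditions, in two senses. First, one must check that "crystalline" is preserved under twisting by $\widetilde\alpha$: this follows because $\widetilde\alpha$ restricted to $G_{K_v}$ for $v \mid p$ is itself crystalline (it is $j_{n,v}$ of a point, or more generally in the image of $\loc_v$ on the Selmer scheme, hence crystalline by the definition of the Selmer scheme), and twisting a crystalline torsor by a crystalline torsor is crystalline — this is where one invokes the formalism of \cite{kim:siegel} for $H^1_f$ of unipotent groups and the fact that the category of crystalline representations (or the relevant torsor-theoretic analogue) is closed under the operations involved. Second, one must check the unramified-outside-$T$ condition is preserved, which is the analogous and easier statement at $v \notin T$. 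I would phrase both as: twisting by a class that is itself crystalline at $p$ and unramified outside $T$ preserves the subfunctors "crystalline at $p$" and "unramified outside $T$" of $H^1(G_{K,T}, -)$. Granting these compatibilities, the inclusion $\Sel(U)_\alpha \hookrightarrow H^1_{f,S}(G_{K,T}, U^{\widetilde\alpha})$ is exactly $\tau_{\widetilde\alpha}$ followed by the observation that a class trivial at each $v \in S$ is in particular unramified there, so it satisfies the defining conditions of the target. I expect the twisting bijection itself to be entirely routine; the only genuine content is bookkeeping that the crystalline condition behaves well under twisting, for which I would cite the relevant lemmas in \cite{kim:siegel}.
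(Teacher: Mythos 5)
Your proof is correct and takes essentially the same route as the paper's: invoke Serre's twisting bijection on $H^1$, note it carries $[\widetilde\alpha]$ to the trivial class and is compatible with restriction to decomposition groups, and then track how the local conditions (equality with $\alpha_v$ away from $p$, crystalline at $p$, unramified outside $T$) transform. Your extra remark that preservation of the crystalline condition requires $\widetilde\alpha$ itself to be crystalline above $p$ is a point the paper's one-line proof leaves implicit (it holds in all applications, e.g.\ in Lemma \ref{virtual_lemma} where $\loc_p(\widetilde\alpha)=j(b)$), and making it explicit is a genuine improvement rather than a divergence.
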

\begin{proof}
Recall from \cite[\S 1.5.2 Proposition 34]{serre-gc} that the twisting construction defines an isomorphism
\[
H^1 (G_{K,T},U ^{\widetilde{\alpha }}) \simeq H^1 (G_{K,T},U),
\]
which sends the trivial torsor to the class of $\widetilde{\alpha } $, and is functorial in both arguments. Hence, under this isomorphism, classes which are trivial at $v\neq p$ go to classes which are equal to $\alpha _v$ at $v$, and classes which are crystalline go to classes which are crystalline.
 \end{proof}
In \cite{kim:chabauty}, Kim proves two fundamental properties of the diagram \eqref{eqn:the_square_2}. First, the Galois cohomology sets $\Sel (U )$ and $H^1 _f (G_{\Q _p },U)$ are representable by $\Q _p $-schemes of finite type, in such a way that the morphism $\loc _p $ is algebraic. Second, the map $\loc _{\alpha }$ is algebraic, and that for all $v|p$, $j_{v}$ is locally analytic (in the sense defined in the introduction), and for all $z\in \mathcal{X}(\mathcal{O}_v )$, the map $j_{v}|_{]z[}$ has Zariski dense image.
If $K=\Q $ and $\loc _{\alpha }$ is not dominant, then the set $\mathcal{X}(\mathbb{Z}_p )_{\alpha }$ is thus finite, since on each residue disk it is given by a non-trivial power series.
\subsection{Trading degree for dimension in the Chabauty--Kim method}
Over number fields, the situation is slightly more complicated. Suppose that the image of $\loc _{\alpha }$ has codimension $d$ in $\prod _{v|p}H^1 _f (G_{K_v },U(b))$. Then, on each residue polydisk $]z[ \subset \prod _{v|p}\mathcal{X}(\mathcal{O}_v )$, we deduce that $]z[ \cap \mathcal{X}(\mathcal{O}_K \otimes \mathbb{Z}_p )$ is contained in the zeroes of $d$ power series. However, this does not imply that $]z[ \cap \mathcal{X}(\mathcal{O}_K \otimes \mathbb{Z}_p )$ is finite.

First, we replace the problem of finding $K$-rational points on $X$ with that of finding $\Q $-rational points on the Weil restriction $\Res _{K|\Q }(X)$. We recall some properties of the Weil restriction from \cite{stix2010trading}. Given topological groups $G,H,N$ with $H<G$ finite index and a continuous action of $H$ on $N$, define $\Ind ^G _H (N)$ to be group of continuous left $H$-equivariant maps
$
G\to N.
$
This has a natural continuous action of $G$ (see \cite[I.5.8]{serre-gc}).
\begin{proposition}[\cite{stix2010trading}, Proposition 8]\label{nonab_shap}
Let $G$ be a profinite group, and $H$ a finite index subgroup. Let $U$ be a topological group with a continuous action of $H$, and let 
$
\Ind ^G _H U \to U
$ 
denote the non-abelian induction, as defined in \cite[2.1.2]{stix2010trading}. Then the natural map
\[
H^1 (G,\Ind ^G _H U )\to H^1 (H,U).
\]
is an isomorphism.
\end{proposition}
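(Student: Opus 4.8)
The plan is to prove the isomorphism by writing down an explicit inverse to the natural map, and the cleanest setting for this is the torsor interpretation of non-abelian $H^1$; the argument is the non-abelian analogue of the classical Shapiro lemma. Recall that for a profinite group $\Gamma$ acting continuously on a topological group $N$, the pointed set $H^1(\Gamma,N)$ is the set of isomorphism classes of continuous $\Gamma$-equivariant right $N$-torsors (sets $P$ with a simply transitive continuous right $N$-action and a compatible continuous left $\Gamma$-action, $\gamma(pn)=(\gamma p)(\gamma\cdot n)$). Realising $A:=\Ind^G_H U$ concretely as the group of continuous left $H$-equivariant maps $G\to U$, with $G$ acting by right translation on the source, evaluation at $1\in G$ is an $H$-equivariant group homomorphism $\mathrm{ev}\colon A\to U$, and the natural map of the statement is the composite
\[
H^1(G,A)\xrightarrow{\ \mathrm{res}\ }H^1(H,A)\xrightarrow{\ \mathrm{ev}_*\ }H^1(H,U).
\]

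To construct an inverse, given a continuous $H$-equivariant $U$-torsor $P$, I would set $\Ind^G_H(P)$ to be the set of continuous $H$-equivariant maps $G\to P$, equipped with the right $A$-action $(f\cdot a)(g)=f(g)\cdot a(g)$ (using the $U$-action on $P$; one checks $f\cdot a$ is again $H$-equivariant because both $f$ and $a$ are) and the left $G$-action by translation on the source. The first things to verify are that this is a continuous $G$-equivariant $A$-torsor: simple transitivity of the $A$-action is immediate, and non-emptiness uses that $H$ is open in $G$, so that $H\backslash G$ is finite and one may prescribe the values of an element of $\Ind^G_H(P)$ on a set of coset representatives (arbitrary values, since $P\neq\emptyset$) and extend $H$-equivariantly. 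This produces a well-defined map $H^1(H,U)\to H^1(G,A)$.

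The heart of the proof is then to check that these two maps are mutually inverse, by exhibiting natural isomorphisms of torsors $\mathrm{ev}_*\big((\Ind^G_H P)|_H\big)\cong P$ and $\Ind^G_H\big(\mathrm{ev}_*(T|_H)\big)\cong T$ for $T$ a continuous $G$-equivariant $A$-torsor. The first is essentially the statement that evaluation-at-$1$ on $\Ind^G_H(P)$ has fibres canonically identified with $P$ — an $H$-equivariant map $G\to P$ can take an arbitrary value at $1$ subject only to the $H$-equivariance constraint — which, after contracting along $\mathrm{ev}$, yields a $U$-equivariant and $H$-equivariant identification. The second is the analogous unwinding in the other direction: an element of $\Ind^G_H(\mathrm{ev}_*(T|_H))$ is an $H$-equivariant assignment $g\mapsto[t_g,u_g]$, and reconstructing $t_1\in T$ from it gives the isomorphism, once one checks $G$-equivariance using that $G$ acts on $A$ by right translation and that this is compatible with $\mathrm{res}$ and $\mathrm{ev}$.

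I expect the main obstacle to be bookkeeping rather than conceptual: since $H^1$ here is only a pointed set ($U$, and hence $A$, being non-abelian in general), bijectivity has to be established through the explicit inverse above rather than by any exactness or dimension-counting argument, and one must track continuity at each step (that induced torsors are genuine continuous $G$-sets, that the comparison maps are continuous) — all routine given that $H$ is open. A parallel cocycle-level argument is also available: the forward map sends $c\in Z^1(G,A)$ to $h\mapsto c(h)(1)\in Z^1(H,U)$ (the cocycle identity being a short computation using the $H$-equivariance of $c(h_2)$), and an inverse is built by choosing continuous coset representatives for $H\backslash G$; I would mention this but present the torsor argument as the main one, since it sidesteps the choice of representatives.
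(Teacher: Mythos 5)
The paper does not prove this proposition: it is stated as a citation to Stix's Proposition 8, with the non-abelian induction defined in Stix's paper, and no argument is given here. Your torsor-theoretic proof is the standard non-abelian Shapiro lemma and is correct; it is essentially the same route as in Stix.

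Two small points worth making explicit. First, in checking that $\mathrm{ev}\colon A\to U$, $f\mapsto f(1)$, is $H$-equivariant and that the $A$-action $(f\cdot a)(g)=f(g)\cdot a(g)$ on $\Ind^G_H(P)$ is well-defined, the computations use both the left $H$-equivariance of elements of $A$ and the right-translation $G$-action simultaneously; e.g.\ $(h\cdot f)(1)=f(h)=h\cdot f(1)$ combines the two, and $(f\cdot a)(hg)=h\cdot\bigl(f(g)\cdot a(g)\bigr)$ uses the compatibility $h(pu)=(hp)(h\cdot u)$ on $P$. Second, for the comparison $T\cong \Ind^G_H\bigl(\mathrm{ev}_*(T|_H)\bigr)$ it is cleanest to write the inverse explicitly: send $t\in T$ to the function $g\mapsto[gt,1]\in T\times^A U$; $H$-equivariance is immediate, $A$-equivariance reduces to the identity $\mathrm{ev}(g\cdot a)=a(g)$ (which holds by the definition of the $G$-action on $A$ as right translation), and $G$-equivariance is the associativity $g(g't)=(gg')t$. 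This fills in the step you described as ``reconstructing $t_1$ from $g\mapsto[t_g,u_g]$''. With these details written out, the argument is complete.
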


\begin{lemma}[Stix,\cite{stix2010trading}]
There is an isomorphism
\[
\pi _1 ^{\et}(\Res (X),b)\simeq \Ind ^\Q _K \pi _1 ^{\et}(X,b),
\]
inducing a $G_{\Q }$-equivariant isomorphism
\[
\pi _1 ^{\et}(\Res (X)_{\overline{\Q }},b) \simeq \Ind ^\Q _K \pi _1 ^{\et}(X_{\overline{K}},b).
\]
\end{lemma}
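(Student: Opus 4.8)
The plan is to deduce both isomorphisms from an explicit description of $\Res(X)$ after base change to $\overline{\Q}$, reducing everything to the product formula for the \'etale fundamental group. First I would record that Weil restriction commutes with base change \cite[\S 7.6]{BLR}, so that $\Res_{K|\Q}(X)\times_\Q\overline{\Q}\cong\Res_{\overline{\Q}\otimes_\Q K\,|\,\overline{\Q}}\bigl(X\times_K(\overline{\Q}\otimes_\Q K)\bigr)$. Since $K|\Q$ is separable, $\overline{\Q}\otimes_\Q K\cong\prod_{\iota}\overline{\Q}$ with the product over the embeddings $\iota\colon K\hookrightarrow\overline{\Q}$, and Weil restriction along the split \'etale $\overline{\Q}$-algebra $\prod_\iota\overline{\Q}$ is simply the $\overline{\Q}$-fibre product of the factors; hence there is a canonical isomorphism $\Res(X)_{\overline{\Q}}\cong\prod_{\iota}X_\iota$, where $X_\iota:=X\times_{K,\iota}\overline{\Q}$. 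Under the defining adjunction the point $b\in\Res(X)(\Q)$ corresponds to $b\in X(K)$, and its image in $\Res(X)(\overline{\Q})$ is the tuple $(b_\iota)_\iota$ of images of $b$.

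Next I would invoke the product formula for the \'etale fundamental group: each $X_\iota$ is a connected scheme of finite type over the algebraically closed field $\overline{\Q}$ of characteristic zero, so $\pi_1^{\et}(\Res(X)_{\overline{\Q}},b)\cong\prod_{\iota}\pi_1^{\et}(X_\iota,b_\iota)$ \cite{SGA1}. Base change along the extension of algebraically closed fields $\overline{K}\hookrightarrow\overline{\Q}$ extending $\iota$ identifies each factor $\pi_1^{\et}(X_\iota,b_\iota)$ canonically with $\pi_1^{\et}(X_{\overline{K}},b)$. Fixing the distinguished embedding $\iota_0\colon K\hookrightarrow\overline{\Q}$ — so that $G_K$ is the stabiliser of $\iota_0$ and $gG_K\mapsto g\circ\iota_0$ is a bijection $G_\Q/G_K\xrightarrow{\sim}\Hom_\Q(K,\overline{\Q})$ — and a set of coset representatives $\{g_\iota\}$, these identifications present $\prod_{\iota}\pi_1^{\et}(X_\iota,b_\iota)$ as the underlying group of $\Ind^{G_\Q}_{G_K}\pi_1^{\et}(X_{\overline{K}},b)$.

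It then remains to check that the two $G_\Q$-actions agree. The group $G_\Q$ acts on $\Res(X)_{\overline{\Q}}$ through the $\overline{\Q}$-factor; under $\Res(X)_{\overline{\Q}}\cong\prod_\iota X_\iota$ an element $g$ carries $X_\iota$ by a $g$-semilinear isomorphism onto $X_{g\circ\iota}$, and hence on the product of fundamental groups it permutes the factors by $\iota\mapsto g\circ\iota$ and acts on the fibre over $\iota$, in the coordinates above, through $g_{g\iota}^{-1}\,g\,g_\iota\in G_K$. This is precisely the formula for the $G_\Q$-action on $\Ind^{G_\Q}_{G_K}\pi_1^{\et}(X_{\overline{K}},b)$ recalled before Proposition~\ref{nonab_shap}. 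I expect this step — checking that the permutation of factors together with the semilinear twist reproduces the induced action, and that the base point $b$ makes every identification genuinely $G_\Q$-equivariant rather than merely equivariant up to inner automorphism — to be the only real content of the proof; carrying it out is a careful but routine computation with coset representatives, and everything else is assembling standard facts. This yields the $G_\Q$-equivariant isomorphism of geometric fundamental groups.

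Finally, for the arithmetic statement, the rational points $b\in\Res(X)(\Q)$ and $b\in X(K)$ split the homotopy exact sequences
\[
1\to\pi_1^{\et}(\Res(X)_{\overline{\Q}},b)\to\pi_1^{\et}(\Res(X),b)\to G_\Q\to1,\qquad 1\to\pi_1^{\et}(X_{\overline{K}},b)\to\pi_1^{\et}(X,b)\to G_K\to1,
\]
so both $\pi_1^{\et}(\Res(X),b)$ and $\pi_1^{\et}(X,b)$ are semidirect products over $G_\Q$ and $G_K$ respectively. Combining the $G_\Q$-equivariant isomorphism just established with the formal compatibility, for the non-abelian induction of \cite[2.1.2]{stix2010trading}, of $\Ind^{G_\Q}_{G_K}(-\rtimes G_K)$ with $(\Ind^{G_\Q}_{G_K}-)\rtimes G_\Q$ then gives the first isomorphism together with the fact that it induces the second. (Alternatively, one could run the same argument inside the Galois category of finite \'etale covers, using that $Y\mapsto\Res_{K|\Q}(Y)$ sends finite \'etale covers of $X$ to finite \'etale covers of $\Res(X)$ compatibly with the adjunction $\Res_{K|\Q}\dashv(-\times_\Q K)$; the computation of geometric fibres is identical.)
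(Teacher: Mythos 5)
The paper provides no proof of this lemma---it is a direct citation to Stix \cite{stix2010trading}---so there is no in-paper argument to compare against. Your reconstruction via the splitting $\Res(X)_{\overline{\Q}}\cong\prod_\iota X_\iota$ is correct and is essentially the standard argument; the coset-representative bookkeeping that you flag as ``the only real content'' is indeed the crux, and your remark that a rational basepoint is what upgrades the identifications from outer to honest $G_\Q$-equivariance is exactly the right point to make. One step deserves a more careful citation: the K\"unneth isomorphism $\pi_1^{\et}\bigl(\prod_\iota X_\iota\bigr)\cong\prod_\iota\pi_1^{\et}(X_\iota)$ is stated in SGA\,1 (Exp.\ X, Cor.\ 1.7) with a properness hypothesis on one factor, and the paper applies this lemma to \emph{affine} hyperbolic curves in cases (1) and (3) of Theorem~\ref{finiteness}. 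You therefore need the characteristic-zero K\"unneth theorem for $\pi_1^{\et}$ with no properness assumption, which follows from the comparison with topological fundamental groups together with the (easy but worth recording) fact that profinite completion commutes with finite direct products of discrete groups. This is a matter of citation precision rather than a genuine gap. Finally, the alternative route you sketch in your last parenthetical---working in the Galois category of finite \'etale covers via the adjunction between $\Res_{K|\Q}$ and base change to $K$---is closer to Stix's own treatment in the cited reference, and has the advantage of producing both isomorphisms simultaneously without having to invoke the splitting of the homotopy exact sequence.
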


By Proposition \ref{nonab_shap}, this induces an isomorphism 
\[
H^1 (G_K ,\pi _1 ^{\et}(X_{\overline{K}},b))\simeq H^1 (G_{\Q },\pi _1 ^{\et}(\Res (X)_{\overline{\Q }},b)),
\]
giving a commutative diagram whose vertical maps are bijections 
\[
\begin{tikzpicture}
\matrix (m) [matrix of math nodes, row sep=3em,
column sep=3em, text height=1.5ex, text depth=0.25ex]
{X(K) & H^1 (G_{K},\pi _1 ^{\et}(X_{\overline{K}},b)) & H^1 (G_{K },U_n (X)(b) ) \\ 
\Res (X)(\Q ) & H^1 (G_{\Q },\pi _1 ^{\et}(\Res (X)_{\overline{\Q }},b) & H^1 (G_{\Q },U_n (\Res (X)(b )).  \\ };
\path[->]
(m-1-1) edge[auto] node[auto] { } (m-1-2)
edge[auto] node[auto] {  } (m-2-1)
(m-1-2) edge[auto] node[auto] { } (m-2-2)
edge[auto] node[auto] {  } (m-1-3)
(m-1-3) edge[auto] node[auto] { } (m-2-3)
(m-2-1) edge[auto] node[auto] {} (m-2-2)
(m-2-2) edge[auto] node[auto] {} (m-2-3);
\end{tikzpicture}
\]

\begin{lemma}\label{nonab_mack}
Let $G$ be a topological group, $H$ a finite index subgroup, and $K<G$ a closed subgroup. Let $U$ be a topological group with a continuous action of $H$. Then we have a $K$=equivariant isomorphism of topological groups
\[
\Ind ^G _H (U)\simeq \prod _{x\in H\backslash G/K}\Ind ^ K_{K\cap xHx^{-1}} U^x .
\]
where $U^x$ denote the group $U$ with action twisted by conjugating by $x$.
\end{lemma}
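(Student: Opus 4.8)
The plan is to prove Lemma \ref{nonab_mack} as the non-abelian analogue of the classical Mackey double-coset decomposition for (co)induced modules. The starting point is the partition of $G$ into $(H,K)$-double cosets, $G = \bigsqcup_{x \in H\backslash G/K} HxK$, which is finite because $[G:H]<\infty$ forces $|H\backslash G/K| \le [G:H]$, and whose members are open and closed in the profinite setting in which the lemma is applied (there a closed subgroup of finite index is open). An element $f \in \Ind^G_H(U)$ is a continuous left $H$-equivariant map $G \to U$; since $H$-equivariance only relates values of $f$ along a single $H$-orbit, and each $HxK$ is a union of $H$-orbits, $f$ is determined uniquely by the tuple of its restrictions $f|_{HxK}$, and conversely any compatible-free tuple of continuous $H$-equivariant maps $HxK\to U$ glues to such a map on $G$ (gluing along a finite closed cover preserves continuity).

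Next I would identify, for each fixed representative $x$, the group of continuous $H$-equivariant maps $HxK\to U$ with $\Ind^K_{K\cap xHx^{-1}}(U^x)$. Passing from $f|_{HxK}$ to the map $\phi_x\colon K\to U$ obtained by evaluating $f$ on the $K$-translates of $x$, one checks that $H$-equivariance of $f$ on $HxK$ is \emph{equivalent} to $\phi_x$ being equivariant for the action of the subgroup $K\cap xHx^{-1}$ of $K$ on $U$ through the twisted action (conjugate by $x$ into $H$, then act through the given $H$-action) — which is exactly the defining datum of an element of $\Ind^K_{K\cap xHx^{-1}}(U^x)$. Assembling over all $x$ yields a bijection $\Ind^G_H(U)\isoarrow\prod_x \Ind^K_{K\cap xHx^{-1}}(U^x)$. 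The one genuinely fiddly point here is convention bookkeeping — whether to index by $H\backslash G/K$ or $K\backslash G/H$, whether $\Ind$ uses left- or right-equivariant maps, and on which side of $x$ the $K$-variable is inserted; these are swapped by $x\mapsto x^{-1}$ and by passing to the opposite group, so I would fix all of them at the outset so that the formula reads exactly as stated. This convention-chasing, rather than any substantive difficulty, is the main obstacle.

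Finally I would verify the three claimed structures. The bijection is a group isomorphism because every group operation on both sides is pointwise multiplication in $U$, and the passage $f\mapsto(\phi_x)$ is pointwise, hence multiplicative. It is a homeomorphism: restriction to a closed (or open) subset and translation in $K$ are continuous, so $f\mapsto(\phi_x)_x$ is continuous, while its inverse is continuous because a map on $G$ that is continuous on each member of the finite closed cover $\{HxK\}$ is continuous. Lastly, $K$-equivariance: the $K$-action on $\Ind^G_H(U)$ obtained by restricting the natural $G$-action is right translation, which preserves each $HxK$ (since $HxKk_0 = HxK$ for $k_0\in K$) and, under the identification above, acts on the factor $\Ind^K_{K\cap xHx^{-1}}(U^x)$ by right translation as well — that is, by the standard $K$-action on that induced group — because $\phi_x(k k_0)$ is precisely the value read off from $k_0\cdot f$. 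Hence the decomposition is $K$-equivariant, which completes the argument.
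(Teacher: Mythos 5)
Your proof is correct and follows essentially the same route as the paper's: decompose $G$ into $(H,K)$-double cosets, observe that a continuous $H$-equivariant map $G\to U$ is determined by its restrictions to each $HxK$, and identify each such restriction with an element of $\Ind^K_{K\cap xHx^{-1}}(U^x)$, checking $K$-equivariance as in the classical Mackey formula. The paper's proof states this outline in three sentences and defers the verification to the reader; you fill in the topological and convention bookkeeping, but the underlying argument is the same.
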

\begin{proof}
Recall that $\Ind ^G _H (U)$ is, by definition, the set of continuous $H$-equivariant functions $G\to U$. Such a function is uniquely determined by where it sends $H \backslash G$. Hence, via the bijection $G/H \simeq \sqcup _{x\in H \backslash G /K}HxK$, we obtain a bijection 
\[
\Ind ^G _H (U)\to \prod _{x\in H \backslash G /K}\Ind ^K _{K\cap xHx^{-1} }U^x .
\]
which may be checked to be $K$-equivariant, as in the classical case of Mackey's restriction formula.
 \end{proof}
Lemma \ref{nonab_mack} and Proposition \ref{nonab_shap} together imply that, for all primes $l\neq p$, we have isomorphisms
\begin{equation}\label{local_shiz}
\prod _{v|l}H^1 (G_{K_v} ,U_n (X)(b))\simeq H^1 (G_{\Q _{\ell}} ,U_n (\Res (X)(b)).
\end{equation}

For the remainder of this paper, we denote $U_n (\Res (X))(b)$ by $U_n (b)$, or sometimes simply $U_n$. 
\begin{lemma}
At $p$, we have an isomorphism of unipotent groups with filtration over $\Q _p$
\[
U_n ^{\dR}(\Res _{K|\Q }(X)_{\Q _p })(b)\simeq \prod _{\sigma :K\hookrightarrow \Q _p }U_n ^{\dR}(X_{\Q _p ,\sigma })(b_{\sigma }),
\]
where the product is over all $\sigma $ in $\Hom (K,\Q _p )$.
\end{lemma}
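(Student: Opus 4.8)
The plan is to reduce the statement to two ingredients: the compatibility of Weil restriction with the base change $\Q\hookrightarrow\Q_p$, and a Künneth-type decomposition of the (truncated) de Rham fundamental group of a product.

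First I would use that $p$ splits completely in $K$ to fix a canonical isomorphism of $\Q_p$-algebras $K\otimes_\Q\Q_p\isoarrow\prod_{\sigma\in\Hom(K,\Q_p)}\Q_p$, the $\sigma$-component of $a\otimes x$ being $\sigma(a)x$. Base change of Weil restriction (\cite[7.6]{BLR}) gives $\Res_{K|\Q}(X)_{\Q_p}\cong\Res_{(K\otimes_\Q\Q_p)|\Q_p}\big(X\times_K(K\otimes_\Q\Q_p)\big)$; since Weil restriction along the product ring $\prod_\sigma\Q_p$ is just the fibre product of the corresponding $\Q_p$-schemes, and $X\times_K(K\otimes_\Q\Q_p)$ is the $\prod_\sigma\Q_p$-scheme $(X_{\Q_p,\sigma})_\sigma$, the right-hand side is $\prod_\sigma X_{\Q_p,\sigma}$. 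Under this identification the basepoint $b\in\Res_{K|\Q}(X)(\Q)\subset X(K\otimes_\Q\Q_p)=\prod_\sigma X(\Q_p)$ goes to $(b_\sigma)_\sigma$. So it remains to construct an isomorphism
\[
U_n^{\dR}\Big(\prod_\sigma X_{\Q_p,\sigma}\Big)\big((b_\sigma)_\sigma\big)\cong\prod_\sigma U_n^{\dR}(X_{\Q_p,\sigma})(b_\sigma)
\]
of unipotent groups respecting the lower central series and, in the de Rham realization, the Hodge filtration.

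For this I would prove the Künneth formula $\pi_1^{\dR}(Y\times_{\Q_p}Z,(y,z))\cong\pi_1^{\dR}(Y,y)\times\pi_1^{\dR}(Z,z)$ for smooth geometrically connected $\Q_p$-varieties (using the log-de Rham complex along the boundary divisor in the non-proper case) and then iterate over the finite set $\Hom(K,\Q_p)$. One route is Tannakian: external tensor products of unipotent connections on $Y$ and on $Z$ generate the category of unipotent connections on $Y\times Z$ — one checks this by dévissage, since every unipotent connection is an iterated extension of trivial ones and the relevant extension groups are computed by de Rham cohomology, for which Künneth holds — and this yields the product decomposition of the pro-unipotent Tannakian fundamental group. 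An equivalent route is through the reduced bar complex: $\mathcal{O}(\pi_1^{\dR})$ is $H^0$ of the bar complex of the (log-)de Rham dg-algebra, and the shuffle/Eilenberg--Zilber map identifies the bar complex of a tensor product of dg-algebras with the tensor product of their bar complexes, compatibly with the Hodge filtration because Künneth in de Rham cohomology is a filtered isomorphism. Either way, the resulting isomorphism is one of affine group schemes over $\Q_p$, hence carries $C_{n+1}$ of the product to the product of the $C_{n+1}$'s of the factors; passing to the quotient by $C_{n+1}$ gives the displayed isomorphism for $U_n^{\dR}$, and the Hodge filtration on $U_n^{\dR}$ of the product is the corresponding product filtration by the same compatibility.

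The step I expect to be the only nontrivial one is making the Künneth statement for $\pi_1^{\dR}$ precise enough to transport both the lower central series and the Hodge filtration at once — concretely, verifying that external tensor products generate the unipotent category on $Y\times Z$, or, in the bar-complex picture, that the shuffle equivalence is strictly filtered. This is essentially folklore (compare the treatment of the de Rham fundamental group in \cite{Del89} and \cite{kim:chabauty}); everything else is formal manipulation with Weil restrictions and with the hypothesis that $p$ splits completely.
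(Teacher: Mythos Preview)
Your proposal is correct and follows the same approach as the paper: the paper's proof consists of the single observation that $\Res_{K|\Q}(X)\times_{\Q}\Q_p\simeq\prod_{\sigma}X\times_{K,\sigma}\Q_p$ because $p$ splits completely, leaving the K\"unneth decomposition of $U_n^{\dR}$ for a product implicit. You have supplied the details of that implicit step (via the Tannakian or bar-complex argument), which is more than the paper does but entirely in the same spirit.
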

\begin{proof}
This follows from the fact that, since $p$ splits completely in $K$, we have
\[
\Res _{K|\Q }(X)\times _{\Q }\Q _p \simeq \prod _{\sigma }X\times _{K,\sigma }\Q _p .
\]
 \end{proof}
Hence we obtain a commutative diagram whose vertical maps are bijective
\[
\begin{tikzpicture}
\matrix (m) [matrix of math nodes, row sep=3em,
column sep=3em, text height=1.5ex, text depth=0.25ex]
{ \prod _{v|p} X(K_v ) & \prod _{v|p}H^1 _f (G_{K_v },U_n (X)(b)) & \prod _{v|p}U_n ^{\dR}(X_{K_v })/F^0  \\ 
\Res (X)(\Q _p ) & H^1 _f (G_{\Q _p },U_n (\Res (X))(b)) &  U_n ^{\dR}(\Res (X)_{\Q _p })/F^0  \\ };
\path[->]
(m-1-1) edge[auto] node[auto] {  } (m-1-2)
edge[auto] node[auto] {  } (m-2-1)
(m-1-2) edge[auto] node[auto] { } (m-2-2)
edge[auto] node[auto] {} (m-1-3)
(m-2-1) edge[auto] node[auto] {} (m-2-2)
(m-1-3) edge[auto] node[auto] {} (m-2-3)
(m-2-2) edge[auto] node[auto] {} (m-2-3);
\end{tikzpicture}
\]
If $S\subset S'$, then by definition $\mathcal{X}(\mathbb{Z}_p \otimes \mathcal{O}_K )_{S,n}\subset \mathcal{X}(\mathbb{Z}_p \otimes \mathcal{O}_K )_{S',n}$, and in particular finiteness of the latter implies finiteness of the former. Hence, enlarging the set $S$ if necessary, we may assume that $S$ is of the form $\{v|l:l\in S_0 \}$, for a finite set $S_0$ of rational primes. Then, by \eqref{local_shiz} and the preceding commutative diagrams, the bijection $\mathcal{X}(\mathcal{O}_K \otimes \mathbb{Z}_p )\simeq \Res _{\mathcal{O}_{K,S} |\mathbb{Z}_{S_0 }}(\mathcal{X})(\mathbb{Z}_p )$ induces a bijection
\begin{equation}\label{reduce_2_weil}
\Res _{\mathcal{O}_{K,S}|\mathbb{Z}_{S_0 } }(\mathcal{X})(\mathbb{Z}_p )_{S_0 ,n}\simeq \mathcal{X}(\mathcal{O}_K \otimes \mathbb{Z}_p )_{S,n}.
\end{equation}
To ease notation, we will sometimes write $\Res _{\mathcal{O}_{K,S}|\mathbb{Z}_{S_0 } }(\mathcal{X})$ simply as $\Res _{K|\Q }(\mathcal{X})$, or $\Res (\mathcal{X})$. Hence \eqref{reduce_2_weil} reduces Theorem \ref{finiteness} to proving finiteness of 
$\Res _{K|\Q }(\mathcal{X})(\mathbb{Z}_p )_{S ,n}$.

We recall the following result from \cite{BDCKW}. Although the proof was given for curves, it also applies for a general smooth geometrically irreducible quasi-projective variety.
\begin{lemma}\cite[\S 2]{BDCKW}\label{indpt_basept}
Let $U(b)$  be a quotient of $U_n (b)$, and $b'$ another basepoint. Let $U(b')$ be the corresponding quotient of $U_n (b')$. Then 
\[
X(\Q _p )_{U(b)} = X(\Q _p )_{U(b')}.
\]
\end{lemma}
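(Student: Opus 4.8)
The plan is to use the $\Q_p$-pro-unipotent torsor of paths to transport the whole Chabauty--Kim diagram attached to $b$ onto the one attached to $b'$, compatibly with every structure map, and then to observe that $X(\Q_p)_{U(b)}$ and $X(\Q_p)_{U(b')}$ are preimages under maps which differ only by a bijection; this is the argument of \cite[\S 2]{BDCKW}. Write $P:=\pi_1^{\et,\Q_p}(X_{\overline{\Q}};b,b')$ for the torsor of paths, pushed out along $U_n(b)\twoheadrightarrow U(b)$ to a $G_{\Q}$-equivariant $U(b)$-torsor. By Grothendieck's specialization theorem \cite[\S X]{SGA1} it is unramified outside $T$, and since $b$ and $b'$ are $S$-integral basepoints it is crystalline at $p$ --- in fact $[P]=j_b(b')\in\Sel(U(b))$. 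The path torsor is a bitorsor realizing $U(b')$ as the inner twist $U(b)^{P}$ (this is the precise meaning of the hypothesis that $U(b')$ is ``the corresponding quotient'' of $U_n(b')$), and likewise $U^{\dR}(b')/F^0=(U^{\dR}(b)/F^0)^{P}$ on the de Rham side at $p$.

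Next I would invoke Serre's twisting construction \cite{serre-gc}, exactly as in the proof of Lemma~\ref{twisty_lemma}: for each profinite group $G$ occurring in the diagram --- $G_{\Q,T}$, the local groups $G_{\Q_v}$ for $v\in T_0$, and $G_{\Q_p}$ --- twisting by the relevant localization of $P$ gives a bijection $\theta_G\colon H^1(G,U(b))\xrightarrow{\sim}H^1(G,U(b'))$, functorial in $G$ and hence compatible with all restriction and localization maps, sending $[P]$ to the trivial torsor, and --- because $P$ is unramified outside $T$ and crystalline at $p$ --- preserving the ``unramified outside $T$'' and ``crystalline'' conditions. Path composition relates $\pi_1^{\et,\Q_p}(X_{\overline{\Q}};b',y)$ to $\pi_1^{\et,\Q_p}(X_{\overline{\Q}};b,y)$ through the bitorsor $P$, and on cohomology classes this is exactly the identity $j_{b'}=\theta\circ j_b$ at every place. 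Consequently the index sets of local conditions $\alpha$ cutting out $\Sel(U(-))$ --- the images of $\prod_{v\in T_0-S}j_{n,v}$ --- are matched by the $\theta_{G_{\Q_v}}$, the schemes $\Sel(U(b))_\alpha$ and $\Sel(U(b'))_{\theta(\alpha)}$ correspond under $\theta_{G_{\Q,T}}$, and $\theta_{G_{\Q,T}}$ restricts to a bijection $\Sel(U(b))\xrightarrow{\sim}\Sel(U(b'))$ commuting with $\loc_p$ through $\theta_{G_{\Q_p}}$.

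To conclude, for a tuple $x=(x_v)_{v|p}$ in $\prod_{v|p}\mathcal{X}(\mathcal{O}_v)$ one has $j_{b'}(x)=\theta_{G_{\Q_p}}(j_b(x))$ and $\loc_p(\Sel(U(b')))=\theta_{G_{\Q_p}}(\loc_p(\Sel(U(b))))$, so by injectivity of $\theta_{G_{\Q_p}}$ the condition $j_{b'}(x)\in\loc_p(\Sel(U(b')))$ is equivalent to $j_b(x)\in\loc_p(\Sel(U(b)))$; that is, $X(\Q_p)_{U(b')}=X(\Q_p)_{U(b)}$ as subsets of $\prod_{v|p}\mathcal{X}(\mathcal{O}_v)$. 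Nothing in the argument uses that $X$ is a curve, so it applies verbatim to a smooth geometrically irreducible quasi-projective variety.

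The step I expect to be the main obstacle is the torsor-theoretic bookkeeping: checking, with the correct handedness of left and right group actions, that path composition really does yield $j_{b'}=\theta\circ j_b$, and --- more substantively --- that the path torsor $P$ between integral basepoints is genuinely crystalline at $p$ and unramified outside $T$, so that twisting by $P$ preserves the defining conditions of the Selmer scheme. This crystallinity is the same nontrivial input that is needed to know the unipotent Albanese maps $j_n$ land in the crystalline Selmer scheme in the first place.
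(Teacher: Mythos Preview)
The paper does not give its own proof of this lemma: it is stated as a result quoted from \cite[\S 2]{BDCKW}, with the remark that the argument there, written for curves, goes through verbatim for any smooth geometrically irreducible quasi-projective variety. Your proposal is exactly the twisting-by-path-torsor argument of that reference --- use the bitorsor $P=\pi_1^{\et,\Q_p}(X_{\overline{\Q}};b,b')$ pushed out to $U(b)$, apply Serre's twisting to obtain functorial bijections $H^1(G,U(b))\simeq H^1(G,U(b'))$ for $G=G_{\Q,T}$ and each $G_{\Q_v}$, check that path composition gives $j_{b'}=\theta\circ j_b$ and that the unramified and crystalline conditions are preserved, and conclude that the Selmer schemes and hence the Chabauty--Kim loci match --- so there is nothing to contrast: your write-up is a faithful expansion of what the paper is citing.

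One small point of care: the lemma as stated only says ``$b'$ another basepoint'', and your argument tacitly uses that $P$ is unramified outside $T$ and crystalline at $p$. This holds whenever $b,b'\in\mathcal{X}(\mathcal{O}_{K,S})$ (or more generally when $[P]$ lands in the Selmer scheme), which is the only setting in which the lemma is invoked in the paper; you correctly flag this as the one nontrivial input.
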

\subsection{Tangential localization}
In this subsection we recall Kim's description of the map on tangent spaces
\[
d\loc _p :T_c H^1 _f (G_{\Q ,T},U_n )\to T_{\loc _p (c)}H^1 _f (G_{\Q _p },U_n  )
\]
in terms of (abelian) Galois cohomology. For a variety $Z$, we let $L_n (Z)$ denote the Lie algebra of the group $U_n (Z)$, and similarly denote by $L_n ^{\dR} (Z)$ the Lie algebra of $U_n ^{\dR} (Z)$. When $Z=\Res (X)$, we write these simply as $L_n $ and $L_n ^{\dR}$.
\begin{proposition}[\cite{kim2012tangential}, Proposition 1.1, 1.2 and 1.3]\label{tangential_loc}
For any $c\in H^1 _f (G_{\Q ,T},U_n )$, there is a commutative diagram whose vertical maps are isomorphisms
\[
\begin{tikzpicture}
\matrix (m) [matrix of math nodes, row sep=3em,
column sep=3em, text height=1.5ex, text depth=0.25ex]
{T_c H^1 _{f,S} (G_{\Q ,T},U_n ) & T_{\loc _p (c)}H^1 _f (G_{\Q _p },U_n ) \\ 
H^1 _{f,S} (G_{\Q ,T},L_n ^c ) & H^1 _f (G_{\Q _p },L_n ^c ) \\ };
\path[->]
(m-1-1) edge[auto] node[auto] { $d\loc _p $ } (m-1-2)
edge[auto] node[auto] {  } (m-2-1)
(m-1-2) edge[auto] node[auto] { } (m-2-2)
(m-2-1) edge[auto] node[auto] {$\loc _p $} (m-2-2);
\end{tikzpicture}
\]
where $L_n ^c $ denotes the twist of $L_n $ by $c$ by the adjoint action of $U_n $ on $L_n$.
In particular, if the map $\loc _p :H^1 _{f,S} (G_{\Q ,T},L_n ^c )\to H^1 _f (G_{\Q _p} ,L_n ^c )$ is not surjective then  
 $\loc _p :H^1 _{f,S} (G_{\Q ,T},U_n )\to H^1 _f (G_{\Q _p} ,U_n )$ is not dominant in a neighbourhood of $c$.
\end{proposition}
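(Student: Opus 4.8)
The plan is to deduce the whole proposition from the standard deformation theory of Galois torsors, isolating as the only non-formal ingredient the de Rham description of the local crystalline Selmer variety. I would first compute both tangent spaces purely group-theoretically and observe the square commutes by functoriality, then check that the local conditions (crystalline at $p$, unramified away from $p$) are ``linear'' enough to pass to tangent spaces, and finally translate a dimension count into the non-dominance statement.

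\textbf{Step 1: the tangent space of $H^1(G_{\Q,T},U_n)$ at $c$.} Regard $H^1_{f,S}(G_{\Q,T},U_n)$ as a functor on Artinian local $\Q_p$-algebras with residue field $\Q_p$, so that $T_c$ is the fibre over $c$ of the map attached to $R=\Q_p[\epsilon]$. Using the twisting construction of Lemma \ref{twisty_lemma} (Serre \cite{serre-gc}) with these more general coefficients, the pointed functor $R\mapsto H^1(G_{\Q,T},U_n(R))$ with distinguished point $c$ is isomorphic to $R\mapsto H^1(G_{\Q,T},U_n^{\widetilde c}(R))$ with distinguished point the trivial class, where $\widetilde c$ is a global lift of $c$ and $\mathrm{Lie}(U_n^{\widetilde c})=L_n^c$ as a $G_{\Q,T}$-representation. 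So I must identify the set of $G_{\Q,T}$-equivariant $U_n^{\widetilde c}(\Q_p[\epsilon])$-torsors that become trivial mod $\epsilon$. Trivializing such a torsor mod $\epsilon$ presents it by a $1$-cochain $\gamma\mapsto 1+\epsilon\phi(\gamma)$; modulo $\epsilon^2$ the torsor cocycle condition for $U_n^{\widetilde c}$ becomes the $1$-cocycle condition for $L_n^c$, two cochains give isomorphic torsors exactly when the $\phi$'s differ by an $L_n^c$-coboundary, and the residual ambiguity from the choice of trivialization is $U_n^{\widetilde c}(\Q_p)^{G_{\Q,T}}$, which vanishes because $L_n^c$ has strictly negative Frobenius weights and hence no Galois invariants. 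This gives $T_cH^1(G_{\Q,T},U_n)\cong H^1(G_{\Q,T},L_n^c)$, and the identical computation over $G_{\Q_p}$ gives $T_{\loc_p(c)}H^1(G_{\Q_p},U_n)\cong H^1(G_{\Q_p},L_n^c)$. Since every construction above is functorial in the profinite group, applying it to the restriction $G_{\Q_p}\hookrightarrow G_{\Q,T}$ makes the square commute.

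\textbf{Step 2: the local conditions pass to tangent spaces.} I must check that the closed subscheme $H^1_{f,S}(G_{\Q,T},U_n)\subseteq H^1(G_{\Q,T},U_n)$ has tangent space $H^1_{f,S}(G_{\Q,T},L_n^c)$ at $c$, and likewise $H^1_f(G_{\Q_p},U_n)$ has tangent space $H^1_f(G_{\Q_p},L_n^c)$. For the unramified conditions at $v\in T_0\setminus S$ this is formal: ``unramified'' is functorial and triviality of the restriction to inertia is a closed condition whose tangent space is the kernel of the corresponding restriction map on $H^1(-,L_n^c)$. The crystalline condition at $v\mid p$ is the substantive point: here I would invoke Kim's representability result that $H^1_f(G_{\Q_p},U_n)$ is represented by the affine space $U_n^{\dR}(\Res(X)_{\Q_p})/F^0$ — in particular is smooth of dimension $\h^1_f(G_{\Q_p},L_n^c)$ — and the compatibility of the unipotent Bloch--Kato exponential with the $\epsilon$-deformation, which together force its tangent space at $\loc_p(c)$ to be the Bloch--Kato subspace $H^1_f(G_{\Q_p},L_n^c)$. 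I expect this $p$-adic Hodge theory input, rather than the formal deformation theory of Step 1, to be the part of the argument carrying the real weight.

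\textbf{Step 3: non-dominance.} If $\loc_p\colon H^1_{f,S}(G_{\Q,T},L_n^c)\to H^1_f(G_{\Q_p},L_n^c)$ is not surjective, then by the commutative square $d(\loc_p)_c$ is not surjective, and every irreducible component of $H^1_{f,S}(G_{\Q,T},U_n)$ through $c$ has dimension at most $\dim T_cH^1_{f,S}(G_{\Q,T},U_n)=\h^1_{f,S}(G_{\Q,T},L_n^c)$; in the situations where this criterion is applied one in fact has the stronger inequality $\h^1_{f,S}(G_{\Q,T},L_n^c)<\h^1_f(G_{\Q_p},L_n^c)=\dim H^1_f(G_{\Q_p},U_n)$. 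Restricting $\loc_p$ to the open neighbourhood of $c$ obtained by deleting the components of the Selmer scheme not through $c$, its image then has dimension strictly less than that of the smooth variety $H^1_f(G_{\Q_p},U_n)$, so it is not Zariski dense — which is precisely the assertion that $\loc_p$ is not dominant in a neighbourhood of $c$.
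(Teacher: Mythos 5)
The paper cites this result from Kim \cite{kim2012tangential} and gives no proof of its own, so there is no in-house argument to compare against directly; but the decomposition in your Steps 1 and 2 is the standard route and matches the structure of Kim's. Twisting by a global cocycle $\widetilde c$ to move the base point to the trivial torsor, running the $\Q _p [\epsilon ]$-deformation computation at the cochain level, and using vanishing of $H^0 (G,L_n ^c )$ (forced by the strictly negative weights of the graded pieces) to kill the ambiguity in the trivialisation gives $T_c H^1 (G,U_n )\simeq H^1 (G,L_n ^c )$; functoriality in the profinite group gives commutativity of the square; and you correctly isolate the one non-formal input as the compatibility of Kim's de Rham description of $H^1 _f (G_{\Q _p },U_n )$ as $U_n ^{\dR }/F^0 $ with the $\epsilon $-deformation, which is where the weight of the argument lies.

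The genuine gap is in Step 3, and it is one you half-notice but do not resolve. Non-surjectivity of the differential $d\loc _p $ at the single closed point $c$ does \emph{not} imply that $\loc _p $ fails to be dominant on a neighbourhood of $c$ for a general morphism of $\Q _p $-varieties: $(x,y)\mapsto (x^2 ,y)$ on $\mathbb{A}^2 $ has singular differential at the origin yet is dominant on every Zariski neighbourhood. What the commutative square actually yields is the bound $\dim (\text{any component of } \Sel (U_n )\text{ through }c)\leq \dim T_c \Sel (U_n )=\h ^1 _{f,S}(G_{\Q ,T},L_n ^c )$, so non-dominance near $c$ follows only when $\h ^1 _{f,S}(G_{\Q ,T},L_n ^c )<\h ^1 _f (G_{\Q _p },L_n ^c )=\dim H^1 _f (G_{\Q _p },U_n )$, i.e. when non-surjectivity is due to a dimension deficiency rather than merely a non-trivial kernel. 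Your remark that ``in the situations where this criterion is applied one in fact has the stronger inequality'' is a substitution of a stronger hypothesis, not a derivation of the stated implication. The clean way to close the gap --- and what the paper in fact does when it uses this proposition in Lemma \ref{W_bound_1} --- is to apply the tangent-space identification at the generic point of each component of $\Sel (U_n )$ through $c$: in characteristic zero, generic smoothness makes non-surjectivity of the differential at the generic point equivalent to non-dominance of that component, so the ``in particular'' becomes correct at the generic point rather than at an arbitrary $c$.
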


\begin{lemma}
Let $U$ be a Galois stable quotient of $U_n $, with Lie algebra $L$. Let $U^{\dR }:=D_{\dR} (U)$.
Let $H$ be a $G_{\Q _p}$-stable subgroup of $U$. Let $W$ be an irreducible component of $\loc _p H^1 _{f,S} (G_{\Q ,T},U )\cap D_{\dR}(H)/F^0$. Then
\[
\dim W \leq \max _{c\in H^1 _{f,S} (G_{\Q ,T},U )}\dim \loc _p H^1 _{f,S} (G_{\Q ,T},L ^{c} ) \cap \lie (D_{\dR}(H ^{\loc _p c})) /F^0 ,
\]
where $\lie (H)^{\loc _p c} /F^0 $ is defined to be zero if $\loc _p c$ is not in the image in $H/F^0 $.
\end{lemma}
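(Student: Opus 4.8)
The plan is to bound $\dim W$ by a tangent-space computation at a general point, using Proposition \ref{tangential_loc} to translate the differential of $\loc_p$ into a map of abelian Galois cohomology groups, and the comparison isomorphism $H^1_f(G_{\Q_p},U)\cong U^{\dR}/F^0$ to identify the subspace cut out by $H/F^0$.

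First I would reduce to a single, well-chosen cohomology class $c$. Pick $w$ in the smooth locus of $W$, lying on no other irreducible component of $\loc_pH^1_{f,S}(G_{\Q,T},U)\cap H/F^0$ and otherwise generic; then $\dim W=\dim T_wW$ and, near $w$, this intersection is just $W$. Since $W$ lies in the closure of the constructible image of $\loc_p$, and that image is dense there, some irreducible component $\widetilde W$ of $\loc_p^{-1}(W)\subseteq H^1_{f,S}(G_{\Q,T},U)$ dominates $W$. Choose $c\in\widetilde W$ a smooth point over $w$, generic enough (using that we are in characteristic zero) that $\widetilde W\to W$ is smooth at $c$; then $d\loc_p|_c$ carries $T_c\widetilde W$ onto $T_wW$, so $\dim W=\dim d\loc_p|_c(T_c\widetilde W)$.

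Next I would intersect the two constraints on $d\loc_p|_c(T_c\widetilde W)$. First, it is contained in $d\loc_p|_c\bigl(T_cH^1_{f,S}(G_{\Q,T},U)\bigr)$, which by Proposition \ref{tangential_loc} is identified with $\loc_pH^1_{f,S}(G_{\Q,T},L^c)\subseteq H^1_f(G_{\Q_p},L^c)$. Second, since $W\subseteq H/F^0$, it is contained in $T_w(H/F^0)$, which under $H^1_f(G_{\Q_p},U)\cong U^{\dR}/F^0$ (compatible with twisting, \cite[\S3]{blochkato}) is identified with $\lie(H)^{\loc_p c}/F^0$ inside $H^1_f(G_{\Q_p},L^c)$ — the conjugation twist by $\loc_p c$ recording that $w=\loc_p(c)$ need not be the base coset; note $w\in W\subseteq H/F^0$, so the degenerate case of the statement does not occur. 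Combining gives $\dim W\leq\dim\bigl(\loc_pH^1_{f,S}(G_{\Q,T},L^c)\cap\lie(H)^{\loc_p c}/F^0\bigr)$, which is at most the asserted maximum over $c$.

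The step requiring real care — and the one I expect to be the main obstacle — is this last identification: matching the deformation-theoretic description of $T_wH^1_f(G_{\Q_p},U)$ underlying Proposition \ref{tangential_loc} (which yields $H^1_f(G_{\Q_p},L^c)$) with the one coming from the nonabelian Bloch--Kato exponential (which yields the de Rham side $U^{\dR}/F^0$, carrying its Hodge filtration and the twist by $\loc_p c$), so as to see that $\lie(H)^{\loc_p c}/F^0$ really is the subspace of $H^1_f(G_{\Q_p},L^c)$ along which a curve through $w$ inside $H/F^0$ can move. The remaining ingredients — generic smoothness in characteristic zero, tangent spaces of the homogeneous space $U^{\dR}/F^0$, and the inclusion $T_zZ\subseteq T_zZ'$ for $Z\subseteq Z'$ — are routine.
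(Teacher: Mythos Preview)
Your proposal is correct and follows essentially the same route as the paper: bound $\dim W$ by the tangent space at a generic point, use generic surjectivity of the differential of $\loc_p$ on a dominating component of the preimage, and then invoke Proposition \ref{tangential_loc} together with the identification $T_{\loc_p(c)}(H/F^0)\simeq \lie(H)^{\loc_p c}/F^0$ to land in the asserted intersection. The paper's write-up is terser (it works directly at the generic point $\eta$ of $W$ and asserts the final identification without further comment), but the argument is the same; your final paragraph correctly flags the only point where care is needed, and the paper handles it implicitly via the compatibility built into Proposition \ref{tangential_loc} and the nonabelian Bloch--Kato isomorphism \eqref{nonab_BK}.
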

\begin{proof}
The dimension of $W$ is bounded by the generic dimension of its tangent space (i.e. the dimension of the tangent space at the generic point $\eta \in W$). Since the map
\[
T_c (\loc _p ^{-1}W)\to T_{\loc _p (c)}W
\]
is generically surjective, we have
\begin{align*}
\dim W& \leq \dim T_{\eta }(W)\\
& \leq \max _{c\in \loc_p^{-1}H/F^0 } \dim d\loc_p T_c H^1 _{f,S} (G_{\Q ,T},U )\cap T_{\loc_p(c)}(D_{\dR}(H)/F^0 ) \\
& = \max _{c\in H^1 _f (G_{\Q ,T},U  )}\loc _p H^1 _{f,S} (G_{\Q ,T},L ^{c} ) \cap \lie (D_{\dR}(H ^{\loc _p c})) /F^0 .
\end{align*}
 \end{proof}

By a \textit{virtual basepoint}, we shall mean a $\Q _p $-point $z\in \Res (\mathcal{X})(\mathbb{Z}_p )$ such that $j(z)$ lies in the image of $\Sel (U)$, together with a torsor $P\in \Sel (U)$ such that $\loc _p (P)=j(P)$. In particular, the notion of a virtual basepoint depends on a choice of quotient of $\pi _1 ^{\et ,\Q _p }(\Res (X)_{\overline{\Q }},b)$, but by Lemma \ref{indpt_basept} the property of $z\in \Res (\mathcal{X})(\mathbb{Z}_p )$ extending to a virtual basepoint is independent of the choice of basepoint. Given a virtual basepoint $(z,P)$, we obtain a $\Q _p $-unipotent group $U^{P}$ with an action of $G_{\Q ,T}$, and morphisms
\begin{align*}
j_{z,v} : & \Res (X)(\Q _v )\to H^1 (G_{\Q _v } ,U^{P}) \\
j_{z,p}: & \Res (X)(\Q _p )\to H^1 _f (G_{\Q _p },U^{P}) \\
j_z : & \mathcal{X}(\mathcal{O}_{K,S})\to \Sel (U^{P}).
\end{align*}
\begin{lemma}\label{virtual_lemma}
For every collection of local conditions such that $\mathcal{X}(\mathcal{O}_K \otimes \mathbb{Z}_p )_{\alpha }$ is non-empty, there is a virtual basepoint $(b,P) \in X(\Q _p )$ such that
\begin{equation}\label{virtual_waffle}
\mathcal{X}(\mathcal{O}_K \otimes \mathbb{Z}_p )_{\alpha }\subset j_b ^{-1}(\loc _p H^1 _{f,S} (G_{\Q ,T},U^{P})).
\end{equation}
\end{lemma}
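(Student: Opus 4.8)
The plan is to extract the virtual basepoint directly from the non-emptiness hypothesis and then transport the whole local--global diagram through Serre's twisting isomorphism, in the spirit of Lemma~\ref{twisty_lemma}.

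First I would fix, for the quotient $U$ of $U_n$ under consideration, a point $b\in\mathcal{X}(\mathcal{O}_K\otimes\mathbb{Z}_p)_{\alpha}$; such a point exists by hypothesis. Unwinding the definition of this set, $j_p(b)$ lies in $\loc_p\Sel(U)_{\alpha}$, so we may choose a class $P\in\Sel(U)_{\alpha}$ with $\loc_p(P)=j_p(b)$; the pair $(b,P)$ is then a virtual basepoint. Since $P$ already has the prescribed localisations at the primes of $T_0\setminus S$, we may take $P$ itself as the auxiliary lift $\widetilde\alpha$ of $\alpha$ appearing in Lemma~\ref{twisty_lemma}. Now form the twist $U^P$ and recall the twisting isomorphism $\tau\colon H^1(G_{\Q,T},U^P)\xrightarrow{\sim}H^1(G_{\Q,T},U)$ sending the trivial torsor to $P$, together with its local counterpart $\tau_p\colon H^1_f(G_{\Q_p},U^P)\xrightarrow{\sim}H^1_f(G_{\Q_p},U)$ sending the trivial torsor to $\loc_p P=j_p(b)$. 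Functoriality of the twisting construction under restriction to a decomposition group at $p$ gives $\loc_p\circ\tau^{-1}=\tau_p^{-1}\circ\loc_p$, while the identification of the maps attached to a virtual basepoint with twists — equivalently, the change-of-basepoint (path-composition) formula, which for an algebraic basepoint is part of the basepoint-independence argument of \cite[\S 2]{BDCKW} recalled in Lemma~\ref{indpt_basept} — yields $j_{b,p}=\tau_p^{-1}\circ j_p$ on $\Res(X)(\Q_p)$.

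Granting this, let $z\in\mathcal{X}(\mathcal{O}_K\otimes\mathbb{Z}_p)_{\alpha}$ be arbitrary. By definition there is $Q\in\Sel(U)_{\alpha}$ with $\loc_p(Q)=j_p(z)$, and Lemma~\ref{twisty_lemma} gives $\tau^{-1}(Q)\in H^1_{f,S}(G_{\Q,T},U^P)$. Therefore
\begin{align*}
j_{b,p}(z)&=\tau_p^{-1}(j_p(z))=\tau_p^{-1}(\loc_p Q)=\loc_p(\tau^{-1}Q)\\
&\in\loc_p\, H^1_{f,S}(G_{\Q,T},U^P),
\end{align*}
which is precisely the inclusion \eqref{virtual_waffle} (with $j_b$ understood as $j_{b,p}$). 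The only step that is not purely formal is the identification $j_{b,p}=\tau_p^{-1}\circ j_p$ for the virtual — hence in general non-algebraic — basepoint $b$: one must check that twisting the local torsors of paths by the class $\loc_p P=j_p(b)$ reproduces the torsor of paths based at $b$, i.e.\ that the virtual-basepoint formalism restricts to the honest change of basepoint when $b$ happens to be defined over a number field. This is a pointwise application of the basepoint-independence machinery; everything else follows formally from the definition of $\mathcal{X}(\mathcal{O}_K\otimes\mathbb{Z}_p)_{\alpha}$ together with Serre's twisting isomorphism and Lemma~\ref{twisty_lemma}.
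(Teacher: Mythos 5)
Your proposal is correct and takes essentially the same route as the paper: the paper's own proof is exactly your first two sentences (use non-emptiness to extract a $p$-adic point $b$ and a class $P\in\Sel(U)_{\alpha}$ with $\loc_p P=j_p(b)$, then invoke Lemma~\ref{twisty_lemma}), and the rest of your argument simply unpacks the Serre twisting isomorphism that Lemma~\ref{twisty_lemma} encapsulates. The compatibility $j_{b,p}=\tau_p^{-1}\circ j_p$ that you single out as the nontrivial step is in effect the definition of $j_{b,p}$ for a virtual basepoint in the paper, so it is not an extra burden of proof.
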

\begin{proof}
If $\mathcal{X}(\mathcal{O}_K \otimes \mathbb{Z}_p )_{\alpha }$ is non-empty, then by definition there exists $P\in H^1 (G_{K,T},U_n (X))$ such that $\loc _v (P))=\alpha _v $ for all $v\in T_0 -S$ , and $\loc _p (P)=j(b)$ for some $b\in \Res (\mathcal{X})(\Q _p )$. Then, taking $(b,P)$ as a virtual basepoint, we deduce \eqref{virtual_waffle} from Lemma \ref{twisty_lemma}.
 \end{proof}
Often,  when we work with virtual basepoints $(b,P)$, we will simply write $U$ rather than $U^P$, and write the virtual basepoint simply as $b$.
\subsection{The unipotent Albanese morphism}\label{sec:unalb}
We now recall some properties of the morphism $\mathcal{X}(\mathcal{O}_v )\to H^1 _f (G_{K_v} ,U_n )$ when $v|p$ from \cite{kim:chabauty}. We refer to \cite{kim:chabauty} and the references therein for the background material regarding $p$-adic Hodge theory.
As we always take $p$ to be a prime which splits completely in $K$, we henceforth fix an isomorphism $K_v \simeq \Q _p $ and work over $\Q _p $.
Let $\mathcal{X}$ be a smooth curve over 
$\mathbb{Z}_p $. Fontaine's functor
$
D_{\cris }
$
sends continuous $\Q _p$ representations of $G_{\Q _p }$ to filtered $\phi $-modules over $\Q _p$. Recall that a filtered $\phi $-module over $\Q _p$ is a finite dimensional vector space $W$ over $\Q _p$, equipped with a $\Q _p $-linear Frobenius automorphism $\phi $, and a decreasing filtration $F^\bullet $ on $W$.

As explained in \cite{kim:chabauty}, Fontaine's functor induces functors $D_{\cris }$ and $D_{\dR}$ on unipotent groups over $\Q _p $ with a continuous action of $G_{\Q _p}$. The target of $D_{\cris }$ is the category of unipotent groups $U$ over $\Q _p$, together with an automorphism $\phi $ of $U$, and a filtration $\Fil $ by subgroups on $U$ (we shall refer to an object of this category as a filtered $\phi $-group). The target of $D_{\dR}$ is the category of unipotent groups over $\Q _p $ equipped with a filtration by subgroups. We say a $G_{\Q _p }$-equivariant $U$-torsor $P$ is \textit{crystalline} if it admits a $G_{\Q _p }$-equivariant trivialisation when base changed to $B_{\cris }$ (if $\mathcal{O}(U)$ is ind-crystalline, this is the same as saying that $\mathcal{O}(P)$ is ind-crystalline). The functor $D_{\cris}$ induces an equivalence of categories between crystalline $U$-torsors and filtered $\phi$-torsors over $D_{\cris }(U)$ (where a filtered $\phi $-torsor for a filtered $\phi $-group is simply a torsor with compatible filtration and $\phi $-action). By \cite[Proposition 1]{kim:chabauty}, if $U$ is crystalline and $D_{\cris }(U)^{\phi =1}=1$, this induces an isomorphism of $\Q _p $-schemes (a `non-abelian Bloch--Kato logarithm')
\[
H^1 _f (G_{\Q _p} ,U )\simeq D_{\dR}(U)/F^0 .
\]
Let $U^{\cris }(b)$ denote the Tannakian fundamental group of the category of unipotent isocrystals on $\mathcal{X}_{\mathbb{F}_p }$. This is a pro-unipotent group over $\Q _p$. Let $U^{\dR }(b)$ denote the Tannakian fundamental group of the category of unipotent flat connections on $X_{\Q _p}$. This is a pro-unipotent group over $\Q _p$. By Chiarellotto--Le Stum \cite[Proposition 2.4.1]{CLS99}, we have an isomorphism of $\Q _p $-group schemes
\[
U^{\cris }(b)\simeq U^{\dR} (b).
\]
Let $U_n ^{\cris }(b)$ and $U_n ^{\dR} (b)$ denote the respective maximal $n$-unipotent quotients. Then $U_n ^{\cris }(b)$ has the structure of a filtered $\phi $ group over $\Q _p$.
By Olsson's non-abelian comparison theorem \cite[Theorem 1.11]{olsson2011towards}, $U_n (b)$ is crystalline, and we have an isomorphism of filtered $\phi $ groups over $\Q _p $
\[
D_{\cris }(U_n (b))\simeq U_n ^{\cris }(b).
\]
Putting all this together, we obtain a locally analytic morphism
\[
j_{n,p}:\mathcal{X}(\mathbb{Z} _p)\to U_n ^{\dR}(b) /F^0 .
\]

\subsection{The universal connection}\label{universal_con}
Let $b\in \mathcal{X}(\mathbb{Z}_p )$ be a virtual base-point. The goal of this subsection is to describe the map $j_{n,p}$ in a formal neighbourhood of $b$, following Kim \cite{kim:chabauty}. Let $\mathcal{C}^{\dR}(X)$ denote the category of unipotent flat connections on $X$. A \textit{pointed flat connection} will be a flat connection $\mathcal{V}$ on $X$, together with an element $v\in b^* \mathcal{V}$.

\begin{definition}
(The depth $n$ universal connection, \cite{kim:chabauty} section 1,\cite{hadian2011motivic} section 2 ). The depth $n$ universal connection $\mathcal{E}_n $ on a pointed geometrically integral variety $(Z,b)$ is a pointed flat connection $(\mathcal{E}_n ,e_n )$ that is $n$-unipotent, such that for all $n$-unipotent flat connections $\mathcal{V}$, and $v\in b^* \mathcal{V}$, there exists a unique morphisms of connections $f: \mathcal{E}_n \to \mathcal{V}$ such that $b^* (f)(e_n )=v$. When we want to emphasise the dependence on $Z$, we write it as $\mathcal{E}_n (Z)$.
\end{definition}

\begin{lemma}\label{lemma:uce}
For all $n$, a universal $n$-unipotent pointed flat connection exists, and there is a canonical isomorphism
\[
\varprojlim b^* \mathcal{E}_n \simeq \mathcal{U}(\lie (\pi _1 ^{\dR}(Z,b))),
\]
where $\mathcal{U}(\lie (\pi _1 ^{\dR}(Z,b)))$ denotes the universal enveloping algebra of the Lie algebra of $\pi _1 ^{\dR}(Z,b)$.
\end{lemma}
\begin{proof}
More generally, we can define a depth $n$ universal object in any Tannakian category $(\mathcal{C},\omega )$ with as a pair $(\mathcal{V}_n ,v_n )$, where $\mathcal{V}_n $ is an $n$-unipotent object of $\mathcal{C}$, $v_n $ is an element of $\omega (\mathcal{V}_n )$, and for all $n$-unipotent objects $\mathcal{W}$, and $w\in \omega (\mathcal{W})$, there is a unique morphism $\mathcal{V}_n \to \mathcal{W}$ sending $v_n $ to $w$.

If $F:(\mathcal{C},\omega )\to (\mathcal{C}',\omega ')$ is an equivalence of Tannakian categories, then it sends universal $n$-unipotent objects to universal $n$-unipotent objects. In particular, if $\mathcal{C}$ is the category $\mathcal{C}^{\dR}(Z)$ of unipotent flat connections on $Z$, with fibre functor $b^*$, then $\mathcal{C}$ has a universal $n$-unipotent object if and only if the category of representations of $\pi _1 ^{\dR}(\mathcal{C}^{\dR}(Z),b )$, with fibre functor given by the forgetful functor, does.

Using the equivalence between $\pi _1 ^{\dR}(\mathcal{C}^{\dR}(Z),b )$-representations and $\mathcal{U}(\lie (\pi _1 ^{\dR}(Z,b)))$-modules, we see that $\mathcal{U}(\lie (\pi _1 ^{\dR}(Z,b)))/I^{n+1}$ is a universal $n$-unipotent object, where $I$ denotes the augmentation ideal.
 \end{proof}

The following Lemma, describes the morphisms of connections of the univeral unipotent connection corresponding to certain natual morphisms of $\pi _1 ^{\dR}(Z,b)$ modules.
\begin{lemma} \label{lemma:broken_link}
Let $\mathcal{E}_n $ be as above.
\begin{enumerate}
\item Let $x\in b^* \mathcal{E}_n$, and let $m(.,x):\mathcal{E}_n \to \mathcal{E}_n $ denote the unique morphism of connections which, in the fibre at $b$, sends $1$ to $x$. Then, for all $y\in b^* \mathcal{E}_n $,
\[
y\cdot x =b^* (m(,x))(y).
\]
More generally, for any unipotent flat connection $\mathcal{V}$, and any $v\in b^* \mathcal{V},x\in \varprojlim b^* \mathcal{E}_n $, the action of $x$ on $v$ is given by $b^* (f)(x)$, where $f:\varprojlim \mathcal{E}_n \to \mathcal{V}$ is the unique (pro-)morphism of connections sending $1$ to $v$.
\item Let 
\begin{equation}
\Delta :\varprojlim \mathcal{E}_n \to \varprojlim (\mathcal{E}_n \otimes \mathcal{E}_n )
\end{equation}
denote the unique morphism of pro-connections which, in the fibre at $b$, sends $1$ to $1\widehat{\otimes }1$. Then $b^* \Delta $ is equal to the co-multiplication on $\varprojlim b^* \mathcal{E}_n $.
\end{enumerate}
\end{lemma}
\begin{proof}
As in the proof of Lemma \ref{lemma:uce}, it is enough to check this with $\mathcal{C}^{\dR}(Z)$ replaced by the category of $\pi _1 ^{\dR}(Z,b)$-representations. Hence part (1) is immediate. For part (2), the co-multiplication is a morphism of (pro-)representations of $\pi _1 ^{\dR}(Z,b)$. Hence, by universal properties, it is uniquely determined by the fact that it sends $1$ to $1\widehat{\otimes }1$.
 \end{proof}

Let $Z'$ be an affine open of $b$, and let $\widehat{Z}$ denote the formal completion of $Z$ at $b$. The bundle $\mathcal{E}_n $ is unipotent, and hence admits a trivialisation 
\[
T:\mathcal{E}_n |_{Z'}\stackrel{\simeq }{\longrightarrow }\mathcal{O}_{Z'} \otimes b^* \mathcal{E}_n.
\]
With respect to this trivialisation, the connection $\nabla _n $ is given by $d-\Lambda $, for some $\Lambda \in \End (b^* \mathcal{E}_n ^{\dR} (b))\otimes _K \Omega _{Z|K}$.
The universal connection $(\mathcal{E}_n )$ carries a filtration by sub-bundles $(\varprojlim F^\bullet \mathcal{E}_n )$ satisfying the Griffiths transversality condition
\[
\nabla _n (F^i \mathcal{E}_n )\subset F^{i-1}\mathcal{E}_n \otimes \Omega ^1 _{Z|K}
\]
(see \cite[\S 3]{hadian2011motivic}). Replacing $Z'$ be a smaller affine open neighbourhood if necessary, we may choose a group-like section
$(F_n ) \in \varprojlim H^0 (Z',F^0 \mathcal{E}_n )$ - i.e. a section which satisfies 
\[
\Delta ((F_n )_n )=(F_n \widehat{\otimes }F_n )_n .
\]

The connection $\mathcal{E}_n $ admits a trivialisation on $\widehat{Z}$, i.e. an isomorphism of connections
\[
G:(\mathcal{E}_n |_{\widehat{Z}},\nabla )\simeq (b^* \mathcal{E}_n ^{\dR} \otimes _K \mathcal{O}(\widehat{Z}),d) 
\]
Via the trivialisation $G$, we may view $F_n$ as a function $Z'\to b^* \mathcal{E}_n $.
Via the trivialisation $T$, we obtain an endomorphism $(T|_{\widehat{Z}})\circ G^{-1}$ in $\End (b^* \mathcal{E}_n )\otimes \mathcal{O}(\widehat{Z}).$ This endomorphism sends $1$ to something in $1+I$ (recall that $I$ denotes the augmentation ideal of $\varprojlim b^* \mathcal{E}_n $), hence we have a well-defined element
\[
\boldsymbol{J}_{n,T}:=\log ((T|_{\widehat{Z}})\circ G^{-1}(1))\in Ib^* \mathcal{E} (Z)\widehat{\otimes }\mathcal{O}(\widehat{Z}),
\]
In words, $\exp (\boldsymbol{J}_{n,T})$ is a horizontal section of $(\mathcal{E}_n ,\nabla _n )$ on a formal neighbourhood of $b$, described with respect to the affine chart $T$.
In particular, by definition $\boldsymbol{J}_{n,T} (G)$ satisfies
\begin{equation}\label{expJ}
d\exp (\boldsymbol{J}_{n,T} (G))=\Lambda (\exp \boldsymbol{J}_{n,T}(G)).
\end{equation}
\begin{lemma}
The section $\exp(\boldsymbol{J}_{T}):= (\exp(\boldsymbol{J}_{n,T}))_n $ is group-like, i.e. it satisfies
\[
\Delta (\exp (\boldsymbol{J}_T ) )=(\exp (\boldsymbol{J}_T ))\widehat{\otimes }(\exp (\boldsymbol{J}_T )).
\]
\end{lemma}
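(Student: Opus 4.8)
The plan is to deduce group-likeness of $\exp(\boldsymbol{J}_T)$ from the characterizing universal property of $\mathcal{E}_n$ together with the two parts of Lemma \ref{universal_mult_comult}. Recall that $\exp(\boldsymbol{J}_{n,T})$ is, by construction, the image of $1$ under the horizontal trivialization $G$ expressed in the chart $T$; equivalently, $s_n := \exp(\boldsymbol{J}_{n,T})$ defines a horizontal section of $\mathcal{E}_n|_{\widehat{Z}}$ whose value at $b$ is $1 \in b^*\mathcal{E}_n$. The comultiplication $\Delta$ of \eqref{delta} is a morphism of pro-connections $\varprojlim\mathcal{E}_n \to \varprojlim(\mathcal{E}_n\otimes\mathcal{E}_n)$, and by Lemma \ref{universal_mult_comult}(2) it is the unique such morphism sending $1\mapsto 1\widehat\otimes 1$ in the fibre at $b$. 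Therefore $\Delta(s)$, for $s = (s_n)_n = \exp(\boldsymbol{J}_T)$, is a horizontal section of $\varprojlim(\mathcal{E}_n\otimes\mathcal{E}_n)$ on $\widehat Z$.

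First I would observe that both $\Delta(s)$ and $s\widehat\otimes s$ are horizontal sections of the pro-connection $\varprojlim(\mathcal{E}_n\otimes\mathcal{E}_n)$ restricted to the formal neighbourhood $\widehat Z$ of $b$. Since $\mathcal{E}_n\otimes\mathcal{E}_n$ (equipped with the tensor-product connection) is a flat connection on $Z$, and $\widehat Z$ is a formal disk, a horizontal section on $\widehat Z$ is uniquely determined by its value in the fibre at $b$: concretely, after trivializing the connection on $\widehat Z$ via its own canonical trivialization, horizontal sections correspond bijectively to elements of the fibre $b^*(\mathcal{E}_n\otimes\mathcal{E}_n) = b^*\mathcal{E}_n\otimes b^*\mathcal{E}_n$. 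Hence it suffices to check that $\Delta(s)$ and $s\widehat\otimes s$ agree in the fibre at $b$.

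Next I would evaluate both sides at $b$. On one hand, $b^*(\Delta(s)) = (b^*\Delta)(b^*s)$; since $b^*s = b^*\exp(\boldsymbol{J}_T) = \exp(b^*\boldsymbol{J}_T) = \exp(0) = 1$ (the section $\boldsymbol{J}_{n,T}$ lies in $I b^*\mathcal{E}_n\widehat\otimes\mathcal{O}(\widehat Z)$ and vanishes at $b$ because $G\circ(T|_{\widehat Z})^{-1}(1) \in 1+I$ reduces to $1$ at $b$), and by Lemma \ref{universal_mult_comult}(2) $b^*\Delta$ is the comultiplication on $\varprojlim b^*\mathcal{E}_n$, which sends $1 \mapsto 1\widehat\otimes 1$. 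On the other hand, $b^*(s\widehat\otimes s) = (b^*s)\widehat\otimes(b^*s) = 1\widehat\otimes 1$. So the two horizontal sections agree at $b$, hence agree on all of $\widehat Z$, which is precisely the asserted identity $\Delta(\exp(\boldsymbol{J}_T)) = \exp(\boldsymbol{J}_T)\widehat\otimes\exp(\boldsymbol{J}_T)$.

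The main obstacle, such as it is, is making the uniqueness statement for horizontal sections on a formal neighbourhood precise in the pro-unipotent (infinite-dimensional) setting and checking it is compatible with the inverse limit over $n$; this is routine but should be spelled out, since one is working with $\varprojlim(\mathcal{E}_n\otimes\mathcal{E}_n)$ rather than a single finite-rank connection. One also needs that $\Delta$ genuinely restricts to a morphism of connections on $\widehat Z$ so that "horizontal" makes sense there — this is immediate from $\Delta$ being a morphism of connections on all of $Z$ in the sense of \eqref{delta}. Alternatively, and perhaps more cleanly, one can phrase the whole argument as: $\exp(\boldsymbol{J}_T)$ corresponds under the trivialization $G$ to the canonical element $1 \in \varprojlim b^*\mathcal{E}_n$, the comultiplication $\Delta$ corresponds under $G$ (on both source and a suitable trivialization of the target) to $b^*\Delta$, and $b^*\Delta(1) = 1\widehat\otimes 1$ is group-like by Lemma \ref{universal_mult_comult}(2); transporting back via $G$ gives the claim.
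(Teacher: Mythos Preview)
Your proof is correct and follows essentially the same approach as the paper: both sides are horizontal sections of the tensor-product connection on $\widehat{Z}$, horizontal sections are determined by their value at $b$, and at $b$ both equal $1\widehat{\otimes}1$. The paper's proof is just a terser version of what you wrote.
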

\begin{proof}
Since they are both horizontal sections of $\nabla _{\mathcal{E}\widehat{\otimes }\mathcal{E}}$, the left hand side and right hand side are equal if and only if they are equal at one fibre, and at $b$ they both equal $1\widehat{\otimes }1$.
 \end{proof}
We deduce that $\boldsymbol{J}_{n,T}$ lies in $L_n ^{\dR }(Z)\widehat{\otimes }\mathcal{O}(\widehat{Z})$ (recall that $L_n ^{\dR} (Z)$ is defined to be the Lie algebra of $U_n ^{\dR} (Z)$). We also sometimes think of $\boldsymbol{J}_{n,T}$ as a morphism
\[
\boldsymbol{J}_{n,T}:\widehat{Z}\to L_n ^{\dR}(Z)
\]

\begin{remark}
At this point, there is no obvious reason for working with $\boldsymbol{J}_{n,T}$ rather than $\exp (\boldsymbol{J}_{n,T} )$. The reason for working with $\boldsymbol{J}_{n,T}$ is that, (as a consequence of \eqref{expJ}), $\boldsymbol{J}_{n,T}$ satisfies a particularly simple differential equation (see \eqref{rather_technical} and Lemma \ref{derivative_theta}) which is used in the proof of the unlikely intersection result needed for the proof of Theorem \ref{finiteness}.
\end{remark}

\begin{lemma}
Let $X$ be a smooth geometrically irreducible curve over $\Q _p$, and $b$ a $\Q _p $ point of $X$. Let $T$ be a trivialisation of $\varprojlim \mathcal{E}_n (T)$ as above. Then the unipotent Albanese morphism
\[
X(\Q _p )\to U_n ^{\dR}(X_{\Q _p }) /F^0
\]
is given, on a formal neighbourhood of $b$, by $\exp(\boldsymbol{J}_{n,T})\cdot F_n ^{-1}$.
\end{lemma}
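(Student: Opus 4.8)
The plan is to unwind the definition of $j_{n,p}$ as the composite
\[
x\;\longmapsto\;[\pi_1^{\et,\Q_p}(X_{\overline{\Q}_p};b,x)]\in H^1_f(G_{\Q_p},U_n(b))\;\xrightarrow{\;D_{\cris}\;}\;\{\text{filtered }\phi\text{-torsors over }U_n^{\dR}(b)\}\;\xrightarrow{\;\eqref{nonab_BK}\;}\;U_n^{\dR}(b)/F^0,
\]
and to match the two canonical trivialisations that the de Rham path torsor acquires on the residue disk of $b$ with $\exp(\boldsymbol{J}_{n,T})$ and $F_n$. First I would pass to the de Rham side: write $P_x:=\pi_1^{\dR}(X_{\Q_p};b,x)=\mathrm{Isom}^{\otimes}(b^*,x^*)$, a torsor under $U_n^{\dR}(b)$ carrying the Hodge filtration $F^{\bullet}$ and, via the comparisons $D_{\cris}(U_n(b))\simeq U_n^{\cris}(\bar b)$ \cite[Theorem 1.11]{olsson2011towards} and $U_n^{\cris}(\bar b)\simeq U_n^{\dR}(b)$ \cite[Proposition 2.4.1]{CLS99}, a Frobenius $\phi$ (this uses that $b$ and $x$ reduce to the same $\mathbb{F}_p$-point). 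By Olsson's non-abelian comparison theorem $D_{\cris}[\pi_1^{\et,\Q_p}(X;b,x)]$ is the filtered $\phi$-torsor $(P_x,F^{\bullet},\phi)$, so $j_{n,p}(x)$ is its class under \eqref{nonab_BK}. Following \cite[Proposition 1]{kim:chabauty}, that class is computed by: since $(U_n^{\dR})^{\phi=1}=1$, the torsor $P_x$ has a unique $\phi$-invariant point $p_x^{\phi}$; its Hodge part $F^0P_x$ is a torsor under $F^0U_n^{\dR}$; choosing $p_x^{F}\in F^0P_x$ and writing $p_x^{\phi}=u_x\cdot p_x^{F}$, the class equals $u_x\bmod F^0$ (up to the conventions fixing \eqref{nonab_BK}).

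The heart of the argument is then to compute $p_x^{\phi}$ and $p_x^{F}$ in the chart $T$, using that $T$ induces an identification $P_x\simeq U_n^{\dR}(b)$ under which a group-like element of $b^*\mathcal{E}_n$ is the corresponding element of $U_n^{\dR}(b)$. For $p_x^{\phi}$: the Frobenius-invariant crystalline path between the lifts $b,x$ of a single $\mathbb{F}_p$-point corresponds, under the de Rham--crystalline comparison, to the parallel transport for $\nabla_n$ from $b$ to $x$ on the residue disk; by Lemma \ref{universal_mult_comult}(1) together with the horizontal-section equation \eqref{expJ}, this parallel transport is, in the chart $T$, left multiplication by the group-like horizontal section whose value at $x$ is $\exp(\boldsymbol{J}_{n,T}(x))$ (group-like by Lemma \ref{grouplike}); hence $p_x^{\phi}$ corresponds to $\exp(\boldsymbol{J}_{n,T}(x))$. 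For $p_x^{F}$: the group-like section $F_n\in\varprojlim H^0(Z',F^0\mathcal{E}_n)$ determines, at each $x\in\widehat Z$, a point of $F^0P_x$ — group-likeness makes the induced isomorphism $b^*\to x^*$ tensor-compatible, while membership in $F^0\mathcal{E}_n$ across the family makes it compatible with $F^{\bullet}$ — which in the chart $T$ is the group-like element $F_n(x)\in U_n^{\dR}(b)$ (in particular $F^0P_x\neq\emptyset$). Combining, $u_x=\exp(\boldsymbol{J}_{n,T}(x))\cdot F_n(x)^{-1}$, so $j_{n,p}(x)=\exp(\boldsymbol{J}_{n,T}(x))\cdot F_n(x)^{-1}\bmod F^0$ on a formal neighbourhood of $b$, as claimed.

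I expect the main obstacle to be the precise compatibilities: that the Frobenius structure $D_{\cris}$ puts on $P_x$ has the $\nabla_n$-parallel transport on the residue disk as its unique $\phi$-fixed point, and the matching of torsor conventions (left versus right, and whether the Bloch--Kato class is $u_x$ or $u_x^{-1}$) so that the inverse in $\exp(\boldsymbol{J}_{n,T})\cdot F_n^{-1}$ lands on the correct side. Both are standard but bookkeeping-heavy; the cleanest way to fix every convention is to specialise at $x=b$, where $\boldsymbol{J}_{n,T}(b)=0$, $F_n(b)=1$ and $j_{n,p}(b)=0$, and to differentiate once to match linear terms.
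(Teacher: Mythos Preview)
Your proposal is correct and follows essentially the same route as the paper: you identify $j_{n,p}(x)$ with the class of $u_x$ defined by $p_x^{\phi}=u_x\cdot p_x^{F}$, then recognise $p_x^{\phi}$ as parallel transport (hence $\exp(\boldsymbol{J}_{n,T})$ in the chart $T$) and $p_x^{F}$ as the value of the group-like Hodge section $F_n$. The paper's proof is terser, citing Kim's description in \cite[\S 1]{kim:chabauty} directly and the remark above Lemma~4 there for the identification of the Frobenius-fixed path with rigid analytic parallel transport, whereas you spell out the comparison isomorphisms and the torsor bookkeeping more explicitly; but the argument is the same.
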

\begin{proof}
This follows from Kim's explicit description of the map $j_n$ in \cite[\S 1]{kim:chabauty}. The map $j_n $ can be defined by sending $z$ to the class of $u$ in $U_n ^{\dR}/F^0 $, where $u\in U_n ^{\dR}$ is defined as follows: let $p_ \phi $ be the unique element of $P_n ^{\dR} (b,z)^{\phi =1}$, and choose $p_H \in F^0 P_n ^{\dR} (b,z)$, then define $u$ by $p_\phi =u\cdot p_H$. Although $u$ depends on the choice of $p_H$, it class in $U_n ^{\dR}/F^0 $ does not.

This is related to the parallel transport map as follows. The element $p^\phi (b,z)$ is an element of $\Hom (b^* \mathcal{E}_n ,z^* \mathcal{E}_n )$. When $b$ and $z$ are on the same residue disk, this homomorphism is given by (rigid analytic) parallel transport (see \cite{kim:chabauty}, above Lemma 4). Hence, in a formal neighbourhood of $b$, it is given by $\exp (\boldsymbol{J}_{n,T})$, and similarly $p_H$ is given by $F_n$, hence $u=\exp (\boldsymbol{J}_{n,T})\cdot F_n ^{-1}$.
 \end{proof}

\subsection{Higher Albanese manifolds}
Recall that, given a quasi-projective variety $Z$ over $\mathbb{C}$, we can define a higher Albanese manifold as follows \cite{hain1987unipotent} \cite{hain:higher_alb}. Let $U_n ^{\dR }(Z)$ denote the $n$-unipotent de Rham fundamental group of $Z$. Let $U_n ^{\Be }(Z)$ denote the $\Q $-unipotent Betti fundamental group of $Z$ at $b$, i.e. the maximal $n$-unipotent quotient of the $\Q $-unipotent completion of $\pi _1 (Z(\mathbb{C}),b)$. Abusing notation, we will denote by $U_n ^{\Be }(Z)(\mathbb{Z})$ the image of $\pi _1 (Z(\mathbb{C}),b)$ in $U_n ^{\Be }(Z)(\mathbb{Q})$. By the Riemann-Hilbert correspondence, we have an isomorphism of unipotent groups over $\mathbb{C}$
\[
U_n ^{\Be }(Z)_{\mathbb{C}}\simeq U_n ^{\dR}(Z).
\]
The $n$th higher Albanese manifold of $Z$ is the double quotient 
\[
\mathcal{H}_n :=U_n ^{\Be }(Z)(\mathbb{Z})\backslash U_n ^{\dR}(Z)(\mathbb{C})/F^0 U_n ^{\dR}(Z)(\mathbb{C}).
\]
There is an $n$th higher Albanese map (see \cite[\S 5]{hain1987unipotent})
\[
j_n ^{\Be} :Z(\mathbb{C})\to \mathcal{H}_n ,
\]
which is a map of complex manifolds. 
The formal completion of $\mathcal{H}_n $ at the identity is isomorphic to the formal completion of $U_n ^{\dR }(Z)(\mathbb{C})/F^0 U_n ^{\dR}(Z)(\mathbb{C})$ at the identity, hence we have a morphism of formal schemes
\[
\widehat{j}_n ^{\Be }:\widehat{Z}_{\mathbb{C}}\to \widehat{U_n ^{\dR} /F^0 }_{\mathbb{C}}.
\]
\begin{lemma}
The formal completion of $j_n ^{\Be }$ is given by
is given by 
\[
\widehat{j}^{\Be }_n =\exp (\boldsymbol{J}_{n,T})\cdot F_n ^{-1},
\] where $\boldsymbol{J}_{n,T}$ and $F_n$ are as in section \ref{universal_con}.
In particular, for any isomorphism $\overline{\Q }_p \simeq \mathbb{C}$ we obtain an identification of the formal completions of $j_{n,p}$ at $b$ (base changed to $\mathbb{C}$ and the formal completion of $j_n ^{\Be }$ at $b$,
\end{lemma}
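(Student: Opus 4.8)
The plan is to run the complex-analytic mirror of the proof of Lemma \ref{from_J_to_j}: replace the Frobenius-invariant path of $P_n^{\dR}(b,z)$ by its Betti point and Olsson's comparison theorem by the Riemann--Hilbert correspondence, obtaining the same germ $\exp(\boldsymbol{J}_{n,T})\cdot F_n^{-1}$ for $\widehat{j}_n^{\Be}$, and then deduce the comparison of formal completions from the fact that $\boldsymbol{J}_{n,T}$ and $F_n$ are algebraic and hence insensitive to the choice of a $p$-adic or complex topology.

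First I would recall Hain's description of $j_n^{\Be}$ from \cite[\S 5]{hain1987unipotent} (see also \cite{hain:higher_alb}): it is the period map of the canonical variation of mixed Hodge structure on $Z$ attached to $U_n^{\Be}(Z)$. Concretely, for $z$ near $b$ the de Rham path torsor $P_n^{\dR}(Z)(b,z)$, base changed to $\mathbb{C}$, carries a Betti point $p_B$ — the image under the comparison isomorphism of the topological homotopy class of a path from $b$ to $z$ — and a non-empty $F^0$-part; choosing $p_H\in F^0P_n^{\dR}(Z)(b,z)(\mathbb{C})$ and writing $p_B=u\cdot p_H$ defines $u\in U_n^{\dR}(Z)(\mathbb{C})$, well-defined modulo $F^0$, and $j_n^{\Be}(z)$ is the class of $u$ in $\mathcal{H}_n$. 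Since the formal completion of $\mathcal{H}_n$ at the identity is $\widehat{U_n^{\dR}(Z)(\mathbb{C})/F^0}$, one obtains $\widehat{j}_n^{\Be}$ by dropping the $U_n^{\Be}(Z)(\mathbb{Z})$-quotient and restricting this recipe to a formal neighbourhood of $b$.

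Next I would identify the two pieces of data on $\widehat{Z}_{\mathbb{C}}$ with those of Section \ref{universal_con}. On a simply connected, in particular a formal, neighbourhood of $b$ the topological path class from $b$ to $z$ is canonical, so under Riemann--Hilbert $p_B$ is computed by complex-analytic parallel transport of $(\mathcal{E}_n,\nabla_n)$ along it. This is exactly the analogue of the statement used in Lemma \ref{from_J_to_j} that the Frobenius-invariant path is given by rigid-analytic parallel transport on a residue disk: in either setting the parallel transport is the unique group-like horizontal section of $(\mathcal{E}_n,\nabla_n)$ reducing to $1$ at $b$, and by construction this section is $\exp(\boldsymbol{J}_{n,T})$ in the chart $T$ (this is precisely \eqref{expJ} together with Lemma \ref{grouplike}), so $p_B=\exp(\boldsymbol{J}_{n,T})$. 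The group-like section $F_n\in\varprojlim H^0(Z',F^0\mathcal{E}_n)$ supplies a choice of $p_H$ over $\widehat{Z}$, and the class of $u$ does not depend on this choice, giving $\widehat{j}_n^{\Be}=\exp(\boldsymbol{J}_{n,T})\cdot F_n^{-1}$. Finally, since $F_n$ is a section of the algebraic bundle $F^0\mathcal{E}_n$ and $\boldsymbol{J}_{n,T}$ is the unique group-like formal solution of the algebraic differential equation \eqref{expJ}, the formal germ $\exp(\boldsymbol{J}_{n,T})\cdot F_n^{-1}$ with values in $U_n^{\dR}(Z)/F^0$ has coefficients in the base field of $(Z,b)$; interpreting it $p$-adic analytically (via a place $v\mid p$) or complex analytically (via an isomorphism $\overline{\Q}_p\simeq\mathbb{C}$) yields the same object, and comparison with Lemma \ref{from_J_to_j}, which gives the same formula for the formal completion of $j_{n,p}$ at $b$, identifies the two formal completions.

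I expect the main obstacle to be making the identification $p_B=\exp(\boldsymbol{J}_{n,T})$ on $\widehat{Z}_{\mathbb{C}}$ fully rigorous: one must check that Hain's Betti-rational point of the path torsor, restricted to a formal neighbourhood of $b$, corresponds under the de Rham comparison to the flat trivialisation in the chart $T$ with the correct normalisation (group-likeness and the basepoint condition of Lemma \ref{grouplike}), and that the a priori only real-analytic map $j_n^{\Be}$ really has this algebraic power series as its Taylor expansion at $b$ — the content of Hain's explicit iterated-integral formula, which can alternatively be recovered from the fact that both germs solve \eqref{expJ} with the same initial condition. Reconciling the left/right conventions for the torsor and for the $F^0$-quotient between \cite{hain1987unipotent} and \cite{kim:chabauty} is a bookkeeping point that needs care but no new idea.
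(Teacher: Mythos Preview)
Your proposal is correct and takes essentially the same approach as the paper: both rely on Hain's explicit description of the higher Albanese morphism in \cite[\S 5]{hain1987unipotent} and \cite[Proposition 3.2, (3.3)]{hain:higher_alb}, identifying the Betti path with parallel transport and the Hodge section with $F_n$. The paper's proof is a bare citation to these references; you have unpacked the content of that citation by running the argument in parallel with Lemma \ref{from_J_to_j}, which is exactly what the citation amounts to.
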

\begin{proof}
This follows from the description of the unipotent Albanese morphism given in \cite[\S 5]{hain1987unipotent} and \cite[Proposition 3.2, (3.3)]{hain:higher_alb}.
 \end{proof}
We deduce the following Lemma.
\begin{lemma}\label{formal_nbd_ok}
Let $i:Z\hookrightarrow \Res (X)_{\overline{\Q }_p }$ be an irreducible subvariety, and $b\in Z(\overline{\Q }_p )\cap \Res (X)(\Q _p )$. Let $\widehat{Z}$ and $\widehat{\Res }(X)_{\overline{\Q }_p }$ denote the formal completions at $b$. Then the restriction of $j_{n,p}$ to $\widehat{Z}$ lands in the image of $\widehat{U}_n ^{\dR} (Z)/F^0 $ in $\widehat{U}_n ^{\dR}(\Res (X))/F^0 $, and 
the diagram
\[
\begin{tikzpicture}
\matrix (m) [matrix of math nodes, row sep=3em,
column sep=3em, text height=1.5ex, text depth=0.25ex]
{\widehat{Z}_b & \widehat{U}_n ^{\dR}(Z)/F^0  \\ 
\widehat{\Res }(X)_b & \widehat{U}_n ^{\dR}(\Res (X))/F^0.  \\ };
\path[->]
(m-1-1) edge[auto] node[auto] { $\exp (\boldsymbol{J}_{n,T})\cdot F_n ^{-1}$ } (m-1-2)
edge[auto] node[auto] {} (m-2-1)
(m-2-1) edge[below] node[auto] {$\widehat{j}_n $  } (m-2-2)
(m-1-2) edge[auto] node[auto] {} (m-2-2);
\end{tikzpicture}
\]
commutes. In particular, the pre-image of the graph of $j_n $ in $\widehat{Z}\times \widehat{U}_n ^{\dR}/F^0 $ under the map
\begin{align*}
\widehat{Z}\times L_n ^{\dR}(Z) & \to \widehat{\Res }(X)\times \widehat{U}_n ^{\dR}/F^0 \\
(z,x) & \mapsto (i(z),\exp (x) \cdot  F_n ^{-1}(z))
\end{align*}
is equal to the graph of $\boldsymbol{J}_{n,T}$.
\end{lemma}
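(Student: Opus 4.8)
The plan is to deduce the statement from the functoriality of the universal connection under the closed immersion $i$, combined with the explicit formula for the (formal) unipotent Albanese morphism furnished by the preceding lemma and by Lemma \ref{from_J_to_j}.

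\emph{Step 1: functoriality of the de Rham fundamental group and its Hodge filtration.} Pullback of unipotent flat connections along $i$ defines a morphism of Tannakian categories $i^{*}\colon\mathcal{C}^{\dR}(\Res(X)_{\overline{\Q}_p})\to\mathcal{C}^{\dR}(Z)$ which, together with the basepoint $b$, induces morphisms $\pi_1^{\dR}(Z,b)\to\pi_1^{\dR}(\Res(X),b)$, $U_n^{\dR}(Z)\to U_n^{\dR}(\Res(X))$ and, on Lie algebras, $L_n^{\dR}(Z)\to L_n^{\dR}(\Res(X))$. Concretely, $i^{*}$ of the filtered universal connection $(\mathcal{E}_n(\Res(X)),F^{\bullet})$ is an $n$-unipotent filtered flat connection on $Z$ with marked point $e_n$, so the universal property of $\mathcal{E}_n(Z)$ produces a canonical morphism $\phi\colon\mathcal{E}_n(Z)\to i^{*}\mathcal{E}_n(\Res(X))$ respecting marked points and, by functoriality of the Hodge filtration (\cite[\S 3]{hadian2011motivic}), filtrations; on the fibre at $b$ it is a filtered morphism of Hopf algebras inducing the maps just described, so we obtain $\widehat{U}_n^{\dR}(Z)/F^0\to\widehat{U}_n^{\dR}(\Res(X))/F^0$ on formal completions at the identity.

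\emph{Step 2: functoriality of $\boldsymbol{J}_{n,T}$ and $F_n$.} Fix an affine neighbourhood $Z'$ of $b$ in $\Res(X)$, a trivialisation $T$ of $\mathcal{E}_n(\Res(X))|_{Z'}$, a group-like section $F_n\in F^0\mathcal{E}_n(\Res(X))$ and the associated $\boldsymbol{J}_{n,T}$ as in \S\ref{universal_con}, and take the corresponding data for $(Z,b)$ (using $Z'\cap Z$ and $i^{*}T$; after shrinking we may assume $Z$ is geometrically integral, or if necessary smooth, so that the formalism of \S\ref{universal_con} applies to it). Since $\exp(\boldsymbol{J}_{n,T})$ is by construction the unique group-like horizontal section of $\mathcal{E}_n|_{\widehat{Z}}$ through $1\in b^{*}\mathcal{E}_n$ written in the chart $T$, and $b^{*}\phi$ is an algebra homomorphism intertwining the canonical horizontal trivialisations $G$ over $\widehat{Z}$, applying $b^{*}\phi$ shows that $\boldsymbol{J}_{n,T}^{\Res(X)}|_{\widehat{Z}}$ is the image of $\boldsymbol{J}_{n,T}^{Z}$ under $L_n^{\dR}(Z)\to L_n^{\dR}(\Res(X))$; in particular it takes values in the image of $L_n^{\dR}(Z)$. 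Likewise the group-like sections $F_n$ are compatible, where one uses that the class of $\exp(\boldsymbol{J}_{n,T})\cdot F_n^{-1}$ in $U_n^{\dR}/F^0$ is independent of the choice of $F_n$ (as in the proof of Lemma \ref{from_J_to_j}).

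\emph{Step 3: conclusion.} Applying the previous lemma to $\Res(X)$ gives $\widehat{j}_{n,p}=\exp(\boldsymbol{J}_{n,T})\cdot F_n^{-1}$ on $\widehat{\Res}(X)_b$, and applying it to $Z$ gives $\widehat{j}_n(Z)=\exp(\boldsymbol{J}_{n,T}^{Z})\cdot(F_n^{Z})^{-1}$ on $\widehat{Z}$; by Step 2 the former restricted to $\widehat{Z}$ is the image of the latter under $\widehat{U}_n^{\dR}(Z)/F^0\to\widehat{U}_n^{\dR}(\Res(X))/F^0$. This yields the first three assertions: $j_{n,p}|_{\widehat{Z}}$ lands in the image of $\widehat{U}_n^{\dR}(Z)/F^0$, equals $\exp(\boldsymbol{J}_{n,T})\cdot F_n^{-1}$, and makes the displayed square commute (equivalently, this square is the functoriality of the higher Albanese map of \cite{hain1987unipotent} under $i$, after base change along a chosen $\overline{\Q}_p\simeq\mathbb{C}$; all morphisms in sight being algebraic over $\overline{\Q}_p$, the asserted equalities of formal-scheme morphisms may be checked after $\otimes_{\overline{\Q}_p}\mathbb{C}$). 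The final assertion is then a reformulation: $(z,x)\in\widehat{Z}\times L_n^{\dR}(Z)$ maps into the graph of $j_n$ exactly when $\exp(x)\cdot F_n^{-1}(z)=j_n(i(z))=\exp(\boldsymbol{J}_{n,T}(z))\cdot F_n^{-1}(z)$, which by injectivity of $\exp$ on the pro-nilpotent Lie algebra forces $x=\boldsymbol{J}_{n,T}(z)$, so the preimage of the graph of $j_n$ is exactly the graph of $\boldsymbol{J}_{n,T}$. The main obstacle is Step 2: one must arrange the non-canonical choices ($T$, the group-like section $F_n$, the horizontal trivialisation) compatibly along $i$ so that $\boldsymbol{J}_{n,T}$ is genuinely functorial — the only real content being the functoriality of the universal connection together with its Hodge filtration, after which Steps 1 and 3 are formal.
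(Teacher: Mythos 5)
Your proof is correct and follows essentially the same route as the paper: the paper's one-line proof invokes the preceding lemma (the formal identification $\widehat{j}_n = \exp(\boldsymbol{J}_{n,T})\cdot F_n^{-1}$, for any pointed smooth variety) together with the functoriality of Hain's higher Albanese map in complex manifolds, which is exactly the content of your Step 3. Your Steps 1 and 2 make the Tannakian mechanism behind that functoriality explicit (the canonical map $\mathcal{E}_n(Z)\to i^*\mathcal{E}_n(\Res(X))$, uniqueness of the horizontal section through $1$, independence of the class in $U_n^{\dR}/F^0$ from the choice of $F_n$), which is fine but not logically necessary once you appeal to Hain; the one place you flag as delicate — arranging the trivialisation $T$ compatibly with $i$, which can genuinely fail since $\phi$ need not be injective — is precisely circumvented by the observation you already make that $\widehat{j}_n$ is independent of the choices of $T$ and $F_n$, so one never needs compatible trivialisations, only the functoriality of the map itself.
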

\begin{proof}
This follows from the previous lemma, together with the fact that Hain's higher Albanese morphism is functorial in complex manifolds \cite[\S 3]{hain1987unipotent}.
 \end{proof}

\section{Unlikely intersections among zeroes of iterated integrals}\label{sec:the_beef}
The aim of this section is to prove the following proposition, which is the unlikely intersection result needed to establish Theorem \ref{finiteness}. Given a scheme $V$, and a formal subscheme $W$ of the formal completion of $V$ at a point $x$, we say that $W$ is Zariski dense if there is no proper closed subscheme of $V$ whose formal completion at $x$ contains $W$.

\begin{proposition}\label{unlikely_intersection}
Let $Z$ be a smooth irreducible subvariety of $\Res (X)_{\overline{\Q }_p }$.
Let $V$ be an irreducible subvariety of $L_n ^{\dR} (Z) \times Z$, containing $(0,b)\in L_n ^{\dR} (Z)\times Z$, such that the projection of $V$ to $Z$ is dominant. Let $\widehat{V}$ and $\widehat{L}_n ^{\dR} (Z)$ denote the formal completions of $V$ and $L_n ^{\dR}(Z)$  at $b$ and $0$ respectively. Let $\Delta \subset \widehat{L}_n ^{\dR} (Z)\times \widehat{Z}$ denote the graph of $\boldsymbol{J}_{n,T}$.
Let $W=\Delta \cap \widehat{V}$. Suppose $W$ is irreducible and 
\[
\codim _{\Delta } W <\codim _{L_n ^{\dR} (Z) \times Z}V .
\]
Then $W$ is not Zariski dense in $V$.
\end{proposition}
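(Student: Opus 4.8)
The plan is to reinterpret $\Delta$ as a formal leaf of an algebraic foliation and then run an Ax-style derivation argument by dévissage along the lower central series of $U_n^{\dR}(Z)$, with the (semi-)abelian Ax--Schanuel theorem as the base case.

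\emph{Reformulation.} By \eqref{expJ} the $L_n^{\dR}(Z)$-valued one-form $d\boldsymbol{J}_{n,T}$ is, at each point $z$, a universal expression in $\boldsymbol{J}_{n,T}(z)$ applied to the connection form $\Lambda$ (of the shape $\Phi(\ad_{\boldsymbol{J}_{n,T}})(\Lambda)$ with $\Phi(t)=t/(e^{t}-1)$). Hence over the affine open $Z'$ on which $\Lambda$ and the trivialization $T$ are defined there is an algebraic integrable distribution $\mathcal{F}_n\subset T\bigl(L_n^{\dR}(Z)\times Z'\bigr)$ of rank $\dim Z$, and $\Delta$ is exactly the formal leaf of $\mathcal{F}_n$ through $(0,b)$. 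Since $W\subset\Delta$ and the projection $\Delta\to\widehat Z$ is an isomorphism, one may replace $Z$ by the Zariski closure of the image of $W$ in $Z$ (using functoriality of $\boldsymbol{J}_{n,T}$) and so assume $V$ dominates $Z$. Passing to Zariski closures, it then suffices to prove: if $W$ is Zariski dense in $V$, then $\codim_\Delta W\ge\codim_{L_n^{\dR}(Z)\times Z}V$.

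\emph{Base case.} When $U_n^{\dR}(Z)$ is abelian, $\boldsymbol{J}_{1,T}$ differs from the formal germ at $b$ of $\log\circ\AJ_Z$ only by the \emph{algebraic} section $F_1\colon Z'\to F^0L_1^{\dR}(Z)$; eliminating this section identifies $\Delta\cap\widehat V$ with an intersection of the graph of $\log\circ\AJ_Z$ with an algebraic subvariety of $\lie(\Alb Z)\times Z$, and the statement is then the content of Corollary \ref{padic_AS} (deduced from Theorem \ref{ASAV}, together with its toric analogue \cite{ax1971schanuel}, via Lemma \ref{exp_vs_log}): a component of codimension smaller than expected lies in the preimage of a translate of a proper semi-abelian subvariety of $\Alb(Z)$, hence is not Zariski dense in $V$.

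\emph{Inductive step and the main obstacle.} I would set up the induction for all unipotent quotients $U$ of $\pi_1^{\dR}(Z,b)$ ordered by $\dim U$, the base case being $U$ abelian. For the inductive step let $L=\lie(U)$, pick a one-dimensional subspace $\mathfrak{z}$ of the centre of $L$ inside its last graded piece, and write $0\to\mathfrak{z}\to L\to\bar L\to 0$; the induced map $\pi\colon L\times Z\to\bar L\times Z$ carries $\mathcal{F}$ to the corresponding foliation $\bar{\mathcal{F}}$ and restricts to an isomorphism $\Delta\xrightarrow{\sim}\bar\Delta$. Given a component $W$ of $\Delta\cap\widehat V$ that is Zariski dense in $V$, set $V':=\overline{\pi(V)}$, so $\pi(W)$ is Zariski dense in $V'$ and lies in a component $\bar W$ of $\bar\Delta\cap\widehat{V'}$ with $\codim_{\bar\Delta}\bar W\le\codim_\Delta W$. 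If $V$ is the preimage of $V'$, or more generally if $\dim V'=\dim V$ and $\codim_{\bar\Delta}\bar W<\codim V'$, the inductive hypothesis for $\bar L$ forbids $\pi(W)$ from being Zariski dense in $V'$, a contradiction. The remaining ``boundary'' case — $V$ not a preimage, $\pi(W)$ of the expected codimension in $\bar\Delta$ — is where Ax's trick enters: at the generic point $\eta$ of $W$ one has $T_\eta W\subseteq\mathcal{F}_\eta\cap T_\eta V$, and, since $V$ genuinely constrains the $\mathfrak{z}$-coordinate while on $\Delta$ that coordinate equals the top-weight iterated integral $j_{\mathfrak{z}}$, differentiating the defining relations of $V$ repeatedly along $\mathcal{F}$ — using that $dj_{\mathfrak{z}}$ is the top-weight one-form built from $\Lambda$ with coefficients among the lower-weight iterated integrals — forces a nontrivial linear relation, with coefficients in the function field of $W$, among the one-forms assembled from $\Lambda$. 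Such a relation cannot be purely among the holomorphic one-forms on $Z$ (that is excluded by the base case, as the image of $W$ is dense in $Z$), so it must involve the higher-weight forms; unwinding it shows that $W$ lies on a leaf of a proper sub-foliation associated to a proper unipotent quotient $U'$ of $U$, and the inductive hypothesis applied to $U'$ then contradicts the Zariski density of $W$ in $V$. Carrying out this last step — identifying exactly which quotients a function-field linear relation among iterated-integral one-forms produces, and tracking the interaction with the central extension and the Hodge filtration — is the crux, and is the arbitrary-depth incarnation of the relation $z_1(z_2-1)=z_2(z_1-1)$ from the introduction.
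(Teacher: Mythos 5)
There is a genuine gap, and you acknowledge it yourself: the sentence ``Carrying out this last step \ldots is the crux'' is precisely the whole content of the inductive step, and it is not carried out. Until that step is made precise, the proposal is a plan, not a proof. Let me be concrete about what is missing and how the paper actually closes the gap, since the route taken there is different from the one you sketch.

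First, the paper does not dévissage by 1-dimensional central quotients. Instead, after reducing (via Lemmas \ref{generatorsI/I2} and \ref{no_diffl}) to showing that a nontrivial $\overline{\Q}_p(V)$-linear relation $\sum_{i=1}^{r_n} a_i\theta_i=0$ forces some $h_1\,dh_2\neq 0$ into the span of the $\theta_i$, it runs a direct induction on the nilpotency class $n$, taking a relation that is \emph{minimal} in the number of nonzero $a_i$ with $i\in\{r_{n-1}+1,\ldots,r_n\}$. The engine is Lemma \ref{derivative_theta}, which gives the Maurer--Cartan-type equation $d\widetilde{\boldsymbol{\theta}}=\tfrac12[\widetilde{\boldsymbol{\theta}},\widetilde{\boldsymbol{\theta}}]+[\widetilde{\boldsymbol{\theta}},\boldsymbol{\omega}]$; differentiating the minimal relation via this identity, and applying a derivation $D$ that kills all $\theta_i$ but not some $da_{j_1}$, produces a strictly shorter relation — unless the top-weight coefficients $a_i$ ($r_{n-1}<i\le r_n$) are all constants. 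Your boundary case stops exactly before this argument; ``forces a nontrivial linear relation'' is not automatic and is the content of the constancy step, and ``unwinding it shows $W$ lies on a leaf of a proper sub-foliation'' is not how the contradiction is actually reached.

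Second, the endgame is Tannakian, not geometric-foliation-theoretic. Once the top-weight coefficients are constants, the remaining relation $\alpha_i=\sum\lambda_{ij}\theta_j$ is reinterpreted as $\nabla_{\mathcal{L}_n^{\dR*}}(\boldsymbol{a})$ lying in the $M_{n-1}$-span of the $\theta_j$, which by the uniqueness-of-lift lemma (and $K$-linearity of morphisms between connections in reduced form, Lemma \ref{lemma:reduced}) corresponds to a morphism of flat connections $\mathcal{E}_{n-1}\to\mathcal{L}_n^{\dR*}$. Explicitly solving the first step of the recursion shows this morphism must factor through $\mathcal{E}_{n-2}$; since $\boldsymbol{a}$ has a nonzero top-weight component, the $b^*\mathcal{E}_{n-1}$-module it generates is not $(n-2)$-unipotent, a contradiction. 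Nothing in your sketch plays the role of this factorisation argument, nor of the constancy step that makes it applicable. There are also two smaller issues: (i) your base case cites Corollary \ref{padic_AS}, which is fine, but the paper's base case is proved from scratch by the same derivation trick (avoiding the semiabelian Ax--Schanuel black box and keeping the argument uniform across $n$), and in particular the formal intersection in your base case is not literally the one covered by Corollary \ref{padic_AS} — you would need a short lemma identifying the two after eliminating $F_1$; (ii) your induction is indexed by arbitrary unipotent quotients $U$ of $\pi_1^{\dR}(Z,b)$ rather than the $U_n^{\dR}$, and quotienting by a generic $1$-dimensional central $\mathfrak{z}$ need not be compatible with the Hodge filtration or with the reduced-form normalization of $\Lambda$, so the claimed foliation $\bar{\mathcal{F}}$ on $\bar L\times Z$ and its leaf $\bar\Delta$ would need to be constructed with care.

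In short: the reformulation as a foliation leaf and the reduction to $V$ dominating $Z$ are reasonable, and the base case is fine modulo the small identification lemma; but the inductive step as written is a restatement of what needs to be proved, and the specific mechanisms the paper uses — the differential equation for $\boldsymbol{\theta}$, the minimality/derivation argument forcing top-weight coefficients to be constant, and the Tannakian factorisation through $\mathcal{E}_{n-2}$ — are absent.
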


For example, we can apply this result when $V$ is the Zariski closure of $\Delta$, and $W=\Delta $. Then $W$ is Zariski dense in $V$ by definition, and hence we deduce
\begin{equation}\label{graph_is_Zariski_dense}
0=\codim _{\Delta }(W)=\codim _{L_n ^{\dR}(Z)  \times Z}V.
\end{equation}
(i.e. the graph of $\boldsymbol{J}_{n,T}$ is Zariski dense in $L_n ^{\dR}(Z) \times Z$). This could also more elegantly be proved following the topological argument in \cite{faltings2007mathematics}.

Proposition \ref{unlikely_intersection} can be informally thought of as saying that algebraic relations between $n$-unipotent iterated integrals must have a geometric explanation.
In the $1-$unipotent (or abelian) case, this is due to Ax \cite{ax1972some}. The idea of the general proof is inspired by Ax's approach: we inductively show that non-trivial algebraic relations between $n$-unipotent iterated integrals come from geometry by differentiating them to produce relations between $(n-1)$-unipotent iterated integrals (the difficult part being to show that these relations must also be non-trivial).

We also note that this Proposition translates into a criterion for finiteness of $\Res (\mathcal{X})(\mathbb{Z}_p )_n $ which will be used repeatedly in subsequent sections. To state the criterion, we introduce the following notation: for $(Z,z)\to (Y,y)$ a morphism of varieties over a field $L$ sending $z$ to $y$, we define $U_n (Z/Y) \subset U_n (Y)=U_n (Y)(y)$ to be the image of $\pi _1 ^{\et ,\Q _p }(Z_{\overline{L}},z)$ in $U_n (Y)$. For $i\leq n$, we define
\[
\gr _i (U_n (Z/Y)):=\Ker (U_i (Z/Y)\to U_{i-1}(Z/Y)),
\]
and similarly for $U_i ^{\dR}(Z/Y)$. Note that, in general, $\gr _i (U_i (Z/Y))$ is not the same as $C_i (U_i (Z/Y))/C_{i+1}(U_i (Z/Y))$. Since the homomorphism
\[
\pi _1 ^{\et ,\Q _p }(Z_{\overline{L}},z)\to \pi _1 ^{\et ,\Q _p }(Y_{\overline{L}},y)
\]
respects the central series filtrations, we have an induced morphism
\[
U_n (Z,z)\to U_n (Y,y),
\]
however this morphism need not be strict with respect to the central series filtration. For example, this non-strictness occurs when $Z=E-\{x_1 ,x_2 \}$, for $E$ an elliptic curve with points $x_1 $ and $x_2 $, $Y=(E-\{ x_1 \} )\times (E-\{ x_2 \} )$, and the map is the diagonal embedding $Z\to Y$. However if $X$ is projective morphisms will be strict, since the weight filtration on $U_n $ will agree with the central series filtration.
\begin{proposition}\label{prop:useful_criterion}
Suppose that, for every irreducible subvariety $Z\subset \Res (X)_{\Q  _p }$, we have
\begin{equation}\label{big_codim}
\codim _{U_n ^{\dR}(Z/\Res (X))/F^0 }(\loc _p (\Sel (U_n ))\cap U_n ^{\dR}(Z/\Res (X))/F^0  )\geq \dim Z.
\end{equation}
Then $\Res (X)(\Z _p )_n$ is finite.
\end{proposition}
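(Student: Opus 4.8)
The plan is to argue by contradiction, feeding the putative infinite set into Proposition~\ref{unlikely_intersection} via the formal description of the unipotent Albanese map from Lemma~\ref{formal_nbd_ok}. Suppose $\Res(X)(\Z_p)_n$ is infinite. Since $\Sel(U_n)=\sqcup_\alpha\Sel(U_n)_\alpha$ is a finite disjoint union, $\Res(X)(\Z_p)_n$ is the finite union of the sets $\mathcal X(\mathcal O_K\otimes\Z_p)_\alpha$, so one of them, call it $S$, is infinite. As $\Res(\mathcal X)(\Z_p)$ has only finitely many residue polydisks, by Lemmas~\ref{virtual_lemma} and~\ref{indpt_basept} we may choose a virtual basepoint $b$ lying in $S$ and in a residue polydisk $]b[$ with $]b[\cap S$ infinite, and replace $U_n$ by the corresponding twisted group (still denoted $U_n$). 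By Lemma~\ref{twisty_lemma} we then have $S\subseteq j_b^{-1}(\loc_p\Sel(U_n))$, where $j_b\colon\Res(X)(\Q_p)\to H^1_f(G_{\Q_p},U_n)\cong U_n^{\dR}/F^0$ is the (virtual-basepoint) unipotent Albanese map, using the non-abelian Bloch--Kato isomorphism~\eqref{nonab_BK}.

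Next I would isolate a positive-dimensional variety carrying the infinitude. On $]b[$ the map $j_b$ is locally analytic and $\loc_p\Sel(U_n)$ is a constructible subset of the $\Q_p$-scheme $U_n^{\dR}/F^0$, so $j_b^{-1}(\loc_p\Sel(U_n))\cap\,]b[$ is a rigid analytic subvariety with finitely many irreducible components, one of which, $\mathcal Z$, contains an infinite subset of $S$; in particular $m:=\dim\mathcal Z\geq 1$. Base changing to $\overline\Q_p$, and choosing $b$ in the smooth locus of $\mathcal Z$ and in the open set where $j_b$ maps into $\loc_p\Sel(U_n)$ itself, let $Z$ be the Zariski closure of the formal subscheme $\widehat{\mathcal Z}_b\subseteq\widehat{\Res(X)}_b$ (in the sense of the start of Section~\ref{sec:the_beef}); then $Z$ is a smooth irreducible subvariety of $\Res(X)_{\overline\Q_p}$, of some dimension $e\geq m$, which is geometrically irreducible because it is the Zariski closure of a set of $\Q_p$-points, and $\widehat{\mathcal Z}_b$ is an $m$-dimensional formal subscheme of $\widehat Z_b$, Zariski dense in $Z$ and contained in $j_b^{-1}(\loc_p\Sel(U_n))$.

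I would then invoke Lemma~\ref{formal_nbd_ok} and Proposition~\ref{unlikely_intersection}. By Lemma~\ref{formal_nbd_ok}, on $\widehat Z_b$ the map $j_b$ takes values in $\widehat U_n^{\dR}(Z/\Res(X))/F^0$ and, modulo $F^0$, equals the composite of $\boldsymbol J_{n,T}\colon\widehat Z_b\to L_n^{\dR}(Z)$ with the natural \emph{algebraic} surjection $\psi\colon L_n^{\dR}(Z)\twoheadrightarrow U_n^{\dR}(Z/\Res(X))/F^0$ (this is Kim's formula $j_n=\exp(\boldsymbol J_{n,T})\cdot F_n^{-1}$ read modulo $F^0$, since $F_n$ is a section of $F^0$; cf.\ Lemma~\ref{from_J_to_j}). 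Let $\Delta\subseteq\widehat L_n^{\dR}(Z)\times\widehat Z$ be the graph of $\boldsymbol J_{n,T}$, so $\dim\Delta=e$, and let $\Delta_0\subseteq\Delta$ be the graph of $\boldsymbol J_{n,T}|_{\widehat{\mathcal Z}_b}$, so $\dim\Delta_0=m$. Since $\widehat{\mathcal Z}_b\subseteq j_b^{-1}(\loc_p\Sel(U_n))$ we obtain $\Delta_0\subseteq L_\Sigma\times Z$, where $L_\Sigma:=\psi^{-1}\bigl(\loc_p\Sel(U_n)\cap U_n^{\dR}(Z/\Res(X))/F^0\bigr)$; because $\psi$ is a surjective morphism of smooth varieties, hypothesis~\eqref{big_codim} applied to $Z$ gives $\codim_{L_n^{\dR}(Z)}(\overline{L_\Sigma})=\codim_{U_n^{\dR}(Z/\Res(X))/F^0}\bigl(\loc_p\Sel(U_n)\cap U_n^{\dR}(Z/\Res(X))/F^0\bigr)\geq e$. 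Now let $V\subseteq L_n^{\dR}(Z)\times Z$ be the Zariski closure of $\Delta_0$; it is irreducible, contains $(0,b)$, and $\Delta_0$ is Zariski dense in $V$. If $W$ denotes the irreducible component of $\Delta\cap\widehat V$ containing $\Delta_0$, then $W$ is Zariski dense in $V$ and $\codim_\Delta(W)\leq\codim_\Delta(\Delta_0)=e-m<e$. Hence, by the contrapositive of Proposition~\ref{unlikely_intersection}, $\dim V\geq\dim(L_n^{\dR}(Z)\times Z)-\codim_\Delta(W)\geq\dim L_n^{\dR}(Z)+m$. On the other hand $V\subseteq\overline{L_\Sigma}\times Z$, so $\dim V\leq\dim\overline{L_\Sigma}+e\leq\dim L_n^{\dR}(Z)$ by the displayed inequality. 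Therefore $\dim L_n^{\dR}(Z)+m\leq\dim L_n^{\dR}(Z)$, i.e.\ $m\leq 0$, contradicting $m\geq 1$. Hence $\Res(X)(\Z_p)_n$ is finite.

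The main obstacle, I expect, lies in the second and third steps: extracting the rigid analytic component $\mathcal Z$, identifying the Zariski closure $Z$ of $\widehat{\mathcal Z}_b$ together with the required smoothness and geometric irreducibility, and --- most delicately --- justifying that modulo $F^0$ the unipotent Albanese map on $\widehat Z_b$ is an \emph{algebraic} function of the iterated integral $\boldsymbol J_{n,T}$. This last point is precisely what lets the codimension bound~\eqref{big_codim}, which lives in the target $U_n^{\dR}(Z/\Res(X))/F^0$, be pulled back to a subvariety $L_\Sigma$ of $L_n^{\dR}(Z)$ and hence become an input for Proposition~\ref{unlikely_intersection}; in carrying this out one must keep track of the possible non-strictness of $U_n^{\dR}(Z)\to U_n^{\dR}(\Res(X))$ flagged before the statement, which is the reason the graded object $U_n^{\dR}(Z/\Res(X))$, rather than a central-series quotient, appears throughout.
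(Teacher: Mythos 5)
Your argument is essentially the paper's: pick a virtual basepoint $b$, produce a positive-dimensional $Z\subset\Res(X)_{\overline\Q_p}$ through $b$ whose formal completion at $b$ carries the Chabauty--Kim points, translate the unipotent Albanese map near $b$ into the graph of $\boldsymbol J_{n,T}$ via Lemma~\ref{formal_nbd_ok}, and then play Proposition~\ref{unlikely_intersection} off against the codimension hypothesis~\eqref{big_codim} to bound dimensions. The proofs differ only in how $Z$ is produced: you take the Zariski closure of the formal completion of a rigid analytic irreducible component of $j_b^{-1}(\loc_p\Sel(U_n))\cap{]b[}$, while the paper takes the Zariski closure of the point set $\Res(\mathcal X)(\Z_p)_n\cap{]b[}$ and then stratifies it into smooth, geometrically irreducible strata, one of which contains $b$. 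That stratification step is not cosmetic, and it is the one place your write-up has a genuine (though fixable) gap: Proposition~\ref{unlikely_intersection} requires $Z$ smooth, and your assertion that $Z$ is smooth does not follow from choosing $b$ in the smooth locus of the rigid space $\mathcal Z$ --- the algebraic Zariski closure of a formal or rigid analytic subscheme can be singular even at a point where that subscheme itself is smooth. Passing to the smooth stratum through $b$ (which may lower $e$, but that only strengthens the inequalities) repairs this, exactly as in the paper. The same stratification also repairs your geometric irreducibility remark, which is literally about the paper's Zariski closure of a set of $\Q_p$-points rather than about the Zariski closure of a formal subscheme.
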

\begin{proof}
Let $\widetilde{Z}\subset \Res (X)_{\Q_p }$ denote the Zariski closure of $\Res (\mathcal{X})(\mathbb{Z}_p )_n \cap ]b[$. Let $\widetilde{Z}=Z_0 \sqcup Z_1 \sqcup \ldots \sqcup Z_N$ be a stratification of $\widetilde{Z}_{\Q _p }$ into smooth irreducible subvarieties. Since $\widetilde{Z}$ is the Zariski closure of a set of $\Q _p $-points of $\Res (X)_{\Q _p }$, the irreducible components are geometrically irreducible.
Let $Z=Z_i$ be the element of the stratification containing the point $b$. To prove finiteness of $\Res (\mathcal{X})(\mathbb{Z}_p )_n$, it will be enough to prove that $Z=\{b \}$. 

Let $W_0 $ denote the formal completion of $\Res (\mathcal{X})(\mathbb{Z}_p )_n \cap Z$ at $b$.
Recall that, by Lemma \ref{formal_nbd_ok}, the pre-image of the graph of $j_n $ in $\widehat{Z}_b \times \widehat{L}_n ^{\dR}(Z)$ is given by the graph of $\boldsymbol{J}_{n,T}$.

We apply Proposition \ref{unlikely_intersection} with $V$ being the product of $Z$ with the pre-image of $\loc _p (\Sel (U_n ))\cap U_n ^{\dR}(Z/Y)/F^0 $ in $L_n ^{\dR}(Z)$. Since we are free to choose $b$ outside a closed analytic subvariety of smaller dimension, we may assume that $W:=V\cap \Delta $ is an irreducible formal scheme (recall $\Delta $ is the graph of $\boldsymbol{J}_{n,T}$) and hence apply Proposition \ref{unlikely_intersection} to deduce
\begin{equation}\label{usefulprop_ineq2}
\codim _{\Delta }(W)\geq \codim _{L_n ^{\dR}(Z)}V.
\end{equation}
On the other hand by definition of $V$ we have
\begin{equation}\label{usefulprop_ineq1}
\codim _{Z\times L_n ^{\dR}(Z)}(V)\geq \codim _{U_n ^{\dR}(Z/Y)/F^0 }(\loc _p (\Sel (U_n ))\cap U_n ^{\dR}(Z/Y)/F^0  ).
\end{equation}

Since $j(W_0 ) \subset \loc _p (\Sel (U_n ))$, we have $W_0 \subset W$, and 
\eqref{big_codim}, \eqref{usefulprop_ineq1} and \eqref{usefulprop_ineq2} together imply the Lemma.
 \end{proof}

\subsection{Universal connections and reduced form}
In the proof of Proposition \ref{unlikely_intersection}, it will be useful to have a fairly explicit description of the map $\boldsymbol{J}_{n,T}$, and hence of the connection $(\mathcal{E}_n ,\nabla _n )$. For this, we introduce the notion of the \textit{reduced form} of a connection, which is inspired by Kim's description of the universal connection \cite{kim:chabauty}. As above, $Z$ is a smooth irreducible affine subvariety of $\Res (X)_{\overline{\Q }_p }$. Let $d_n$ denote the rank of $\mathcal{E}_n $, and let $r_n$ denote the dimension of $U_n ^{\dR}(Z)$.

\begin{definition}[Reduced form]
Let $S$ be a complement of $d\mathcal{O}(Z)$ in $H^0 (Z,\Omega )$.

A $n$-nilpotent matrix $M\in \End (\oplus _{i=0}^n V_i )\otimes H^0 (Z,\Omega )$ (for $V_0 ,\ldots ,V_n$ vector spaces over $K$) is in reduced form (relative to the complement $S$) if it is block $n$-nilpotent and all of its entries lie in $S$.

An $n$-unipotent connection $(\mathcal{V},\nabla )$ in reduced form is an isomorphism $\mathcal{V}\simeq \oplus _{i=0}^n\mathcal{O}_Z \otimes V_i $ with respect to which $\nabla =d+\Lambda $, where $\Lambda \in \End (\oplus V_i )\otimes H^0 (Z,\Omega )$ is an $n$-nilpotent matrix in reduced form.
\end{definition}

\begin{lemma}\label{lemma:reduced}
Let $\mathcal{V}$ be an $n$-unipotent connection on a smooth affine variety $Z$ over a field $K$ of characteristic zero. Then $\mathcal{V}$ can be written in reduced form. All morphisms $f$ between connections $(\oplus V_i \otimes \mathcal{O}_Z,\nabla )$, $(\oplus W_i \otimes \mathcal{O}_Z ,\nabla ')$ in reduced form are $K$-linear, i.e. $f\in \Hom (\oplus V_i ,\oplus W_j )\otimes \mathcal{O}(Z)$ actually has entries in $\Mat _{n,m}(K)$.
\end{lemma}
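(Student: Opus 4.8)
The plan is to prove the two assertions separately: first that every $n$-unipotent connection admits a presentation in reduced form, and second the rigidity statement that morphisms between connections already in reduced form are necessarily constant. For the first assertion, I would argue by induction on the level of unipotency. Suppose $\mathcal{V}$ is $n$-unipotent, so it sits in an exact sequence $0 \to \mathcal{W} \to \mathcal{V} \to \mathcal{V}/\mathcal{W} \to 0$ where $\mathcal{W}$ is the maximal trivial subconnection (or the bottom of the central-series filtration) and $\mathcal{V}/\mathcal{W}$ is $(n-1)$-unipotent; by induction $\mathcal{V}/\mathcal{W}$ can be put in reduced form, say as $\oplus_{i=0}^{n-1}\mathcal{O}_Z \otimes V_i$ with connection matrix $\Lambda'$ having its $i$-nilpotent part in $S_i$. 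Choosing a $C^\infty$ (here: algebraic, after shrinking $Z$) splitting of the extension as $\mathcal{O}_Z$-modules, the connection on $\mathcal{V}$ is $d + \Lambda' + \beta$ where $\beta \in \Hom(\oplus_{i<n} V_i, V_n)\otimes H^0(Z,\Omega)$ records the extension class. The key point is that the integrability condition $\nabla^2 = 0$ forces $d\beta$ to be expressible in terms of wedges $S_j \wedge S_k$ with $j+k=n$ (this is exactly the flatness equation $d\Lambda + \Lambda\wedge\Lambda = 0$ read off in the top block), so modulo exact forms $\beta$ lands in $\sum_{j+k=n} d^{-1}(S_j\wedge S_k)$; we may therefore change the $\mathcal{O}_Z$-splitting by a gauge transformation valued in $\Hom(\oplus_{i<n}V_i, V_n)$ to absorb the exact part, landing $\beta$ in $S_n$. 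This is the construction in Kim's paper \cite{kim:chabauty} and Hadian's \cite{hadian2011motivic}, just phrased via the $S_i$.

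For the second assertion, let $f \in \Hom(\oplus V_i, \oplus W_i)\otimes \mathcal{O}(Z)$ be horizontal for the two reduced connections $d+\Lambda$ and $d+\Lambda'$. Horizontality reads $df = \Lambda' f - f\Lambda$. I would again induct on $n$, looking at $f$ block by block along the diagonals: the degree-$0$ part $f_0$ (the block-diagonal piece) satisfies $df_0 = 0$ since $\Lambda,\Lambda'$ are strictly lower triangular, so $f_0$ is constant; then the first off-diagonal part $f_1$ satisfies $df_1 = \Lambda'_1 f_0 - f_0 \Lambda_1$, whose right-hand side has entries in $S_1$, and one continues, at each stage finding that $df_k$ has entries in $\sum_{j+k}(\text{stuff})$ lying in $\sum d^{-1}$-type spaces — and here is where the \emph{defining property of the $S_i$} is essential. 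The subtlety (and the main obstacle) is to conclude that each $f_k$ is actually constant rather than merely closed: one needs that a function $g$ on $Z$ with $dg \in S_j \wedge S_k$-image and, simultaneously, $g$ itself produced as a $\mathcal{O}_Z$-linear combination forced by the previous steps, must be locally constant. This follows because the $S_i$ were chosen as \emph{sections} — i.e. as subspaces mapping isomorphically onto the relevant quotients of $H^0(Z,\Omega^2)$ modulo exact forms — so the only functions whose differentials land in the span of the relevant $S_i$ and which arise as entries of $f_k$ must have vanishing differential; equivalently, a nonconstant such function would give a nontrivial relation contradicting that the $S_i$ inject into $H^1_{\dR}$ (for $i=1$) resp. into the successive quotients. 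Concretely I would phrase this as: the map $S_i \to H^0(Z,\Omega^2)/dH^0(Z,\Omega)$ being injective with the chosen image means no nonzero element of $S_i$ is exact, and more to the point $dg \in (\text{span of } S_i\text{'s appearing})$ with $g$ a regular function implies $dg = 0$ since $Z$ is integral and the $S_i$ contain no exact forms. Hence each $f_k$ is constant, completing the induction.

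The truly delicate step, which deserves care in the write-up, is the inductive bookkeeping in the rigidity argument: one must check that at the $k$-th stage the right-hand side $\Lambda'_{(\cdot)} f_{(\cdot)} - f_{(\cdot)} \Lambda_{(\cdot)}$ of the equation for $df_k$ really does have entries in the span of the $S_i$ with the correct indices (this uses that the lower-index $f_j$ are already known to be \emph{constant}, so they do not introduce new functions), and then that closedness plus "entries in $\mathrm{span}(S_i)$, no exact forms in $S_i$" forces $f_k$ constant. I expect the bulk of the remaining work to be routine once the $S_i$'s no-exact-forms property is isolated as the engine; the existence half is essentially a repackaging of the universal-connection construction and flatness, and I would keep that brief, citing \cite[\S 1]{kim:chabauty} and \cite[\S 2--3]{hadian2011motivic} for the details of the gauge-fixing.
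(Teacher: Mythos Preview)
Your existence argument follows the same strategy as the paper---induction on the unipotency level, using flatness and a gauge transformation to normalise the new extension datum---but your write-up is too coarse and gets the indexing wrong. The class $\beta$ decomposes as $\beta=\sum_i\beta_i$ with $\beta_i\in\Hom(V_i,V_n)\otimes H^0(Z,\Omega)$, and each $\beta_i$ must land in $S_{n-i}$, not in a single $S_n$. Moreover the flatness equation reads $d\beta_i+\sum_{j>i}\beta_j\circ\Lambda_{ij}=0$, so fixing $\beta_i$ into $S_{n-i}$ requires the $\beta_j$ with $j>i$ to already be in reduced form; this forces a \emph{descending} induction on $i$ inside the outer induction on $n$, which the paper carries out explicitly and which your single ``gauge away the exact part'' step does not capture.

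For the rigidity claim (morphisms are $K$-linear), the paper actually gives no argument at all, so you are attempting more than it does. Your diagonal-degree induction has the right shape, but the crucial step has a gap. At degree $m$ one finds
\[
df_m=\sum_{b\geq 1}\bigl(f_{m-b}\Lambda_b-\Lambda'_b f_{m-b}\bigr),
\]
and once the lower $f_{m-b}$ are constant this has entries in $S_1+S_2+\cdots+S_{m-m_0}$, not in a single $S_i$. To conclude $df_m=0$ you need $\bigl(\sum_i S_i\bigr)\cap d\mathcal{O}(Z)=0$. Your stated justification---``the $S_i$ contain no exact forms''---only gives $S_i\cap d\mathcal{O}(Z)=0$ for each $i$ separately, and that does not preclude an exact form splitting nontrivially as $\omega_1+\omega_2+\cdots$ with $\omega_i\in S_i$. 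You must either choose the $S_i$ compatibly so that $\bigoplus_i S_i$ is genuinely a direct-sum complement to $d\mathcal{O}(Z)$ inside the relevant filtered piece of $H^0(Z,\Omega)$, or supply a separate argument (for instance, applying $d$ and exploiting that $d\omega_i\in\sum_{j+k=i}S_j\wedge S_k$ lives in the degree-$i$ piece of a suitable grading) to rule out such cancellations.
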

\begin{proof}
This may be proved in a similar fashion to \cite[Lemma 2]{kim:chabauty}. We argue by induction on $n$, the case $n=0$ being immediate. If $\mathcal{V}$ is an extension of an $(n-1)$-unipotent connection $\mathcal{V}'$ by a trivial connection $\mathcal{V}'' =(V'' \otimes \mathcal{O},d)$, then we can assume $\mathcal{V}'$ can be written in reduced form, say 
\[
\mathcal{V}'\simeq \oplus _{i=0}^{n-1}\mathcal{V}_i \simeq \oplus _{i=0}^{n-1}V_i \otimes \mathcal{O}
\] 
so that $\nabla _{\mathcal{V}'}=d+\Lambda $, where $\Lambda =\sum _{0\leq i<j<n} \Lambda _{ij}$ and $\Lambda _{ij}\in \Hom (V_i ,V_j )\otimes S$.
Since $Z$ is affine, we can choose a vector bundle splitting of the short exact sequence
\begin{equation}\label{induction_exact_seq}
0\to \mathcal{V}'' \to \mathcal{V}\to \mathcal{V}'\to 0,
\end{equation}
so that we can write $\mathcal{V}$ as $\mathcal{V}' \oplus \mathcal{V}''$, with connection given by $d+\Lambda  +\Lambda ' $, where $\Lambda '\in \Hom (\mathcal{V}',\mathcal{V}'')\otimes H^0 (Z,\Omega )$. We want to show that, changing the basis by an element of $1+\Hom (\mathcal{V}',\mathcal{V}'')$, we can make $\Lambda '$ of the form $\sum \Lambda ' _i $, where 
\begin{equation}\label{s_i}
\Lambda ' _i \in \Hom (V_i ,V'')\otimes S.
\end{equation}
Write $\Lambda '$ as $\sum \Lambda ' _i $, with $\Lambda ' _i $ in $\Hom (V_i ,V'')\otimes H^0 (Z,\Omega )$. If we change the splitting of \eqref{induction_exact_seq} by $1+\sum M_i $, where $M_i \in \Hom (\mathcal{V}_i ,\mathcal{V}'')$, this will change $\Lambda '$ by
\begin{equation}\label{the_obligatory_gauge_transformation}
\Lambda _i ' \mapsto \Lambda _i ' +dM_i -\sum _{j>i}M_j \circ \Lambda _{ij}.
\end{equation}
We show that we can change our basis so that the $\Lambda '_i$ satisfy \eqref{s_i} by descending induction on $i$. When $i=n-1$, \eqref{s_i} implies that there is an 
$M\in \Hom (\mathcal{V}_{n-1},\mathcal{V}'')$ such that
\[
\Lambda _{n-1}'-dM\in \Hom (V_{n-1} ,V'')\otimes S .
\]
Changing the splitting of \eqref{induction_exact_seq} by $M$, we obtaining a splitting for which $\Lambda _{n-1}'$ satisfies \eqref{s_i}. Now suppose we have a splitting of \eqref{induction_exact_seq} such that $\Lambda _{n-1}',\ldots ,\Lambda _{n-i+1}'$ satisfy \eqref{s_i}. Then \eqref{s_i} implies there is an $M\in \Hom (\mathcal{V}_{n-i},\mathcal{V}'')$ such that
\[
\Lambda _{n-i}'-dM\in \Hom (V_{n-i} ,V'')\otimes S .
\]
If we change the splitting of \eqref{induction_exact_seq} by $M$, then by \eqref{the_obligatory_gauge_transformation}, $\Lambda _{n-1}',\ldots,\Lambda _{n-i+1}$ will be unchanged, and $\Lambda _{n-i}$ will now lie in $\Hom (V_{n-i} ,V'')\otimes S$. Hence $\mathcal{V}$ is in reduced form.
 
For the second part of the lemma, write the homomorphism $f$ as $\sum f_{ij}$, where $f_{ij}\in \Hom (\oplus V_i ,\oplus W_j )\otimes \mathcal{O}(Z)$. Write $\nabla =d+\sum _{i<j}\Lambda _{ij}$ and $\nabla '=d+\sum _{i<j}\Lambda _{ij} '$ with the notation as above. Then the identity
\[
df_{ij}+\sum _{k<j}\Lambda _{kj} ' f_{ik}=\sum _{i<k}f_{kj}\Lambda _{ik},
\]
and the fact that $S\cap d\mathcal{O}(Z)=0$, shows $df_{ij}=0$ for all $i$ and $j$ by a double induction on $i$ and $j$.
\end{proof}
\begin{definition}
We write $\mathcal{E}_n $ in reduced form as $\oplus _{i=0}^{d_n }\mathcal{O}_Z \cdot e_i $ (i.e. we write it in reduced form, and then pick a basis for each vector space $V_i $), and $\nabla _n =d+\Lambda _n $. The matrix $\Lambda _n $ is $n$-nilpotent, and its upper left $k$-nilpotent submatrix  is equal to $\mathcal{E}_k $, for all $k<n$ (i.e. the quotient $\mathcal{E}_n \to \mathcal{E}_k $ is just given by projecting onto the first $(k+1)$-blocks).
If $T$ is a trivialisation of $\mathcal{E}_n $ coming from its reduced form, we denote $\boldsymbol{J}_{n,T}$ simply by $\boldsymbol{J}_n$.
\end{definition}
\subsection{An explicit description of the universal connection}
The main result of this section is a differential equation satisfied by $\boldsymbol{J}_{n}$, which is used in the inductive step of the proof of Proposition \ref{unlikely_intersection}.
Let $\mathcal{E}_n \simeq \mathcal{O}_Z ^{\oplus d_n}$ be a bundle trivialisation of $\mathcal{E}_n $ putting it in reduced form. Let $\Lambda _n $ be the connection matrix for its reduced form. Recall that by Lemma \ref{lemma:uce}, $\varprojlim b^* \mathcal{E}_n $ is isomorphic via Tannaka duality to the universal enveloping algebra of $\pi _1 ^{\dR}(Z,b)$, viewed as a pro-representation of $\pi _1 ^{\dR} (Z,b)$.
Recall that the Lie algebra $L_\infty ^{\dR} (Z) \subset \varprojlim b^* \mathcal{E}_n $ is defined to be the subspace satisfying $\epsilon (x)=0$ and $\Delta (x)=x\widehat{\otimes }1+1\widehat{\otimes }x$, and $L_n ^{\dR} (Z)\subset b^* \mathcal{E}_n $ is the image of $\lie (\pi _1 ^{\dR} (Z,b))$ in $b^* \mathcal{E}_n $. By making a $K$-linear change of basis (preserving the unipotent filtration) if necessary, we may (and do) henceforth assume that there is a sub-basis $e_{l_i }$ of  $e_i$ forming a basis of $L_n ^{\dR} (Z)$.
Define $b_{ijk}\in K,1\leq i,j,k\leq r_n $, by
\begin{equation}\label{bijk}
[e_{l_i} ,e_{l_j} ]=\sum b_{ijk}e_{l_k} .
\end{equation}

\begin{lemma}\label{omega}
There is an element $\boldsymbol{\omega }=\sum _{i=1}^{r_n } e_{l_i} \otimes \omega _i $ of $L_n ^{\dR} (Z)\otimes H^0 (Z,\Omega )$ such that the connection $\nabla _n $ on $\mathcal{E}_n $ is given by 
\[
v \mapsto \boldsymbol{\omega}\cdot v+dv .
\]
\end{lemma}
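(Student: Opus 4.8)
The plan is to exploit the \emph{reduced form} systematically: once $\mathcal E_n$ and the few auxiliary connections that intervene are written in the reduced-form trivialisation, Lemma \ref{lemma:reduced} forces every morphism between them to be a constant ($K$-linear) matrix, and the statement then reduces to elementary linear algebra over the truncated enveloping algebra $b^*\mathcal E_n$. Concretely, write $\mathcal E_n$ in the chosen reduced form $\mathcal E_n=\bigoplus_{i=0}^{d_n}\mathcal O_Z\cdot e_i$, $\nabla_n=d+\Lambda_n$, and put $V:=b^*\mathcal E_n=\bigoplus_i Ke_i$, the bottom block $V_0=Ke_0$ being spanned by the marked vector $e_0$ (the quotient $\mathcal E_n\to\mathcal E_0=(\mathcal O_Z,d)$ carries $e_0$ to the unit section). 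By Lemma \ref{universal_connection_exists}(2) the pro-object $\varprojlim b^*\mathcal E_n$ is the universal enveloping algebra $\mathcal U(\lie\pi_1^{\dR}(Z,b))$, so $V$ acquires a unital associative algebra structure with unit $e_0$; its space of primitive elements is $L_n^{\dR}(Z)$, and its coproduct $\delta_0$ and counit $\varepsilon_0$ satisfy $\delta_0(e_0)=e_0\otimes e_0$ and $\varepsilon_0(e_0)=1$. I would then set
\[
\boldsymbol\omega:=\Lambda_n(e_0)\in V\otimes H^0(Z,\Omega),
\]
equivalently $\boldsymbol\omega=\nabla_n(e_0)$ for $e_0$ viewed as a constant section, and prove the two assertions: (i) $\Lambda_n$ is left multiplication $v\mapsto\boldsymbol\omega\cdot v$ by $\boldsymbol\omega$; and (ii) $\boldsymbol\omega\in L_n^{\dR}(Z)\otimes H^0(Z,\Omega)$. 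Together these give the Lemma, on writing $\boldsymbol\omega=\sum_{i=1}^{r_n}e_{l_i}\otimes\omega_i$ in the sub-basis $\{e_{l_i}\}$ of $L_n^{\dR}(Z)$.

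For (i): for each $v\in V$, Lemma \ref{universal_mult_comult}(1) provides a morphism of connections $m(\cdot,v)\colon\mathcal E_n\to\mathcal E_n$ whose fibre at $b$ is right multiplication $R_v$ by $v$; since source and target are in reduced form, Lemma \ref{lemma:reduced} makes $m(\cdot,v)$ the constant matrix $R_v$ in the reduced trivialisation. Horizontality $\nabla_n\circ R_v=(R_v\otimes\mathrm{id}_\Omega)\circ\nabla_n$ then reduces (the $d$-terms cancelling because $R_v$ is constant) to the statement that $\Lambda_n\in\End_K(V)\otimes H^0(Z,\Omega)$ commutes with $R_v$. As this holds for every $v$, the $\End_K(V)$-coefficients of $\Lambda_n$ are endomorphisms of $V$ regarded as a right module over itself; but $V$ is free of rank one as a right $V$-module (generated by $e_0$), so every such endomorphism is a left multiplication, whence $\Lambda_n=L_{\boldsymbol\omega'}$ for a unique $\boldsymbol\omega'\in V\otimes H^0(Z,\Omega)$. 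Evaluating at $e_0$ gives $\boldsymbol\omega'=\boldsymbol\omega'\cdot e_0=\Lambda_n(e_0)=\boldsymbol\omega$.

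For (ii): the counit $\varepsilon\colon\mathcal E_n\to\mathcal E_0=(\mathcal O_Z,d)$ (projection onto the $0$-unipotent quotient) and, by Lemma \ref{universal_mult_comult}(2), the comultiplication $\Delta\colon\mathcal E_n\to(\mathcal E_n\otimes\mathcal E_n)^{\le n}$ into the maximal $n$-unipotent quotient of $\mathcal E_n\otimes\mathcal E_n$ are morphisms of connections. The connection matrix of this last target is $\Lambda_n\otimes 1+1\otimes\Lambda_n$, which is again in reduced form with respect to the same subspaces $S_i$, so Lemma \ref{lemma:reduced} makes $\varepsilon$ and $\Delta$ the constant maps $\varepsilon_0$ and $\delta_0$. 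Applying horizontality of $\varepsilon$ to the constant section $e_0$ gives $(\varepsilon_0\otimes\mathrm{id}_\Omega)(\boldsymbol\omega)=d(\varepsilon_0(e_0))=0$, and applying horizontality of $\Delta$ to $e_0$ gives
\[
(\delta_0\otimes\mathrm{id}_\Omega)(\boldsymbol\omega)=(\Lambda_n\otimes 1+1\otimes\Lambda_n)(e_0\otimes e_0)=\boldsymbol\omega\otimes e_0+e_0\otimes\boldsymbol\omega.
\]
These are precisely the conditions that $\boldsymbol\omega$ be valued in the primitive elements of $V$, i.e.\ that $\boldsymbol\omega\in L_n^{\dR}(Z)\otimes H^0(Z,\Omega)$.

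The step I expect to require the most care is the repeated use of Lemma \ref{lemma:reduced}: one must check that the auxiliary connections entering the argument --- $\mathcal E_n$ as source and target of the $m(\cdot,v)$, the truncated tensor square $(\mathcal E_n\otimes\mathcal E_n)^{\le n}$, and $\mathcal E_0$ --- are genuinely in reduced form relative to one fixed choice of the subspaces $S_i\subset H^0(Z,\Omega)$, so that the structure maps $m(\cdot,v)$, $\varepsilon$ and $\Delta$ really are $K$-linear; and one needs the standard identification of the space of primitive elements of the truncated enveloping algebra $b^*\mathcal E_n$ with the image $L_n^{\dR}(Z)$ of $\lie\pi_1^{\dR}(Z,b)$. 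Granting these, the remainder is formal.
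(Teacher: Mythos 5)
Your proof is correct and follows essentially the same strategy as the paper: set $\boldsymbol{\omega}=\nabla_n(e_0)$, use Lemma \ref{lemma:reduced} to make the structure maps $m(\cdot,v)$, $\varepsilon$ and $\Delta$ constant matrices in the reduced trivialisation, and combine this with Lemma \ref{universal_mult_comult} to identify $\Lambda_n$ as left multiplication by $\boldsymbol{\omega}$ and to show $\boldsymbol{\omega}$ is primitive. Your detour in step (i) through the commutant of right multiplications (rather than applying the constant morphism $m(\cdot,v)$ directly to $1_n$, as the paper does) is a more verbose phrasing of the same argument, resting on the same two lemmas.
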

\begin{proof}
First, we show that there is an element $\boldsymbol{\omega }\in H^0 (Z,\mathcal{E}_n \otimes \Omega ^1 )$ such that for all $v$ in $b^* \mathcal{E}_n $, $\nabla (v \otimes 1 )=\boldsymbol{\omega }\cdot v$. Define $\boldsymbol{\omega }:=\nabla (1_n )$. Let $f $ be the unique morphism $\mathcal{E}_n \to \mathcal{E}_n $ such that $b^* (f)(1_n )=v$. Then, since $f$ is $K$-linear with respect to the trivialisation, we have
\[
\nabla (v\otimes 1 )=f (\boldsymbol{\omega }).
\]
By Lemma \ref{lemma:broken_link}, $f (\boldsymbol{\omega })=\boldsymbol{\omega }\cdot v $.

We now show that $\boldsymbol{\omega }$ is in $L_n ^{\dR} (Z) \otimes H^0 (Z,\Omega )$. It is convenient to instead show this in the limit over $n$: i.e. that $\boldsymbol{\omega }\in \varprojlim H^0 (Z,\mathcal{E}_n \otimes \Omega ^1 )$ lies in $\varprojlim H^0 (Z,L_n ^{\dR} (Z)\otimes \Omega ^1 )$. Since, in $\mathcal{E}_0$, we have $\nabla (1_0 )=0$, the first condition $\boldsymbol{\omega }\in \Ker(\epsilon \otimes 1)$ is satisfied. The second condition is that $\Delta (\boldsymbol{\omega })=\boldsymbol{\omega }\widehat{\otimes }1_n +1_n \widehat{\otimes }\boldsymbol{\omega }$. By Lemma \ref{lemma:broken_link}, $\Delta $ is the unique morphism of pro-connections
\[
\varprojlim \mathcal{E}_n \to \varprojlim (\mathcal{E}_n \otimes \mathcal{E}_n )
\]
sending $(1_n )$ to $(1_n \otimes 1_n )$. Hence $\Delta (\boldsymbol{\omega })=\nabla (1_n \widehat{\otimes }1_n )=\boldsymbol{\omega }\widehat{\otimes }1_n +1_n \widehat{\otimes }\boldsymbol{\omega }$, by definition of the tensor product of two connections.
 \end{proof}
We write $\boldsymbol{J}=\sum J_i \otimes e_{l_i} \in \mathcal{O}(\widehat{Z})\otimes L_n ^{\dR} (Z)$. Recall $\boldsymbol{J}(b)=0$ and
\begin{equation}
d\exp (\boldsymbol{J})=\boldsymbol{\omega }\cdot \exp (\boldsymbol{J})
\end{equation}
in $H^0 (\widehat{Z},\Omega )$.
Let $t_i \in \mathcal{O}(\widehat{L}_n ^{\dR} (Z))$ be the $i$th coordinate function with respect to the basis $(e_{l_i} )$. Define 
\[
\boldsymbol{t}=\sum t_i \otimes e_{l_i}\in H^0 (L_n ^{\dR}(Z),\mathcal{O})\otimes L_n ^{dR} (Z).
\]
Recall $\Delta \subset \widehat{Z}\times \widehat{L}_n ^{\dR}(Z)$ denotes the graph of $\boldsymbol{J}$.
Then $\Delta $ is the zero set of the functions $t_i -J_i $ (here we view $t_i $ and $J_i$ as functions on $\widehat{Z}\times \widehat{L}^{\dR} (Z)$ via pulling back to ease notation we suppress the pull-back from the notation).

To describe the derivative of $\boldsymbol{J}$, we make use of Poincar\'e's Lemma on the derivative of the exponential function. Given an $N$-nilpotent Lie algebra $L$, and a formal power series $F=\sum _{i\geq 0}a_i t^i $, and $x\in L$, we write $F(\ad _x )$ to mean the operator $\sum _{i=0}^N a_i (\ad _x )^i $ on $L$. Finally, if $F,G$ are nonzero power series in $K[\! [t]\! ]$ such that $H:=F/G$ lies in $K[\! [t]\! ]$, we further abuse notation by writing $F(\ad _x )/G(\ad _x )$ to mean the operator $H(\ad _x )$.
\begin{lemma}\label{poincare}
Let $\exp :L_n ^{\dR} (Z)\to b^* \mathcal{E}_n (Z) $ denote the exponential function. Then
\[
d\exp (x)=\frac{e^{\ad _x }-1}{\ad _x }(dx)\cdot e^{x}.
\]
\end{lemma}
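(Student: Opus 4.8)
The plan is to reduce the identity to an elementary ordinary differential equation in an auxiliary scaling parameter $s$, following the classical proof of the derivative-of-exponential formula. I would first make precise the meaning of the statement: it is an identity of $b^*\mathcal{E}_n(Z)$-valued one-forms on $L_n^{\dR}(Z)$, in which $x$ denotes the tautological point, $dx\in\Omega^1_{L_n^{\dR}(Z)}\otimes L_n^{\dR}(Z)$ the tautological $L_n^{\dR}(Z)$-valued one-form, $\exp(x)\colon L_n^{\dR}(Z)\to b^*\mathcal{E}_n(Z)$ the (polynomial) exponential map, and $d\exp(x)$ its exterior derivative; pulling back along $\boldsymbol{J}_n\colon\widehat{Z}\to L_n^{\dR}(Z)$ then recovers the form in which the lemma is actually applied. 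Because $L_n^{\dR}(Z)$ is nilpotent, every expression below is polynomial, so no convergence or analytic ODE theory is needed: $s\mapsto\exp(sx)$ is a polynomial in $s$ with values in $b^*\mathcal{E}_n(Z)$, it is invertible with inverse $\exp(-sx)$, and $x$ commutes with $\exp(sx)$, so that $\partial_s\exp(sx)=x\exp(sx)=\exp(sx)x$.

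I would then introduce the one-form $\theta(s):=d(\exp(sx))\cdot\exp(-sx)$ and compute its $s$-derivative. Using $\partial_s\exp(\pm sx)=\pm x\exp(\pm sx)$, the commutation of $d$ with $\partial_s$, the Leibniz rule in the associative algebra $b^*\mathcal{E}_n(Z)$, and the fact that $x$ commutes with $\exp(-sx)$, one gets
\[
\partial_s\theta = dx + x\,\theta - \theta\,x = dx + \ad_x(\theta),\qquad \theta(0)=0,
\]
the initial condition holding because $\exp(0\cdot x)=1$ is constant. Since $\ad_x$ is a nilpotent operator on $L_n^{\dR}(Z)$, the operator $\tfrac{e^{s\ad_x}-1}{\ad_x}:=\sum_{k\ge0}\tfrac{s^{k+1}}{(k+1)!}(\ad_x)^k$ is a well-defined polynomial in $s$ with coefficients in $\End(L_n^{\dR}(Z))$, and a one-line check shows that $s\mapsto\tfrac{e^{s\ad_x}-1}{\ad_x}(dx)$ solves the same equation with the same initial value; as a polynomial in $s$ is determined by its derivative together with its constant term, this equals $\theta(s)$. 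In particular $\theta$ is valued in $\Omega^1\otimes L_n^{\dR}(Z)$, since $\ad_x$ preserves $L_n^{\dR}(Z)$. Setting $s=1$ and right-multiplying by $\exp(x)$ yields $d\exp(x)=\theta(1)\cdot e^x=\tfrac{e^{\ad_x}-1}{\ad_x}(dx)\cdot e^x$, as claimed.

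I do not expect a real obstacle here; the only delicate points are keeping the associative product of $b^*\mathcal{E}_n(Z)$ distinct from the Lie bracket throughout the bookkeeping, and verifying that $\tfrac{e^{\ad_x}-1}{\ad_x}(dx)$ (interpreted via the terminating power series) indeed lands in $\Omega^1\otimes L_n^{\dR}(Z)$ rather than merely in $\Omega^1\otimes b^*\mathcal{E}_n(Z)$. An alternative that avoids the parameter $s$ altogether is to expand both sides directly as power series in $x$, using $d(x^m)=\sum_{i+j=m-1}x^i(dx)x^j$ and $(\ad_x)^k(dx)=\sum_{a+b=k}(-1)^b\binom{k}{a}x^a(dx)x^b$, and to match the coefficient of each monomial $x^i(dx)x^j$; this reduces the claim to a standard binomial identity, and the $s$-parameter computation above is simply a clean repackaging of it.
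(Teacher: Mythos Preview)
Your argument is correct: the $s$-parameter ODE computation is the standard proof of Poincar\'e's derivative-of-exponential formula, and it goes through here because nilpotency makes everything polynomial. The paper's proof is terser but amounts to the same thing: it observes that $b^*\mathcal{E}_n(Z)$ is a quotient of the universal enveloping algebra of $L_n^{\dR}(Z)$ (compatibly with $\exp$ and $\log$), so it suffices to check the identity there, and then it simply cites the classical Poincar\'e formula from a textbook rather than reproving it. In other words, you have written out in full the computation that the paper delegates to a reference; there is no substantive difference in strategy.
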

\begin{proof}
It will be enough to prove this identity in the universal enveloping algebra of $L_n ^{\dR}(Z)$, as $b^* \mathcal{E}_n (Z)$ is a quotient of the universal enveloping algebra of $L_n ^{\dR}(Z)$, compatible with the exponential and logarithm maps. Hence this follows from the usual version of Poincar\'e's formula (see e.g. \cite[II.5.11]{CRS95}).
 \end{proof}

We define $\widetilde{\theta }_i \in H^0 (L_n ^{\dR} (Z) \times Z,\Omega )$ by
\[
\sum \widetilde{\theta } _i \otimes e_{l_i} =\widetilde{\boldsymbol{\theta }}:=\frac{e^{\ad _{\boldsymbol{t}}}-1}{\ad _{\boldsymbol{t}}}d\boldsymbol{t}-\boldsymbol{\omega }.
\]
We define $\theta _i $ to be the image of $\widetilde{\theta }_i $ in $H^0 (V,\Omega )$ and $\overline{\theta }_i $ to be the image of $\theta _i $ in $H^0 (V,\Omega )\otimes \mathcal{O}(W)$. 

\begin{remark}\label{rk:BSCFN}
The $\theta _i $ have the following interpretation in terms of foliations on principal bundles. The frame bundle on $\mathcal{E}_n ^{\dR}$ descends to a $U_n ^{\dR}$-bundle $P$. The choice of trivialisation on $\mathcal{E}_n ^{\dR}$ on $Z$ defines a trivialisation of $P$, i.e. an isomorphism $P\simeq U_n ^{\dR} \times Z$. With respect to this isomorphism, the connection form $\Omega $ on $P$ can be viewed as an element of $H^0 (Z\times U_n ^{\dR},\Omega )\otimes L_n ^{\dR}$. As explained in \cite[2.7]{BSCFN}, the connection form is given by $\boldsymbol{t}^{-1}d\boldsymbol{t}-\boldsymbol{t}^{-1}\boldsymbol{\omega }\boldsymbol{t}$. Via the exponential map, this may be viewed as an element $\Omega '$ of $H^0 (Z \times L_n ^{\dR},\Omega )\otimes L_n ^{\dR}$. Then we see that 
\[
\Omega ' =\boldsymbol{t}^{-1}\widetilde{\boldsymbol{\theta }}\boldsymbol{t}.
\]
Hence finding linear relations between the $\theta _i$ is equivalent to finding linear relations between the coefficients of $\Omega $, as in the proof of Theorem 3.6 of loc. cit.
\end{remark}
We have an exact sequence \cite[20.7.20]{egaiv}
\begin{equation}\label{ega}
I/I^2 \to \widehat{\Omega }_{\widehat{V}|\overline{\Q }_p }\otimes \mathcal{O}(W)\to \widehat{\Omega }_{W|\overline{\Q }_p }\to 0.
\end{equation}
\begin{lemma}\label{generatorsI/I2}
Let $I\subset \mathcal{O}(\widehat{V})$ denote the ideal of functions vanishing on $W$. Then the image of $I/I^2 $ in $\widehat{\Omega } _{V|K}\otimes \mathcal{O}(W)$ under
\[
I/I^2 \to \widehat{\Omega } _{\widehat{V}|K}\otimes \mathcal{O}(W)
\] is spanned by $\overline{\theta } _1 ,\ldots ,\overline{\theta } _{r_n }$.
\end{lemma}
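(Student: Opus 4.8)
The plan is to run an induction-free computation built on the explicit differential equation for the horizontal section $\boldsymbol{J}=\boldsymbol{J}_n$, and then a conormal-sequence argument. First I would extract the basic identity: combining \eqref{expJ} with Lemma~\ref{omega} gives $d\exp(\boldsymbol{J})=\boldsymbol{\omega}\cdot\exp(\boldsymbol{J})$, and Poincar\'e's formula (Lemma~\ref{poincare}) rewrites the left-hand side as $\tfrac{e^{\ad_{\boldsymbol{J}}}-1}{\ad_{\boldsymbol{J}}}(d\boldsymbol{J})\cdot e^{\boldsymbol{J}}$; cancelling the invertible factor $e^{\boldsymbol{J}}$ yields
\[
\frac{e^{\ad_{\boldsymbol{J}}}-1}{\ad_{\boldsymbol{J}}}(d\boldsymbol{J})=\boldsymbol{\omega}.
\]
Since $\boldsymbol{t}$ restricts to $\boldsymbol{J}$ along $\Delta$, this is exactly the statement that $\widetilde{\boldsymbol{\theta}}$ pulls back to zero on $\Delta$. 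As $W\subseteq\Delta$, each $\theta_i$ pulls back to zero on $W$, so $\overline{\theta}_i$ lies in the kernel of $\widehat{\Omega}_{\widehat V|K}\otimes\mathcal{O}(W)\to\widehat{\Omega}_{W|K}$, which by exactness of \eqref{ega} is the image of $I/I^2$. Thus the $\overline{\theta}_i$ automatically lie in the image of $I/I^2$, and the content of the Lemma is the reverse inclusion: that they \emph{generate} it.

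To see that, I would compare the $\overline{\theta}_i$ with the evident generators of the ideal of $\Delta$. Put $g_i:=t_i-J_i$, so that $\Delta$ is cut out in $\widehat{L}_n^{\dR}(Z)\times\widehat{Z}$ by $g_1,\dots,g_{r_n}$, the restrictions $\bar g_i$ lie in $I$, and their images $\overline{dg_i}\in\widehat{\Omega}_{\widehat V|K}\otimes\mathcal{O}(W)$ lie in the image of $I/I^2$ by construction. Expanding $d\boldsymbol{t}=\sum_i dg_i\otimes e_{l_i}+d\boldsymbol{J}$ inside the definition of $\widetilde{\boldsymbol{\theta}}$, applying $\tfrac{e^{\ad_{\boldsymbol{t}}}-1}{\ad_{\boldsymbol{t}}}$, and reducing modulo $I$ — where $\boldsymbol{t}\equiv\boldsymbol{J}$ and hence $\ad_{\boldsymbol{t}}\equiv\ad_{\boldsymbol{J}}$, so that the $\boldsymbol{J}$-part collapses to $\boldsymbol{\omega}$ by the identity above — one obtains
\[
\sum_i\overline{\theta}_i\otimes e_{l_i}=\frac{e^{\ad_{\boldsymbol{J}|_W}}-1}{\ad_{\boldsymbol{J}|_W}}\Bigl(\sum_i\overline{dg_i}\otimes e_{l_i}\Bigr).
\]
Because $L_n^{\dR}(Z)$ is $n$-step nilpotent, $\ad_{\boldsymbol{J}|_W}$ is a nilpotent $\mathcal{O}(W)$-linear operator, so $\tfrac{e^{\ad_{\boldsymbol{J}|_W}}-1}{\ad_{\boldsymbol{J}|_W}}=1+(\text{nilpotent})$ is invertible over $\mathcal{O}(W)$. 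Consequently the $\mathcal{O}(W)$-span of $\overline{\theta}_1,\dots,\overline{\theta}_{r_n}$ coincides with that of $\overline{dg_1},\dots,\overline{dg_{r_n}}$, i.e. with the image under $I/I^2\to\widehat{\Omega}_{\widehat V|K}\otimes\mathcal{O}(W)$ of the subideal $(\bar g_1,\dots,\bar g_{r_n})\subseteq I$.

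It then remains to show the image of \emph{all} of $I/I^2$ is no larger — that the remaining generators of $I$ add no new conormal directions along $W$. Here one uses that $\Delta$ is smooth of codimension $r_n$ and is cut out by exactly the $r_n$ functions $g_i$, while $W$ is an irreducible component of the closed subscheme $\Delta\cap\widehat V$ of $\widehat V$ (cut out there by $\bar g_1,\dots,\bar g_{r_n}$): localising at the generic point of $W$, the ideal of $W$ agrees with the ideal generated by the $\bar g_i$, so the two images agree after localisation, hence as submodules of $\widehat{\Omega}_{\widehat V|K}\otimes\mathcal{O}(W)$. I expect \textbf{this last step to be the main obstacle}: one has to be careful about which scheme structure the ``irreducible component'' $W$ carries — the intersection $\Delta\cap\widehat V$ need not be reduced, nor $\widehat V$ transverse to $\Delta$ along $W$, and the assertion is only correct when $I$ is the ideal of $W$ as a component of $\Delta\cap\widehat V$ and not of the associated reduced subscheme — and one must also record the (routine) identifications between $\widehat{\Omega}_{V|K}$, $\widehat{\Omega}_{\widehat V|K}$ and the completions of $\mathcal{O}(W)$ appearing in \eqref{ega}. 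Everything else is a direct computation with the reduced form of the universal connection.
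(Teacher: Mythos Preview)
Your approach is essentially the paper's: derive the differential equation for $\boldsymbol{J}$ from Poincar\'e's formula, then use invertibility of $\tfrac{e^{\ad}-1}{\ad}$ over $\mathcal{O}(W)$ to identify the $\mathcal{O}(W)$-span of the $\overline{\theta}_i$ with that of the $\overline{d(t_i-J_i)}$ (the paper applies the inverse operator $\tfrac{\ad}{e^{\ad}-1}$ to $\boldsymbol{\omega}$ rather than the forward operator to $d\boldsymbol{t}$, but the content is identical). The step you flag as the ``main obstacle'' --- that the $\bar g_i = t_i - J_i$ already account for the full image of $I/I^2$ --- is simply asserted in the paper without further comment; your worry about the scheme structure on the component $W$ is fair, but note that for the sole application of this lemma (deducing $\overline{\Q}_p(W)$-dependence of the $\overline{\theta}_i$ from the codimension hypothesis, via the rank bound on the image of $I/I^2$ coming from \eqref{ega}) one only needs the $\overline{\theta}_i$ to \emph{lie in} that image, and you establish this cleanly in your first paragraph.
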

\begin{proof}
The image of $I/I^2$ is spanned by the functions $dt_1 -dJ_1,\ldots ,dt_{r_n }-dJ_{r_n }$, so it will be enough to show that the submodule spanned by these differentials is equal to the submodule spanned by $\theta _1 ,\ldots ,\theta _{r_n }$.
By \eqref{expJ} and Lemma \ref{omega}, $\exp (\boldsymbol{J})$ satisfies
\[
d(\exp (\boldsymbol{J}))=\boldsymbol{\omega } \cdot \exp (\boldsymbol{J}).
\]
Hence, by Lemma \ref{poincare}, the function $\boldsymbol{J}$ satisfies the differential equation
\begin{equation}\label{rather_technical}
d\boldsymbol{J}=\frac{\ad _{\boldsymbol{J}}}{e^{\ad _{\boldsymbol{J}}}-1}(\boldsymbol{\omega }).
\end{equation}
Hence the image of $I/I^2$ is equal to the submodule spanned by
\begin{equation}\label{silly}
d\boldsymbol{t}-\frac{\ad _{\boldsymbol{t}}}{e^{\ad _{\boldsymbol{t}}}-1}(\boldsymbol{\omega }).
\end{equation}
since $t_i =J_i $ on $W$. Finally, the map
\[
\boldsymbol{v}\mapsto \frac{e^{\ad _{\boldsymbol{t}}}-1}{\ad _{\boldsymbol{t}}}(\boldsymbol{v})
\]
is an $\mathcal{O}(W)$-linear automorphism of $L_n ^{\dR}(Z)  \otimes \mathcal{O}(W)$, hence the submodule spanned by the coordinates of $\boldsymbol{\theta }$ is equal to the submodule spanned by the coordinates of
$
\frac{\ad _{\boldsymbol{t}}}{e^{\ad _{\boldsymbol{t}}}-1}\cdot \boldsymbol{\theta } ,
$
which equals \eqref{silly}.
 \end{proof}

\subsection{Proof of Proposition \ref{unlikely_intersection}}
Since the singular locus of $V$ is not Zariski dense in $V$, we may restrict to the case where $b$ is a smooth point of $V$. We suppose that the codimension of $\Delta $ in $W$ is less than the codimension of $V$ in $L_n ^{\dR}(Z)\times Z$. Let $\overline{\Q }_p (V)$ denote the function field of $V$, and $\overline{\Q }_p (W)$ the function field of $W$. By the exact sequence \eqref{ega}
\[
I/I^2 \otimes \overline{\Q }_p (W) \to \widehat{\Omega }_{\widehat{V}_{\overline{\Q} _p }|\overline{\Q }_p }\otimes \overline{\Q }_p (W)\to \widehat{\Omega } _{\overline{\Q }_p (W)|\overline{\Q }_p } \to 0
\]
and the inequality $\dim (W)\leq \rk \widehat{\Omega }_{W|\overline{\Q }_p }$, we deduce that $\overline{\theta }_1 ,\ldots ,\overline{\theta }_{r_n }$ are linearly dependent in $\widehat{\Omega }_{\widehat{V}|\overline{\Q }_p }\otimes \overline{\Q }_p (W) $.
We henceforth assume that $W$ is Zariski dense in $V$, and assume, as hypothesis for contradiction, that the $\overline{\theta } _i $ are $\overline{\Q }_p (W)$-linearly dependent. By our assumption that $W$ is Zariski dense in $V$, this implies that the $\theta _i $ are $\overline{\Q }_p (V)$-linearly dependent. In this subsection, we aim to show that such a dependence contradicts the Zariski density of $W$.
The following elementary Lemma gives a couple of ways to prove that a formal sub-scheme is not Zariski dense.
\begin{lemma}\label{no_diffl}
Let $V$ be an integral variety over $\overline{\Q }_p $, $W$ an integral closed formal subscheme of the formal completion of $V$ at a $\overline{\Q }_p $-point $b$ at which $V$ is smooth.
\begin{enumerate}
\item Suppose there are $h_1 ,h_2$ in $\overline{\Q }_p (V)$ such that $h_1 dh_2$ is zero in $\Omega _{\overline{\Q }_p (W)|\overline{\Q }_p }$, but non-zero in $\Omega _{\overline{\Q }_p (V)|\overline{\Q })_p }$. Then $W$ is not Zariski dense in $V$.
\item Let $M$ be an $\mathcal{O}_V$-submodule of $\mathcal{O}_V ^{\oplus r}$. Suppose the $\overline{\Q }_p (W)$-rank of the image of $M\otimes \overline{\Q }_p (W)$ in $\overline{\Q }_p (W)^{\oplus r}$ is less than the $\overline{\Q }_p (V)$-rank of $M\otimes \overline{\Q }_p (V)$. Then $W$ is not Zariski dense in $V$.
\end{enumerate}
\end{lemma}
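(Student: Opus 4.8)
The plan is to prove both parts by exhibiting, from the given data, a proper closed subscheme of $V$ whose formal completion at $b$ contains $W$; equivalently, by producing a nonzero regular function on some affine open of $V$ containing $b$ which pulls back to zero on $W$. The two parts are closely related: part (2) is essentially the mechanism, and part (1) is a convenient packaging of a special case. I would prove (1) first as a warm-up, then (2).

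For part (1): since $h_1\,dh_2$ is nonzero in $\Omega_{\overline{\Q}_p(V)|\overline{\Q}_p}$ but zero in $\Omega_{\overline{\Q}_p(W)|\overline{\Q}_p}$, in particular $h_2$ is not constant along $V$ (so $h_2-h_2(b)$ is a nonzero function, shrinking to an affine open where $h_1,h_2$ are regular and $h_1$ is a unit near $b$ if needed). The first step is the observation that $h_1\,dh_2 = 0$ in $\Omega_{\overline{\Q}_p(W)|\overline{\Q}_p}$ forces $dh_2 = 0$ there, hence $h_2$ restricted to $W$ is locally constant; since $W$ is integral and contains $b$, $h_2|_W$ is the constant $h_2(b)$. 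Therefore $W$ lies in the closed subscheme $\{h_2 = h_2(b)\}$ of the chosen affine open. It remains to see this subscheme is proper, i.e. that $h_2 - h_2(b)$ is not identically zero on $V$: this is exactly where we use that $h_1\,dh_2\neq 0$ in $\Omega_{\overline{\Q}_p(V)|\overline{\Q}_p}$, which implies $dh_2 \neq 0$ in $\Omega_{\overline{\Q}_p(V)|\overline{\Q}_p}$, so $h_2$ is nonconstant on $V$. Hence $W$ is contained in a proper closed subscheme and is not Zariski dense.

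For part (2): let $r' < r''$ be the $\overline{\Q}_p(W)$-rank and $\overline{\Q}_p(V)$-rank respectively. Pick $m_1,\dots,m_{r''}\in M$ whose images are $\overline{\Q}_p(V)$-linearly independent in $\overline{\Q}_p(V)^{\oplus r}$. Their images in $\overline{\Q}_p(W)^{\oplus r}$ span a space of dimension $\le r' < r''$, so there is a nontrivial relation $\sum_i f_i \bar m_i = 0$ with $f_i\in\overline{\Q}_p(W)$, not all zero. The key step is to promote this to a relation with coefficients that are \emph{algebraic} over $V$: clearing denominators and using that $\overline{\Q}_p(W)$ is a quotient-of-a-domain extension of $\overline{\Q}_p(V)$ via the inclusion $V\supset W$, I would argue that the kernel of the surjection $M\otimes\overline{\Q}_p(V) \to (\text{image in }\overline{\Q}_p(V)^{\oplus r})$ together with the drop in rank over $W$ produces a nonzero element of $\overline{\Q}_p(V)$ (an entry, or a maximal minor of the matrix formed by the $m_i$) which vanishes on $W$. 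Concretely: form the $r'' \times r$ matrix with rows $m_i$; it has a nonvanishing $r''\times r''$ minor $\delta \in \overline{\Q}_p(V)$ (since the rows are independent over $\overline{\Q}_p(V)$), but every $r''\times r''$ minor vanishes on $W$ (since the rows become dependent over $\overline{\Q}_p(W)$). After shrinking to an affine open where $\delta$ is regular and the $m_i$ have entries in $\mathcal{O}(V)$, the locus $\{\delta = 0\}$ is a proper closed subscheme of $V$ containing $W$, so $W$ is not Zariski dense. Finally I would note that part (1) also follows from part (2) by taking $M$ spanned by $(h_1, 0)$-type rows appropriately, but giving the direct argument above is cleaner.

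The main obstacle I anticipate is the bookkeeping in part (2) around fields of definition and regularity: $W$ is only a \emph{formal} subscheme, so "$\delta$ vanishes on $W$" must be interpreted as $\delta$ mapping to $0$ under $\mathcal{O}(V)_b^\wedge \to \mathcal{O}(W)$, and one must be careful that the minors $\delta$, a priori rational functions on $V$, are actually regular near $b$ after an appropriate shrinking, and that the resulting vanishing locus is genuinely proper (nonempty complement) rather than all of $V$ — this is guaranteed precisely by the strict inequality $r' < r''$, which ensures the chosen minor $\delta$ is a nonzero element of $\overline{\Q}_p(V)$. The algebraic content is elementary linear algebra over fields; the care required is entirely in translating between the generic-rank statements, the formal-scheme language, and honest regular functions on an affine chart.
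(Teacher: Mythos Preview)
Your approach is essentially the paper's, and part (2) matches it almost verbatim (pick independent elements, form the matrix, take maximal minors). There is, however, a genuine gap in your part (1): the claim that $h_1\,dh_2=0$ in $\Omega_{\overline{\Q}_p(W)|\overline{\Q}_p}$ ``forces $dh_2=0$ there'' is not justified, and your parenthetical about making $h_1$ a unit near $b$ cannot always be arranged. Since $\Omega_{\overline{\Q}_p(W)|\overline{\Q}_p}$ is a $\overline{\Q}_p(W)$-vector space, $h_1\,dh_2=0$ only yields the dichotomy $h_1|_W=0$ \emph{or} $dh_2|_W=0$. You handle the second alternative correctly; in the first alternative, writing $h_1=f_1/g_1$ with $f_1,g_1$ regular near $b$, the numerator $f_1$ is a nonzero element of $\mathcal{O}_V$ (since $h_1\,dh_2\neq 0$ on $V$ implies $h_1\neq 0$ on $V$) which vanishes on $W$, and you are done. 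The paper's proof treats exactly these two cases. A secondary wrinkle: you write ``$h_2-h_2(b)$'' and ``$h_2|_W=h_2(b)$'', but $h_2$ is only rational and may have a pole at $b$; the clean fix (also the paper's) is to write $h_2=f_2/g_2$ and observe that $h_2|_W$ constant equal to some $\lambda\in\overline{\Q}_p$ forces $W$ into the zero locus of $f_2-\lambda g_2$.
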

\begin{proof}
\begin{enumerate}
\item If $h_1 dh_2 =0$ then either $h_1 =0$ or $dh_2 =0$. Write $h_i =f_i /g_i $, with $f_i ,g_i \in H^0 (V,\mathcal{O})$. Then in the first case $W$ is contained in the zero locus of $f_1 $. In the second case $h_2 $ is constant on $W$, say equal to $\lambda $, and hence $W$ is contained in the zero locus of $f_2 -\lambda g_2 $.
\item Suppose the generic rank of $M$ is $s$, and $m_1 ,\ldots ,m_s $ are generically independent elements of $M$, say $m_i =\sum f_{ij}e_j$. Then $W$ is contained in the zero set of the determinants of the $(s,s)$-minors of $(f_{ij})$.
\end{enumerate}
 \end{proof}
Combining this with Lemma \ref{generatorsI/I2}, we deduce that to prove Proposition \ref{unlikely_intersection}, it will be enough to prove that, 
if $\theta _1 ,\ldots ,\theta _{r_n }$ are not $ \overline{\Q }_p (V)$-linearly independent, then there exists $h_1 ,h_2 \in \overline{\Q }_p (V)$ such that $h_1 dh_2 \neq 0$, and $h_1 dh_2 $ is in the $ \overline{\Q }_p (V)$-span of $\theta _1 ,\ldots ,\theta _{r_n }$. We prove this by induction on $n$.

The case $n=1$ is elementary: in this case $\theta _i $ is of the form $dt_i -\omega _i $, where $\omega _1 ,\ldots ,\omega _{r_1 }$ are closed $1$-forms forming a basis of $H^1 _{\dR} (Z/\overline{\Q }_p )$. Suppose 
\begin{equation}\label{relation_level1}
\sum_{i=1}^{r_1} a_i \theta _i =0
\end{equation}
in $\Omega _{\overline{\Q }_p (V)|\overline{\Q }_p }$. Suppose \eqref{relation_level1} is minimal among all non-trivial relations, in the sense that it has a minimal number of nonzero $a_i$ among all relations \eqref{relation_level1} for which the $a_i$ are not identically zero. Without loss of generality $a_1$ is non-zero, and re-scaling if necessary, we may assume $a_1 =1$. Since $\omega _i $ are closed for $i\leq r_1 $, we have
\[
da_i \wedge \theta _i =d(\sum a_i \wedge \theta _i ) =0
\]
in $\Omega ^2 _{\overline{\Q }_p (V)|\overline{\Q }_p }$. If all of the $da_i$ are zero in $\Omega _{\overline{\Q }_p (V)|\overline{\Q }_p }$, then we have
\[
d(\sum a_i t_i )=\sum a_i \omega _i ,
\]
hence the map $H^1 _{\dR}(Z_{\overline{\Q }_p }/\overline{\Q }_p) \to H^1 _{\dR}(V/\overline{\Q }_p)$ has a non-trivial kernel. Dually, this implies that the map on Albanese varieties is not dominant, and hence that $V\to Z$ is not dominant, giving a contradiction.

If the $da_i$ are not all zero, but are in the $\overline{\Q }_p (V)$-span of the $\theta _i$, then the Proposition is proved, by Lemma \ref{no_diffl}. Hence we reduce to the case that
the $da_i $ are not all zero, and not all in the $\overline{\Q }_p (V)$-span of the $\theta _i $ (say $da_2 \notin \sum \overline{\Q }_p (V)\cdot \theta _i $). Then there is a derivation
\[
D:\Omega _{\overline{\Q }_p (V)|\overline{\Q }_p }\to \overline{\Q }_p (V)
\]
for which $D(\theta _i )=0$ for all $i$ but $D(da_2 )\neq 0$. Then
\[
\sum D(da_i )\cdot \theta _i =0
\]
is a non-trivial relation, which has fewer non-zero terms than \eqref{relation_level1} since $da_1 =0$, contradicting our assumption that \eqref{relation_level1} was minimal.

Now suppose $n>1$, and let $M_k$ denote the submodule of $\Omega _{\overline{\Q }_p (V)|\overline{\Q }_p }$ spanned by $\theta _1 ,\ldots ,\theta _{r_k }$. We suppose that
\[
\{h_1 dh_2 :h_1 ,h_2 \in \overline{\Q }_p (V)\} \cap M_n =0.
\]
Hence, by induction $\rk M_{n-1}=r_{n-1}$. Suppose as hypothesis for contradiction that $\rk M_n < r_n $, and hence there are non-zero $a_i \in \overline{\Q }_p (V)$ such that
\begin{equation}\label{relation}
\sum _{i=1}^{r_n } a_i \theta _i =0.
\end{equation}

To deduce the inductive step, we use the following differential equation satisfied by the the $\theta _i $ (in the notation of remark \ref{rk:BSCFN}, this identity could also be deduced from the Cartan structure equation for the connection form $\Omega $).
\begin{lemma}\label{derivative_theta}
$\widetilde{\boldsymbol{\theta }}$ satisfies
\[
d\widetilde{\boldsymbol{\theta }}=\frac{1}{2}[\widetilde{\boldsymbol{\theta }},\widetilde{\boldsymbol{\theta }}]+[\widetilde{\boldsymbol{\theta }},\boldsymbol{\omega }].
\]
(here the Lie bracket may be thought of as the Lie bracket on the differential graded Lie algebra $L_n ^{\dR} (Z) \otimes H^0 (L_n \times Z,\Omega ^{\bullet })$). Equivalently, we have
\[
d\widetilde{\theta } _k = \sum _{i,j}b_{ijk}\widetilde{\theta  }_i \wedge (\frac{1}{2}\widetilde{\theta } _j +\omega _j ).
\]
where $b_{ijk}$ are as in \eqref{bijk}.
\end{lemma}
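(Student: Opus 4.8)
The plan is to prove the identity by differentiating the defining expression $\widetilde{\boldsymbol{\theta}}=\frac{e^{\ad_{\boldsymbol{t}}}-1}{\ad_{\boldsymbol{t}}}d\boldsymbol{t}-\boldsymbol{\omega}$, the key point being that $\beta:=\frac{e^{\ad_{\boldsymbol{t}}}-1}{\ad_{\boldsymbol{t}}}d\boldsymbol{t}$ is a Maurer--Cartan form. Since $L_n^{\dR}(Z)$ is nilpotent, $\exp(\boldsymbol{t})$ is a polynomial, defining a morphism $L_n^{\dR}(Z)\times Z\to b^*\mathcal{E}_n(Z)$, and by Poincar\'e's Lemma \ref{poincare} one has $\beta=d\exp(\boldsymbol{t})\cdot\exp(\boldsymbol{t})^{-1}$, the right logarithmic derivative of $\exp(\boldsymbol{t})$. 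A direct computation using $d^2=0$ and $d(g^{-1})=-g^{-1}(dg)g^{-1}$ then yields the Maurer--Cartan equation
\[
d\beta=\beta\wedge\beta=\tfrac{1}{2}[\beta,\beta],
\]
where $[\cdot,\cdot]$ denotes the bracket of the differential graded Lie algebra $L_n^{\dR}(Z)\otimes H^0(L_n^{\dR}(Z)\times Z,\Omega^{\bullet})$.

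Next I would establish the analogous identity $d\boldsymbol{\omega}=\frac{1}{2}[\boldsymbol{\omega},\boldsymbol{\omega}]$. By \eqref{expJ} and Lemma \ref{omega}, the horizontal section $\exp(\boldsymbol{J})$ of $\mathcal{E}_n$ over the formal neighbourhood $\widehat{Z}$ of $b$ satisfies $d\exp(\boldsymbol{J})=\boldsymbol{\omega}\cdot\exp(\boldsymbol{J})$, so $\boldsymbol{\omega}$ restricted to $\widehat{Z}$ is the right logarithmic derivative $d\exp(\boldsymbol{J})\cdot\exp(\boldsymbol{J})^{-1}$ and therefore satisfies the same Maurer--Cartan equation by the computation above. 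Since $\boldsymbol{\omega}$, and both sides of the asserted identity, are algebraic differential forms on $Z$, and $\widehat{Z}$ is Zariski dense in $Z$, the identity $d\boldsymbol{\omega}=\frac{1}{2}[\boldsymbol{\omega},\boldsymbol{\omega}]$ holds on all of $Z$. (Conceptually this is just the flatness $\nabla_n^2=0$ of the universal connection, re-expressed via Lemma \ref{omega}.)

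Granting these two equations, the lemma follows by bilinearity. As $[\cdot,\cdot]$ is graded-symmetric on $1$-forms, $[\beta,\beta]=[\widetilde{\boldsymbol{\theta}}+\boldsymbol{\omega},\widetilde{\boldsymbol{\theta}}+\boldsymbol{\omega}]=[\widetilde{\boldsymbol{\theta}},\widetilde{\boldsymbol{\theta}}]+2[\widetilde{\boldsymbol{\theta}},\boldsymbol{\omega}]+[\boldsymbol{\omega},\boldsymbol{\omega}]$, so that
\[
d\widetilde{\boldsymbol{\theta}}=d\beta-d\boldsymbol{\omega}=\tfrac{1}{2}[\beta,\beta]-\tfrac{1}{2}[\boldsymbol{\omega},\boldsymbol{\omega}]=\tfrac{1}{2}[\widetilde{\boldsymbol{\theta}},\widetilde{\boldsymbol{\theta}}]+[\widetilde{\boldsymbol{\theta}},\boldsymbol{\omega}].
\]
The componentwise form is then obtained by writing $\widetilde{\boldsymbol{\theta}}=\sum_i e_{l_i}\otimes\widetilde{\theta}_i$ and $\boldsymbol{\omega}=\sum_j e_{l_j}\otimes\omega_j$, and using $[e_{l_i},e_{l_j}]=\sum_k b_{ijk}e_{l_k}$ from \eqref{bijk}: the $e_{l_k}$-component of the right-hand side equals $\sum_{i,j}b_{ijk}\,\widetilde{\theta}_i\wedge(\tfrac{1}{2}\widetilde{\theta}_j+\omega_j)$, while that of $d\widetilde{\boldsymbol{\theta}}$ equals $d\widetilde{\theta}_k$.

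I expect the one genuinely delicate point to be the sign bookkeeping in the second paragraph: one must track consistently whether $b^*\mathcal{E}_n$ is a left or a right module over the universal enveloping algebra (this decides whether $\boldsymbol{\omega}\mapsto(\boldsymbol{\omega}\cdot{-})$ is a homomorphism or an anti-homomorphism of Lie algebras) and the sign in $\nabla_n=d\mp\Lambda$, since these choices flip the sign of the Maurer--Cartan equation for $\boldsymbol{\omega}$; the conventions of \S\ref{universal_con} are precisely those for which both Maurer--Cartan equations carry the signs used above. Apart from this, everything is the formal manipulation indicated, with nilpotence of $L_n^{\dR}(Z)$ guaranteeing that all exponential series involved are polynomial, so that no formal-versus-algebraic issue arises for $\widetilde{\boldsymbol{\theta}}$ itself.
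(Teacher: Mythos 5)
Your proof is correct, and it is the same underlying idea as the paper's (both Maurer--Cartan equations combined by bilinearity), but you organize it more transparently. The paper never writes $d\beta=\tfrac{1}{2}[\beta,\beta]$ directly: instead it applies $\ad_{\boldsymbol{t}}$ to everything, invokes injectivity of $\ad_{\boldsymbol{t}}$ on $\varprojlim L^{\dR}_n(Z)\otimes H^0(L_n\times Z,\Omega^{\bullet})$, and derives $d(e^{\ad_{\boldsymbol{t}}}d\boldsymbol{t})=\bigl[\tfrac{e^{\ad_{\boldsymbol{t}}}-1}{\ad_{\boldsymbol{t}}}d\boldsymbol{t},\,e^{\ad_{\boldsymbol{t}}}d\boldsymbol{t}\bigr]$ from Lemma \ref{poincare}, after which the Maurer--Cartan equation for $\beta$ is recovered from this by a graded Jacobi manipulation. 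Your direct computation of $d\beta$ from the right logarithmic derivative, using only $d^2=0$ and the Leibniz rule on $g^{-1}$, avoids the $\ad_{\boldsymbol{t}}$-injectivity step and the double application of Poincar\'e's lemma entirely; this is cleaner. Two small remarks. First, your derivation of $d\boldsymbol{\omega}=\tfrac{1}{2}[\boldsymbol{\omega},\boldsymbol{\omega}]$ via the formal horizontal section $\exp(\boldsymbol{J})$ and Zariski density is a needless detour --- the paper just reads this off as the flatness $\nabla_n^2=0$ of the universal connection written out in the frame of Lemma \ref{omega} --- though, as you observe, it is logically equivalent. Second, the sign issue you flag at the end is real: Lemma \ref{omega} writes $\nabla_n(v)=dv+\boldsymbol{\omega}\cdot v$, whereas \eqref{expJ} and the proof of Lemma \ref{generatorsI/I2} use $d\exp(\boldsymbol{J})=\boldsymbol{\omega}\cdot\exp(\boldsymbol{J})$, which is the horizontality equation for $\nabla_n=d-\boldsymbol{\omega}$; the latter convention (i.e.\ $\boldsymbol{\omega}=\Lambda$, not $-\Lambda$) is the one under which both Maurer--Cartan equations carry the $+\tfrac{1}{2}$ signs used here and in the paper, so your caution is warranted and your sign choices are the right ones.
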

\begin{proof}
We prove this in $\varprojlim L_n ^{\dR} (Z) \otimes H^0 (L_n \times Z,\Omega ^{\bullet })$. Then it is enough to prove 
\[
[\boldsymbol{t},d\widetilde{\boldsymbol{\theta }}]=\frac{1}{2}[\boldsymbol{t},[\widetilde{\boldsymbol{\theta }},\widetilde{\boldsymbol{\theta }}]]+[\boldsymbol{t},[\widetilde{\boldsymbol{\theta }},\boldsymbol{\omega }]].
\]
since $\ad _{\boldsymbol{t}}$ is injective on $\varprojlim L_n^{\dR} (Z)\otimes H^0 (U_n \times Z,\Omega ^{\bullet })$.
From Lemma \ref{poincare} we derive
\begin{align*}
d (e^{\ad _{\boldsymbol{t}}} \cdot d\boldsymbol{t}) & =d(e^{\boldsymbol{t}}\cdot d\boldsymbol{t} \cdot e^{-\boldsymbol{t}}) \\
& = [\frac{e^{\ad _{\boldsymbol{t}}}-1}{\ad _{\boldsymbol{t}}}d\boldsymbol{t},e^{\ad _{\boldsymbol{t}}}d\boldsymbol{t}].
\end{align*}
Since the connection $\nabla $ is flat we have
$
d\boldsymbol{\omega }=\frac{1}{2}[\boldsymbol{\omega },\boldsymbol{\omega }].
$
Hence
\begin{align*}
[\boldsymbol{t},d\widetilde{\boldsymbol{\theta }}] & = [\boldsymbol{t},d\widetilde{\boldsymbol{\theta }}+\boldsymbol{\omega }]-[\boldsymbol{t},d\boldsymbol{\omega }] \\
& = d[\boldsymbol{t},\widetilde{\boldsymbol{\theta }}+\boldsymbol{\omega }] -[d\boldsymbol{t},\widetilde{\boldsymbol{\theta}}+\boldsymbol{\omega } ]-[\boldsymbol{t},d\boldsymbol{\omega }] \\
& = [\widetilde{\boldsymbol{\theta}}+\boldsymbol{\omega },e^{\ad _{\boldsymbol{t}}}d\boldsymbol{t}]-
\widetilde{\boldsymbol{\theta}}+\boldsymbol{\omega },d\boldsymbol{t}]-\frac{1}{2}[\boldsymbol{t},[\boldsymbol{\omega },\boldsymbol{\omega }]] \\
& = [\widetilde{\boldsymbol{\theta}}+\boldsymbol{\omega },[\boldsymbol{t},\widetilde{\boldsymbol{\theta}}+\boldsymbol{\omega }]]-
\widetilde{\boldsymbol{\theta}}+\boldsymbol{\omega },d\boldsymbol{t}]-\frac{1}{2}[\boldsymbol{t},[\boldsymbol{\omega },\boldsymbol{\omega }]] \\
& = \frac{1}{2}[\boldsymbol{t},[\widetilde{\boldsymbol{\theta}}+\boldsymbol{\omega },\widetilde{\boldsymbol{\theta}}+\boldsymbol{\omega }]]-\frac{1}{2}[\boldsymbol{t},[\boldsymbol{\omega },\boldsymbol{\omega }]].
\end{align*}
 \end{proof}

Note that, since $\theta _1 ,\ldots ,\theta _{r_{n-1}}$ are linearly indendent, there is $r_{n-1}<i\leq r_n$ such that $a_i \neq 0$. We choose the $a_i$ minimally in the sense that the size of the set $\{ i\in \{r_{n-1}+1, \ldots ,r_n \}:a_i \neq 0\}$ is minimal among all non-trivial relations \eqref{relation} (where \textit{non-trivial} simply means the $a_i$ are not all zero).

By Lemma \ref{derivative_theta} we have
\begin{equation}\label{Dminimal_reln}
d(\sum a_i \theta _i ) =\sum \theta _i \wedge (-da_i +\sum b_{ijk}a_k (\frac{1}{2}\theta _j +\omega _j ))= 0.
\end{equation}
Suppose that for $r_{n-1}<i\leq r_n$, the $a_i $ are not all constant. Pick $j_0$ between $r_{n-1}$ and $r_n$ such that 
$a_{j_0 }$ is non-zero. Rescaling if necessary, we may assume $a_{j_0 } =1$. We claim that, after this rescaling, the $a_i$ are all constant for $i>r_{n-1}$. Suppose $a_{j_1 }$ is non-constant. Since, by assumption, $da_{j_1 }$ is not in the span of the $\theta _i $, there is a derivation
\[
D:\Omega _{\overline{\Q }_p (V)|\overline{\Q }_p }\to \overline{\Q }_p (V)
\]
such that $D(\theta _i )=0$ for all $i$, and $D(da_{j_1 } )\neq 0$. 
Write $c_i :=D(-da_i +\sum b_{ijk}a_k (\frac{1}{2}\theta _j +\omega _j ))$.
Then
\begin{equation}\label{minimal_contradiction}
D(d(\sum _{i=1}^{r_{n}}a_i \theta _i ))=\sum _{i=1}^{r_{n-1}} c_i \theta _i =0
\end{equation}
Since $b_{ijk}=0$ whenever $i$ or $j$ are greater than $r_{k-1}$, we have, for all $i>r_{n-1}$,
\[
c_i =D(-da_i ).
\]
In particular $c_{j_1 }\neq 0$ and $c_{j_0 }=0$. Then \eqref{minimal_contradiction} is a non-trivial relation with a smaller number of non-zero terms between $r_{n-1}$ and $r_n$, contradicting our assumption of minimality of \eqref{relation}.

Hence we may assume that for $r_{n-1}<i\leq r_n$, the $a_i$ are in $\overline{\Q }_p$. Define 
\[
\alpha _i =da_i -\sum b_{ijk}a_k \omega _j .
\] 
Then \eqref{Dminimal_reln} can be rewritten
\[
\sum _{i=1}^{r_{n-1}}\theta _i \wedge \alpha _i =\frac{1}{2}\sum _k a_k\sum _{i,j \leq r_{n-1}}b_{ijk} \theta _i \wedge \theta _j .
\]
Hence, by our assumption that $\theta _1 ,\ldots ,\theta _{r_{n-1}}$ are $\overline{\Q }_p (V)$-linearly independent, each $\alpha _i$ is in the $ \overline{\Q }_p (V)$-span of $\theta _1 ,\ldots ,\theta _{r_{n-1}}$, and in fact can be written as
\begin{equation}\label{alph}
\alpha _i =\sum \lambda _{ij}\theta _j
\end{equation}
where 
\begin{equation}\label{lambda}
\lambda _{ij}-\lambda _{ji}=\sum _k b_{ijk}a_k .
\end{equation}

A solution to \eqref{alph} has the following interpretation. We can define a connection on $L_n ^{\dR} (Z) \otimes \mathcal{O}_Z$ is given by
$
x\mapsto [\boldsymbol{\omega } ,x],
$
or with respect to our chosen basis by
\[
e_{l_j} \mapsto \sum b_{ijk}\omega _{l_i } e_{l_k} .
\]

Let $\mathcal{L}_n ^{\dR}(Z)$ denote the unipotent flat connection corresponding, by Tannaka duality, to $L_n ^{\dR}(Z)$ with the adjoint action of $\pi _1 ^{\dR} (Z,b)$. By Lemma \ref{lemma:broken_link} and Lemma \ref{omega}, we have $\mathcal{L}_n ^{\dR} (Z)\simeq L_n ^{\dR} (Z)\otimes \mathcal{O}_Z $ with connection given by
\[
v\otimes 1 \mapsto [\boldsymbol{\omega },v].
\]

The dual connection $\nabla _{L_n ^{\dR *} }$ on $\mathcal{L}_n ^{\dR }(Z)^*$ is given by
\[
e_{l_j } ^* \mapsto \sum -b_{ikj}\omega _i e_{l_k} ^* .
\]
Hence the $\alpha _i$ can also be interpreted as the coefficients, with respect to the basis $e_{l_i}^* $, of $\nabla _{L_n ^{\dR *}}(\boldsymbol{a})$, where $\boldsymbol{a}=\sum a_i e_{l_i }^* $. As the following lemma explains, the condition that $\alpha _i \in M_{n-1}$ for all $i$ is equivalent to the existence of a morphism $\mathcal{E}_{n-1}\to \mathcal{L}_n ^{\dR}(Z)^* $. Let $\pi :V\to Z$ denote the projection.

\begin{lemma}
\begin{enumerate}
\item $M_n$ is equal to the submodule spanned by the coefficients of $\pi ^* \nabla _n (\exp (\boldsymbol{t}))$ with respect to the basis $e_i $, where $\pi ^* \nabla _n $ denotes the connection $\mathcal{E}_n ^{\dR}(Z)$ pulled back to $V$.
\item Suppose $\{0 \}=\{ h_1 dh_2 :h_1 ,h_2 \in  \overline{\Q }_p (V)\} \cap M_{n-1}$, and $\rk M_{n-1}=r_{n-1}$. Let $\mathcal{V}=(\mathcal{O}_{Z}^{\oplus N},\nabla _{\mathcal{V}})$ be an $(n-1)$-unipotent flat connection on $Z$ in reduced form. Let $x\in V(\overline{\Q }_p )$, and let $\mathcal{O}_{V,x}$ denote the local ring of $V$ at $x$. Then, for each $\overline{v}\in x^* \pi ^* \mathcal{V}$, a lift of $\overline{v}$ to $v\in \pi ^* \mathcal{V}(\mathcal{O}_{V,x})$ such that $\nabla _{\pi ^* \mathcal{V}}(v)$ lies in the subspace $\mathcal{O}_V ^{\oplus N}\otimes M_{n-1}$ is unique if it exists.
\item Given any morphism of flat connections
\[
P:\mathcal{E}_{n-1}\to \mathcal{V}
\]
on $Z$,
we have a solution to
\begin{equation}\label{eta_lift}
\nabla _{\pi ^* \mathcal{V}}(v)\in \mathcal{\pi ^* V}\otimes M_{n-1}
\end{equation}
given by $v=P\exp (\boldsymbol{t})$. In particular, by the universal property of $\mathcal{E}_m $, for all $m> 0$, and any $\overline{v}$ in $b^* \mathcal{V}$, there exists a unique lift of $\overline{v}$ to $v\in \pi ^* \mathcal{V}(V)$ satisfying $\nabla _{\pi ^* \mathcal{V}}(v)\in M_m$.
\end{enumerate}
\end{lemma}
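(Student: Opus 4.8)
The plan is to prove the three parts in order; the only genuine computation is one application of Poincaré's formula for $d\exp$ (Lemma \ref{poincare}), the rest being linear algebra over the function field $\overline{\Q}_p(V)$. For part (1), the first step is the identity $\pi^*\nabla_n(\exp(\boldsymbol{t}))=\widetilde{\boldsymbol{\theta}}|_V\cdot\exp(\boldsymbol{t})$ in $\pi^*\mathcal{E}_n\otimes\Omega^1_V$, where $\exp(\boldsymbol{t})$ is the section of $\pi^*\mathcal{E}_n$ obtained by exponentiating the tautological point of $L_n^{\dR}(Z)$; this is the same computation as in the proof of Lemma \ref{generatorsI/I2}, using that in reduced form $\pi^*\nabla_n$ acts by $v\mapsto dv-\boldsymbol{\omega}\cdot v$ (Lemma \ref{omega}) and that Lemma \ref{poincare} turns $d\exp(\boldsymbol{t})$ into $\frac{e^{\ad_{\boldsymbol{t}}}-1}{\ad_{\boldsymbol{t}}}(d\boldsymbol{t})\cdot\exp(\boldsymbol{t})$, so that the bracket $\frac{e^{\ad_{\boldsymbol{t}}}-1}{\ad_{\boldsymbol{t}}}(d\boldsymbol{t})-\boldsymbol{\omega}=\widetilde{\boldsymbol{\theta}}$ appears. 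Now right multiplication by $\exp(\boldsymbol{t})$ is an $\mathcal{O}(V)$-linear automorphism $R$ of $\pi^*\mathcal{E}_n$, being $1$ plus a nilpotent operator; since an invertible $\overline{\Q}_p(V)$-linear operator sends any element all of whose coordinates lie in a submodule $N\subseteq\Omega^1_{\overline{\Q}_p(V)}$ to another such element, and $\widetilde{\boldsymbol{\theta}}=\sum_i\theta_i\otimes e_{l_i}$ has coordinates $\theta_1,\dots,\theta_{r_n}$ (and zeros) with respect to the basis $(e_k)$, the $\overline{\Q}_p(V)$-span of the coordinates of $R(\widetilde{\boldsymbol{\theta}})=\pi^*\nabla_n(\exp(\boldsymbol{t}))$ is contained in $M_n$; applying the same remark to $R^{-1}$ gives the reverse inclusion, hence equality.

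For part (2) it suffices to prove uniqueness: if $v,v'$ are two lifts of $\overline{v}$, then $w:=v-v'$ satisfies $w(x)=0$ and all coordinates of $\nabla_{\pi^*\mathcal{V}}(w)$ with respect to the reduced-form basis of $\mathcal{V}$ lie in $M_{n-1}$. Writing $w=\sum_k w_k$ by blocks and using that in reduced form the block-$k$ component of $\nabla_{\pi^*\mathcal{V}}(w)$ equals $dw_k+\sum_{j<k}\Lambda_{k-j}(w_j)$, I would induct on $k$: once $w_0=\dots=w_{k-1}=0$ the block-$k$ component is just $dw_k$, so each coordinate $dw_{k,a}\in M_{n-1}$; but $dw_{k,a}=1\cdot dw_{k,a}$ has the form $h_1\,dh_2$ with $h_1,h_2\in\overline{\Q}_p(V)$, hence lies in $\{h_1\,dh_2\}\cap M_{n-1}=\{0\}$ by hypothesis, so $dw_{k,a}=0$; thus $w_{k,a}\in\overline{\Q}_p$ is constant, and $=0$ since $w(x)=0$. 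Hence $w=0$.

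For part (3), set $v=P\exp(\boldsymbol{t})$, where $\exp(\boldsymbol{t})$ is now taken in $\pi^*\mathcal{E}_{n-1}$ (the image of the level-$n$ section under the natural surjection $\mathcal{E}_n\to\mathcal{E}_{n-1}$, equivalently the exponential of the tautological point of $L_{n-1}^{\dR}(Z)$). Since $P$ is horizontal, $\nabla_{\pi^*\mathcal{V}}(v)=P\big(\pi^*\nabla_{n-1}(\exp(\boldsymbol{t}))\big)$, and by part (1) at level $n-1$ the argument of $P$ already lies in $\pi^*\mathcal{E}_{n-1}\otimes M_{n-1}$; since $P$ is $\overline{\Q}_p$-linear on connections in reduced form (Lemma \ref{lemma:reduced}), $\nabla_{\pi^*\mathcal{V}}(v)\in\pi^*\mathcal{V}\otimes M_{n-1}$, which is the first assertion. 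For the last sentence: given $\overline{v}\in b^*\mathcal{V}$ and any $m\geq n-1$, the universal property of $\mathcal{E}_m$ yields a morphism $P:\mathcal{E}_m\to\mathcal{V}$ whose fibre at $b$ sends the universal element to $\overline{v}$, and then $v=P\exp(\boldsymbol{t})$ is a lift of $\overline{v}$ (because $\exp(\boldsymbol{t})$ specializes to $1$ at the basepoint, where the tautological point vanishes) with $\nabla_{\pi^*\mathcal{V}}(v)\in\pi^*\mathcal{V}\otimes M_{n-1}\subseteq\pi^*\mathcal{V}\otimes M_m$; by part (2) this lift is unique.

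The main obstacle I anticipate is bookkeeping rather than substance: reconciling the sign and normalization conventions of Lemma \ref{omega}, \eqref{expJ} and Lemma \ref{poincare} so that the displayed identity in (1) holds on the nose, and carefully justifying that an invertible $\overline{\Q}_p(V)$-linear automorphism carries the span of the coordinates of a vector onto the span of the coordinates of its image — this last point being exactly what makes (1) produce $M_n$ rather than merely a submodule of it, and hence what powers the inductive step in the proof of Proposition \ref{unlikely_intersection}.
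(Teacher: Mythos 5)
Your argument is correct and follows the same route as the paper's proof. Part~(1) is the identical Poincar\'e-formula computation $\pi^*\nabla_n(\exp(\boldsymbol{t}))=\widetilde{\boldsymbol{\theta}}\cdot\exp(\boldsymbol{t})$; the paper leaves the observation that right multiplication by the invertible $\exp(\boldsymbol{t})$ preserves the $\overline{\Q}_p(V)$-span of coordinates implicit, whereas you spell it out, which is a welcome clarification (and indeed the point that makes the equality $M_n=$ that span, not just an inclusion). Part~(3) is the same use of Lemma~\ref{lemma:reduced} and horizontality of $P$, again with you making the appeal to part~(1) explicit and adding the verification that $P\exp(\boldsymbol{t})$ lifts $\overline v$ because $\boldsymbol{t}$ vanishes at the basepoint. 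In part~(2) the paper packages the induction as ``induction on the minimal degree of unipotence via a short exact sequence $0\to\mathcal V'\to\mathcal V\to\mathcal V''\to 0$ with $\mathcal V'$ trivial,'' while you run the equivalent induction directly on the blocks of the reduced form; both come down to the same observation that, once lower blocks agree, the next block difference $w_k$ satisfies $dw_{k,a}\in\{h_1\,dh_2\}\cap M_{n-1}=0$ and hence is constant, then zero by evaluation at $x$. The sign discrepancy you flag between Lemma~\ref{omega} as stated ($\nabla=d+\boldsymbol{\omega}\cdot{}$) and the sign used in the computations (effectively $\nabla=d-\boldsymbol{\omega}\cdot{}$, as in the proof of Lemma~\ref{generatorsI/I2}) is indeed an inconsistency in the paper itself; you resolved it the way the paper's own proofs do, so this is not a gap in your argument.
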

\begin{proof}
\begin{enumerate}
\item As in the proof of Lemma \ref{derivative_theta}, this follows from Lemma \ref{poincare}, which gives
\begin{align*}
\nabla _n (\exp (\boldsymbol{t})) & = d\exp (\boldsymbol{t})-\boldsymbol{\omega }\cdot \exp(\boldsymbol{t}) \\
& = \boldsymbol{\theta }\cdot \exp (\boldsymbol{t}).
\end{align*}
\item We prove this by induction on the minimal degree of unipotence of $\mathcal{V}$. For $0$-unipotent connections, $\nabla (\sum a_i \otimes e_i )=da_i \otimes e_i$, so the result is immediate. Given the result for $k$-unipotent connections, let $\mathcal{V}$ be $(k+1)$-unipotent, and let 
\[
0\to \mathcal{V}' \to \mathcal{V}\stackrel{\tau }{\longrightarrow }\mathcal{V}''\to 0
\]
be a short exact sequence where $\mathcal{V}'$ is $0$-unipotent and $\mathcal{V}''$ is $k$-unipotent. Let $v$ and $w$ be two lifts of $\overline{v}$ to sections in $\pi ^* \mathcal{V}(\mathcal{O}_{V,x})$ such that $\nabla _{\pi ^* \mathcal{V}}(v)$ and $\nabla _{\pi ^* \mathcal{V}}(w)$ are in $\pi ^* \mathcal{V}\otimes M_{n-1}$. Then $\tau (v)$ and $\tau (w)$ are equal at $x$, and satisfy $\nabla _{\pi ^* \mathcal{V}''}(\tau (v)),\nabla _{\pi ^* \mathcal{V}''}(\tau (w))$ $\in \pi ^* \mathcal{V}'' \otimes M_{n-1}$. Since $\mathcal{V}''$ is $k$-unipotent, we have $\tau (v)=\tau (w)$, hence $v-w$ is a section of the trivial connection $\mathcal{V}'$ satisfying $\nabla _{\pi ^* \mathcal{V}'}(v-w)\in \pi ^* \mathcal{V}' \otimes M_{n-1}$, hence is zero.
\item By Lemma \ref{lemma:reduced}, the morphism is $K$-linear, since both connections are in reduced form. Hence it satisfies
\[
P\nabla _{n-1}=\nabla _{\mathcal{V}}P,
\]
hence $P\exp (\boldsymbol{t})$ satisfies \eqref{eta_lift}.
\end{enumerate}
 \end{proof}
Now suppose we have a solution to 
\begin{equation}\label{spurious_soln}
\alpha _i =\sum \lambda _{ij}\theta _j
\end{equation}
with $\lambda _{ij}$ in $\overline{\Q }_p (V)$. Let $x$ be a point at which none of the $\lambda _{ij}$ have a pole, then by Lemma \ref{eta_lift}, a solution to \eqref{spurious_soln} with $\lambda _{ij}\in \mathcal{O}(Z)_x $ is unique given their value at $x$, and for every choice of $(c_{ij})$ with $c_{ij}$ in $\overline{\Q }_p $, there is a unique lift to $\lambda _{ij}\in \mathcal{O}(Z)_x$ satisfying $\lambda _{ij}(x)=c_{ij}$, and coming from a morphism of flat connections
$
\mathcal{E}_{n-1}\to \mathcal{L}_n .
$
In particular the $\lambda _{ij}$ are actually in $\mathcal{O}(Z)$. We claim that for all $i \in \{r_{n-2}+1,\ldots ,r_{n-1}\},j\in \{1,\ldots ,r_1 \}$, if \eqref{lambda} holds then, for all $i\geq r_{n-2}+1$,
$
\lambda _{ji}=0.
$

For $i\in \{ r_{n-2}+1,\ldots r_{n-1}\}$, we have
\[
\alpha _i =da_i -\sum _{k=r_{n-1}+1}^{r_n } b_{ijk}a_k \omega _j .
\]
since $b_{ijk}=0$ if $i>r_{n-2}$ and $k\leq r_{n-1}$. Since $a_k $ is constant for $k>r_{n-1}$, we deduce
\[
a_i =\sum b_{ijk}a_k t_j
\]
is a solution, hence for $i$ in $\{r_{n-1}+1,\ldots ,r_n \}$, we have
\[
\lambda _{ij}=\sum _{k=r_{n-1}+1}^{r_n } b_{ijk}a_k .
\]
This completes the proof of the claim.

This means that the map
\[
\mathcal{E}_{n-1}\to \mathcal{L}_n ^*
\]
factors through $\mathcal{E}_{n-2}$, or equivalently that the action of $b^* \mathcal{E}_{n-1}$ on $\sum a_k e_{l_k} ^* $ factors through $b^* \mathcal{E}_{n-2}$. Since we assume $a_k$ is non-zero for some $k\in \{r_{n-1}+1,\ldots ,r_n \}$, the $b^* \mathcal{E}_{n-1}$-module generated by $\sum a_k e_{l_k }^* $ is not $(n-2)$-unipotent. Hence we obtain a contradiction, completing the proof of Proposition \ref{unlikely_intersection}.

\section{The intersection of $j_n (Z)$ with the Selmer variety}\label{sec:complete}
In this section we prove certain techniques for proving finiteness of $\mathcal{X}(\mathcal{O}_{K}\otimes \Z _p )_{S,n}$ and $\mathcal{X}(\mathcal{O}_{K_v })_{S,n}$. First, we prove a general result which allows to prove finiteness after passing to a finite extension. The only wrinkle this introduces is that, as we have chosen to work with a prime which splits completely in $K$, passing to a finite extension will compel us to work with a prime splitting completely in the extension field.

\begin{lemma}\label{lemma:fin_extn}
Let $L|K$ be a finite extension of $\Q $, and $X$ a curve over $K$ as in the introduction. Let $p$ be a rational prime which splits completely in $L$.
\begin{enumerate}
\item If $\mathcal{X}(\mathcal{O}_L \otimes \Z _p )_{S,n}$ is finite, then $\mathcal{X}(\mathcal{O}_K \otimes \Z _p )_{S,n}$ is finite.
\item Let $w$ be a prime of $K$ lying above $p$, and $v$ a prime of $L$ lying above $w$. If $\mathcal{X}(\mathcal{O}_v )_{S,n}$ is finite, then $\mathcal{X}(\mathcal{O}_w )_{S,n}$ is finite.
\end{enumerate}
\end{lemma}
\begin{proof}
We have diagonal embeddings of $\Res _{K|\Q }(X)$ into $\Res _{L|\Q }(X_L )$ which, upon base change to $\Q _p$, induce embeddings of $X_{K_w}$ into $\prod _{v' |w}X_{L_{v'}}$, where the product is over primes of $L$ lying above $w$. On the other hand, we also have inclusions of $H^1 _{f,S}(G_{K,S},U_n (X))$ into $H^1 _{f,S}(G_{L,S},U_n (X))$. We obtain a commutative diagram
\begin{tikzcd}
\Res _{K|\Q }(X)(\Q ) \arrow[r] \arrow[d] & H^1 _{f,S} (G_{\Q ,S} ,\Ind ^\Q  _K U_n (X)) \arrow[d] \\
\Res _{L|\Q }(X_L )(\Q ) \arrow[r]           & H^1 _{f,S}(G_{\Q ,S} ,\Ind ^\Q  _L U_n (X)),         
\end{tikzcd}

and similarly for local fields, giving inclusions
\[
\mathcal{X}(\mathcal{O}_K \otimes \Z _p )_{S,n}\hookrightarrow \mathcal{X}(\mathcal{O}_L \otimes \Z _p )_{S,n}
\]
and
\[
\mathcal{X}(\mathcal{O}_{K_w})_{S,n}\hookrightarrow \mathcal{X}(\mathcal{O}_{L_v})_{S,n}
\]
\end{proof}
\begin{lemma}\label{lemma:twists}
Let $U$ be a Galois-stable finite-dimensional quotient of the $\Q _p$-unipotent fundamental group of $X$. Suppose $\Hom (J_{\sigma _1 ,\mathbb{C}},J_{\sigma _2 ,\mathbb{C}})=0$ for all distinct embeddings $\sigma _i :K\hookrightarrow \mathbb{C}$. Then, if 
\[
\sum _{i=1}^n \h^1 _f (G_{K},\gr _i (U)) \leq [K:\Q ]+\sum _{i=1}^n \sum _{v|p}\dim D_{\dR}(U)/F^0 ,
\]
then there is a prime $v|p$ of $K$ such that $X(K_v )_U $ is finite.
\end{lemma}
\begin{proof}
By Proposition \ref{tangential_loc}, we have
\[
\dim \Sel (U )\leq \sum _{i=1}^n \h^1 _f (G_{K},\gr _i (U)).
\]
On the other hand 
\[
\sum_{v|p}\dim U^{\dR}(X_v)/F^0 = \dim \sum _{i=1}^n \sum _{v|p}\dim D_{\dR}(\gr _i (U) )/F^0 .
\]
Hence, if the inequality in the statement of the Lemma holds, then by Proposition \ref{prop:useful_criterion} the Zariski closure of $X(K\otimes \Q _p )_U $ is a proper subvariety, all of whose positive dimensional irreducible components $Z$ satisfy
\begin{equation}\label{eqn:Zsmall}
\codim _{U ^{\dR}(Z/\Res (X))/F^0 }(\loc _p (\Sel (U ))\cap U ^{\dR}(X/\Res (X))/F^0 ) <\dim Z.
\end{equation}
Since there are no nonzero homomorphisms between the different factors of $\Res (J)_{\Q _p }$, the image of $\Alb (Z)$ in $\Res (J)_{\Q _p }$ is the product of the images in the different factors. Hence if $Z$ dominates each factor of $\Res (X)_{\Q _p }$, then its geometric unipotent fundamental group surjects onto the geometric unipotent fundamental group of $\Res (X)_{\Q _p }$, contradicting \eqref{eqn:Zsmall}.
\end{proof}
This straightforwardly implies case (2) of Proposition \ref{QC_numberfield}.
\begin{proof}[Proof of \ref{QC_numberfield}, case (2)]
Let $U$ be, as in \cite[Proposition 2.2]{QC2}, the quotient of $U_2 $ which is an extension of $V_p J$ by $\Ker (\NS (J_{\overline{K}})\to \NS (X_{\overline{K}}))\otimes \Q _p (1)$. By \cite[Lemma 2.3]{QC2}, we have
\[
h^1 _f (G_{K,S},\gr _2 U)=\sum _{v\in P_{\mathbb{R}}}(\dim \NS (J_{\overline{K}})^{c_v =1} -\dim \NS (J)).
\]

Hence case (2) of Proposition \ref{QC_numberfield} follows from Lemma \ref{lemma:twists}.
\end{proof}

\subsection{Semi-simplicity properties of graded pieces of fundamental groups}
Let $n>0$, and let $X$ be a smooth projective curve of genus $g>1$ over $K$ with $n-1$ marked points $x_i \in X(K)$. Let $Y=X-\{x_1 ,\ldots ,x_{n-1}\}$. Let $\mathcal{M}_{g,n,K}$ denote the moduli stack of $n$-pointed curves of genus $g$ over $K$. For $x_n \in Y(K)$, we have a short exact sequence
\[
1\to \pi _1 (Y_{\overline{K}},x)\to \pi _1 (\mathcal{M}_{g,n,K},[(X,(x_i )_{i=1}^n )]) \to \pi _1 (\mathcal{M}_{g,n-1,K},[(X,(x_i )_{i=1}^{n-1})]) \to 1,
\]
(see e.g. Nakamura--Tsunogai \cite{nakamura1993some}). This induces an outer action of \\ 
$\pi _1 (\mathcal{M}_{g,n-1,K},[(X,(x_i ))])$ on $\pi _1 ^{\et } (Y_{\overline{K}},x_n )$, which induces an outer action on the Malcev completion of $\pi _1 ^{\et }(Y_{\overline{K}},x_n )$, and hence an action on the graded pieces $\gr _i (U_i (Y)(x)$. When $n=1$ or $2$, the action of $\pi _1 (\mathcal{M}_{g,n-1,K},[(X,(x_i ))])$ on $U_1 (Y)=H_1 ^{\et } (X_{\overline{K}},\Q _p )$ has Zariski dense image in $\mathrm{GSp} (U_1 (X))$. The action of $G_K$ on $\gr _i (U_i (Y))$ factors through the action of $\pi _1 (\mathcal{M}_{g,0,K},[X])$ via the morphism
\begin{equation}\label{moduli_stack}
\Gal (\overline{K}|K)\to \pi _1 (\mathcal{M}_{g,n-1,K},[(X,(x_i ))])
\end{equation} 
induced by the morphism $\spec (K)\to \mathcal{M}_{g,n-1,K}$ induced by $[(X,(x_i ))]$.
\begin{lemma}\label{lemma_summands}
Let $X/K$ be either projective or a projective curve minus a point.
\begin{enumerate}
\item The commutator homomorphism
\[
U_1 (X)^{\otimes i}\to \gr _i (U_i (X))
\]
admits a $G_K$-equivariant section.
\item Let $U_i =U_i (\Res (X))$, and let $W\subset V=U_1$ denote the image of the $\Q _p $-Tate module of an abelian subvariety of $\Res (J)_L$, for some $L|\Q$. Then the image of $W^{\otimes i}$ in $\gr _i (U_i )$ is a $G_{L,T}$-stable direct summand of $\gr _i (U_i )$. \end{enumerate}
\end{lemma}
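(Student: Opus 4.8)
The plan is to deduce both statements from the semisimplicity of $\gr_i(U_i)$ as a module over the relevant monodromy group, using the fact that for projective curves (or a projective curve minus a point) the weight filtration agrees with the central series filtration, so $\gr_i(U_i(X))$ is a quotient of $U_1(X)^{\otimes i}$ under the iterated bracket. For part (1), I would recall from the structure of the free Lie algebra (Nakamura--Tsunogai, or the description of $\gr_i$ of the unipotent fundamental group of a curve) that $\gr_i(U_i(X))$ is canonically a quotient of $U_1(X)^{\otimes i}$ via the $i$-fold commutator map; in the projective case this quotient has kernel generated by the image of the symplectic (or cup-product) form, while in the once-punctured case the map is actually surjective and one can even take it to be the full $U_1^{\otimes i}$ in low degrees. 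Since the commutator map is induced by a morphism of fundamental groups, it is automatically $G_K$-equivariant, giving the claimed equivariant surjection $U_1(X)^{\otimes i}\twoheadrightarrow \gr_i(U_i(X))$.

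For part (2), the strategy is: since $W$ is the $\Q_p$-Tate module of an abelian subvariety $B$ of $\Res(J)_L$, by Poincaré reducibility over the field $L$ (or rather after passing to an isogeny) we have a $G_{L,T}$-equivariant decomposition $V=U_1(\Res(X))\otimes\Q_p \simeq W\oplus W'$, where $W'$ corresponds to a complementary abelian subvariety. This decomposition is preserved by the $G_{L,T}$-action, and since the monodromy action on $\gr_i(U_i)$ factors through $\pi_1(\mathcal{M}_{g,0,L})$ (via \eqref{moduli_stack}) and respects the grading coming from $V$, the induced decomposition of $V^{\otimes i}$ into summands indexed by which tensor factors lie in $W$ versus $W'$ is $G_{L,T}$-stable. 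Projecting onto the pure tensor power $W^{\otimes i}\subset V^{\otimes i}$ and then pushing forward along the equivariant surjection from part (1), I would identify the image of $W^{\otimes i}$ in $\gr_i(U_i)$ with a canonical subquotient. To see it is a \emph{direct summand} and not merely a subrepresentation, I would use that the whole decomposition $V^{\otimes i}=\bigoplus_{S\subseteq\{1,\dots,i\}}(\text{factors in }W\text{ for }j\in S)$ is $G_{L,T}$-equivariant; the commutator relations defining the kernel of $V^{\otimes i}\to\gr_i(U_i)$ are homogeneous for the Lie bracket, hence compatible with this direct sum decomposition after symmetrization, so the image of $W^{\otimes i}$ splits off as a summand with complement the image of the span of the other $S$.

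The main obstacle I anticipate is the second point of (2): verifying that the image of $W^{\otimes i}$ is literally a direct \emph{summand} rather than just a subrepresentation. The subtlety is that the kernel of $V^{\otimes i}\to\gr_i(U_i)$ is not obviously compatible with the naive decomposition $V^{\otimes i}=\bigoplus_S(\cdots)$, because the bracket relations mix the tensor slots. I would resolve this by working at the level of the free Lie algebra: $\gr_\bullet(U_\infty(\Res(X)))$ is a quotient of the free Lie algebra on $V$ (in the punctured case) or of the free Lie algebra modulo one relation in degree $2$ (in the projective case), and the inclusion $W\hookrightarrow V$ of a $G_{L,T}$-submodule that is a direct summand induces an inclusion of the free Lie subalgebra on $W$ as a direct summand of the free Lie algebra on $V$ in each degree (this is a standard fact about free Lie algebras: $\mathrm{Lie}(W\oplus W')$ decomposes as $\mathrm{Lie}(W)\oplus(\text{bracket terms involving }W')$, and the summands are functorial hence $G_{L,T}$-equivariant). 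One then checks that the single relation in the projective case (the image of the symplectic form $\wedge^2 V\to\Q_p(1)$) intersects $\mathrm{Lie}(W)$ in its own symplectic relation when $B$ is a principally polarized abelian subvariety — or more generally that the relation is a sum of a term inside $\mathrm{Lie}(W)_2$ and terms in the complement — so the quotient still splits. This reduces everything to the elementary functoriality of free Lie algebras, which is where I would put the bulk of the (omitted) routine work.
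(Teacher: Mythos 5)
Your approach diverges from the paper's in a substantive way, and there is one genuine gap.

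The paper's proof of (1) should really be read as establishing that the surjection $U_1(X)^{\otimes i}\to\gr_i(U_i(X))$ admits a $G_K$-equivariant \emph{section} — equivalently that $\gr_i(U_i(X))$ is a semisimple $G_K$-module. That is the whole point of the detour through the arithmetic mapping class group $\pi_1(\mathcal{M}_{g,n-1,K})$ and the Zariski density of its image in the reductive group $\mathrm{GSp}(U_1(X))$: semisimplicity of $U_1^{\otimes i}$ and hence of its quotient $\gr_i(U_i)$ then comes for free. Your reading of (1), that the commutator map is simply a $G_K$-equivariant surjection, is trivially true and needs none of that machinery; under your reading, the $\mathrm{GSp}$-density argument has no role. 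With the intended reading, (2) is immediate: the image of $W^{\otimes i}$ is a $G_{L,T}$-submodule of a semisimple module (semisimplicity descending to the finite-index subgroup, then passing to the quotient $G_{L,T}$ through which the action factors), hence a direct summand. The paper's cryptic remark about $W^{\otimes i}\subset V^{\otimes i}$ being a summand is really just flagging that the submodule is nonzero and well-behaved; the engine is semisimplicity.

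Your substitute argument via multidegree gradings on free Lie algebras is a legitimate alternative in principle, but as written it has a gap: $\Res(X)$ is not a curve, and $\varprojlim\mathrm{Lie}(U_n(\Res(X)))$ is \emph{not} free on $V$ modulo a single degree-$2$ relation. It is a direct product $\prod_{\sigma}\mathrm{Lie}(U_\infty(X_\sigma))$ over embeddings $\sigma\colon K\hookrightarrow\overline{\Q}$; as a quotient of the free Lie algebra on $V=\bigoplus_\sigma V_\sigma$ its defining ideal contains one symplectic element per $\sigma$ \emph{and} all of $[V_\sigma,V_\tau]$ for $\sigma\ne\tau$. Since $W$ is the Tate module of an abelian subvariety of $\Res(J)_L$ that may sit diagonally across the factors $J_\sigma$, the decomposition $V=W\oplus W^\perp$ is not compatible with $V=\bigoplus_\sigma V_\sigma$, and the relations $[V_\sigma,V_\tau]$ are not homogeneous for your $W$-multidegree grading. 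So the assertion that the ideal respects the grading — the step you defer to ``omitted routine work'' — fails for the relations you have overlooked. The fix is to argue factor by factor: the image of $W^{\otimes i}$ in $\gr_i(U_i(X_\sigma))$ equals that of $W_\sigma^{\otimes i}$ where $W_\sigma$ is the projection of $W$ to $V_\sigma$ (the Tate module of the image abelian subvariety of $J_\sigma$, hence a symplectic subspace); your single-curve multidegree argument then works in each factor, and one assembles the summands $G_L$-equivariantly using the fact that $G_L$ permutes the $\{W_\sigma\}$ compatibly with the $\{V_\sigma\}$. This is a real reorganization of the argument around the correct Lie algebra, not a routine omission, and it is precisely the complication that the paper's semisimplicity route avoids.
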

\begin{proof}
Let $n=1$ or $2$ depending on whether $X$ is projective or projective minus a point $x\in \overline{X}(K)$, and let $S=\O $ or $\{x\}$.
By the homomorphism \eqref{moduli_stack}, to prove the first claim it is enough to prove them with $G_K$ replaced by $\pi _1 (\mathcal{M}_{g,n-1,K},[(\overline{X},S)])$.  Since $\pi _1 (\mathcal{M}_{g,n-1,K},[(\overline{X},S)])$ has Zariski dense image in $\mathrm{GSp} (U_1 (X))$, it is enough to prove them with $\pi _1 (\mathcal{M}_{g,n-1,K},[(\overline{X},S)])$ replaced by $\mathrm{GSp}(U_1 (X))$, which proves the Lemma.
The second claim follows from part (1), together with the fact that the image of $W^{\otimes i}$ in $V^{\otimes i}$ is a direct summand, since $W\subset V$ is a direct summand.
 \end{proof}
\subsection{Further reductions}
Given Proposition \ref{prop:useful_criterion}, to prove finiteness of $\mathcal{X}(\mathcal{O}_K \otimes \mathbb{Z}_p )_n$, it is enough to prove that, for any positive-dimensional, geometrically irreducible smooth quasi-projective subvariety $Z\subset \Res (X)_{\Q _p }$, and any virtual basepoint $b\in Z(\Q _p )\cap \Res (X)(\Q _p )$, there is a Galois-stable quotient $U$ of $U_n (\Res (X))$, such that
\begin{equation}\label{codim_inequality}
\codim _{U (Z/\Res (X))/F^0} (\loc _p (\Sel (U )_{\alpha })\cap U (Z/\Res (X))/F^0 )>\dim (Z).
\end{equation}

First, we reduce proving \eqref{codim_inequality} to proving an inequality involving abelian Galois cohomology. Recall from Lemma \ref{virtual_lemma} that, by changing $b$ to a `virtual basepoint' we may assume $\alpha $ is the trivial collection of local conditions, and hence $\Sel (U )_{\alpha }\subset H^1 _{f,S} (G_{\Q ,T},U )$.
Recall that, by Proposition \ref{tangential_loc}, we have
\begin{align*}
& \dim H^1 _{f,S} (G_{\Q ,T},U )\cap U ^{\dR} (Z/\Res (X))/F^0 \\ 
\leq & \max _{c\in H^1 _{f,S} (G_{\Q ,T},U )}\dim \loc _p H^1 _{f,S} (G_{\Q ,T},L ^c )\cap L ^{\dR}  (Z/\Res (X))^c /F^0 ,
\end{align*}
where $L ^{\dR } (Z/\Res (X))^c /F^0$ is defined to be $0$ if $c$ is not in $U ^{\dR } (Z/\Res (X)) /F^0 $. Hence we can estimate the dimension of $H^1 _{f,S} (G_{\Q ,T},U )\cap U ^{\dR} (Z/\Res (X))/F^0$ using the following Lemma.
\begin{lemma}\label{W_bound_2}
Let $U$ be a Galois stable quotient of $U_N$, with Lie algebra $L$, and $c\in H^1 _{f,S}(G_{\Q ,T},U_n )$. Let $Z\subset \Res (X)_{\Q _p }$ be an irreducible subvariety, and let $L^{\dR}(Z/\Res (X))$ denote the image of $\varprojlim L_i ^{\dR}(Z)$ in $D_{\dR}(L)$.
We have
\begin{align*}
& \codim _{L ^{\dR} (Z/\Res (X))^c /F^0 }\loc _p H^1 _{f,S} (G_{\Q ,T},L^c )\cap L ^{\dR} (Z/\Res (X))^c /F^0 ) \\
 \geq & \sum _{i=1}^N \codim _{\gr _i (L^{\dR}(Z/\Res (X)))/F^0 }H^1 _{f,S} (G_{\Q ,T},\gr _i (L))\cap (\gr _i (L^{\dR}(Z/\Res (X)))/F^0 ) \\
& -\sum _{i=1}^{N-1}\dim \Ker (H^1 _{f,S} (G_{\Q ,T},\gr _i (L))\to H^1 _f (G_{\Q _p },\gr _i (L))).
\end{align*}
\end{lemma}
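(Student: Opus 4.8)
The plan is to run a d\'evissage along the lower central series $C_\bullet L$ of $L$, peeling off the bottom graded quotient $\gr_1 L = L/C_2 L$ at each step and inducting on the number of graded pieces (equivalently, on the nilpotency class of the filtered Lie algebra). Write $P := L^{\dR}(Z/\Res(X))/F^0 \subset L^{\dR}/F^0$ and $H := \loc_p H^1_{f,S}(G_{\Q,T}, L) \subset H^1_f(G_{\Q_p}, L) = L^{\dR}/F^0$, using \eqref{nonab_BK}, which applies since every $\gr_i L$ has strictly negative weights so that $D_{\cris}(L)^{\phi = 1} = 0$. Both $P$ and $H$ inherit the filtration on $L^{\dR}/F^0$ whose $i$-th step is the image of $C_i L^{\dR}$. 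The first point to establish is that this filtration is strictly compatible with the Hodge filtration, so that its $i$-th graded piece is $\gr_i(L^{\dR})/F^0 = H^1_f(G_{\Q_p},\gr_i L)$, that it induces on $P$ the filtration with graded pieces $\gr_i(L^{\dR}(Z/\Res(X)))/F^0$, and that when $X$ is projective this is the weight filtration, so that the compatibility is an instance of strictness of morphisms of mixed Hodge structures; in the affine case one argues after separating the weight $-i$ and weight $-2i$ summands as in Lemma \ref{lemma_summands}.

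Granting this, since $\dim (H\cap P) = \sum_i \dim \gr_i(H\cap P)$ and $\gr_i(H\cap P)$ injects into $\gr_i(H) \cap \gr_i(P)$ inside $H^1_f(G_{\Q_p},\gr_i L)$, it is enough to compare $\gr_i(H) \cap \gr_i(P)$ with $\loc_p H^1_{f,S}(G_{\Q,T},\gr_i L) \cap \gr_i(L^{\dR}(Z/\Res(X)))/F^0$. For this one uses the functoriality of $H^1_{f,S}$ and of $\loc_p$ with respect to the quotient maps $L \to L/C_i L$ and $C_i L \to \gr_i L$, together with the exactness of $H^1_{f,S}(G_{\Q,T},C_{i+1}L) \to H^1_{f,S}(G_{\Q,T},C_i L) \to H^1_{f,S}(G_{\Q,T},\gr_i L)$ and of its local analogue; these hold because the relevant local invariants $(\gr_i L)^{G_{\Q_p}}$ and $D_{\cris}(\gr_i L)^{\phi=1}$ vanish by purity (Frobenius eigenvalues on $\gr_i L$ are Weil numbers of positive weight). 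Concretely: the image of $H\cap P$ in $\gr_1(L^{\dR})/F^0 = H^1_f(G_{\Q_p},\gr_1 L)$ lands in $\loc_p H^1_{f,S}(G_{\Q,T},\gr_1 L)\cap \gr_1(L^{\dR}(Z/\Res(X)))/F^0$ with no loss, while the kernel of this projection agrees with $\loc_p H^1_{f,S}(G_{\Q,T},C_2 L) \cap C_2(L^{\dR}(Z/\Res(X)))/F^0$ up to a subspace of dimension at most $\dim \Ker(\loc_p\colon H^1_{f,S}(G_{\Q,T},\gr_1 L)\to H^1_f(G_{\Q_p},\gr_1 L))$. This discrepancy is where the error term enters: a Selmer class for $L$ may project to $0$ in $H^1_f(G_{\Q_p},\gr_1 L)$ without being the localization of a Selmer class for $C_2 L$, and, via the exact sequences above, the cokernel of the comparison is controlled precisely by $\Ker(\loc_p, \gr_1 L)$. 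Applying the inductive hypothesis to $C_2 L$ with its induced $G_{\Q,T}$-stable filtration (graded pieces $\gr_2 L,\dots,\gr_N L$) contributes the terms for $i=2,\dots,N$ and the error terms for $i=2,\dots,N-1$; adding the $i=1$ term, with its error, gives the statement. The top graded piece $\gr_N L = C_N L$ is reached only in the base case of the recursion, where the assertion is a tautology with no error term, which is why the error sum runs only to $N-1$.

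The main obstacle is the homological bookkeeping in the middle paragraph: one must check that the Bloch--Kato and crystalline local conditions are exact enough with respect to the central-series extensions for the comparison maps to have kernels and cokernels of exactly the stated size, and that no localization kernel $\Ker(\loc_p, \gr_i L)$ is charged more than once along the recursion. The weight and purity inputs ($\gr_i L$ pure of weight $-i$ in the projective case, an extension of weight $-2i$ by weight $-i$ in the affine case) are what make available the vanishing statements $D_{\cris}(\gr_i L)^{\phi=1} = 0$, $(\gr_i L)^{G_{\Q_p}} = 0$ and $H^0(G_{\Q,T}, \gr_i L) = 0$, and hence the short-exactness of the local and global cohomology sequences in the relevant range. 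A secondary subtlety is the compatibility of the central-series and Hodge filtrations on $L^{\dR}(Z/\Res(X))$ in the affine case: as noted after Proposition \ref{prop:useful_criterion}, the map $U_n^{\dR}(Z) \to U_n^{\dR}(\Res(X))$ need not be strict, so $\gr_i$ must be read throughout as $\Ker(U_i^{\dR}(Z/\Res(X)) \to U_{i-1}^{\dR}(Z/\Res(X)))$, and one must ensure the filtration one puts on $P$ is the one this definition dictates.
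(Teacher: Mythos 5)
Your d\'evissage along the lower central series, peeling off $\gr_1 L$ at each stage and tracking an error of size $\dim\Ker(\loc_p\colon H^1_{f,S}(G_{\Q,T},\gr_i L)\to H^1_f(G_{\Q_p},\gr_i L))$ for $i=1,\ldots,N-1$, is essentially the paper's argument; the paper packages the recursion as an abstract linear-algebra lemma about a strict filtered subspace $A_\bullet\subset B_\bullet$ together with a compatible family of maps $\phi_{i,j}\colon C_{i,j}\to B_i/B_j$, and then specializes to $C_{i,j}=H^1_{f,S}(G_{\Q,T},C_iL/C_jL)$, but the induction is the same as yours. One small remark: your appeal to Lemma \ref{lemma_summands} to handle strictness of the central series against $F^0$ in the affine case is not quite the right tool, since that lemma is about the Galois-module (\'etale) side rather than the de Rham/Hodge side; the strictness you need follows because each $C_i L^{\dR}$ is a sub-mixed Hodge structure of $L^{\dR}$, a point the paper absorbs wholesale into the hypothesis that $A$ is a strict filtered subspace of $B$.
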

\begin{proof}
We have
\[
\dim L^{\dR }(Z/\Res (X))^c /F^0 =\sum _{i=1}^N \dim \gr _i (L^{\dR}(Z/\Res (X)))/F^0 .
\]
Hence it is enough to prove that
\begin{align*}
& \dim \loc _p (H^1 _{f,S} (G_{\Q ,T},L^c ))\cap L^{\dR} (Z/\Res (X))^c /F^0 \\
 \leq &   \sum _{i=1}^N \dim \loc _p H^1 _{f,S} (G_{\Q ,T},\gr _i (L))\cap (\gr _i (L^{\dR}(Z/\Res (X)))/F^0 ) \\
& +\sum _{i=1}^{N-1}\dim \Ker (H^1 _{f,S} (G_{\Q ,T},\gr _i (L))\to H^1 _f (G_{\Q _p },\gr _i (L))).\\
\end{align*}
Note that $U_n$ acts unipotent on $L$, hence we may replace $\gr _i (L)$ with $\gr _i (L^c )$ in the above.

This is then just linear algebra: more generally suppose $A=A_{\bullet },B=B_{\bullet }$ are finite dimensional vector space with separated exhaustive decreasing filtrations, such that $A$ is a strict filtered subpsace of $B$, and $C_{i,j}$ $(i<j)$ are finite dimensional vector spaces such that, for all $i<j<k$, we have a commutative diagram with exact rows
\[
\begin{tikzpicture}
\matrix (m) [matrix of math nodes, row sep=3em,
column sep=3em, text height=1.5ex, text depth=0.25ex]
{0 & C_{j,k} & C_{i,k} & C_{i,j} & \\
0 & B_j /B_k & B_i /B_k & B_i /B_j & 0. \\ };
\path[->]
(m-1-1) edge[auto] node[auto] {} (m-1-2)
(m-1-2) edge[auto] node[auto] {} (m-1-3)
edge[auto] node[auto] {$\phi _{j,k} $  } (m-2-2)
(m-1-3) edge[auto] node[auto] {$\phi _{i,k} $} (m-2-3)
edge[auto] node[auto] {  } (m-1-4)
(m-1-4) edge[auto] node[auto] {$\phi _{j,k} $} (m-2-4)
(m-2-1) edge[auto] node[auto] {  } (m-2-2)
(m-2-2) edge[auto] node[auto] {  } (m-2-3)
(m-2-3) edge[auto] node[auto] {  } (m-2-4)
(m-2-4) edge[auto] node[auto] {  } (m-2-5);
\end{tikzpicture}
\]
Then 
\begin{align*}
& \dim \Ker (\phi _{i,j}(C_{i,j})\cap (A_i /A_j ) \to \phi _{i,i+1}(C_{i,i+1})\cap (A_i /A_{i+1} )) \\
\leq & \dim \Ker (\phi _{i,i+1})+\dim \phi _{i+1 ,j}(C_{i+1,j})\cap (A_{i+1}/A_j ). \\
\end{align*}
Hence, for all $i<j$,
\begin{align*}
\dim \phi _{i,j}(C_{i,j})\cap (A_i /A_j ) \leq &  \sum _{i\leq k<j}\dim  \phi _{k,k+1}(C_{k,k+1})\cap (A_k /A_{k+1} ) \\ 
& +\sum _{i<k<j}\dim \Ker (\phi _{k-1,k}).
\end{align*}
Applying this when $B_i=C_i L^{\dR}$, $A_i=C_i L^{\dR}(Z/\Res (X))$ and $A_{i,j}=H^1 _{f,S} (G_{\Q ,T},C_i L/C_j L)$ completes the proof of Lemma.
 \end{proof}

One subtlety in estimating the dimension of the intersection of $\gr _i U^{\dR}(Z/\Res (X))/F^0 $ with $\loc _p H^1 _{f,S}(G_{\Q ,T},\gr _i (U))_{\overline{\Q }_p }$ is that we do not assume that $Z$ is defined over $\Q $, or even over a number field. However in spite of this, $\gr _i U^{\dR}(Z/\Res (X))/F^0 $ behaves as if it was defined over a number field, in the following sense.
\begin{lemma}\label{nearly_arithmetic}
Let $A=\Alb (\Res (X))$, where $X$ is either a projective curve or a projective curve minus a point.
Let $f:Z\hookrightarrow \Res (X)_{\Q _p }$ be a geometrically irreducible subvariety of $\Res (X)_{\overline{\Q }_p }$. There exists a finite extension $L|\Q $ such that $f_* H_1 ^{\et} (\Alb (Z)_{\overline{\Q }_p },\Q _p ) \subset H_1 ^{\et} (A_{\overline{\Q }_p },\Q _p )$ is stable under the action of $\Gal (L|\Q )$.
\end{lemma}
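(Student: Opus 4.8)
The plan is to realise $W:=f_*H_1^{\et}(\Alb(Z),\Q_p)$ as the image of an idempotent in $\End(A)\otimes\Q$ that descends to $\overline{\Q}$, and then to use that the Galois action on the endomorphism algebra of $A_{\overline{\Q}}$ is finite. First I would set up the geometry. Since $X$ is projective or a projective curve minus a point — and deleting a single point changes neither $H_1$ nor the Albanese — the variety $A=\Alb(\Res(X))$ is the abelian variety $\Res_{K|\Q}(\Jac(\overline X))$ over $\Q$. Composing $f$ with an Abel--Jacobi map and taking Albanese varieties gives a homomorphism $\Alb(f)\colon\Alb(Z)\to A_{\overline{\Q}_p}$. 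Any homomorphism from a semi-abelian variety to an abelian variety kills the maximal subtorus, so $\Alb(f)$ factors through an abelian subvariety $B\subseteq A_{\overline{\Q}_p}$; and a surjection of semi-abelian varieties is surjective on rational Tate modules, so $W=H_1^{\et}(B,\Q_p)$ inside $H_1^{\et}(A_{\overline{\Q}_p},\Q_p)$. Fixing $\overline{\Q}\hookrightarrow\overline{\Q}_p$ and using invariance of \'etale $H_1$ with $\Q_p$-coefficients under extension of an algebraically closed base field, I identify $H_1^{\et}(A_{\overline{\Q}_p},\Q_p)=H_1^{\et}(A_{\overline{\Q}},\Q_p)=:V_p$, so that $W$ is a $\Q_p$-subspace of $V_p$.

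Next I would produce the idempotent. Poincar\'e complete reducibility applied to $A_{\overline{\Q}_p}$ gives a complementary abelian subvariety $B'$ and an isogeny $B\times B'\to A_{\overline{\Q}_p}$; composing the inclusion $B\hookrightarrow A_{\overline{\Q}_p}$, the projection $B\times B'\to B$ and a quasi-inverse isogeny yields $e\in\End(A_{\overline{\Q}_p})\otimes\Q$ whose action on $V_p$ is the projection onto $W=H_1^{\et}(B,\Q_p)$ along $H_1^{\et}(B',\Q_p)$; thus $W=e\cdot V_p$. The crucial input — and the only step that is not purely formal — is the standard fact that in characteristic zero the natural map $\End(A_{\overline{\Q}})\otimes\Q\to\End(A_{\overline{\Q}_p})\otimes\Q$ is an isomorphism: an endomorphism defined over $\overline{\Q}_p$ is already defined over a finitely generated subextension, spreads out to an endomorphism of abelian schemes over an open subvariety, and can be specialised at a $\overline{\Q}$-point, the specialisation agreeing with the original by rigidity. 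Hence $e\in\End(A_{\overline{\Q}})\otimes\Q$, acting on $V_p=H_1^{\et}(A_{\overline{\Q}},\Q_p)$ with image $W$.

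Finally I would invoke the finiteness of the Galois action on the endomorphism algebra. The group $G_{\Q}$ acts on the finite-dimensional $\Q$-algebra $\End(A_{\overline{\Q}})\otimes\Q$, and since every endomorphism is defined over a number field this action factors through $\Gal(L|\Q)$ for some finite $L|\Q$; in particular $G_L$ fixes $e$. For $\sigma\in G_L$ the operator $e$ on $V_p$ then satisfies $\sigma\circ e\circ\sigma^{-1}={}^{\sigma}e=e$, so $\sigma$ preserves $W=e\cdot V_p$. Therefore $W$ is stable under the open subgroup $G_L\subseteq G_{\Q}$, which is the assertion of the lemma. The possible non-properness of $Z$ causes no difficulty, as only the image of $\Alb(Z)$ in the abelian variety $A_{\overline{\Q}_p}$ enters the argument, and no serious obstacle arises beyond the base-change invariance of $\End$ cited above; the remaining steps are routine bookkeeping with Tate modules and Galois actions, which I would keep brief.
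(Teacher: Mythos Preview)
Your proof is correct and follows essentially the same route as the paper: both reduce to an abelian subvariety $B\subset A_{\overline{\Q}_p}$, realise $H_1^{\et}(B,\Q_p)$ as the image of an element of $\End(A_{\overline{\Q}_p})\otimes\Q$ acting on $V_p$, and then invoke $\End(A_{\overline{\Q}_p})\simeq\End(A_L)$ for some finite $L|\Q$ to conclude $G_L$-stability. The only cosmetic difference is that you build an idempotent via Poincar\'e complete reducibility, whereas the paper writes down the endomorphism $A\to A^*\to B^*\to B\to A$ using a polarisation on $B$; these amount to the same construction.
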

\begin{proof}
First, note that since $X$ is projective, or projective minus a point, its Albanese variety is abelian. Hence it is enough to show that, for any injective morphism of abelian varieties $
g:B\to A_{\overline{\Q }_p }
$,
defined over $\overline{\Q }_p $, there is a finite extension $L|\Q $ such that $g_* H_1 ^{\et }(B,\Q _p )$ is $G_L$-stable.
There is an endomorphism $\phi \in \End (A_{\overline{\Q }_p })$ such that
\[
g_* H_1 ^{\et} (B,\Q _p )=\phi _* (H_1 ^{\et} (\Alb(\Res (X))_{\overline{\Q }},\Q _p )).
\]
Indeed, we may take the endomorphism to be the composite
\[
A_{\overline{\Q }_p }\to A^* _{\overline{\Q }_p } \to B^* \to B \to A _{\overline{\Q }_p },
\]
for some choice of polarisation on $B$
(since the map $A^* _{\overline{\Q }_p }\to B^* $ is surjective, the image of $H_1 ^{\et }(A_{\overline{\Q }_p },\Q _p )$ under this endomorphism is exactly $g_* H_1 ^{\et }(B,\Q _p )$).
Hence it is enough to show that there exists a finite extension $L|\Q $ such that $\End (A_L )\simeq \End (A_{\overline{\Q }_p} )$, which follows from the classification of endomorphisms of abelian varieties \cite[Corollary IV.1]{mumford1974abelian}.
 \end{proof}

\begin{lemma}\label{galcoh:descent_intersection}
Let $K|\Q $ be a finite extension, and $n>1$. Let $W$ be a subspace of $V:=\Ind ^\Q  _K \Q _p (n)$, stable under $G_{\Q _p }$. Then
\[
\dim \loc _p H^1 (G_{\Q ,T},V)\cap H^1 (G_{\Q _p },W) \leq \dim W^{c=-1},
\]
where $c\in G_{\Q }$ is complex conjugation with respect to an inclusion $\overline{\Q }\hookrightarrow \mathbb{C}$.
\end{lemma}
\begin{proof}
Let $L|\Q $ denote a totally imaginary Galois extension containing $K|\Q $, with Galois group $G$, and $H:=\Gal (L|K)$. Let $M:=\Ind ^\Q  _L \Q _p (n)$.
We have an isomorphism $M\simeq \Q _p [G](n)$, and an inclusion
\[
V\hookrightarrow W.
\]
Hence it is enough to prove that for any $W\subset \Q _p [G]$, we have
\[
\dim \loc _p H^1 (G_{\Q ,T},M)\cap H^1 (G_{\Q _p },W(n)) \leq \dim W^{c=(-1)^{n+1}}.
\]
By Shapiro's lemma, we have a commutative diagram whose vertical arrows are isomorphisms
\[
\begin{tikzcd}
H^1 (G_{\Q ,T},M) \arrow[d] \arrow[r] & H^1 (G_{\Q _p },M) \arrow[d] \\
H^1 (G_{L,T},\Q _p (n)) \arrow[r]           & \oplus _{w|p}H^1 (G_{L_w },\Q _p (n))          
\end{tikzcd}
\]
hence it is enough to prove the claim for the dimension of the image of the bottom horizontal map. Both $H^1  (G_{L,T},\Q _p (n))$ and $\oplus _{w|p}H^1 (G_{L_w },\Q _p (n))$ have the structure of $\Gal (L|\Q )$-modules, with respect to which the localisation map is $\Gal (L|\Q )$-equivariant.

We claim that we have an isomorphism of Galois modules 
\begin{equation}\label{galeq_K}
H^1 (G_{L,T},\Q _p (n))\simeq \Ind ^{\Gal (L|\Q )}_{\langle \overline{c}\rangle }\chi ^{n+1},
\end{equation}
where $\chi $ is the unique nontrivial character of $\langle \overline{c} \rangle $, and $\overline{c}$ now denotes the image of $c$ in $\Gal (L|\Q )$,
and an isomorphism
\begin{equation}\label{galeqp_K}
H^1 (G_{\Q _p },\Q _p [G](n)) \simeq \Q _p [G].
\end{equation}
induced by $H^1 (G_{\Q _p },\Q _p (n))\simeq \Q _p $.

For the first claim, note that Borel's theorem \cite{borel} proves a $\Gal (L|\Q )$-equivariant isomorphism $K_{2n+1}(L)\otimes \mathbb{R}\simeq \Ind ^{\Gal (L|\Q )}_{\langle \overline{c} \rangle }\chi ^{n+1}\otimes \mathbb{R}$. This implies a Galois-equivariant isomorphism $K_{2n+1}(L)\otimes \Q \simeq \Ind ^{\Gal (L|\Q )}_{\langle \overline{c} \rangle }\chi ^{n+1}$. Hence, by Soul\'e's theorem \cite{soule}, we obtain the isomorphism \eqref{galeq_K}.

For the isomorphism \eqref{galeqp_K}, we may use the fact that the Bloch--Kato logarithm is Galois-equivariant by construction, and hence we have $\Gal (L_w |\Q _p )$-equivariant isomorphisms
\[
H^1 (G_{L_w },\Q _p (n))\simeq H^1 _f (G_{L_f },\Q _p (n))\simeq L_f .
\]
Having proved these isomorphisms, we deduce that the image of $H^1 (G_{\Q ,T},V)$ in $H^1 (G_{\Q _p },V)$ must be contained in $\Ker (1+(-1)^n c)$. This implies the Lemma.
\end{proof}

\subsection{Metabelian quotients of fundamental groups}
We say a group $G$ is \textit{metabelian} if $[[G,G],[G,G]]$ is zero, and similarly a Lie algebra $L$ is metabelian if $[[L,L],[L,L]]=0$. The free metabelian Lie algebra on a vector space $W$ is simply the quotient of the free pro-nilpotent Lie algebra $L$ on $W$ by the double commutator $[[L,L],[L,L]]$. Given a vector space $W$, we denote by $\widehat{\Sym }^\bullet (W)$ the completion of the symmetric algebra on $W$ with respect to the ideal generated by $W$. If $L$ is a metabelian Lie algebra, the adjoint action of $L$ on $[L,L]$ factors through $L^{\ab }$, and gives $[L,L]$ the structure of a module over $\widehat{\Sym }^{\bullet }(L^{\ab })$.
\begin{lemma}\label{metabelian_module}

\begin{enumerate}
Let $L^{\ma}$ be the free metabelian $\Q _p $-Lie algebra on generators $x_1 ,\ldots ,x_n $.
\item
Let $M$ denote the $\Q _p [x_1 ,\ldots ,x_n ]$ module
\[
M=\{ (v_1 ,\ldots ,v_n )\in \Sym ^\bullet (L^{\ma ,\ab })^{\oplus n}:\sum v_i x_i =0 \}.
\]
Then we have an isomorphism of $\Q _p [\! [x_1 ,\ldots ,x_n ]\! ]$ modules
\[
\widehat{M}\simeq [L^{\ma },L^{\ma }].
\]
via the identification $\widehat{\Sym }^\bullet (L^{\ma ,\ab })\simeq \Q _p [\! [x_1 ,\ldots ,x_n ]\! ]$.
\item Let $\overline{L}^{\ma }$ be the maximal metabelian quotient on the Lie algebra of the $\Q _p $-unipotent fundamental group of a smooth projective irreducible curve $X$ over an algebraically closed field $F$ of characteristic zero. Let $x_1 ,\ldots x_{2g}$ be a symplectic basis of $H:=H_1 ^{\et }(X,\Q _p )$.
Define
\[
M=\{ (v_1 ,\ldots ,v_n )\in \Sym ^\bullet (H)^{\oplus n}:\sum v_i x_i =0 \},
\]
and define $m:=(v_{g+1 },\ldots ,v_{2g},-v_1 ,\ldots ,-v_g )\in M$.
 Then we have an isomorphism of $\widehat{\Sym }^{\bullet }(H)$-modules.
\[
[\overline{L}^{\ma },\overline{L}^{\ma }]\simeq \widehat{M}/\widehat{\Sym } ^{\bullet }(H)\cdot m . 
\]
\end{enumerate}
\end{lemma}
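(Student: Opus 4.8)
The plan is to prove part (1) by explicit generators-and-relations analysis of the free metabelian Lie algebra, and then deduce part (2) by quotienting out the single relation imposed by the cup product / Poincaré duality on the curve. For part (1): recall that $L^{\ma}$ is the free metabelian Lie algebra on $x_1,\dots,x_n$, so $L^{\ma,\ab}$ is the free vector space on the $x_i$ and $[L^{\ma},L^{\ma}]$ is generated, as an abelian group, by iterated brackets $[x_{i_1},[x_{i_2},[\dots,[x_{i_{k-1}},x_{i_k}]]]]$. Because the algebra is metabelian, the ideal $[L^{\ma},L^{\ma}]$ is a module over $\widehat{\Sym}^\bullet(L^{\ma,\ab})\simeq \Q_p[\![x_1,\dots,x_n]\!]$ via the adjoint action, and it is generated over this ring by the elements $[x_i,x_j]$. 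The Jacobi identity in the metabelian setting reduces to the relations $[x_i,x_j]+[x_j,x_i]=0$ together with $x_k\cdot[x_i,x_j]+x_i\cdot[x_j,x_k]+x_j\cdot[x_k,x_i]=0$ (the only surviving instance of Jacobi once double brackets of brackets vanish). So I would define a map $\Q_p[\![x_1,\dots,x_n]\!]^{\oplus \binom n2}\to [L^{\ma},L^{\ma}]$ sending the $(i,j)$ generator to $[x_i,x_j]$, and identify its kernel. An equivalent and cleaner bookkeeping, which matches the statement, is to use the surjection $\Q_p[\![x]\!]^{\oplus n}\to[L^{\ma},L^{\ma}]$, $e_i\mapsto \mathrm{ad}(x_i)$ acting on... — more precisely, send $(v_1,\dots,v_n)$ to $\sum_i v_i\cdot(\text{the derivation }\mathrm{ad}_{x_i})$ evaluated appropriately; the point is that an element of the commutator subalgebra is determined by how the $x_i$ act on $[L^{\ma},L^{\ma}]^{\ab}$, and the compatibility condition for these actions to glue is exactly $\sum v_i x_i = 0$ in $\widehat{\Sym}^\bullet$. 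I would then check surjectivity (every commutator is hit) and injectivity (the relation $\sum v_ix_i=0$ is the only one) by comparing graded pieces: in each symmetric degree this is a finite-dimensional linear-algebra statement, and the dimension count on both sides agrees because $\dim\gr^d[L^{\ma},L^{\ma}]$ is a classical formula (Chen/Magnus) for the free metabelian Lie algebra. This degree-by-degree comparison is the main technical content.

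For part (2): the maximal metabelian quotient $L^{\ma}$ of $\lie(\pi_1^{\et,\Q_p}(X,b))$ for a smooth projective curve of genus $g$ differs from the free metabelian Lie algebra on $H=H_1^{\et}(X,\Q_p)$ by exactly one relation, namely the image of the cup-product pairing $\Q_p(1)\hookrightarrow\wedge^2 H$; concretely, in a symplectic basis $x_1,\dots,x_{2g}$ this relation is $\sum_{i=1}^g[x_i,x_{g+i}]=0$. I would first record this structural fact (it is standard — e.g. from the presentation of the fundamental group of a surface group and Labute's work on the associated graded Lie algebra, or just from the weight-$2$ part of $U_2$ being $\wedge^2H/\Q_p(1)$). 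Translating through the isomorphism of part (1), the element $\sum_{i=1}^g[x_i,x_{g+i}]$ corresponds precisely to the element $m=(v_{g+1},\dots,v_{2g},-v_1,\dots,-v_g)$ of $M$ — one checks $\sum_i (m)_i x_i = \sum_{i=1}^g x_{g+i}x_i - \sum_{i=1}^g x_i x_{g+i}=0$ in the commutative ring, so $m$ indeed lies in $M$. Quotienting by the submodule $\widehat{\Sym}^\bullet(H)\cdot m$ generated by this single relation under the $\widehat{\Sym}^\bullet(H)$-action (the adjoint action of $L^{\ab}$) then gives $[L^{\ma},L^{\ma}]\simeq \widehat M/\widehat{\Sym}^\bullet(H)\cdot m$, as claimed.

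The step I expect to be the main obstacle is the injectivity half of part (1): showing that $\sum v_i x_i = 0$ is the \emph{only} relation, i.e. that the natural surjection from $M$ (or from the free module modulo the Jacobi-type relations) onto $[L^{\ma},L^{\ma}]$ is an isomorphism and not merely a surjection with extra kernel. The clean way to handle this is to produce an explicit $\Q_p[\![x]\!]$-module splitting or to compute the Hilbert series of both sides and invoke the known structure theory of free metabelian Lie algebras over a field of characteristic zero (Chen trees / the Magnus embedding into $\widehat{\Sym}^\bullet(H)^{\oplus n}$), so that equality of Hilbert series plus surjectivity forces an isomorphism. Once this is in place, everything else — the module structure via the adjoint action, the identification of $m$, and the passage to the curve — is formal. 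I would also need to be mildly careful about completions throughout (working with $\widehat{\Sym}^\bullet$ and the $I$-adic topology, so that "module generated by $m$" means the closure), but this introduces no real difficulty.
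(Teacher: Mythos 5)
Your proposal is sound in outline but takes a genuinely different route from the paper's proof of part (1). The paper does not argue directly at the Lie algebra level: it passes to the free metabelian pro-$p$ group $\overline G=G/[[G,G],[G,G]]$, identifies $L^{\ma}$ with the Lie algebra of the $\Q_p$-Malcev completion of $\overline G$ and $[L^{\ma},L^{\ma}]$ with $\gr^\bullet[\overline G,\overline G]\widehat\otimes\Q_p$ as a module over $\Q_p[\![x_1,\dots,x_n]\!]\simeq\mathbb Z_p[\![G^{\ab}]\!]\widehat\otimes\Q_p$, and then simply cites Ihara's theorem, which gives the isomorphism $[\overline G,\overline G]\simeq\{(v_i)\in\mathbb Z_p[\![G^{\ab}]\!]^{\oplus n}:\sum v_i x_i=0\}$ directly. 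Your approach instead works purely with Lie algebras: generate $[L^{\ma},L^{\ma}]$ by $[x_i,x_j]$ over $\widehat\Sym^\bullet(L^{\ab})$, observe the Jacobi relations are exactly the Koszul relations identifying the quotient with the first syzygy module $M$, and then check injectivity by a Hilbert series count (or, as you also note, by the Magnus embedding of $[L^{\ma},L^{\ma}]$ into $\widehat\Sym^\bullet(L^{\ab})^{\oplus n}$). This is a legitimate alternative that avoids the detour through pro-$p$ group theory and Malcev completions, at the cost of carrying out the dimension count yourself rather than outsourcing it to Ihara. Two small cautions: your description of the map ``send $(v_1,\dots,v_n)$ to $\sum_i v_i\cdot\mathrm{ad}_{x_i}$ evaluated appropriately'' is not a well-defined homomorphism as stated — the correct map is the Magnus embedding sending $x_i\mapsto (x_i,\,x_i\otimes 1)\in L^{\ab}\ltimes(L^{\ab}\otimes\widehat\Sym^\bullet(L^{\ab}))$, which on $[L^{\ma},L^{\ma}]$ lands in the second factor and sends $[x_i,x_j]$ to the tuple with $x_i$ in slot $j$ and $-x_j$ in slot $i$; you should use that formulation throughout. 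For part (2) you and the paper use essentially the same input (the Labute/Hain presentation of $\gr^\bullet L^{\ma}$ for a genus-$g$ curve as the free metabelian Lie algebra modulo the single relation $\sum_{i=1}^g[x_i,x_{g+i}]$), so that half is the same argument.
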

Given an element $x$ of $[L^{\ma },L^{\ma }]$, we refer to the coefficients $v_i$ of the corresponding element of $\widehat{M}$ as the \textit{Fox differentials} of $x$ (motivated by Ihara's construction, used in the proof of Lemma \ref{metabelian_module}).
\begin{proof}
\begin{enumerate}
\item Let $G$ be the free pro-$p$ group on generators $\gamma _1 ,\ldots ,\gamma _n$. Let $\overline{G}$ be the maximal metabelian quotient
\[
\overline{G}:=G/[[G,G],[G,G]].
\]
We claim that $L^{\ma }$ is isomorphic to the Lie algebra of the $\Q _p $-Malcev completion of $\overline{G}$. This follows from universal properties: $L^{\ma }$ is the Lie algebra of the metabelian quotient $U$  of the free pro-unipotent $\Q _p $-group on generators $x_i $, which we will denote by $\widetilde{U}$. The category of continuous $\Q _p $-representations of $\overline{G}$ is equivalent to the category of metabelian representations of $\widetilde{U}$, which is equivalent to the category of representations of $U$.

Let $G_i$ and $U_i $ denote the maximal $i$-unipotent quotients of $\overline{G}$ and $U$ respectively. 
We have an isomorphism of $\widehat{\Sym }^{\bullet }(V)$-modules
\[
[\overline{G} ,\overline{G} ]\otimes _{\mathbb{Z}_p }\Q _p \simeq [U_i ,U_i ].
\]
Hence it is enough to compute the action of the Lie algebra of $\overline{G}$ on $[G,G]$. 
Let $\mathcal{L}$ denote the $\Z _p $-Lie algebra of $\overline{G}$: this is the $\mathbb{Z}_p $-module
$\varprojlim \oplus _{i\leq n}\gr _i (\overline{G})$, with Lie bracket induced by the commutator on $\overline{G}$. Then, by \cite[2.3.2]{nakamura-takao}, we have an isomorphism of $\Q _p$-Lie algebras
\begin{equation}\label{lie1}
\mathcal{L}\widehat{\otimes }_{\mathbb{Z}_p }\Q _p \simeq L^{\ma }.
\end{equation}
The conjugation action of $G^{\ab }$ on $[\overline{G},\overline{G}]$ gives it the structure of a $\mathbb{Z}_p [\! [G^{\ab }]\! ]$-module. On the other hand, the Lie bracket on $[\mathcal{L},\mathcal{L}]$ gives it the structure of a module over $\mathbb{Z}_p [\! [G^{\ab }]\! ]$. From the definitions, we obtain an isomorphism of $\mathbb{Z}_p [\! [G^{\ab }]\! ]$-modules 
\begin{equation}\label{lie2}
[\mathcal{L},\mathcal{L}]\simeq \gr ^\bullet [\overline{G},\overline{G}].
\end{equation}
In particular, we obtain a non-canonical isomorphism of $\Q _p [\! [x_1 ,\ldots ,x_n ]\! ]$-modules
\[
[L^{\ma },L^{\ma }]\simeq [\overline{G},\overline{G}]\widehat{\otimes }_{\mathbb{Z}_p [\! [G^{\ab }]\! ]}\Q _p [\! [x_1 ,\ldots ,x_n ]\! ]
\]
where $x_i$ acts on $[\overline{G},\overline{G}]$ by $\gamma _i -1$.
Hence it will be enough to prove that we have an isomorphism of $\mathbb{Z}_p [\! [x_1 ,\ldots ,x_n ]\! ]$-modules
\[
[\overline{G},\overline{G}]\simeq \{ (v_1 ,\ldots ,v_n )\in \mathbb{Z}_p [\! [G^{\ab }]\! ]:\sum v_i x_i =0 \}.
\]
This is a special case of a theorem of Ihara \cite[Theorem 2.2]{ihara}.
\item By \cite{hain2011rational}, we know that $\gr ^\bullet \overline{L}^{\ma }$ is isomorphic to a free pro-nilpotent $\Q _p $-Lie algebra on generators $x_1 ,\ldots ,x_{2g}$, modulo the Lie ideal generated by $\sum _{i=1}^g [x_i ,x_{g+i}]$. Hence part (2) follows from part (1).
\end{enumerate}
 \end{proof}

\section{Proof of Theorem \ref{finiteness}}\label{the_proof}
We first recall the following result, which is a corollary of Euler characteristic formulae / Poitou--Tate duality for finite Galois representations, and roughly says that for global Galois cohomology, showing that $H^1 $ is small, showing that $H^2$ is small, and showing that $H^1 $ of the Tate dual is small are equivalent problems.
\begin{lemma} \label{PTD}

\begin{enumerate}
\item \cite[Lemma 2]{jannsen1989} For any finite dimensional $\Q _p $-representation $W$ of $G_{K ,T}$, we have
\[
\h ^1 (G_{K ,T},W)=\h ^2 (G_{K ,T},W)+\h ^0 (G_{K ,T},W)+\sum _{v\in P_{\mathbb{R}}}\dim W^{c_v =-1}+(\# P_{\mathbb{C}})\cdot \dim W,
\]
where $P_{\mathbb{R}}$ and $P_{\mathbb{C}}$ denote the set of real and complex places of $K$ respectively, and the decomposition group at $v\in P_{\mathbb{R}}$ is generated by $c_v $.
\item \cite[Remark 1.2.4]{FPR94}:If $K=\Q $, and $H^0 (G_{\Q ,T},W), H^0 (G_{\Q ,T},W^* (1))$ and $D_{\cris }(W)^{\phi =1}$ are all zero, then
\begin{align*}
& \h ^1 _f (G_{\Q _p },W)-\h ^1 _f (G_{\Q ,T},W) \\
= & \h ^0 (G_{\mathbb{R}},W)- \dim H^1 _f (G_{\Q ,T},W^* (1)).
\end{align*}
\item \cite[1.2.2]{FPR94}For any number field $K$ and $\Q _p $-Galois representation $W$,
\begin{align*}
& \dim (\Ker (H^1 (G_{K,T},W)\to \oplus _{v\in T}H^1 (G_{K_v} ,W)))  \\
= & \dim \Ker (H^2 (G_{K,T},W^* (1))\to \oplus _{v\in T}H^2 (G_{K_v} ,W^* (1))).
\end{align*}
\end{enumerate}
\end{lemma}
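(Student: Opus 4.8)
The plan is to deduce all three statements from the classical toolkit of Galois cohomology over number fields: Tate's global Euler--Poincar\'e characteristic formula for $G_{K,T}$, the Poitou--Tate nine-term exact sequence, and local Tate duality together with the local Euler characteristic formula at the places in $T$. In each case one fixes a $G_{K,T}$-stable $\mathbb{Z}_p$-lattice $\Lambda$ in the $\Q_p$-representation, applies the relevant formula to the finite modules $\Lambda/p^n$, and passes to the limit of $n^{-1}\log_p$ of orders. The three parts are, respectively, \cite[Lemma 2]{jannsen1989}, \cite[Remark 1.2.4]{FPR94} and \cite[1.2.2]{FPR94}; it suffices to isolate precisely the derivations we will invoke.

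For part (1) I would apply Tate's formula
\[
\frac{\#H^0(G_{K,T},M)\,\#H^2(G_{K,T},M)}{\#H^1(G_{K,T},M)}=\prod_{v\mid\infty}\frac{\#H^0(G_v,M)}{|\#M|_v}
\]
to $M=\Lambda/p^n$: in the limit a complex place of $K$ contributes $\dim W-2\dim W=-\dim W$ and a real place $v$ contributes $\dim W^{c_v=1}-\dim W=-\dim W^{c_v=-1}$, and transposing gives the identity. Part (3) is likewise formal: the Poitou--Tate sequence for $G_{K,T}$ with localisation at the places of $T$, applied to $\Lambda/p^n$ and to its Cartier dual (whose $\Q_p$-incarnation is $W^*(1)$), produces a perfect pairing between $\ker\bigl(H^1(G_{K,T},-)\to\bigoplus_{v\in T}H^1(G_v,-)\bigr)$ for $W$ and the corresponding $H^2$-kernel for $W^*(1)$; passing to dimensions yields the stated equality.

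For part (2), with $K=\Q$, I would use the Greenberg--Wiles formula with the Bloch--Kato local conditions $\mathcal{L}_v=H^1_f(G_v,W)$, which are their own orthogonal complements under local Tate duality \cite[Prop.~3.8]{blochkato}. After reduction mod $p^n$ and the limit it reads
\[
\h^1_f(G_{\Q,T},W)-\h^1_f(G_{\Q,T},W^*(1))=\h^0(G_{\Q,T},W)-\h^0(G_{\Q,T},W^*(1))+\sum_{v\in T}\bigl(\dim H^1_f(G_v,W)-\h^0(G_v,W)\bigr).
\]
The two global $H^0$'s vanish by hypothesis; the summand vanishes at each finite $v\neq p$ because there $H^1_f(G_v,W)$ is the unramified subspace, of dimension $\h^0(G_v,W)$; it equals $-\h^0(G_\mathbb{R},W)$ at $v=\infty$ since $H^1(G_\mathbb{R},W)=0$ for $p$ odd; and at $v=p$ it equals $\dim H^1_f(G_{\Q_p},W)-\h^0(G_{\Q_p},W)$, which reduces to $\h^1_f(G_{\Q_p},W)$ because $D_{\cris}(W)^{\phi=1}=0$ forces $W^{G_{\Q_p}}=0$. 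Substituting and rearranging gives the claimed formula. The one point demanding care --- and the main obstacle to a fully self-contained account --- is this local analysis at $p$: it relies on the Bloch--Kato computation of $H^1_f(G_{\Q_p},-)$ via $p$-adic Hodge theory, and one must also check that the conditions defining $H^1_{f,S}(G_{K,T},-)$ used elsewhere in the paper agree with the Bloch--Kato conditions at the places where the formula is applied (at each finite $v\neq p$ both are the unramified condition, so the bookkeeping above applies unchanged).
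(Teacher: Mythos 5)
The paper itself gives no proof of Lemma~\ref{PTD}; it simply cites the three statements to Jannsen and Fontaine--Perrin-Riou after noting they follow from Euler characteristic formulae and Poitou--Tate duality. Your derivation supplies exactly that missing argument, correctly: the Tate global Euler--Poincar\'e formula gives~(1), the Poitou--Tate nine-term sequence gives~(3), and the Greenberg--Wiles formula with the Bloch--Kato self-dual local conditions gives~(2), with the local term at $p$ handled via $\dim H^1_f(G_{\Q_p},W)=\dim D_{\dR}(W)/F^0+\h^0(G_{\Q_p},W)$ and the vanishing $W^{G_{\Q_p}}=0$ forced by $D_{\cris}(W)^{\phi=1}=0$. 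The only cosmetic issue is that your Greenberg--Wiles sum should explicitly run over all places including~$\infty$ (you do treat the archimedean place afterward, so this is a typo, not a gap), and the remark restricting $H^1(G_\mathbb{R},W)=0$ to $p$ odd is unnecessary since for $\Q_p$-coefficients that group is a $\Q_p$-vector space killed by~$2$, hence zero for every~$p$.
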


\subsection{Theorem \ref{finiteness}, case (1)}
In this subsection we prove Theorem \ref{finiteness} in the case $X=\mathbb{P}^1 _K -D$. First, we make use of the following Lemma to reduce to the case where $S$ is empty.
\begin{lemma}\label{local_ignore}
For all $i>1$, any Galois stable quotient $W$ of $\gr _i (U)$, and any $v\neq p$, we have $H^1 (G_{\Q _v },W)=0$.
\end{lemma}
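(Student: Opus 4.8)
\textbf{Proof plan for Lemma \ref{local_ignore}.}

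The plan is to reduce the vanishing of $H^1(G_{\Q_v}, W)$ to a purely local statement about the weights of the Galois representation $\gr_i(U)$, using that $\gr_i(U)$ is a quotient of a tensor power of the $H^1$ of a genus-zero (or projective) curve. First I would recall the structure of $\gr_i(U)$: since $X = \mathbb{P}^1_K - D$, the abelianization $U_1 = \gr_1(U)$ is a subquotient of $\bigoplus_{\sigma} H^1_{\et}(X_{\overline{K},\sigma},\Q_p)(1)$ — more precisely, via Stix's decomposition $\Res(X)_{\overline{\Q}} \simeq \prod_\sigma X_{\overline{K},\sigma}$ and the induction description of $U_n(\Res(X))$, each graded piece $\gr_i(U)$ is a Galois-equivariant subquotient of $\bigoplus_{\sigma} (\gr_1 L(X_{\overline{K},\sigma}))^{\otimes i}$, and for $\mathbb{P}^1$ minus points $\gr_1 L(X_{\overline{K},\sigma})$ is spanned by the classes of the residue divisors, each of which is a copy of $\Q_p(1)$. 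Hence $\gr_i(U)$, and any Galois-stable quotient $W$ of it, is an extension (indeed a sum, up to semisimplification) of copies of $\Q_p(i)$ twisted by finite characters coming from the permutation action on the punctures; in particular $W$ is a successive extension of Artin–Tate representations of weight $-2i$ with $i \geq 2$.

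Given this, the key step is the observation that for $v \nmid p$ and a representation $W$ all of whose Jordan--Hölder factors are of the form $\chi \otimes \Q_p(i)$ with $\chi$ finite and $i \neq 0$, one has $H^1(G_{\Q_v}, W) = 0$. I would prove this by dévissage on the length of $W$: it suffices to treat $W = \chi(i)$ irreducible with $i \geq 2$. Then $H^0(G_{\Q_v}, \chi(i)) = 0$ since the cyclotomic character is infinitely ramified (or: $q^i \neq 1$ where $q = \#\F_v$ acts via Frobenius, as the geometric Frobenius eigenvalue has absolute value $q^{-i} \neq 1$), and likewise $H^0(G_{\Q_v}, \chi(i)^*(1)) = H^0(G_{\Q_v}, \chi^{-1}(1-i)) = 0$ for $i \geq 2$. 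By local Tate duality $H^2(G_{\Q_v},\chi(i)) \cong H^0(G_{\Q_v},\chi(i)^*(1))^\vee = 0$, and the local Euler characteristic formula gives $\h^1 - \h^0 - \h^2 = 0$ (the Euler characteristic of a $\Q_p$-representation at a place not above $p$ vanishes). Hence $\h^1(G_{\Q_v},\chi(i)) = 0$, and the long exact cohomology sequences in a composition series propagate this to all of $W$.

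The main obstacle — really the only thing requiring care — is confirming that every Galois-stable quotient $W$ of $\gr_i(U)$ for $i \geq 2$ genuinely has all Jordan--Hölder constituents of strictly negative weight (equivalently, Frobenius-weight $2i \geq 4$), so that the hypothesis $i > 1$ is exactly what is used and no weight-zero piece sneaks in through the induction/permutation structure. This is where one must use that $X$ is $\mathbb{P}^1$ minus points (so $\gr_1$ is pure of weight $-2$, built from $\Q_p(1)$'s) rather than, say, a higher-genus curve; for genus zero the commutator map $U_1^{\otimes i} \to \gr_i(U)$ is surjective by the analogue of Lemma \ref{lemma_summands}, and iterated brackets of weight-$(-2)$ classes land in weight $-2i$. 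I would then note that the whole argument is insensitive to $T$ and to the choice of $S$: the local condition at $v \neq p$ is automatically satisfied precisely because the relevant local $H^1$ already vanishes, which is the point of stating the lemma — it lets us enlarge $S$ freely in case (1) and reduce to $S = \emptyset$.
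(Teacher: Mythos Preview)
Your proof is correct and rests on the same idea as the paper's: the key is that $\gr_i(U)$ is pure of weight $-2i$, so for $i>1$ both $W$ and $W^*(1)$ have nonzero weight, forcing the relevant local invariants to vanish. The paper argues slightly more directly, using the exact sequence
\[
0 \to H^1(G_{\F_v}, W^{I_v}) \to H^1(G_{\Q_v}, W) \to H^1(G_{\F_v}, (W^*(1))^{I_v})^* \to 0
\]
and observing that $(\phi-1)$ acts invertibly on $W^{I_v}$ and $(W^*(1))^{I_v}$ by weight, whereas you first identify the Jordan--H\"older constituents as Artin--Tate characters $\chi(i)$ and then combine $H^0=H^2=0$ with the local Euler characteristic formula. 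The paper's route avoids the d\'evissage and the explicit Artin--Tate description (so it would apply verbatim to any pure representation of weight $\neq 0,-2$), while yours makes the structure of $\gr_i(U)$ in the $\mathbb{P}^1-D$ case more transparent; but the substance is the same.
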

\begin{proof}
Let $I_v <G_{\Q _v }$ denote the inertia subgroup, and $\phi _v $ a generator of $G_{\Q _v }/I_v $.
Tate duality gives an exact sequence (see e.g. \cite[3.3.9]{FPR94})
\[
0\to H^1 (G_{\mathbb{F}_v },W^{I_v })\to H^1 (G_{\Q _v },W)\to H^1 (G_{\Q _v },W^* (1))^* \to 0.
\]
We have
\[
H^1 (G_{\mathbb{F}_v },W^{I_v })\simeq W^{I_v }/(\phi -1)W^{I_v }.
\]
Since $W$ and $W^* (1)$ have weight $-2i$ and $2-2i$ respectively, we deduce $H^1 (G_{\Q _v },W)=0$.
 \end{proof}
Using the Euler characteristic formula above, we can reduce the computation of dimensions of Galois cohomology groups of Artin--Tate representations (i.e. Tate twists of Artin representations) to a theorem of Soul\'e \cite[Theorem 5]{soule}.
\begin{theorem}\label{souleth}[Soul\'e]
For any number field $K$, and any $n>1$, 
\[
\h ^2 (G_{K,T},\Q _p (n))=0,
\]
\end{theorem}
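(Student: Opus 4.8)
This is Soul\'e's theorem \cite[Theorem 5]{soule}; I sketch the argument I would give. First, by the global Euler characteristic formula of Lemma \ref{PTD}(1) applied to $W=\Q_p(n)$ with $n>1$: here $\h^0(G_{K,T},\Q_p(n))=0$, and complex conjugation at a real place acts on $\Q_p(n)$ by $(-1)^n$, so $\dim_{\Q_p}\Q_p(n)^{c_v=-1}$ equals $1$ for $n$ odd and $0$ for $n$ even. Writing $\varepsilon_n=(1-(-1)^n)/2$, this gives
\[
\h^1(G_{K,T},\Q_p(n))-\h^2(G_{K,T},\Q_p(n))=\varepsilon_n\, r_1(K)+r_2(K).
\]
Thus the theorem is equivalent to the exact value $\h^1(G_{K,T},\Q_p(n))=\varepsilon_n r_1(K)+r_2(K)$, and the plan is to establish this by comparison with algebraic $K$-theory.

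Concretely, I would identify $H^1(G_{K,T},\Q_p(n))$ with the continuous \'etale cohomology $H^1_{\et}(\spec\mathcal{O}_{K,T},\Q_p(n))$ (there is no $\varprojlim^1$ obstruction, as the groups with finite coefficients are finite), and use Soul\'e's $p$-adic \'etale Chern class map $K_{2n-1}(\mathcal{O}_K)\otimes\Q_p\to H^1_{\et}(\spec\mathcal{O}_{K,T},\Q_p(n))$. Borel's computation of the ranks of the higher $K$-groups of $\mathcal{O}_K$ gives $\dim_{\Q}K_{2n-1}(\mathcal{O}_K)\otimes\Q=\varepsilon_n r_1(K)+r_2(K)$, and comparison with the Borel regulator shows the Chern class map is injective after $\otimes\Q_p$. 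This already yields $\h^1\geq\varepsilon_n r_1(K)+r_2(K)$, hence $\h^2\geq 0$, which is vacuous: the genuine content is the reverse bound $\h^1\leq\varepsilon_n r_1(K)+r_2(K)$, equivalently the rational surjectivity of the Chern class map, equivalently the asserted vanishing $\h^2(G_{K,T},\Q_p(n))=0$.

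This vanishing is the main obstacle. One route is Soul\'e's Iwasawa-theoretic argument along the cyclotomic $\Z_p$-tower $K_\infty/K$ with $\Gamma=\Gal(K_\infty/K)$: a corestriction reduction to a cyclotomic base field, the weak Leopoldt conjecture for cyclotomic $\Z_p$-extensions (a theorem of Iwasawa) making the Iwasawa module $\varprojlim_m H^2(\spec\mathcal{O}_{K_m,T},\Z_p(n))$ torsion over the Iwasawa algebra, and a control argument together with Quillen's finite generation of $K$-groups and Borel's vanishing $K_{2n-2}(\mathcal{O}_K)\otimes\Q=0$ forcing the relevant $\Gamma$-coinvariants to be finite, hence to vanish after $\otimes\Q_p$. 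Alternatively, since all local groups $H^2(G_v,\Q_p(n))$ vanish for $n>1$, Poitou--Tate duality identifies $H^2(G_{K,T},\Q_p(n))$ with the dual of the group of everywhere-locally-trivial classes in $H^1(G_{K,T},\Q_p(1-n))$, and one could instead target the vanishing of the latter. In either route the essential input is Soul\'e's theorem --- in contemporary language a consequence of the Quillen--Lichtenbaum conjecture, now a theorem of Voevodsky, Rost and collaborators --- and since it is standard we simply quote \cite{soule}.
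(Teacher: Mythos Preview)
The paper does not prove this statement at all: it is simply quoted as a theorem of Soul\'e, with reference \cite[Theorem 5]{soule}, and used as a black box. Your proposal ultimately does the same thing (citing \cite{soule}), so in that sense you match the paper exactly. The sketch you add --- Euler characteristic reduction, the Chern class comparison with $K_{2n-1}(\mathcal{O}_K)\otimes\Q_p$, Borel's rank computation, and the Iwasawa-theoretic control argument --- is a reasonable outline of Soul\'e's argument and is correct in broad strokes, but it goes well beyond what the paper provides and is not needed here.
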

We deduce case (1) of Theorem \ref{finiteness} as follows. Let $Z$ be an irreducible $\Q _p $-subvariety of $\Res (X)_{\Q _p }$, and let $U_n (Z/X)$ be as in Proposition \ref{prop:useful_criterion}. By Lemma \ref{W_bound_2}, and Theorem \ref{souleth}, it is enough to prove that for infinitely many $n $, $\gr _n (U_n ^{\dR} (Z/X))/F^0 =\gr _n (U_n ^{\dR}(Z/X))$ is not contained in $\loc _p H^1 _f (G_{\Q },\gr _n (U_n ))$. By Lemma \ref{galcoh:descent_intersection}, it is enough to prove that, for infinitely many $n$, $\gr _n (U_n (Z/X))$ is not contained in $U_n ^{c=-1}$. This follows from the fact that $\oplus _n \gr _n (U_n (Z/X))$ is a sub-Lie algebra of the graded Lie algebra $\oplus _n \gr _n U_n $.

\subsection{Theorem \ref{finiteness}, case (2)}
In this subsection we prove Theorem \ref{finiteness} in the case $X$ is a smooth projective curve of genus $g>0$, and we assume either the Bloch--Kato conjectures or Jannsen's conjecture, which we now recall.

\begin{conjecture}[Bloch--Kato, \cite{blochkato}, Conjecture 5.3]\label{BK}
Let $Z$ be a smooth projective variety over $\Q$.
For any $n>0$ and $2r-1\neq n$, the map
\[
ch _{n,r}:K_{2r-1-n}(Z)\otimes \Q _p \to H^1 _g (G_{\Q },H^n (Z_{\overline{\Q }},\Q _p (r)))
\]
is an isomorphism.
\end{conjecture}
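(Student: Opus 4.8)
Conjecture \ref{BK} is the Bloch--Kato conjecture on surjectivity of the $p$-adic regulator onto the ``geometric'' Bloch--Kato subspace, and outside a few families it is open; accordingly the most I can offer is the standard strategy, the cases in which it can actually be carried out, and the location of the genuine obstruction. This is also why, in Theorem \ref{finiteness}(2), Conjecture \ref{BK} (or the complementary conjecture of Jannsen) is imposed as a hypothesis rather than established.

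The plan would be to factor the Chern class map $ch_{n,r}$ through motivic cohomology: for $2r-1\neq n$ the relevant Adams eigenspace of $K_{2r-1-n}(Z)\otimes\Q$ is the motivic cohomology group $H^{n+1}_{\mathcal{M}}(Z,\Q(r))$, and the $p$-adic \'etale regulator then gives a map $H^{n+1}_{\mathcal{M}}(Z,\Q(r))\otimes\Q_p\to H^1(G_\Q,H^n(Z_{\overline\Q},\Q_p(r)))$ whose image one checks, using the de Rham and crystalline comparison theorems, to lie in $H^1_g$ (in $H^1_f$ at primes of good reduction). This containment, and the identification of the source with the appropriate eigenspace of $K$-theory, are the routine half. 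Injectivity of $ch_{n,r}$ is itself conjectural (injectivity of the $p$-adic regulator is a conjecture of Jannsen, tied to Beilinson--Soul\'e vanishing), but is the more accessible direction. The hard part --- genuinely open --- is \emph{surjectivity}: one must produce enough higher algebraic cycles on $Z$ to exhaust $H^1_g$, which amounts to the existence of the conjectural abelian category of mixed motives over $\Q$, together with the principle that every extension of pure motives that is de Rham at $p$ and unramified almost everywhere is motivic.

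I would then record the cases that are unconditional and that are the ones used downstream. When $Z=\spec\Q$ and $n=0$, Conjecture \ref{BK} is equivalent, via the Euler characteristic formula of Lemma \ref{PTD}, Soul\'e's theorem (Theorem \ref{souleth}) and Borel's computation of $\rk K_\bullet(\Z)$, to a theorem --- this is exactly the input behind case (1) of Theorem \ref{finiteness}. When $Z$ is an abelian variety and $(n,r)=(1,1)$, it reduces to finiteness of the $p$-part of the Tate--Shafarevich group together with the Bloch--Kato equality $\h^1_f=\rk$ for abelian varieties, which is known in the CM case; this is why cases (3) and (4) of Theorem \ref{finiteness} can be made unconditional after the reduction of Lemma \ref{yet_another_equivalent_condition}, K\"unneth and semisimplicity propagating the statement to the $H^n$ of products of such abelian varieties.

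Finally --- and this is the only step that is a proof rather than an appeal to a conjecture --- one feeds Conjecture \ref{BK} (or, dually, Jannsen's conjecture on the vanishing of $H^2(G_{\Q,T},H^n(X^j_{\overline\Q},\Q_p(r)))$ in the relevant range) into the machine of Section \ref{sec:complete}: it supplies the rank predictions for $\h^1_f(G_{\Q,T},\overline W_i)$ on the graded pieces $\gr_i(U_n(\Res(X)))$, which are subquotients of $\bigoplus_j H^1(X^j_{\overline\Q},\Q_p(\ast))$, and then the codimension inequality \eqref{more_inequality} of Lemma \ref{yet_another_equivalent_condition} is checked using Lemma \ref{W_bound_2} and Lemma \ref{PTD}; the only subtlety here is that $Z$ need not be defined over $\Q$, handled by Lemma \ref{nearly_arithmetic}. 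I expect this last step to be essentially bookkeeping in abelian Galois cohomology; the real obstacle is the surjectivity in Conjecture \ref{BK} itself, which remains open.
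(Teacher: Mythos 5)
This ``statement'' is a conjecture, not a theorem: the paper itself gives no proof of Conjecture \ref{BK} and does not claim one. It simply records Bloch and Kato's Conjecture 5.3 verbatim so it can be invoked as a hypothesis in Theorem \ref{finiteness}(2). You correctly recognise this, and your discussion of the standard strategy (motivic cohomology, the \'etale regulator, the $H^1_g$ containment via the comparison theorems, and the open surjectivity step tied to the conjectural category of mixed motives) is accurate as background. There is therefore nothing in the paper to compare against, and no gap in your response: declining to prove an open conjecture is the only correct move, and you have done that while usefully locating where the conjecture actually enters the downstream argument (it supplies the rank bounds for $\h^1_f(G_{\Q,T},\overline W_i)$ fed into Lemma \ref{W_bound_2} and Lemma \ref{yet_another_equivalent_condition}, with Jannsen's conjecture providing the dual route via $H^2$-vanishing and injectivity of the localisation map \eqref{jannsen_inj}). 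One small caveat: you suggest that cases (3) and (4) of Theorem \ref{finiteness} ``reduce to'' a known case of Conjecture \ref{BK}; that is not quite how the paper proceeds --- it bypasses Bloch--Kato entirely there and instead uses Iwasawa's theorem and the Coates--Kim bound (Theorem \ref{CK_thm}) to control $\h^2(G_{L,T},\gr_i(\overline U_n))$ directly, so those cases are unconditional for a different reason than you indicate.
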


\begin{conjecture}[Jannsen, \cite{jannsen1989}, Conjecture 1]\label{janns_conj}
Let $Z$ be a smooth projective variety over $K$ with good reduction outside $T$. Then
\[
H^2 (G_{K,T},H^i (Z_{\overline{K}},\Q _p (n)))=0
\]
whenever $i+1<n$ or $i>2n$.
\end{conjecture}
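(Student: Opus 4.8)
\begin{remark}
The statement just recalled is Jannsen's conjecture; it is a deep open problem, and we use it (together with Conjecture \ref{BK}) only as a hypothesis in case (2) of Theorem \ref{finiteness}. We sketch the route one would follow in attempting it, and the point at which it becomes genuinely hard.

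Fix $p$ odd, write $d=\dim Z$ and $V=H^i(Z_{\overline{K}},\Q_p(n))$, and recall that Poincar\'e duality identifies the Kummer dual $V^{*}(1)$ with $H^{2d-i}(Z_{\overline{K}},\Q_p(d+1-n))$, again the $p$-adic cohomology of a smooth projective variety. By Deligne's purity theorem $V$ and $V^{*}(1)$ are pure of \emph{nonzero} weight in each of the ranges $i+1<n$ and $i>2n$ (the weights being $i-2n$ and $2n-i-2$). Using this -- together with the weight-monodromy filtration at the bad primes -- one checks that $H^0(G_{K,T},V^{*}(1))=0$ and that $H^2(G_{K_v},V)=0$ for every place $v$ of $K$ (at the archimedean places because $p$ is odd). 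Feeding these vanishings into the Poitou--Tate exact sequence (compare Lemma \ref{PTD}(3)) collapses its tail to a canonical isomorphism
\[
H^2(G_{K,T},V)\ \cong\ \mathrm{Sha}^1\!\big(G_{K,T},V^{*}(1)\big)^{*}, \qquad
\mathrm{Sha}^1(G_{K,T},W):=\ker\Big(H^1(G_{K,T},W)\to\bigoplus_{v\in T}H^1(G_{K_v},W)\Big).
\]
Thus the conjecture is equivalent to the vanishing of a Tate--Shafarevich group of $W=V^{*}(1)$, which in both ranges is pure of weight $\neq 0$ (weight $\geq 2$ when $i+1<n$, weight $\leq -3$ when $i>2n$), i.e.\ lies in the \emph{non-critical} region.

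Any class in $\mathrm{Sha}^1(G_{K,T},W)$ is locally trivial everywhere, hence crystalline at the primes above $p$, so $\mathrm{Sha}^1(G_{K,T},W)$ is contained in the Bloch--Kato Selmer group $H^1_f(G_{K,T},W)$; it would therefore suffice to show that $\mathrm{Sha}^1$ vanishes inside the Selmer group for $W$ in the non-critical range. This is exactly (a piece of) the Bloch--Kato conjecture \cite{blochkato} -- equivalently, of the injectivity of the $p$-adic \'etale regulator into local Galois cohomology, or of the non-vanishing of the $p$-adic $L$-function of $W$ outside the critical strip, which is the form in which \cite{jannsen1989} phrases the obstruction. \emph{This is the crux.} For $W=\Q_p(m)$ a Tate twist of the trivial Artin motive it is Soul\'e's theorem \ref{souleth} (the case $i=0$; combined with the Euler characteristic formula of Lemma \ref{PTD}(1) this is precisely how Lemma \ref{PTduality} was established above), and it is known in a few further cases, e.g.\ for motives with complex multiplication; but no general mechanism is known for propagating Soul\'e-type vanishing from Tate motives to an arbitrary cohomology group $H^i(Z_{\overline{K}},\Q_p)$. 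For this reason we only assume Jannsen's conjecture here rather than prove it.
\end{remark}
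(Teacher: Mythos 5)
The statement you were asked to prove is \emph{Conjecture}~\ref{janns_conj}, not a theorem: the paper does not (and cannot) supply a proof, and merely records Jannsen's conjecture verbatim so that it can be invoked as a hypothesis in case (2) of Theorem~\ref{finiteness}. You correctly recognised this, and rather than manufacturing a spurious argument you wrote an expository remark on the standard reduction and on where the real difficulty lies. That is exactly the right call.

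As for the content of the remark itself: the Poincar\'e duality computation of $V^{*}(1)$, the weight bookkeeping (weight $i-2n$ for $V$, weight $2n-i-2$ for $V^{*}(1)$, both nonzero and in fact bounded away from $0$ in the stated ranges), and the collapse of the Poitou--Tate sequence to $H^2(G_{K,T},V)\cong \mathrm{Sha}^1(G_{K,T},V^{*}(1))^{*}$ once $H^2(G_{K_v},V)=0$ for all $v\in T$ are all correct, and this is precisely the dual form that Lemma~\ref{PTD}(3) packages for later use. Two small caveats worth being explicit about, since the remark will sit next to a genuine open problem: (i) the vanishing $H^0(G_{K_v},V^{*}(1))=0$ at primes of bad reduction relies on the weight--monodromy conjecture for $Z$ at those primes (you flag this, but it is itself open in general); and (ii) the assertion that $\mathrm{Sha}^1\subset H^1_f$ and that its vanishing in the non-critical range is ``exactly (a piece of) the Bloch--Kato conjecture'' is a useful heuristic but slightly loose -- Jannsen's conjecture is closer to the injectivity of the localisation map on $H^1$, which Bloch--Kato (via the expected dimension formula for $H^1_f$) would imply but is not literally a sub-case of. With those qualifications, the remark is accurate and does not overclaim.
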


In particular, when $2r-1-n<0$, since negative $K$-groups are zero, Conjecture \ref{BK} implies
$
H^1 _f (G_{\Q },H^n _{\et} (Z_{\overline{\Q }},\Q _p (r)))=0.
$

\subsubsection{Finiteness assuming Conjecture \ref{BK}}
Part (2) of Lemma \ref{PTD} implies the following corollary of Conjecture \ref{BK}.
\begin{lemma}
Let $X$ be a smooth projective geometrically irreducible curve of genus $g>1$. Suppose Conjecture \ref{BK} holds for $H^n _{\et} (X^n _{\overline{\Q }} ,\Q _p (n))$. Then, for any Galois stable direct summand $W$ of $H^n _{\et} (X^n _{\overline{\Q}},\Q _p (n))$, we have
\[
\h ^1 _f (G_{\Q _p },W)-\h ^1 _f (G_{\Q ,T},W)=\h ^0 (G_{\mathbb{R}},W)
\]
when $n>2$.
\end{lemma}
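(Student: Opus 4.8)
The plan is to deduce the identity directly from part (2) of Lemma \ref{PTD}, after checking its hypotheses for $W$ and after using Conjecture \ref{BK} to kill the term $\dim H^1_f(G_{\Q,T},W^*(1))$ that it contributes.

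First I would record that $W$ meets the three hypotheses of Lemma \ref{PTD}(2). Since $X$, hence $X^n$, has good reduction at $p$, the representation $H^n_{\et}(X^n_{\overline{\Q}},\Q_p(n))$ is crystalline at $p$ and unramified outside $T$, and it is pure of weight $-n$ by the Weil conjectures; so $W$ is pure of weight $-n$ and $W^*(1)$ is pure of weight $n-2$, and because $n>2$ neither weight is $0$. Purity of nonzero weight gives the vanishing of $H^0(G_{\Q,T},W)$ and of $H^0(G_{\Q,T},W^*(1))$ — the geometric Frobenius at any $v\notin T$ acts on $W$, resp.\ $W^*(1)$, with eigenvalues of complex absolute value $v^{-n/2}$, resp.\ $v^{(n-2)/2}$, none of which is $1$ — and likewise forces the crystalline Frobenius on $D_{\cris}(W)$ to have all eigenvalues of absolute value $p^{-n/2}\neq 1$, so $D_{\cris}(W)^{\phi=1}=0$. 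Applying Lemma \ref{PTD}(2) then yields
\[
\h^1_f(G_{\Q_p},W)-\h^1_f(G_{\Q,T},W)=\h^0(G_{\mathbb{R}},W)-\dim H^1_f(G_{\Q,T},W^*(1)),
\]
so it remains to prove $H^1_f(G_{\Q,T},W^*(1))=0$.

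For this I would first pass to a statement about all of $H^n$: Poincaré duality on the $n$-dimensional smooth projective variety $X^n$ gives $H^n_{\et}(X^n_{\overline{\Q}},\Q_p)^{\vee}\cong H^n_{\et}(X^n_{\overline{\Q}},\Q_p)(n)$, so dualizing a $G_\Q$-stable splitting of $H^n_{\et}(X^n_{\overline{\Q}},\Q_p(n))$ exhibits $W^*(1)$ as a $G_\Q$-stable direct summand of $H^n_{\et}(X^n_{\overline{\Q}},\Q_p(1))$. Now I would invoke Conjecture \ref{BK} for $Z=X^n$ in cohomological degree $n$ and Tate twist $r=1$, which is admissible since $2r-1=1\neq n$: it identifies $H^1_g(G_\Q,H^n_{\et}(X^n_{\overline{\Q}},\Q_p(1)))$ with $K_{1-n}(X^n)\otimes\Q_p$, and this vanishes because $X^n$ is regular and $1-n<0$, so negative $K$-groups are zero. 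Since $H^1_f\subseteq H^1_g$, both Selmer conditions are compatible with passing to direct summands, and the Bloch--Kato Selmer group $H^1_f(G_{\Q,T},W^*(1))$ of the paper is (for $K=\Q$ and empty $S$) just the usual $H^1_f$ over $G_\Q$, it follows that $H^1_f(G_{\Q,T},W^*(1))=0$; substituting into the displayed identity completes the proof.

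The conceptual weight of the argument lies entirely in the appeal to Conjecture \ref{BK} in the last paragraph; the rest is standard purity and Poincaré-duality bookkeeping. The step I would be most careful about is precisely this chain of compatibilities — Poincaré duality identifying $W^*(1)$ with a summand of $H^n_{\et}(X^n_{\overline{\Q}},\Q_p(1))$, the identification of the Selmer group used in the paper with the $H^1_g$ appearing in Conjecture \ref{BK}, and the vanishing of negative $K$-theory of the regular scheme $X^n$. It is also worth flagging where $n>2$ is used: for $n=2$ the twist $W^*(1)$ would be pure of weight $0$, both the relevant $H^0$ and $H^1_f(G_{\Q,T},W^*(1))$ could be nonzero, and the identity — as well as the argument — fails, which is why the statement excludes that case.
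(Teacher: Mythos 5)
Your proof is correct and supplies exactly the reasoning the paper leaves implicit: the paper's own ``proof'' is the single sentence ``Part (2) of Lemma \ref{PTD} implies the following corollary of Conjecture \ref{BK}'', so your job was to verify the hypotheses of Lemma \ref{PTD}(2) (purity of weight $-n$, resp.\ $n-2$, for $W$, resp.\ $W^*(1)$, giving the required vanishing of $H^0$'s and of $D_{\cris}(W)^{\phi=1}$) and then to kill the extra term $\dim H^1_f(G_{\Q,T},W^*(1))$, which you do via Poincar\'e duality and vanishing of negative $K$-theory.

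One small point is worth flagging, since it touches the exact hypothesis: you show that $W^*(1)$ is a direct summand of $H^n_{\et}(X^n_{\overline{\Q}},\Q_p(1))$, so the instance of Conjecture \ref{BK} you actually invoke is the one for $H^n_{\et}(X^n_{\overline{\Q}},\Q_p(1))$ (where $r=1$ gives $2r-1-n=1-n<0$ and the $K$-group is negative, hence zero). The lemma as worded assumes Conjecture \ref{BK} for $H^n_{\et}(X^n_{\overline{\Q}},\Q_p(n))$, for which $r=n$ gives $2r-1-n=n-1>0$ and the $K$-group $K_{n-1}(X^n)$ need not vanish; so the hypothesis as printed doesn't directly yield the vanishing you need. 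This appears to be an imprecision in the paper's statement rather than a gap in your argument --- under the hypotheses of Theorem \ref{finiteness}(2), which assume Bloch--Kato for all the product varieties $X^n$ (in all degrees and twists), the twist $r=1$ is available, and your proof goes through. If you want to match the lemma's literal hypothesis you should point this out, or restate the hypothesis as ``Conjecture \ref{BK} holds for $H^n_{\et}(X^n_{\overline{\Q}},\Q_p(1))$''.
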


This means that, for all but finitely many $i$, 
\[
\Ker (H^1 _f (G_{\Q ,T},\gr _i (U_i ))\to H^1 _f (G_{\Q _p },\gr _i (U_i )))=0.
\]

Similarly, by part (3) of Lemma \ref{PTD}, Jannsen's conjecture implies that the localisation map
\begin{equation}\label{jannsen_inj}
H^1 _f (G_{K,T},H^i _{\et } (X_{\overline{K}},\Q _p (i)))\hookrightarrow  \oplus _{v|p}H^1 _f (G_{K_v },H^i _{\et }(X_{\overline{K}},\Q _p (i)))
\end{equation}
is injective for $i>1$. This implies that 
\begin{align}\label{eqn:jannsen_eqn}
&\sum _{i=1}^n \dim \Ker(\loc _p : H^1 _f (G_{\Q ,T},\gr _i (U_i ))\to H^1 _f (G_{\Q _p },\gr _i (U_i ))) \nonumber \\
= & \dim \Ker (H^1 _f (G_{\Q ,T},V)\to H^1 _f (G_{\Q _p },V)).
\end{align}

To prove finiteness of $X(K\otimes \Q _p )_n$, we now show that $U_n (Z/\Res (X))$ contains a large Artin--Tate part, and use this to apply Lemma \ref{galcoh:descent_intersection}. This is done by showing that the unipotent fundamental group of a projective curve contains many Tate motives.
\begin{lemma}[Hain]
Let $X$ be projective. Then 
\[
\gr _6 (U_6 (X))^{\Sp (U_1 (X))}\neq 0.
\]
\end{lemma}
\begin{proof}
This is proved in Hain \cite[\S 9]{hain2011rational}. For the sake of completeness we briefly recall the argument. Let $\mathfrak{p}:= \varinjlim \oplus \gr _i (U_i (X))$, viewed as a graded Lie algebra, so that $U_1 (X)$ has weight $-1$. By computing the Lie algebra cohomology of $\mathfrak{p}$, Hain shows that, for all $i<-2$, the complex
\[
\gr ^W _i (\wedge ^\bullet \mathfrak{p}): \ldots \to \gr ^W _i (\wedge ^2 \mathfrak{p}) \to \gr ^W _i \mathfrak{p}\to 0
\]
is exact. Hence given the irreducible representations arising in $\gr _1 (U_i (X)),\ldots $, $\gr _{i-1}(U_i (X))$, one may compute $\gr _i (U_i (X))$ as an $\spg (U_1 (X))$-representation using the complex $\gr ^W _{-i}(\wedge ^\bullet \mathfrak{p})$. We obtain a non-zero morphism of $\spg (U_1 (X))$-representations $\Q _p \to \gr _6 (U_6 (X))$. 
\end{proof}
\begin{lemma}\label{lemma:artin-tate}
Let $X$ be a smooth projective curve over a number field $K$. Let $\gr ^\bullet (L)$ be the associated graded of the Lie algebra of the $\Q _p $ pro-unipotent completion of the \'etale fundamental group of $X_{\overline{K}}$. Then $\gr ^\bullet (L)$ contains a free Lie algebra generated by $\Q _p (3)$ and $\Q _p (5)$.
\end{lemma}
\begin{proof}
By the previous lemma, $\Q _p (3)$ is a direct summand of $\gr ^6 (L)$. This follows from the fact that it is enough to prove this as a statement about representations of $\GSp (V)$, for $V$ a $2g$-dimensional vector space with a nondegenerate symplectic form. Namely one wants to prove the corresponding statement for $L(V)$, which we define to be the free Lie algebra on $V$ modulo the Lie ideal generated by $\Q _p (1) \subset \wedge ^2 V$. Then it is enough to replace $\GSp (V)$ with $\Sp (V)$ and prove that $L(V)$, as a representation of $\Sp (V)$, contains an invariant vector in $\gr ^6 L(V)$. 

Similarly, to prove the Lemma it is enough to prove that $L(V)$ contains a free Lie algebra with trivial $\Sp (V)$ action, and generators lying in $\gr ^6 (L(V))$ and $\gr ^{10}(L(V))$. By a theorem of Labute \cite[Theorem 1]{labute:free}, if $L$ is a free Lie algebra on a vector space $V$ of dimension $>2$, modulo a Lie ideal generated by one element in $\wedge ^2 V$, then $[L,L]$ is a free Lie algebra (more precisely Labute's theorem says it is a sub-Lie algebra of a free Lie algebra, and thus is free by Shirsov's theorem). Hence, to prove that $L(V)$ contains the free Lie algebra mentioned above, it is enough to prove that $\gr ^6 (L(V))$ and $\gr ^{10}(L(V))$ both contain copies of the trivial representation. 

Hain proves in loc. cit. that $\gr ^6 (L(V))$ contains a copy of the trivial representation. Now let $I$ be the Lie ideal in $[L(V),L(V)]$ generated by the copy of $\Q _p (3)$ in the 6th graded piece. By another theorem of Labute \cite[Theorem 1]{labute:algebre}, $I/[I,I]$ is a free rank one $\mathcal{U}(L/I)$ module, where $\mathcal{U}(L/I)$ is the universal enveloping algebra of $L/I$. By the Poincar\'e--Birkhoff--Witt theorem, $\mathcal{U}(L/I)$ is isomorphic, as a representation on $\Sp (V)$, to the symmetric algebra on $\mathcal{U}(L/I)$. In particular, we see that $I/[I,I]$ contains a copy of $\Sym ^2 \gr ^2 (L)$, which contains a copy of the trivial representation. This can be seen by direct computation, or from the fact that, since $\gr ^2 (L)$ is self-dual as a representation of $\Sp (V)$, either $\Sym ^2 \gr ^2 (L)$ or $\wedge ^2 \gr ^2 (L)$ must contain a copy of the trivial representation, and the latter does not.
\end{proof}

We now deduce case (2) of Theorem \ref{finiteness}, assuming Jannsen's conjecture, as follows. Let $L^t$ be the maximal Artin--Tate subspace of the graded Lie algebra $\oplus \gr ^i (\lie (U_i (\Res (X)) ))$, i.e. the maximal subrepresentation each of whose summands becomes a Tate twist after a finite extension. By Lemma \ref{lemma:artin-tate}, this is infinite dimensional, and contains a free Lie algebra. Since $\gr ^\bullet (U_n (Z/\Res (X)))$ is Galois stable over a finite extension of $\Q $, and $U_n (Z/\Res (X))$ surjects onto a factor of $U_n (\Res (X))$, the intersection of $\gr ^\bullet (U_n (Z/\Res (X))$ with $L^t $ is infinite-dimensional and contains a free Lie algebra. Hence the dimension of the image of $\oplus \gr _i U_i (Z/\Res (X))\cap L^t$ in $L^t /(L^t)^{c=-1}$ is infinite dimensional, and hence by Lemma \ref{galcoh:descent_intersection}, together with \eqref{eqn:jannsen_eqn}, $X(K\otimes \Q _p )_n $ is finite.

\subsection{Proof of Theorem \ref{finiteness}, case (3)}
We follow the argument in \cite{kim2010p}. Let $\overline{U}_n$ denote the maximal metabelian quotient of $U_n (X)$. By Lemma \ref{metabelian_module}, $\varprojlim [\overline{U}_n ,\overline{U}_n ]$ is a free module of rank one over the completed symmetric algebra of $V:=V_p E$.
The Tate module has a decomposition $T_p E \simeq T_{\pi }E \oplus T_{\overline{\pi }}E$ over $L$ where $L|K$ is the field over which the CM is defined. Let $a$ and $b$ be generators of $\lie (\overline{U}_n )$ whose images in $V$ generate $V_\pi E:=\Q _p \otimes T_\pi E$ and $V_{\overline{\pi }} E:=\Q _p \otimes T_{\overline{\pi }} E$ respectively. Then $\lie (\overline{U}_n )$ has a $\Q _p $ basis $a,b$ and $\ad (a)^i \ad (b)^j [a,b]$ for $i+j\leq n-2$. Let $L_{\geq i,\geq j}$ denote the subpsace generated by $\ad (a)^k \ad (b)^l [a,b]$ for $k\geq i,l\geq j$. Then $L_{\geq i ,\geq j}$ is a Lie ideal, and $L_{\geq i ,\geq j}+L_{\geq j,\geq i }$ is a Galois stable Lie ideal. Let $\overline{L}_n $ denote the quotient of $\lie (\overline{U}_n )$ by $L_{\geq 1,\geq 1}$. We have isomorphisms
\[
\gr _i \overline{L}_n \simeq (\Ind ^K _L (V_{\pi }(E)^{\otimes (n-3)}))(1)
\]
for all $3\leq i\leq n$.

By Lemma \ref{lemma:fin_extn} we may enlarge the field $K$ and hence we may assume that $K|\Q $ is Galois, and that all isogenies between $E_{\sigma ,\overline{K}}$ for different embeddings $\sigma $ are in fact defined over $K$. Hence, if $H<\Gal (K|\Q )$ is the subgroup generated by all $\sigma $ such that $E$ is isogenous (over $K$ or equivalently $\overline{K}$) to $E_{\sigma }$, then $V$ descends to a Galois representation $V_0$ of $K_0 :=K^H $.

Let $L(V)$ be the associated graded Lie algebra of $\varprojlim \overline{L}_n $. Equivalently, we may define $L(V)$ to be the free metabelian Lie algebra on $V$ modulo the ideal generated by $\Q _p (2)$. We may similarly define $L(V_0 )$. Recall that we have an isomorphism of representations of $\Gal (\overline{\Q }_p |\Q _p )$
\[
\Res ^\Q _{\Q _p }\Ind ^{\Q }_{K_0 }L(V_0 )\simeq \oplus _{w|p}\Res ^{K_0 }_{K_{0,w}} L(V_0 ).
\]
and that in this way we think of $H^1 _f (G_{K_{0,w}},\gr _n L(V_0 ))$ as a direct summand of $H^1 _f (G_{\Q _p },\Ind ^\Q _{K_0 }\gr _n L(V_0 ))$.
\begin{lemma}\label{lemma:CMhell}
For any infinite set $\mathcal{S}$ of positive integers, there is a prime $w$ of $K_0$ lying above $p$ with the following property. For a positive proportion of $n$ in $\mathcal{S}$, the intersection of $\loc _p H^1 _f (G_{\Q },\Ind ^{\Q }_{K_0 }\gr _n(L(V_0 )))$ with $H^1 _f (G_{K_0 ,w} ,\gr _n (L(V_0 )))$ has dimension at most one.
\end{lemma}
\begin{proof}
Since $E$ has potentially good reduction at all primes, for all $v$ in $S$ there is a finite Galois extension $L_w |K_v$ such that the action of $G_{L_w }$ on $V_p (E)$ is unramified of weight $-1$. Hence for all $i>2$, and any $G_{K_v }$-stable quotient $W$ of $\gr _i (U)$, arguing as in Lemma \ref{local_ignore}, we have $H^1 (G_{L_w },W)=0$. By the Hochschild--Serre spectral sequence, and the fact that $H^1 (\Gal (L_w |K_v ),W')=0$ for all representations $W'$ (or that $H^0 (G_{L_w },W)=0$), we deduce that $H^1 (G_{K_{0,w} },W)=0$ for all $i>2$. Hence for all $i>2$, we have
\[
H^1 _f (G_{K_0 ,T},\gr _i \overline{L}_i )=H^1 _{f,S}(G_{K_0 ,T},\gr _i \overline{L}_i ).
\]

It will be enough to prove that for all but finitely many $i$, $h^1 (G_{K_0 ,T},\gr _i L(V_0 ) )=[K_0 :\Q ]$. By Lemma \ref{PTD}, it will hence be enough to prove that $h^2 (G_{K_0 ,T},\gr _i L(V_0 ))=0$, or equivalently that $h^1 _f (G_{K_0 ,T},\gr _i L(V_0 )^* (1))=0$. It will hence be enough to prove that 
\[
H^1 (G_{L,T},V_\pi (E) ^{\otimes n} )\to \oplus _{v\in T}H^1 (G_{L_v },V_\pi (E) ^{\otimes n} )
\]
is injective for all but finitely $n$ (and similarly for $V_{\overline{\pi }}(E)$.

Let $X_{\infty }$ denote the Galois group of the maximal unramified $\mathbb{Z}_p$-extension $L_{\infty }$ of $L $, where $\Lambda =\mathbb{Z}_p [\! [\Gal (L_{\infty }|L )]\! ]$. From the Hochschild--Serre spectral sequence we have an exact sequence
\[
H^1 (\Gal (L_{\infty }|L ),V_\pi (E) ^{\otimes i})\to H^1 (G_{L,T},V_\pi (E) ^{\otimes i})\to \Hom _{\Lambda }(X_{\infty },V_\pi (E) ^{\otimes i} ),
\]
using the isomorphism $\Hom _{\Lambda }(X_{\infty },V_\pi (E) ^{\otimes i})\simeq H^0 (\Gal (L_{\infty }|L),H^1 (\Gal (\overline{\Q }|L_{\infty }),V_\pi (E) ^{\otimes i})).$
Since $H^1 (\Gal (L_{\infty }|L),V_\pi (E) ^{\otimes i})=0$ for all $i\neq 0$, it is enough to bound the dimension of $\Hom _{\Lambda }(X_{\infty },V_\pi (E) ^{\otimes i})$.
By Iwasawa's theorem \cite[Theorem 5]{iwasawa}, $X_{\infty }$ is a torsion $\Lambda $-module, hence
$
\Hom _{\Lambda }(X_{\infty },V_\pi (E) ^{\otimes i})=0
$
for all but finitely many $i$, and similarly for $V_{\overline{\pi }}(E)$.
\end{proof}

Now we take $\mathcal{S}$ to be $2+e\cdot \mathbb{Z}_{>0}$, where $e:=\# \mu (F)$. Let $w$ be a prime as in Lemma \ref{lemma:CMhell} for the set $\mathcal{S}$. Let $Z\subset (\Res _{K|\Q }E)_{\Q _p }$ be an irreducible subvariety dominating a factor above $w$. We will henceforth identify $\Ind^\Q  _K L(V_0)$ with the associated graded Lie algebra of the corresponding quotient of $\varprojlim U_n (\Res (X))$. Let $\overline{L}_i (Z/\Res (X))$ denote the image of the $\Q _p$-nilpotent Lie algebra of $Z_{\overline{\Q }_p} $ in $\Ind ^\Q  _K \overline{L}_i $. To complete the proof of case (3) of Theorem \ref{finiteness}, we need to show that, for infinitely many $i>0$, $H^1 _f (G_{\Q _p },\gr _i (\overline{L}_i (Z/\Res (X))))$ is not contained in $\loc _p H^1 _f (G_{\Q ,S},\Ind ^\Q  _K \gr _i (\overline{L}_i ))$. Since there are no isogenies between $E_{\overline{K},v_1 }$ and $E_{\overline{K},v_2 }$ if $v_1$ lies above $w$ and $v_2 $ does not, the image of $V_p \Alb (Z)$ in 
$
V_p (\Res (J))\simeq \Ind ^\Q  _K V_p E
$ 
is a direct sum of the image in $\oplus _{v|w}V_p E_v$ and the image in $\oplus _{v\nmid w}V_p E_w$. It follows that $\overline{U}_n (Z/\Res (X))$ is a direct product of its image in $\prod _{v|w}\overline{U}_n (X_v )$ and its image in $\prod _{v\nmid w}\overline{U}_n (X_v )$. Hence, to show that for infinitely many $i$, $H^1 _f (G_{\Q _p },\gr _i (\overline{U}_i (Z/\Res (X))))$ is not contained in $\loc _p H^1 _{f,S}(G_{\Q ,S},\Ind ^\Q  _K \gr _i (\overline{U}_i (X)))$, it will be enough to show that for infinitely many $i$, the image of $H^1 _f (G_{\Q _p },\gr _i (\overline{U}_i (Z/\Res (X))))$ in $\oplus _{v|w}H^1 _f(G_{K_v },\gr _i (\overline{U}_i ))$ is not contained in 
$M_i:= \loc _p H^1 _f(G_{\Q ,S},\Ind ^\Q  _K \gr _i (\overline{U}_i ))\cap \oplus _{v|w}H^1 _f(G_{K_v },\gr _i (\overline{U}_i))$.

Via the isomorphism $V\simeq \Res ^{K_0} _{K}V_0$, we have an $H$-action on $H^1 _f (G_{\Q },\Ind ^\Q  _K \gr _i (U_i ))$, compatible with the $H$-action on $\oplus _{v|w}H^1 _f (G_{K_v },\gr _i(U_i ))$. In particular, $M_i $ is an $H$-submodule of $\oplus _{v|w}H^1 _f (G_{K_v },\gr _i (U_i ))$. Hence we deduce that it is enough to show that for infinitely many $i$, the $H$-module $N_i$ generated by the image of $H^1 _f (G_{\Q _p },\gr _i (\overline{U}_i (Z/\Res (X))))$ in $\oplus _{v|w}H^1 _f (G_{K_v },\gr _i (\overline{U}_i ))$ is not contained in $M_i$. In fact, we show that $N_i ^H$ is not contained $M_i ^H$. Note that $M_i ^H$ is equal to the intersection of $\loc _p H^1 _f (G_{\Q },\gr _i (L(V_0 )))$ with $H^1 (G_{K_{0,w}},\gr _i (L(V_0 )))$.

Note that, via the isomorphism $\oplus _{v|w}H^1 _f (G_{K_v },\gr _i (\overline{L}_i ))\simeq \Q _p [H]\otimes H^1 _f (G_{K_{0,w}},\gr _i (\overline{L}_i ))$, the subspace $M_i ^H$ can be identified with the image of $H^1 _f (G_{\Q _p },\gr _i (\overline{L}_i (Z/\prod _{v|w}X_v )))$ in $H^1 _f (G_{K_{0,w} },\gr _i (\overline{L}_i ))$ with respect to the norm map
\[
\Nm :\Q _p [H]\otimes H^1 _f (G_{K_{0,w}},\gr _i (\overline{L}_i ))\to H^1 _f (G_{K_{0,w}},\gr _i (\overline{L}_i ))
\]
induced by the co-unit map $\Q _p [H]\to \Q _p .$
\begin{proposition}
Let $Z\subset (\Res _{K|\Q }E)_{\Q _p }$ be an irreducible subvariety. Suppose $Z$ dominates a factor above $w$. Let $W\subset \Ind ^{K_0 } _K V_0 $ denote the image of $V_p \Alb (Z)$. Let $L(W)\subset \Ind ^{K_0 } _K L(V_0 )$ denote the sub-Lie algebra of $L(V)$ generated by $W$. Then for infinitely many $i>0$, the norm map, restricted to $\gr _i (L(W))$, surjects onto $\gr _i (L(V_0 ))_w$.
\end{proposition}
\begin{proof}
In fact we will show this for a positive proportion of $i>0$.
By embedding $\Q _p $ into $\mathbb{C}$, we can descend $W$ to a sub-$\Q $-vector space $W_{\Q }$ of $H_1 (\Res _{K|\Q }(E)_{\mathbb{C} },\Q )$. It follows that $W_{\Q }$ is an $F:=\End (E_{\overline{\Q }})\otimes \Q $-subspace, hence we may assume it is of the form $(\lambda _1 ,\ldots ,\lambda _m )\cdot H_1 (E_{\mathbb{C}},\Q )$ for some $\lambda _i \in F$ not all zero. Fix an embedding of $F$ into $\Q _p$. Then, choosing a basis $e,\overline{e}$ of $V_0$ such $F$ acts on $e$ by the embedding and on $\overline{e}$ by its conjugate, it follows that the $i$th graded piece of $L(W)$ is spanned by elements of the form
\begin{align*}
(\lambda _j ^{i-1}\overline{\lambda }_j \cdot e^{i-2}[e,\overline{e}])_{j=1}^m  .
\end{align*}
We see that it will be enough to prove that for a positive proportion of $i\geq 0$,
\begin{equation}\label{bigzero}
\sum _j \lambda _j ^{ei+1}\overline{\lambda }_j \neq 0
\end{equation}
(technically, this just shows that the norm is nonzero, but conjugating \eqref{bigzero} shows that the map is surjective).

Without loss of generality, we may assume all the $\lambda _i$ are nonzero. Re-ordering, we may assume that $\lambda _1 ^e ,\ldots ,\lambda _k ^e$ are pairwise distinct and $\lambda _i ^e \in \{\lambda _1 ^e ,\ldots ,\lambda _k ^e \}$ for all $i$. If $k=1$, then it is enough to prove that 
\[
\sum \lambda _j \overline{\lambda }_j \neq 0,
\]
which follows from the fact that $\lambda _j \overline{\lambda }_j >0$. If $k=2$, then by the same argument as for $k=1$, \eqref{bigzero} can be rewritten as 
\[
\sum _{j=1}^2 a_j \lambda _j ^{ei} \neq 0.
\]
for some rational constants $a_i >0$. Since $\lambda _1 ^e \neq \lambda _2 ^e $, we have that $\lambda _1 ^{ie}\neq \lambda _2 ^{ie}$ for all $i>0$, since distinct $e$th powers in $F^\times $ cannot differ by a root of unity. Hence \eqref{bigzero} holds for all but at most one $i$.

Now suppose $k\geq 3$. By Szemer\'edi's theorem \cite{szemeredi}, it will be enough to prove that the set of $i$ such that \eqref{bigzero} does not hold does not contain an arithmetic progression of length $k$. Let $i_1 ,\ldots ,i_k $ be an arithmetic progression in $e\mathbb{Z}_{>0}$. Let $d=i_2 -i_1$. Then to prove that \eqref{bigzero} holds for at least one of the $i_j$, it is enough to prove that for any positive constants $a_i$ in $\Q $, we have
\[
\sum _{j=1}^k a_j \lambda _j ^{i_1 }v_j \neq 0
\] 
where $v_j \in F^k$ is the vector $(1,\lambda _j ^{de} ,\ldots, \lambda _j ^{de(k-1)})$. Taking determinants, this follows from the fact that $\lambda _i ^{ed} \neq \lambda _j ^{ed} $ for $i\neq j $ between $1$ and $k$.
\end{proof}

\subsection{Proof of Theorem \ref{finiteness}, case (4)}
As in \cite{coates2010selmer} and \cite{ellenberg2017rational}, the key estimate in the proof of case (4) of Theorem \ref{finiteness} is the following theorem of Coates and Kim, coming from Greenberg's generalisation of Iwasawa's theorem to the case of $\mathbb{Z}_p ^d $ extensions (\cite[Theorem 1]{greenberg})..
\begin{theorem}[Coates, Kim \cite{coates2010selmer}, Theorem 0.1]\label{CK_thm}
Let $Y/L$ be a curve of genus $g$ whose Jacobian has complex multiplication. Let $\overline{U}_n (Y)$ be the maximal metabelian quotient of $U_n (Y)$. Then, for any finite extension $L'|L$, we have
\[
\sum _{j=1}^n \h ^2 (G_{L',T},\gr _i (\overline{U}_n (Y) ))\leq Bn^{2g-1},
\]
for some constant $B$ depending on $L'$ and $Y$.
\end{theorem}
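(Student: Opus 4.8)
The plan is to reduce the statement, via the explicit description of the graded pieces of the metabelian quotient, to an Iwasawa-theoretic estimate for the cohomology of algebraic Hecke characters, and then to invoke Greenberg's theorem on $\Z_p^d$-extensions. Since restriction $H^2(G_{L',T},W)\to H^2(G_{L'',T},W)$ is injective on $\Q_p$-vector spaces for any finite extension $L''|L'$ (corestriction provides a one-sided inverse up to the nonzero scalar $[L'':L']$), we may freely enlarge $L'$; we take $L'$ large enough that $\Jac(Y)$ has CM over $L'$ and $V_p(\Jac(Y))\otimes_{\Q_p}\overline{\Q}_p$ decomposes as a direct sum $\bigoplus_{k=1}^{2g}\overline{\Q}_p(\chi_k)$ of one-dimensional $G_{L',T}$-representations, where the $\chi_k$ are the $p$-adic avatars of algebraic Hecke characters of $L'$, with infinity types pairing up under the Weil pairing.

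By Lemma \ref{metabelian_module}, $\gr_i(\overline{U}_n(Y))$ for $i\geq 2$ is, as a module over $\widehat{\Sym}^\bullet(V_p(\Jac Y))$, a subquotient of $V_p(\Jac Y)\otimes\widehat{\Sym}^\bullet(V_p(\Jac Y))$ cut out by the symplectic relation; hence over $\overline{\Q}_p$ it decomposes as a direct sum of one-dimensional $G_{L',T}$-representations, each isomorphic to $\prod_{k=1}^{2g}\chi_k^{a_k}$ with $\sum_k a_k=i$, and the number of summands is $O(i^{2g-1})$ (the degree-$i$ part of $\widehat{\Sym}^\bullet$ on a $2g$-dimensional space has dimension $O(i^{2g-1})$, and passing to the metabelian Lie algebra of a curve alters this only by lower-order terms and by the cokernel of the symplectic relation). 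The pieces $\gr_1,\gr_2$ are handled directly and contribute $O(1)$. So it suffices to bound $\sum_{i=3}^n\sum_{\rho}\h^2(G_{L',T},\overline{\Q}_p(\rho))$, the inner sum over the characters $\rho=\prod\chi_k^{a_k}$ with $\sum a_k=i$. For each such $\rho$, the Euler characteristic formula (Lemma \ref{PTD}(1)) together with Poitou--Tate duality gives $\h^2(G_{L',T},\overline{\Q}_p(\rho))\leq \h^1(G_{L',T},\overline{\Q}_p(\rho^{-1})(1))+\sum_{v\in T}\h^0(G_v,\overline{\Q}_p(\rho^{-1})(1))$, and $\rho^{-1}(1)$ is again an algebraic Hecke character of $L'$. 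The local terms vanish for $i\geq 3$ at $v\mid p$ (nonzero Hodge--Tate weight) and, at a fixed $v\nmid p$, are nonzero only when the Weil numbers $\chi_k(\mathrm{Frob}_v)$ satisfy $\prod\chi_k(\mathrm{Frob}_v)^{a_k}=(\mathrm{Nv})$, a proper subgroup condition, so they contribute $O(i^{2g-2})$ to the inner sum.

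It remains to bound $\sum_{i=3}^n\sum_\rho\h^1(G_{L',T},\overline{\Q}_p(\rho^{-1})(1))$. Let $L_\infty/L'$ be the compositum of all $\Z_p$-extensions of $L'$, so $\Gal(L_\infty/L')\cong\Z_p^{d}$ with $d\geq r_2(L')+1\geq 1$, and let $X_\infty$ be the Galois group of the maximal abelian pro-$p$ everywhere-unramified extension of $L_\infty$; Greenberg's theorem \cite[Theorem 1]{greenberg} asserts that $X_\infty$ is a torsion $\Lambda$-module, $\Lambda=\Z_p[[\Gal(L_\infty/L')]]$. A Hochschild--Serre and control argument for $G_{L_\infty,T}\triangleleft G_{L',T}$ — relating $H^1(G_{L',T},\overline{\Q}_p(\psi))$ to $\Hom_{\Gal(L_\infty/L')}(X_\infty,\overline{\Q}_p(\psi))$ modulo the (understood) contribution of the local terms at $p$ to the $T$-ramified Iwasawa module — shows that, for a Hecke character $\psi=\rho^{-1}(1)$, $\h^1(G_{L',T},\overline{\Q}_p(\psi))$ is bounded by a constant depending only on $L'$ and $Y$ unless the image of $\psi$ under $G_{L',T}\to\Gal(L_\infty/L')^{\vee}\otimes\Q_p\cong\Q_p^{d}$ lies on the support of the characteristic ideal of $X_\infty$, a fixed proper closed subvariety of $\Q_p^{d}$. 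The map $(a_k)\mapsto$ (image of $\prod\chi_k^{-a_k}(1)$) is affine-linear and nonconstant in at least one coordinate (the cyclotomic one), so the number of $(a_k)$ with $\sum a_k=i$ mapping into this subvariety is $O(i^{2g-2})$. Summing, $\sum_{i=3}^n\h^2(\gr_i)=O\!\left(\sum_{i=3}^n i^{2g-2}\right)=O(n^{2g-1})$, which is the assertion.

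The main obstacle is the last step: making the descent from the Iwasawa module $X_\infty$ to $\h^1(G_{L',T},\overline{\Q}_p(\psi))$ with a bound uniform over the infinite family of characters. The essential point is that one must work with the full $\Z_p^d$-extension, so that $X_\infty$ can be taken unramified everywhere and is therefore $\Lambda$-torsion by Greenberg (on the cyclotomic line alone the wild $p$-ramification of the $\chi_k$ obstructs torsionness), and one must control the support of its characteristic ideal; this is precisely the content of \cite[Theorem 0.1]{coates2010selmer}, whose argument we would follow.
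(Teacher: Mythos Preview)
The paper does not give a proof of this theorem; it is quoted from Coates--Kim \cite{coates2010selmer} with only the remark that the argument ``comes from Greenberg's generalisation of Iwasawa's theorem to the case of $\Z_p^d$-extensions''. Your outline is consistent with this and captures the Coates--Kim strategy correctly: decompose the graded pieces of the metabelian quotient into one-dimensional Hecke characters using the CM hypothesis and Lemma~\ref{metabelian_module}, reduce $\h^2$ to $\h^1$ of the dual characters via Poitou--Tate, and then control the latter by descent from a $\Z_p^d$-extension together with Greenberg's torsion theorem for the Iwasawa module. Since the paper itself defers entirely to \cite{coates2010selmer}, and you do the same after sketching the reduction, there is no discrepancy to report.

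A couple of minor imprecisions in your sketch are worth noting, though they do not affect the strategy. First, your claim that the local terms at $v\mid p$ vanish for $i\geq 3$ ``by nonzero Hodge--Tate weight'' is not literally correct: the Hodge--Tate weight of $\rho^{-1}(1)$ can be zero for certain $(a_k)$ with $\sum a_k=i\geq 3$. What you actually need is that $\rho^{-1}(1)|_{G_v}$ is nontrivial, which is a stronger condition and does follow (for $i\geq 3$) from weight considerations on the global character. Second, the passage from ``$X_\infty$ is $\Lambda$-torsion'' (Greenberg, for the unramified module) to a uniform bound on $\h^1(G_{L',T},\overline{\Q}_p(\psi))$ (which involves the $T$-ramified module) is where the real work lies, and you are right to flag it: one must control the local-at-$p$ discrepancy uniformly over the family, and then argue that the map $(a_k)\mapsto\psi$ lands in the support of the characteristic ideal only on an $O(i^{2g-2})$-subset of the degree-$i$ simplex. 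This is exactly what Coates--Kim carry out, and your deferral to their argument is the honest thing to do here.
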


As in case (3), by Lemma \ref{lemma:fin_extn} we may enlarge $K$ if necessary to assume that $K|\Q $ is Galois and that all isogenies between $J_{\sigma ,\mathbb{C}}$ for different embeddings $\sigma $ are in fact defined over $K$. Hence, if $V:=V_p J$ and $H<\Gal (K|\Q )$ is the subgroup generated by all $\sigma $ such that $\Jac(X)$ is isogenous (over $K$ or equivalently $\overline{K}$) to $\Jac (X_{\sigma })$, then $V$ descends to a Galois representation $V_0$ of $K_0 :=K^H $.

Let $F:=\End (J_{\overline{K}})\otimes \Q $ be the CM field by which $J$ has complex multiplication. Let $e:= \# \mu (F')$, where $F'$ is the Galois closure of $F$. Over a finite extension of $\Q $, $V_0$ decomposes as a direct sum of characters $\chi _i $ for $1\leq i\leq 2g$. Over $K_0 $, the Galois action permutes the $\chi_i$. In particular, if $S\subset \{1 ,\ldots ,2g \}^n$ is stable under the action of $S_{2g}$, then the subspace 
\[
\oplus _{(i_j )\in S}\otimes \chi _{i_j }
\]
of $V_0 ^{\otimes n}$ is stable under the action of $G_K$. Let $L$ denote the free metabelian Lie algebra generated by $V_0 $. For $n$ divisible by $e$, let $V_0 [n]\subset \gr _n (L)$ denote the subspace generated by the image of $\prod _{i=1}^{2g}\Sym ^{a_i }(\chi _i )\otimes \chi _j \otimes \chi _k $ in $L$ under the nested commutator map, where $a_i$ are such that $a_i$ is in $e\cdot \mathbb{Z}_{\geq 0}$ if $i\neq j,k$, and $a_i +1\in e\cdot \mathbb{Z}_{>0}$ otherwise.

\begin{lemma}\label{lemma:positive-const}
There is a positive constant $c_0 <1$ with the following property: there is a prime $w$ of $K_0$ lying above $p$ such that, for a positive proportion of $n$ dividing $e$, the intersection of $\loc _p H^1 _f (G_{\Q },\Ind ^\Q _{K_0 }V_0 [n])$ with $H^1 _f (G_{K_{0,w} },V_0 [n])$ is at most $c_0$ times the dimension of $H^1 _f (G_{K_{0,w} },V_0 [n])$.
\end{lemma}
\begin{proof}
It will be enough to show that the dimension of $\loc _p H^1 _f (G_{\Q },\Ind ^{\Q }_{K_0 }V_0 [n])$ is at most $c_0 <1$ times that of 
\[
H^1 _f (G_{\Q _p },\Ind ^{\Q }_{K_0 }V_0 [n])=\oplus _{w|p}H^1 _f (G_{K_{0,w} },V_0 [n]).
\]

By Lemma \ref{PTD} and Theorem \ref{CK_thm} is enough to show that $\frac{\dim (\Ind ^\Q _{K_0 }V_0 [n])^{c=1}}{\dim (\Ind ^\Q _{K_0 }V_0 [n])}$ is bounded below by a constant $>0$ for $n$ sufficiently large. If $K_0$ contains one complex place, then it follows that the dimension is at least $[K_0 :\Q ]^{-1}\cdot \dim V_0 [n]$. Hence suppose $K_0$ is totally real. Complex conjugation defines an involution of $V_0$ which nontrivially permutes the vector subspaces $\chi _i $. Hence $V_1 :=\oplus _i \chi _i ^{\otimes e}\subset \Sym ^e V_0 $ is stable under conjugation and the minus eigenspace of $V_1$ is nontrivial. Via the action of $\Sym ^\bullet V_0$ on $[L,L]$, the module $\oplus _{e|n}V_0 [n]$ is a module under $\Sym ^\bullet V_1 $, and by Lemma \ref{metabelian_module}, for any nonzero element $x$ of $\Sym ^\bullet V_1 $, the kernel of the action of $x$ on $\oplus _{e|n}V_0 [n]$ is zero. Choosing a nonzero element of $V_1 $ in the minus eigenspace with respect to complex conjugation, we get an injection
\[
V_0 [n]^{c=-1} \hookrightarrow V_0 [n+e]^{c=1}.
\]
Since $\dim V_0 [n]/V_0 [n+e]$ is bounded by a nonzero constant, we see that $V_0 [n]^{c=1} $ is a positive proportion of $V_0 [n]$.
\end{proof}

For a prime $v$ of $K$, let $L_v $ denote the free metabelian Lie algebra generated by $V_v :=V_p \Jac (X_v )$, and $\overline{L}_v$ the associated graded of the Lie algebra of the metabelian quotient of the $\Q _p $-unipotent fundamental group of $X_{v,\overline{\Q }_p }$. Then, by Lemma \ref{metabelian_module}, we have an isomorphism of $G_{\Q _p }$-representations
\begin{equation}\label{eqn:Lv1}
\overline{L}_v \simeq L_v /\Sym ^\bullet (V_v )(1).
\end{equation}
Let $w$ be a prime of $K_0$. The action of $G_{K_0}$ on $\oplus _{v|w}V_v$ induces an action on $\oplus _{v|w}L_v$, giving an isomorphism
\begin{equation}\label{eqn:Lv2}
\Ind ^{K_0 } _K L \simeq \oplus _{v|w}L_v .
\end{equation}

For each $v$, we have a decomposition of $V_v $ as $\oplus _{i=1}^{2g} \chi _{i,v}$, such that the isomorphisms $V_v \simeq \Res V_0$ restrict to isomorphisms between $\Res \chi _{i,v}$ and $\Res \chi _i $ (where the latter restrictions are to a suitably large finite extension of $K_0$). In the same way as for $V_0$, we can define subspaces $V_v [n]$ of $\gr _n (L_v)$ for $n$ divisible by $e$. 

Note that we do not assume that the isogenies between $\Jac (X_v )$ for different $v$ above $w$ respect polarisations, and hence we cannot automatically use \eqref{eqn:Lv1} and \eqref{eqn:Lv2} to identify $\oplus \overline{L}_v $ with the induction of a quotient Lie algebra of $L$. However, since we restrict attention to the spaces $V_v [n]$ and $V_0 [n]$, we can ignore this subtlety by the following lemma.

\begin{lemma}\label{lemma:Vv}
The surjection $L_v \to \overline{L}_v $ restricts to an injection on $\oplus _{e|n}V_v [n]$.
\end{lemma}
\begin{proof}
Let $x$ be an element of $V_v [n]\cap \Ker (L_v \to \overline{L}_v )$. Then there is an element $f$ of $\Sym ^{n-2}(V_v)$ such that $x=f\cdot t$, where $t \in \wedge ^2 V_v $ is a generator of $\Ker (L_v \to \overline{L}_v)$. Lemma \ref{metabelian_module} implies that $x$ is uniquely determined by its Fox differentials. Let $z_i$ be a generator of $\chi _{i,v}$. We may order the characters $\chi _{i,v}$ so that $t$ is a one-dimensional subspace of $\oplus _{i=1}^g \chi _{i,v} \otimes \chi _{g+i,v}$ surjecting onto $\chi _{i,v} \otimes \chi _{g+i,v}$ for all $i$, hence the Fox differential of $x$ with respect to $z_i$ is given by $\alpha _i \cdot f \cdot z_{g+i} $ (when $i\leq g$) or $\alpha _i \cdot x \cdot z_{i-g}$ (when $i>g$) for some nonzero $\alpha _i \in \Q _p $. Since the Fox differential with respect to $z_i $ of an element of $V_v [n]$ must necessarily be in 
\[
\oplus _{(a_i )\in e\cdot \mathbb{Z}_{\geq 0}^{2g}}\Sym ^{a_i -1}(\chi _i )\otimes (\otimes _{j\neq i}\Sym ^{a_j }(\chi _j )),
\]
we see that the congruence conditions force $x$ to be an element of 
\[
\oplus _{(a_i )\in e\cdot \mathbb{Z}_{\geq 0}^{2g}}\Sym ^{a_i -1}(\chi _i )\otimes \Sym ^{a_{i+g} -1}(\chi _i )(\otimes _{j\neq i,g+i}\Sym ^{a_j }(\chi _j ))
\]
for all $1\leq i\leq g$. It follows that $x$ must equal zero.
\end{proof}
Let $Z\subset \Res (X)_{\Q _p }$ be an irreducible subvariety which dominates a factor of $\Res (X)$ at a prime lying above $w$. Let $W\subset \Ind ^{K_0 }_K (V)$ denote the image of $V_p \Alb (Z)$ in $V_p \Res J$. Let $L(W)$ denote the sub-Lie algebra of $L$ generated by $W$. For $n$ divisible by $e$, let $W[n]$ denote the intersection of $\gr _n (L(W))$ with $\oplus _{v|p}V_v [n]$. 

By Lemma \ref{lemma:Vv} and Theorem \ref{CK_thm}, to complete the proof of case (4) of Theorem \ref{finiteness}, it will be enough to show that
\[
\sum _{i\leq n:e|i}\dim (H^1 _f (G_{\Q _p },W[i])/\loc _p H^1 _f (G_{\Q },\Ind ^\Q  _K V[i])\cap H^1 _f (G_{\Q _p },W[i]))\gg n^{2g}
\]
(i.e. that it is bounded below by a nonzero constant times $n^{2g}$). Since 
\[
\lim \dim V_0 [n]/\dim \gr _n L(V_0 )>0,
\] 
it is enough to show that this sum of dimensions makes up a positive proportion of $\sum _{i\leq n:e|i}\dim V_0 [i]$. 

As in the proof of case (3), we reduce to estimating the size of the image of $H^1 _f (G_{\Q _p },W[i])$ in $\oplus _{v|w}H^1 _f (G_{K_v },W[i])$ modulo $M_i$, where
\[
M_i :=\loc _p H^1 _f (G_{\Q },\Ind ^\Q  _K V[i]) \cap \oplus _{v|w}H^1 _f (G_{K_v },W[i]).
\] 
The dimension of this image is at least 
$
\frac{1}{\# H }\dim N_i /M_i ,
$
where $N_i$ is the $H$-module generated by the image of $H^1 _f (G_{\Q _p },W[i])$ in $\oplus _{v|w}H^1 _f (G_{K_v },V_0[i])$.

We use this to reduce to comparing the subspaces $M_i ^H$ and $N_i ^H$ of $\Ind ^{\Q}_{K_0}V_0 [i]\simeq \prod _{v|w}V_v [i]$. As in case (3), we have an isomorphism 
\[
M_i ^H \simeq \loc _p H^1 _f (G_{\Q ,S},\Ind ^{\Q}_{K_0}V_0 [n]) \cap H^1 _f (G_{K_{0,w}},V_0 [n]) \subset H^1 (G_{\Q _p },\Ind ^{\Q}_{K_0} V_0 [n]).
\]
Hence, by Lemma \ref{lemma:positive-const}, we know that for $i$ sufficiently large, $\dim M_i ^H / \h^1 _f (G_{K_{0,w}},V_0 [i])$ is bounded above by a constant strictly less than 1. Hence it will be enough to prove that $\dim N_i ^H / \h^1 _f (G_{K_{0,w}},V_0 [i])$ tends to 1. Since $\dim F^0 (\gr _i (U_i ))=O(i^g )$, we have that $\dim F^0 W[i]$ and $\dim V_0 [i]$ are both $O(i^{g} ).$ Hence, by the $p$-adic comparison theorem, it is enough to prove that $\dim \widetilde{N}_i ^H /\dim V_0 [i]$ tends to $1$, where $\widetilde{N}_i$ is the $H$-module generated by the image of $W[i]$ in $V_0 [i]$ with respect to the composite of the projection to $\oplus _{v|w}V_v [i]$ with the norm map
\[
\Nm :\oplus _{v|w}V_v [i]\simeq \Q _p [H]\otimes V_0 [i]\to V_0 [i].
\]

Hence to complete the proof of case (4) of Theorem \ref{finiteness}, it is enough to prove the following lemma regarding the image of $W[n]$ under the composite of the projection to $\oplus _{v|w}V_v [n]$ and the norm map above.
\begin{lemma}\label{lemma:nonvanishing}
Let $\Nm  W[n]  \subset V_0 [n]$ denote the image of $W[n]$ with respect to the norm map. Then 
\[
\dim \Nm W[n] /\dim V_0 [n]\to 1
\]
as $n\in e\cdot \mathbb{Z}_{>0}$ tends to infinity.
\end{lemma}
Our proof of Lemma \ref{lemma:nonvanishing} is rather elaborate. We first need the following result.
\begin{lemma}\label{lemma:szemeredi}
Let $x_1 ,\ldots ,x_n$ be nonzero elements of a field $F$ such that for all $m>0$ and all $i\neq j$, $x_i ^m \neq x_j ^m$. Let $y_1 ,\ldots ,y_n$ be nonzero elements of $F$. Then, as $m$ tends to infinity, the proportion of $i$ in $\{ 0,\ldots ,m\}$ such that $\sum _j y_j x_j ^i \neq 0$ tends to one uniformly in $m$. More precisely, for any $\epsilon >0$, there is an $m_{\epsilon }$ independent of $x_i$ and $y_i$ such that, for all $m>m_{\epsilon }$, $\sum _{j=1}^n y_j x_j ^i \neq 0$ for at least $(1-\epsilon )m$ elements of $\{1 ,\ldots ,m\}$.
\end{lemma}
\begin{proof}
If $n\leq 2$ the result is clear, so assume $n>2$. Let $S_m \subset \{0 ,\ldots ,m\}$ be the set of $i$ such that $\sum _j y_j x_j ^i =0$. By Szemer\'edi's theorem, it will be enough to show that $S_m$ does not contain any arithmetic progression of length $n$. Let $\{i_1 ,\ldots ,i_n \} \subset \{1 ,\ldots ,m\}$ be an arithmetic progression in $\{1 ,\ldots ,m\}$. Let $d:=i_2 -i_1$. Let $S_0 \subset S$ Then, since the $x_i ^d$ are pairwise distinct by hypothesis, the Vandermonde determinant $\det (x_i ^{d\cdot (j-1)})_{1\leq i,j \leq n}$ is nonzero. Hence the vectors $(x_j ^{i_1 },\ldots  ,x_j ^{i_n} )\in F^n$, for $1\leq j\leq n$, are linearly independent. It follows that there is a $k$ such that
\[
\sum _j y_j x_j ^{i_k }\neq 0.
\]
\end{proof}
Now let $F:=\End ^0 (\Jac (X)_{\overline{K}})\otimes \Q$ be the CM field associated to $X$, and let $\sigma _1 ,\ldots ,\sigma _{2g}$ be the embeddings of $F$ into $\overline{\Q }$.
\begin{lemma}\label{lemma:fermat_propotion}
Let $\lambda _1 ,\ldots ,\lambda _m$ be $m$ elements of $F$ which are not all zero, and are all $e$th powers. Let $
S(n):=\{(a_1 ,\ldots ,a_{2g})\in e\cdot \Z ^{2g}_{\geq 0}:\sum a_j =n\}.
$
Then, as $n\in e\cdot \mathbb{Z}_{>0}$ tends to infinity, the proportion of $(a_i)$ in $S(n)$ such that
\[
F(a_1 ,\ldots ,a_{2g}):=\sum _{i=1}^m \prod _{j=1}^{2g}\sigma _j (\lambda _i )^{a_j }
\]
is nonzero tends to 1.
\end{lemma}
\begin{proof}
Firstly, clearly we can remove all $\lambda _i $ equal to zero, hence we may assume that all $\lambda _i$ are nonzero. We may also rescale so that $\lambda _1 =1$.

We filter the set $\{ 1,\ldots ,m\}$ by subsets as follows. For $1\leq i\leq 2g$, define $T_i $ to be the set of $j$ in $\{1,\ldots ,m\}$ such that $\sigma _k (\lambda _j )=\sigma _1 (\lambda _j )$ for all $1\leq k\leq i$. We prove, by descending induction on $i$, that for all $i\leq m$, as $N$ tends to infinity the proportion of $(a_j )$ in $S(n)$ such that
\[
F_i (a_1,\ldots ,a_{2g}):=\sum _{k\in T_i }\prod _{j=1}^{2g}\sigma _j (\lambda _k )^{a_j }
\]
is nonzero tends to 1. In the case where $i={2g}$, we see that all the $\lambda _k $ for $k$ in $T_{2g}$ are in fact in $\Q $. Since all the $a_i$ are even and all $\lambda _i$ are nonzero, each summand is strictly positive (also, since $\lambda _1 =1$, $T_{2g}$ is nonempty).

Now fix a $k$ between $2$ and $2g$, and suppose that the proportion of $(a_i )$ such that $F_k (a_1 ,\ldots ,a_{2g})$ is nonzero tends to 1 for $(a_i )$ all divisible by $e$. If $T_k =T_{k-1}$ there is nothing to prove in the next inductive step, so assume that there is a $j$ in $T_{k-1}$ such that $\sigma _1 (\lambda _j )\neq \sigma _k (\lambda _j )$. Let $(b_1 ,\ldots ,b_{2g})$ be a tuple such that $F_k (b_1 ,\ldots ,b_{2g})$ is nonzero. Let $R(b_1 ,\ldots ,b_{2g} )$ be the set of tuples $(a_i )\in e\cdot \mathbb{Z}_{\geq 0}^{2g}$ such that $a_i =b_i$ for $i\notin {1,k-1}$, and $a_1 +a_{k-1}=b_1 +b_{k-1}$. Let $\mu _1 ,\ldots ,\mu _\ell $ be the set of (distinct) possible values of $\sigma _k (\lambda _j ^e )/\sigma _1 (\lambda _j ^e )$ for $j$ in $T_{k-1}$, such that $\mu _1 =1$. Then there are constants $c_1 ,\ldots ,c_{\ell }$ such that, for any $(a_i )$ in $R(b_1 ,\ldots ,b_{2g} )$, we can write 
\[
F_{k-1}(a_1 ,\ldots ,a_{2g})=\sum _{i=1}^\ell c_i\cdot \mu _i ^{(a_1 -b_1 )/e}.
\]
In particular, $c_1 =F_k (b_1 ,\ldots ,b_{2g})\neq 0$, and hence the constants $c_i$ are not all zero. Since the $\mu _i$ are all $e$th powers, for all $i>0$ and $j_1 \neq j_2 $,  $\mu ^i _{j_1 }\neq \mu ^i _{j_2 }$. Hence the proportion of $(a_i )$ in $R(b_1 ,\ldots ,b_{2g})$ such that $F_{k-1}(a_1 ,\ldots ,a_{2g})$ is nonzero tends to 1 uniformly in $b_1 +b_{k-1}$, by Lemma \ref{lemma:szemeredi}. Since, for any $n_0$, the proportion of $(b_i )$ in $S(n)$ such that $b_1 +b_{k-1}\leq n_0$ tends to zero, we deduce that the proportion of $(a_i)$ in $S(n)$ such that $F_{k-1}(a_1 ,\ldots ,a_{2g})$ is nonzero tends to 1 as required.
\end{proof}

\begin{proof}[Proof of Lemma \ref{lemma:nonvanishing}]
By embedding $\Q _p $ into $\mathbb{C}$ and viewing $Z$ as a complex subvariety, we can descend the subspace $W$ of $\oplus _{v|w}V_v$ to a subspace $W_{\Q }$ of $\prod H_1 (X_{v,\mathbb{C}},\Q )$. Then we may assume $W$ is a subspace of $V_0 ^m$ of the form $(\lambda _1 ,\ldots ,\lambda _m )\cdot V_0 $, for some $\lambda _i$ in $F\subset \End (V_0 )$. The action of $\lambda \in F$ on $\chi _i $ is given by $\sigma  _i (\lambda )$ (after re-ordering the $\sigma _i $ if necessary). Hence $\Nm W [n]$ consists of elements of the form
\begin{equation}\label{eqn:sigma}
\sum _{i=1}^m  \sigma _k (\lambda _i )  \sigma _{\ell }(\lambda _i )(\prod _{j=1}^{2g} \sigma _j ( \lambda _i )^{a_j })(\prod _{j=1}^{2g}e_j ^{a_j })[e_k ,e_\ell ]
\end{equation}
for $a_i \in S(n)$, where $e_i$ is a generator of $\chi _i $. Hence, to show that the kernel of the induced map $
V_0 [n]\to V_0 [n]
$
has dimension $o(\dim V_0 [n])$, it is enough to show that the proportion of $a_i $ in $S(n)$ such that \eqref{eqn:sigma} is zero is $o(1)$, which follows from Lemma \ref{lemma:fermat_propotion}.
\end{proof}
\bibliography{bib_unlikely}

\begin{thebibliography}{BDCKW18}

\bibitem[Ax71]{ax1971schanuel}
J.~Ax.
\newblock On {S}chanuel's conjectures.
\newblock {\em Annals of mathematics}, pages 252--268, 1971.

\bibitem[Ax72]{ax1972some}
J.~Ax.
\newblock Some topics in differential algebraic geometry {I}: {A}nalytic
  subgroups of algebraic groups.
\newblock {\em American Journal of Mathematics}, 94(4):1195--1204, 1972.

\bibitem[BBBM20]{balakrishnan2020explicit}
J.~Balakrishnan, A.~Besser, F.~Bianchi, and J.~S. Mueller.
\newblock Explicit quadratic chabauty over number fields.
\newblock {\em Israel Journal of Mathematics}, 2020.

\bibitem[BD18]{QC1}
J.~S. Balakrishnan and N.~Dogra.
\newblock Quadratic {C}habauty and rational points, {I}: {$p$}-adic heights.
\newblock {\em Duke Math. J.}, 167, 2018.
\newblock With an appendix by J. Steffen M\"{u}ller.

\bibitem[BD21]{QC2}
J.~S. Balakrishnan and N.~Dogra.
\newblock Quadratic {C}habauty and rational points {II}: {G}eneralised height
  functions on {S}elmer varieties.
\newblock {\em Int. Math. Res. Not. IMRN}, (15):11923--12008, 2021.

\bibitem[BDCKW18]{BDCKW}
J.~S. Balakrishnan, I.~Dan-Cohen, M.~Kim, and S.~Wewers.
\newblock A non-abelian conjecture of {T}ate-{S}hafarevich type for hyperbolic
  curves.
\newblock {\em Math. Ann.}, 372(1-2):369--428, 2018.

\bibitem[BK90]{blochkato}
S.~Bloch and K.~Kato.
\newblock {L}-functions and {T}amagawa numbers of motives, in {T}he
  {G}rothendieck {F}estschrift, {V}ol {I}.
\newblock pages 333--400. Birkh\"auser Boston, 1990.

\bibitem[BLR90]{BLR}
S.~Bosch, W.~Lutkebohmer, and M.~Raynaud.
\newblock N{\'e}ron {M}odels, 1990.

\bibitem[Bor72]{borel}
A.~Borel.
\newblock Cohomologie r\'{e}elle stable de groupes {$S$}-arithm\'{e}tiques
  classiques.
\newblock {\em C. R. Acad. Sci. Paris S\'{e}r. A-B}, 274:A1700--A1702, 1972.

\bibitem[BSCFN21]{BSCFN}
D.~Bl{\'a}zquez-Sanz, G.~Casale, J.~Freitag, and J.~Nagloo.
\newblock A differential approach to the {A}x-{S}chanuel, {I}.
\newblock {\em arXiv preprint arXiv:2102.03384v2}, 2021.

\bibitem[CDC20]{CDC}
D.~Corwin and I.~Dan-Cohen.
\newblock The polylog quotient and the {G}oncharov quotient in computational
  {C}habauty--{K}im {T}heory {I}.
\newblock {\em Int. J. Number Theory}, 16(8):1859--1905, 2020.

\bibitem[CK10]{coates2010selmer}
J.~Coates and M.~Kim.
\newblock Selmer varieties for curves with {C}{M} {J}acobians.
\newblock {\em Kyoto Journal of Mathematics}, 50(4):827--852, 2010.

\bibitem[CS99]{CLS99}
B.~Chiarellotto and B.~Le Stum.
\newblock F-isocristaux unipotents.
\newblock {\em Compositio Math.}, 116:81--110, 1999.

\bibitem[CSM95]{CRS95}
R.~Carter, G.~Segal, and I.~Macdonald.
\newblock {\em Lectures on {L}ie groups and {L}ie algebras}, volume~32 of {\em
  London Mathematical Society Student Texts}.
\newblock Cambridge University Press, Cambridge, 1995.

\bibitem[DC20]{DC}
I.~Dan-Cohen.
\newblock Mixed {T}ate motives and the unit equation {II}.
\newblock {\em Algebra Number Theory}, 14(5):1175--1237, 2020.

\bibitem[DCW16]{DCW}
I.~Dan-Cohen and S.~Wewers.
\newblock Mixed {T}ate motives and the unit equation.
\newblock {\em Int. Math. Res. Not. IMRN}, (17):5291--5354, 2016.

\bibitem[Del89]{Del89}
P.~Deligne.
\newblock Le groupe fondamental de la droite projective moins trois points.
\newblock In {\em Galois groups over {$\mathbf{Q}$}}, volume~16 of {\em Math.
  Inst. Res. Inst. Publ.}, pages 79--297. Springer-Verlag, 1989.

\bibitem[DR15]{DR15}
M.~Dimitrov and D.~Ramakrishnan.
\newblock Arithmetic quotients of the complex ball and a conjecture of {L}ang.
\newblock {\em Doc. Math.}, 20:1185--1205, 2015.

\bibitem[EH17]{ellenberg2017rational}
J.~S. Ellenberg and D.~R. Hast.
\newblock Rational points on solvable curves over $\mathbb{Q}$ via non-abelian
  {C}habauty.
\newblock {\em arXiv preprint arXiv:1706.00525}, 2017.

\bibitem[Fal83]{faltings1983}
G.~Faltings.
\newblock Endlichkeitss{\"a}tze f{\"u}r abelsche variet{\"a}ten {\"u}ber
  zahlk{\"o}rpern.
\newblock {\em Inventiones mathematicae}, 73(3):349--366, 1983.

\bibitem[Fal07]{faltings2007mathematics}
G.~Faltings.
\newblock Mathematics around {K}im’s new proof of {S}iegel’s theorem.
\newblock {\em Diophantine geometry, CRM Series}, 4:173--188, 2007.

\bibitem[FPR94]{FPR94}
J.-M. Fontaine and B.~Perrin-Riou.
\newblock Autour des conjectures de {B}loch et {K}ato: cohomologie galoisienne
  et valeurs de fonctions {$L$}.
\newblock In {\em Motives ({S}eattle, {WA}, 1991)}, volume~55 of {\em Proc.
  Sympos. Pure Math.}, pages 599--706. Amer. Math. Soc., Providence, RI, 1994.

\bibitem[Gre73]{greenberg}
R.~Greenberg.
\newblock The {I}wasawa invariants of ${\Gamma }$-extensions of a fixed number
  field.
\newblock {\em Amer. J. Math.}, 95, 1973.

\bibitem[Gro64]{egaiv}
A.~Grothendieck.
\newblock {\'E}l{\'e}ments de g{\'e}om{\'e}trie alg{\'e}brique: {IV}. {\'e}tude
  locale des sch{\'e}mas et des morphismes de sch{\'e}mas, premi{\`e}re partie.
\newblock {\em Publications Math{\'e}matiques de l'IHES}, 20:5--259, 1964.

\bibitem[Had11]{hadian2011motivic}
M.~Hadian.
\newblock Motivic fundamental groups and integral points.
\newblock {\em Duke Mathematical Journal}, 160(3):503--565, 2011.

\bibitem[Hai87]{hain:higher_alb}
R.~M. Hain.
\newblock Higher {A}lbanese manifolds.
\newblock In {\em Hodge theory ({S}ant {C}ugat, 1985)}, volume 1246 of {\em
  Lecture Notes in Math.}, pages 84--91. Springer, Berlin, 1987.

\bibitem[Hai11]{hain2011rational}
R.~Hain.
\newblock Rational points of universal curves.
\newblock {\em Journal of the American Mathematical Society}, 24(3):709--769,
  2011.

\bibitem[Has18]{hast-thesis}
D.R. Hast.
\newblock Rational points and unipotent fundamental groups.
\newblock {\em PhD Thesis, University of Wisconsin-Madison}, 2018.

\bibitem[Has21]{hast}
D.~R. Hast.
\newblock Functional transcendence for the unipotent {A}lbanese map.
\newblock {\em Algebra Number Theory}, 15(6):1565--1580, 2021.

\bibitem[HZ87]{hain1987unipotent}
R.~M. Hain and S.~Zucker.
\newblock Unipotent variations of mixed {H}odge structure.
\newblock {\em Inventiones mathematicae}, 88(1):83--124, 1987.

\bibitem[Iha86]{ihara}
Y.~Ihara.
\newblock On {G}alois representations arising from towers of coverings of
  {${\bf P}^1- \{0,1,\infty\}$}.
\newblock {\em Invent. Math.}, 86(3):427--459, 1986.

\bibitem[Iwa73]{iwasawa}
K.~Iwasawa.
\newblock On $\mathbb{Z}_l$-extensions of algebraic number fields.
\newblock {\em Ann. of Math.}, 98, 1973.

\bibitem[Jan89]{jannsen1989}
U.~Jannsen.
\newblock On the $l$-adic cohomology of varieties over number fields and its
  {G}alois cohomology.
\newblock In {\em Galois Groups over $\Q $}, pages 315--360. Springer, 1989.

\bibitem[Kim05]{kim:siegel}
M.~Kim.
\newblock The motivic fundamental group of $\mathbf{P}^1 -\{0,1,\infty\}$ and
  the theorem of {S}iegel.
\newblock {\em Invent. Math.}, 161(3):629--656, 2005.

\bibitem[Kim09]{kim:chabauty}
M.~Kim.
\newblock The unipotent {A}lbanese map and {S}elmer varieties for curves.
\newblock {\em Publ. Res. Inst. Math. Sci.}, 45(1):89--133, 2009.

\bibitem[Kim10]{kim2010p}
M.~Kim.
\newblock $p$-adic ${L}$-functions and {S}elmer varieties associated to
  elliptic curves with complex multiplication.
\newblock {\em Annals of mathematics}, pages 751--759, 2010.

\bibitem[Kim12]{kim2012tangential}
M.~Kim.
\newblock Tangential localization for {S}elmer varieties.
\newblock {\em Duke Mathematical Journal}, 161(2):173--199, 2012.

\bibitem[Kli17]{klingler2017hodge}
B.~Klingler.
\newblock Hodge loci and atypical intersections: conjectures.
\newblock {\em arXiv preprint arXiv:1711.09387}, 2017.

\bibitem[KT08]{kim2008component}
M.~Kim and A.~Tamagawa.
\newblock The {$l$}-component of the unipotent {A}lbanese map.
\newblock {\em Math. Ann.}, 340(1):223--235, 2008.

\bibitem[Lab67]{labute:algebre}
J.~P. Labute.
\newblock Alg\`ebres de {L}ie et pro-{$p$}-groupes d\'{e}finis par une seule
  relation.
\newblock {\em Invent. Math.}, 4:142--158, 1967.

\bibitem[Lab95]{labute:free}
J.~P. Labute.
\newblock Free ideals of one-relator graded {L}ie algebras.
\newblock {\em Trans. Amer. Math. Soc.}, 347(1):175--188, 1995.

\bibitem[Mum74]{mumford1974abelian}
D.~Mumford.
\newblock {\em Abelian {V}arieties}, volume 108.
\newblock Oxford University Press, 1974.

\bibitem[NT93]{nakamura1993some}
H.~Nakamura and H.~Tsunogai.
\newblock Some finiteness theorems on {G}alois centralizers in pro-$l$ mapping
  class groups.
\newblock {\em J. reine angew. Math}, 441:115--144, 1993.

\bibitem[NT98]{nakamura-takao}
H.~Namakura and N.~Takao.
\newblock Galois rigidity of pro-$l$ pure braid groups of algebraic curves.
\newblock {\em Transactions of the AMS}, 350(3):1079--1102, 1998.

\bibitem[Ols11]{olsson2011towards}
M.~Olsson.
\newblock Towards non-abelian p-adic {H}odge theory in the good reduction case.
\newblock {\em Mem. Amer. Math. Soc}, 210, 2011.

\bibitem[Ser02]{serre-gc}
J.-P. Serre.
\newblock {\em Galois cohomology}.
\newblock Springer Monographs in Mathematics. Springer-Verlag, Berlin, 2002.
\newblock Translated from the French by Patrick Ion and revised by the author.

\bibitem[SGA71]{SGA1}
{\em Rev\^{e}tements \'{e}tales et groupe fondamental}.
\newblock Lecture Notes in Mathematics, Vol. 224. Springer-Verlag, Berlin-New
  York, 1971.
\newblock S\'{e}minaire de G\'{e}om\'{e}trie Alg\'{e}brique du Bois Marie
  1960--1961 (SGA 1), Dirig\'{e} par A. Grothendieck. Augment\'{e} de deux
  expos\'{e}s de M. Raynaud.

\bibitem[Sie]{siegel}
C.~L. Siegel.
\newblock \"{U}ber einige {A}nwendungen diophantischer {A}pproximationen %
  [reprint of {A}bhandlungen der {P}reuss ischen {A}kademie der %
  {W}issenschaften. {P}hysikalisch-mathematische {K}lasse 1929, % {N}r. 1].
\newblock In {\em On some applications of {D}iophantine approximations},
  Quad./Monogr.

\bibitem[Sik13]{siksek2013explicit}
S.~Siksek.
\newblock Explicit {C}habauty over number fields.
\newblock {\em Algebra \& Number Theory}, 7(4):765--793, 2013.

\bibitem[Sou79]{soule}
C.~Soul\'{e}.
\newblock {$K$}-th\'{e}orie des anneaux d'entiers de corps de nombres et
  cohomologie \'{e}tale.
\newblock {\em Invent. Math.}, 55(3):251--295, 1979.

\bibitem[Sti10]{stix2010trading}
J.~Stix.
\newblock Trading degree for dimension in the section conjecture: the
  non-abelian {S}hapiro lemma.
\newblock {\em Mathematical Journal of Okayama University}, 52(1), 2010.

\bibitem[Sze75]{szemeredi}
E.~Szemer\'{e}di.
\newblock On sets of integers containing no {$k$} elements in arithmetic
  progression.
\newblock {\em Acta Arith.}, 27:199--245, 1975.

\bibitem[Tri20]{triantafillou2020restriction}
N.~Triantafillou.
\newblock Restriction of scalars {C}habauty and the {S}-unit equation.
\newblock {\em arXiv preprint arXiv:2006.10590}, 2020.

\bibitem[Tsi15]{tsimerman:unlikely}
J.~Tsimerman.
\newblock Ax-{S}chanuel and o-minimality.
\newblock {\em O-Minimality and Diophantine Geometry}, 421:216, 2015.

\end{thebibliography}
\bibliographystyle{alpha}
\end{document}